\newtheorem{theorem}{Theorem}[section]
\newtheorem{lem}[theorem]{Lemma}
\newtheorem{coro}[theorem]{Corollary}
\newtheorem{problem}[theorem]{Problem}
\newtheorem{prop}[theorem]{Proposition}
\theoremstyle{definition}
\newtheorem{defn}[theorem]{Definition}
\newtheorem{remark}[theorem]{Remark}
\newtheorem{ex}[theorem]{Example}
\newcommand{\delete}[1]{}
\begin{document}
\title[Set-theoretic solutions of the Yang-Baxter equation  and regular $\star$-semibraces]{Set-theoretic solutions of the Yang-Baxter equation  and regular $\star$-semibraces}

\author{Qianxue Liu}
\address{School of Mathematics, Yunnan Normal University, Kunming, Yunnan 650500, China}
\email{2521782539@qq.com}

\author{Shoufeng Wang$^{\ast}$}\thanks{*Corresponding author}
\address{School of Mathematics, Yunnan Normal University, Kunming, Yunnan 650500, China}
\email{wsf1004@163.com}


\begin{abstract}
As generalizations of inverse semibraces introduced by Catino, Mazzotta and Stefanelli, Miccoli has introduced regular $\star$-semibraces under the name   of involution semibraces and given a sufficient condition under which the associated map  to a regular $\star$-semibrace  is a  set-theoretic solution  of the Yang-Baxter equation. From the viewpoint of universal algebra, regular $\star$-semibraces are (2,2,1)-type algebras.
In this paper we   continue to study  set-theoretic solutions  of the Yang-Baxter equation  and regular $\star$-semibraces.
We first consider several kinds of (2,2,1)-type algebras that induced by regular $\star$-semigroups and give some equivalent characterizations of the statement that  they form  regular $\star$-semibraces. Then we give sufficient and necessary conditions under which the associated maps to these (2,2,1)-type algebras are set-theoretic solutions of the Yang-Baxter equation. Finally, as analogues of weak braces defined by Catino, Mazzotta, Miccoli and Stefanelli, we introduce weak $\star$-braces in the class of regular $\star$-semibraces,  describe their algebraic structures and prove that the associated maps to weak $\star$-braces are  always set-theoretic solutions of the Yang-Baxter equation. The result of the present paper shows that the class of completely regular, orthodox and locally inverse regular $\star$-semigroups is a source of possibly new set-theoretic solutions of the Yang-Baxter equation. Our results establish the close connection between the Yang-Baxter equation and the classical structural theory of regular $\star$-semigroups.
\end{abstract}

\keywords{ Quantum Yang-Baxter equation; Set-theoretic solution; Regular $\star$-semigroups;  Semibrace; Inverse semibrace; Regular $\star$-semibraces; Weak brace;  Weak $\ast$-braces}

\maketitle



\vspace{-0.8cm}
\section{Introduction}
The quantum Yang-Baxter equation first appeared in theoretical physics in Yang \cite{Yang} and  in statistical mechanics  in Baxter \cite{Baxter} independently. Now,  the Yang-Baxter equation has many applications in the areas of mathematics and mathematical physics such as knot theory, quantum computation, quantum group theory and so on. Let $V$ be a vector space and $R: V\otimes V \rightarrow V\otimes V$ be a linear transformation. Then $R$ is called  {\em a solution on a vector space} of the Yang-Baxter equation if $R_{12}R_{23}R_{12} =R_{23}R_{12}R_{23} \mbox{ in End}(V\otimes V\otimes V),$
where $R_{ij}$ acts as $R$ on the $i$-th and $j$-th components, and as the identity on the remaining component.

Finding all solutions of the Yang-Baxter equation is presently impossible, so in 1992, Drinfeld \cite{Drinfeld} posed the question of finding all the so called set-theoretic  solutions of the Yang-Baxter equation as they form solutions on vector spaces by linearly extending them. Let $S$ be a non-empty set and $r_S: S\times S\longrightarrow S\times S$ be a map. Then for any $a\in S$, $r$ induces the following maps:
$$\lambda_{a}: S\rightarrow S, \,b\mapsto \mbox{the first component of }r(a,b),$$$$
\rho_{b}: S\rightarrow S,\, a\mapsto \mbox{the second component of }r(a,b).$$
Thus, for all $a,b\in S$, we have $r(a,b)=(\lambda_{a}(b),\rho_{b}(a))$.
If $r$ satisfies the following equality $$(r\times {\rm id}_{S})({\rm id}_{S}\times r)(r\times {\rm id}_{S})=({\rm id}_{S}\times r)(r\times
{\rm id}_{S})({\rm id}_{S}\times r)$$ in the set of maps from $S\times S\times S$ to $S\times S\times S$, where  ${\rm id}_{S}$ is the identity map on $S$, then  $r_S$ is called a {\em set-theoretic  solution} of the Yang-Baxter equation, or briefly a {\em solution}.
It is a routine matter to prove  that $r_S$ is  a   solution  of the Yang-Baxter equation if and only if
\begin{equation}\label{jie1}
\lambda_{x}\lambda_{y}(z)=\lambda_{\lambda_{x}(y)}\lambda_{\rho_{y}(x)}(z),
\end{equation}
\begin{equation}\label{jie2}
\rho_{z}\rho_{y}(x)=\rho_{\rho_{z}(y)}\rho_{\lambda_{y}(z)}(x),
\end{equation}
\begin{equation}\label{jie3}
\lambda_{\rho_{\lambda_{y}(z)}(x)}\rho_{z}(y)=\rho_{\lambda_{\rho_{y}(x)}(z)}\lambda_{x}(y)
\end{equation}
for all $x,y,z\in S$. Moreover, a solution $r_S$ is called {\em left non-degenerate} (respectively, right non-degenerate) if $\lambda_a$ (respectively, $\rho_a$) is bijective for each $a\in S$. A {\em non-degenerate solution} is a solution which is both left and right non-degenerate.  A solution
that is neither left nor right non-degenerate is called {\em degenerate}. Furthermore,  a solution $r_S$ is called
{\em involutive} (respectively, {\em idempotent}) if $r^2 ={\rm id}_{S\times S}$ (respectively, $r^2=r$).

Set-theoretic solutions of the Yang-Baxter equation are investigated extensively in recent years, see for example
\cite{Etingof-Schedler-Soloviev,Gateva-Ivanova-Van den Bergh,Lu-Yan-Zhu,Soloviev}. Subsequently, non-degenerate involutive solutions have been studied by many authors, see the papers \cite{Catino-Mazzotta-Stefanelli,Cedo,Cedo-Jespers-Okninski,Rump1,Rump2} and the  references therein. In particular,   Rump \cite{Rump1,Rump2} introduced left cycle sets and left braces to investigate non-degenerate involutive solutions, respectively.  In 2007, Rump introduced left braces in \cite{Rump2} as a generalization of Jacobson radical rings, and a few years later, Ced$\acute{\rm o}$, Jespers, and Okni$\acute{\rm n}$ski \cite{Cedo-Jespers-Okninski} reformulated
Rump's definition of left braces.  Since then, left braces have become the most used tool in the investigations of  non-degenerate involutive solutions of the Yang-Baxter equation. In order to study non-degenerate solutions that are not necessarily involutive, Guarnieri and Vendramin \cite{Guarnieri-Vendramin} introduced a generalization of left braces, namely, skew left braces. To state the notions of skew left braces and left braces, we need recall some necessary terminologies and notations.

A non-empty set $S$ equipped with an associative binary operation $``\cdot"$ is usually  called a {\em semigroup} and is denoted by $(S, \cdot)$.
Let $(S, \cdot)$ be a semigroup.  As usual, for all $x, y\in S$, we denote $x\cdot y$ by $xy$.
Recall  from Howie \cite{Howie} that a semigroup $(S, \cdot)$ is called an {\em inverse semigroup} if, for every $a\in S$, there exists a unique element $b$ in  $S$ such that $aba=a$ and $bab=b$. We denote such an element $b$ by $a^{-1}$ and call it the {\em inverse } of $a$ in $(S, \cdot)$. In this case, we have
\begin{equation}\label{inverse semigroup}
(a^{-1})^{-1}=a^{-1} \mbox{ and } (ab)^{-1}=b^{-1}a^{-1}\mbox{ for all } a,b\in S.
\end{equation}
  Obviously, groups are inverse semigroups, and if $(S, \cdot)$ is a group and  $a\in S$, then $a^{-1}$ above is exactly the usual inverse of $a$ in the group $(S, \cdot)$.
Let $(S, +)$ and $(S, \cdot)$  be two inverse semigroups.  Then we usually denote the inverse  of $x$ in $(S, +)$ by $-x$ and denote $x+(-y)$ by $x-y$ for all $x, y\in S$. {\bf To avoid parentheses,  throughout this paper we will assume that the multiplication has higher precedence than the addition}. For  example, we use $xy^{-1}-z+y+zw$ to denote $(x\cdot (y^{-1}))+(-z)+y+(z\cdot w)$  for all $x, y, z, w\in S$.

Let $(S, +)$ and $(S, \cdot)$ be two groups. The triple $(S, +, \cdot)$ is called a {\em skew left brace}  if the following axiom holds:
\begin{equation}\label{skew brace}x(y+z)=xy-x+xz.
\end{equation}
If this is the case, the identities in $(S, +)$ and $(S, \cdot)$ coincide.  A skew left brace $(S, +, \cdot)$ is called a {\em left brace} if $(S, +)$  is an abelian group.
In \cite[Theorem 3.1]{Guarnieri-Vendramin}, Guarnieri and Vendramin have proved that every skew left brace $(S, +, \cdot)$ can give rise to some bijective non-degenerate solution $r_S$. One can prove that $r_S$ can be rewritten as
\begin{equation}\label{associated map}
\begin{array}{cc}
r_S: S\times S \rightarrow  S\times S,\,\,\,\,  (x, y)\mapsto (x(x^{-1}+y), (x^{-1}+y)^{-1}y),\\[2mm]
\mbox{i.e. } r_S(x, y)=(x(x^{-1}+y), (x^{-1}+y)^{-1}y) \mbox{ for all } x, y\in S,
\end{array}
\end{equation}
where $x^{-1}$ is the inverse of $x$ in the group $(S, \cdot)$ for all $x\in S$. Moreover, $r_S$ is involutive if and only if  $(S, +, \cdot)$ is a left brace. The above $r_S$ is called {\em the map associated to $(S, +, \cdot)$} in literature.
More details on left braces and skew left braces, the readers can  consult
the survey articles \cite{Cedo,Vendramin}.

To investigate left  non-degenerate solutions of the Yang-Baxter equation, Catino,  Mazzotta and Stefanelli \cite{Catino-Colazzo-Stefanelli} introduced left cancellative semibraces. Recall that a semigroup $(S, \cdot)$ is called {\em left cancellative} if for all $a,b,c\in S$, $ab=ac$ implies that $b=c$.
A triple $(S, +, \cdot)$ is called a {\em left cancellative semibrace} if $(S, +)$ is a  left cancellative semigroup, $(S, \cdot)$ is a group and the following axiom holds:
\begin{equation}\label{semibraces}
x(y+z)=xy+x(x^{-1}+z).
\end{equation}
One can show that (\ref{skew brace}) and (\ref{semibraces}) coincide in a skew brace. Since groups are  left cancellative semigroups, it follows that skew left braces are left cancellative semibraces.   It is obvious that $r_S$ in (\ref{associated map}) is well defined in a left cancellative semibrace $(S, +, \cdot)$, and     \cite[Theorem 9]{Catino-Colazzo-Stefanelli} says that $r_S$ is a left non-degenerate solution of the Yang-Baxter equation in the case. More results about left cancellative semibraces and left non-degenerate solutions of the Yang-Baxter equation have been provided in \cite{Castelli,Catino-Cedo-Stefanelli,Colazzo-Jespers-Van Antwerpen-Verwimp,Miccoli}. In 2019, Jespers and Van Antwerpen \cite{Jespers-Van Antwerpen} introduced and investigated general left semibraces.  A triple $(S, +, \cdot)$ is called a {\em left  semibrace} if $(S, +)$ is a semigroup, $(S, \cdot)$ is a group and the axiom (\ref{semibraces}) holds. Again, $r_S$ in (\ref{associated map}) is also well defined in a left semibrace. Among other things, Jespers and Van Antwerpen have shown that under a natural assumption,  the map $r_S$ associated to a left semibrace $(S, +, \cdot)$ is a solution (see \cite[Theorem 5.1]{Jespers-Van Antwerpen}), and there exists  a left semibrace $(S, +, \cdot)$ such that $r_S$ in (\ref{associated map}) is not a solution (see \cite[Example 2.11]{Jespers-Van Antwerpen}). Moreover,  Catino, Colazzo and Stefanelli \cite{Catino-Colazzo-Stefanelli2} have given a necessary and sufficient condition under which the map associated to a left semibrace  is a solution (see \cite[Theorem 3]{Catino-Colazzo-Stefanelli2}). More  results on general left semibraces can be found in \cite{Colazzo-Van Antwerpen,Stefanelli}.

To obtain new solutions, left inverse semibraces were introduced by  Catino, Mazzotta and Stefanelli in \cite{Catino-Mazzotta-Stefanelli}. A triple $(S, +, \cdot)$ is called a {\em left inverse semibrace} if $(S, +)$ is a  semigroup, $(S, \cdot)$ is an inverse semigroup and the axiom (\ref{semibraces}) holds. In this case, $r_S$ in
(\ref{associated map}) is well defined, but $x^{-1}$ is the inverse of $x$ in the inverse semigroup $(S, \cdot)$ for all $x\in S$.
Catino, Mazzotta and Stefanelli \cite{Catino-Mazzotta-Stefanelli} have provided a sufficient condition under which the map associated to a left inverse semibrace given in (\ref{associated map}) is a solution, and given some construction methods of  left inverse semibraces. Since  left semibraces are left inverse semibraces, the map associated to a left inverse semibrace may not be a solution. For this reason, Catino, Mazzotta, Miccoli and Stefanelli \cite{Catino-Mazzotta-Miccoli-Stefanelli} have considered a class of special left inverse semibraces, namely {\em weak left  braces}.
From \cite{Catino-Mazzotta-Miccoli-Stefanelli},  a triple $(S, +, \cdot)$ is called a {\em weak left brace} if $(S, +)$ and $(S, \cdot)$ are  inverse  semigroups and the following axioms hold:
\begin{equation}\label{weak semibrace}
x(y+z)=xy-x+xz,\,\,\,\,\,\, xx^{-1}=-x+x.
\end{equation}
According to \cite[Theorem 11]{Catino-Mazzotta-Miccoli-Stefanelli},  the map  given in (\ref{associated map})  associated to a weak left  brace is always  a solution. More recent results on left inverse semibraces and weak left braces can be found in \cite{Catino-Colazzo-Stefanelli3,Catino-Mazzotta-Stefanelli2,Catino-Mazzotta-Stefanelli3,Mazzotta-Rybolowicz-Stefanelli}.

The class of inverse semigroups is a kind of semigroups which has been studied most extensively in algebraic theory of semigroups, and so various generalizations of inverse semigroups have also received much attention from researchers. As a generalization of inverse semigroups, regular $\star$-semigroups (see Definition \ref{zhengze*} in our present paper) were introduced by   Nordahl  and Scheiblich in \cite{Nordahl-Scheiblich} in 1978.  Since then, regular $\star$-semigroups have been investigated by several authors and many meaningful results have been obtained, see for example \cite{Auinger,East-Azeef Muhammed,Imaoka-Inata-Yokoyama,Jones,Yamada2} and the references therein.  In view of the fact that regular $\star$-semigroups are generalizations of inverse semigroups, Miccoli \cite{Miccoli2} has introduced  left regular $\star$-semibraces under the name  of {\em left involution semibraces} (see Definition \ref{left regular*} and Remark \ref{left regular**} in our present paper) which are generalizations of left inverse semibraces,  and obtained several results parallel to left inverse semibraces. In particular, he has pointed out that left  regular $\star$-semibraces can provide some solutions of the Yang-Baxter equation under some necessary assumptions.

From the above statements, the following  problems seem  natural:
\begin{problem}\label{wenti1}
Find more natural examples of left  regular $\star$-semibraces which are induced by a given regular $\star$-semigroup.
\end{problem}
\begin{problem}\label{wenti2}Find and study an analogue of weak left braces in the class of left regular $\star$-semibraces.
\end{problem}
In this paper, around the above two questions, we continue to study  solutions of the Yang-Baxter equation and left regular $\star$-semibraces on the basis of the results in Miccoli \cite{Miccoli2}.  Section 2 contains some preliminaries on regular $\star$-semigroups. We include basic knowledge of regular $\star$-semigroups which is needed for the rest of the paper. In Section 3, inspired by examples given in \cite{Catino-Mazzotta-Stefanelli} and \cite{Miccoli2}, we consider Problem \ref{wenti1} and give more examples of left regular $\star$-semibraces which are induced by regular $\star$-semigroups. Section 4 gives   sufficient and necessary  conditions under which the maps associated  to (2,2,1)-type algebras provided in Section 3 are solutions of the Yang-Baxter equation. The final section is devoted to Problem \ref{wenti2}. We introduce weak left $\star$-braces in the class of left regular $\star$-semibraces as   analogues of weak left braces. After obtaining some structural properties, we show that the maps associated  to weak left $\star$-braces are always solutions of the Yang-Baxter equation.

\section{Preliminaries}
In this section, among other things we recall and give some notions and basic facts on regular $\star$-semigroups. Let $(S, \cdot)$ be a semigroup and $e\in S$. Then $e$ is called idempotent if $e^2=e$. As usual, we denote the set of idempotent elements in $(S, \cdot)$ by $E(S, \cdot)$.
We begin with the following definition of regular $\star$-semigroups.

\begin{defn}[\cite{Nordahl-Scheiblich}]\label{zhengze*} Let $(S, \cdot)$ be a semigroup  and $\ast: S\rightarrow S,\, x\mapsto x^\ast$ be a map.  Then the triple  $(S, \cdot,\, \ast)$ is called {\em a regular ${\star}$-semigroup} if the following axioms are satisfied:
\begin{equation}\label{shishi1.2} x x^\ast x=x,\,\, x^{\ast\ast}=x,\,\,
(xy)^\ast=y^\ast x^\ast,
\end{equation} where $x^{\ast\ast}=(x^\ast)^\ast$. We denote $$P(S, \cdot)=\{e\in E(S, \cdot)\mid e^\ast=e\}$$ and called it {\em the set of projections of $(S,\cdot, \ast)$}.
\begin{remark}\label{inverse-regular*}
Let $(S, \cdot,\, \ast)$  be a regular ${\star}$-semigroup. Then by the first two axioms in (\ref{shishi1.2}), we have $x^\ast=x^\ast x^{\ast\ast} x^\ast=x^\ast xx^\ast$  for all $x\in S$. Obviously, if $(S, \cdot)$ is an inverse semigroup, then by the definition of inverse semigroups and (\ref{inverse semigroup}), $(S, \cdot, \ast)$ forms a regular $\star$-semigroup by setting $x^\ast=x^{-1}$ for all $x\in S$. In fact, this is the only way by which an inverse semigroup $(S, \cdot)$  becomes a regular $\star$-semigroup.
\end{remark}
\end{defn}
\begin{lem}\label{danweiyuan}Let $(S,\cdot,\, \ast)$ be a regular $\star$-semigroup with  identity $1$. Then $1^\ast=1\in P(S, \cdot)$.
\end{lem}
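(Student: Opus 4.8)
The plan is to exploit the first axiom in (\ref{shishi1.2}) together with the defining property of the identity; in fact the second and third axioms in (\ref{shishi1.2}) are not needed at all. First I would record that $1$ is idempotent: since $1$ is the identity of $(S,\cdot)$ we have $1\cdot 1 = 1$, so that $1 \in E(S, \cdot)$.

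The key step is to substitute $x = 1$ into the regularity law $x x^\ast x = x$, which yields $1 \cdot 1^\ast \cdot 1 = 1$. On the left-hand side the two copies of the identity are absorbed, so $1 \cdot 1^\ast \cdot 1 = 1^\ast$, and therefore $1^\ast = 1$. Combining this with $1 \in E(S,\cdot)$ and the definition of the set of projections $P(S,\cdot) = \{e \in E(S,\cdot) \mid e^\ast = e\}$ immediately gives $1 \in P(S,\cdot)$, which completes the argument.

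I do not expect any genuine obstacle here: the whole statement collapses to a one-line computation whose only input is that the identity absorbs adjacent factors. The lemma is best regarded as a basic consistency check, to be invoked freely in the later sections whenever a regular $\star$-semigroup under consideration happens to be a monoid.
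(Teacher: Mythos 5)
Your proof is correct, and it is genuinely shorter than the paper's. You specialize the regularity axiom $xx^\ast x = x$ to $x=1$ and let the identity absorb the outer factors, getting $1^\ast = 1\cdot 1^\ast\cdot 1 = 1$ in one line, using only the first axiom of (\ref{shishi1.2}). The paper instead takes an arbitrary $x$, writes $x^\ast = x^\ast\cdot 1 = 1\cdot x^\ast$, applies $\ast$ and the anti-homomorphism and involution axioms to deduce $x = 1^\ast\cdot x = x\cdot 1^\ast$, i.e.\ that $1^\ast$ is itself a two-sided identity, and then concludes $1^\ast = 1$ from uniqueness of the identity. Both arguments are valid and both finish by noting $1\cdot 1 = 1$ and $1^\ast = 1$ to place $1$ in $P(S,\cdot)$; your route buys economy (one substitution, one axiom), while the paper's route yields the slightly stronger intermediate observation that $1^\ast$ acts as an identity, at the cost of invoking $x^{\ast\ast}=x$ and $(xy)^\ast = y^\ast x^\ast$.
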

\begin{proof}Let $x\in S$. Then $x^\ast=x^\ast\cdot 1=1\cdot x^\ast$. This implies that $$x=x^\ast=(x^\ast\cdot 1)^\ast=(1\cdot x^\ast)^\ast=1^\ast \cdot x=x\cdot 1^\ast,$$  and so $1^\ast$ is also the identity of $S$. Thus $1\cdot 1=1= 1^\ast$, and hence $1 \in P(S, \cdot)$.
\end{proof}
Recall from Howie \cite{Howie} that the {\em Green's relations} ${\mathcal L}$ and ${\mathcal R}$ on a semigroup $(S, \cdot)$ are defined as follows: For all $a,b\in S$, $$a{\mathcal L} b \mbox{ if and only if } a=b \mbox{ or  there exist } u, v\in S \mbox{ such that }a=ub,\, \, b=va,$$
$$a{\mathcal R} b \mbox{ if and only if } a=b \mbox{ or  there exist  } u, v\in S \mbox{ such that }a=bu,\, \, b=av.$$
Moreover, we denote ${\mathcal H}=\mathcal L\cap R$. Observe that $\mathcal L, R$ and $\mathcal H$ are all equivalences on $S$.
The following lemma  collects some  basic properties of regular $\star$-semigroups.
\begin{lem}[\cite{Nordahl-Scheiblich,Yamada2}] \label{1.2} Let $(S,\cdot,\, \ast)$ be a regular $\star$-semigroup, $e,f\in P(S,\cdot)$ and $a, b\in S$.
\begin{itemize}
\item[(1)] $P(S,\cdot)=\{ xx^\ast\mid x\in S\} =\{x^\ast x\mid x\in S\}$, $e^\ast=e\in E(S, \cdot)$ and  $aea^\ast, fef\in P(S,\cdot)$.
\item[(2)] $ef \in P(S,\cdot)$ if and only if $ef=fe$.
\item[(3)] $a\in E(S,\cdot)$ if and only if $a^\ast\in E(S,\cdot)$.
\item[(4)] $E(S,\cdot)=(P(S,\cdot))^2=\{ef\mid e,f \in P(S, \cdot)\}$.
\item[(5)] $a{\mathcal R} b$ (respectively, $a{\mathcal L} b$) if and only if  $ aa^\ast =bb^\ast $ (respectively, $a^\ast a =b^\ast b$).
\item[(6)] $a{\mathcal H} b$   if and only if  $aa^\ast=bb^\ast$ and $a^\ast a=b^\ast b$.
\end{itemize}
\end{lem}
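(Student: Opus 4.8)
The plan is to derive all six parts from the three defining axioms in (\ref{shishi1.2}) together with the identity $x^\ast = x^\ast x x^\ast$ recorded in Remark \ref{inverse-regular*}, proceeding part by part and freely reusing the earlier parts.

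For part (1) I would first show $\{xx^\ast\mid x\in S\}\subseteq P(S,\cdot)$ by checking that $xx^\ast$ is both idempotent, since $xx^\ast xx^\ast=(xx^\ast x)x^\ast=xx^\ast$, and self-adjoint, since $(xx^\ast)^\ast=x^{\ast\ast}x^\ast=xx^\ast$. The reverse inclusion is immediate, as any projection satisfies $e=ee=ee^\ast$, and the symmetric argument gives $P(S,\cdot)=\{x^\ast x\mid x\in S\}$. The claim $e^\ast=e\in E(S,\cdot)$ is simply the definition of a projection. For $aea^\ast$ the key observation is to rewrite it as a square-type element: using $e=ee^\ast$ one gets $aea^\ast=(ae)e^\ast a^\ast=(ae)(ae)^\ast\in P(S,\cdot)$ by the first claim, and $fef\in P(S,\cdot)$ is then the special case $a=f$.

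Part (3) is a one-line computation: if $a^2=a$ then $(a^\ast)^2=(aa)^\ast=a^\ast$, and the converse follows from $a^{\ast\ast}=a$. For part (2), the forward direction uses that $ef\in P(S,\cdot)$ forces $(ef)^\ast=ef$, i.e. $fe=f^\ast e^\ast=ef$; conversely, if $ef=fe$ then $ef$ is self-adjoint and $(ef)^2=e(fe)f=e(ef)f=ef$. Part (4) splits into two inclusions. For $E(S,\cdot)\subseteq(P(S,\cdot))^2$ I would write an idempotent $a$ as $a=(aa^\ast)(a^\ast a)$, which is valid because $a^\ast$ is idempotent by part (3), so that $aa^\ast a^\ast a=aa^\ast a=a$. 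The reverse inclusion is where the only genuinely clever step lies: applying regularity to $x=ef$ and using $(ef)^\ast=fe$ with $e^2=e$, $f^2=f$ yields $ef=(ef)(ef)^\ast(ef)=(ef)(fe)(ef)=efef$, so every product of two projections is automatically idempotent.

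Finally, parts (5) and (6) rest on the relation between an element and its associated projections. I would first record that $a\,\mathcal{R}\,aa^\ast$ and $a\,\mathcal{L}\,a^\ast a$, which hold because $aa^\ast=a\cdot a^\ast$ and $a=(aa^\ast)a$. The whole of part (5) then reduces to the claim that two $\mathcal{R}$-related projections must coincide: if $e,f\in P(S,\cdot)$ with $e=fu$ and $f=ev$, then $fe=e$ and $ef=f$, whence applying $\ast$ to $fe=e$ gives $e=e^\ast=ef=f$. With this, $a\,\mathcal{R}\,b$ forces $aa^\ast\,\mathcal{R}\,bb^\ast$ and hence $aa^\ast=bb^\ast$, while the converse is transitivity of $\mathcal{R}$ through the common projection; the $\mathcal{L}$-statement is symmetric via $\ast$. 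Part (6) is then immediate from $\mathcal{H}=\mathcal{L}\cap\mathcal{R}$. I expect the self-adjointness collapse of $\mathcal{R}$-related projections in part (5), together with the regularity trick in part (4), to be the only points requiring real insight; everything else is bookkeeping with the three axioms.
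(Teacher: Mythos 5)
Your proof is correct in all six parts; the paper itself gives no proof of this lemma, citing it to Nordahl--Scheiblich and Yamada, so there is no in-text argument to compare against. Your derivation is the standard one from the original sources: the idempotency trick $ef=(ef)(ef)^\ast(ef)=efef$ for part (4), the collapse of $\mathcal{R}$-related projections via $e=(fe)^\ast=ef=f$ for part (5), and the factorization $aea^\ast=(ae)(ae)^\ast$ for part (1) are all sound, and the reductions of (2), (3), (6) to these are routine.
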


Let $(S, \cdot)$ be a semigroup. Then $(S, \cdot)$ is called {\em regular} if, for every $x\in S$, there exists $y\in S$ such that $xyx=x$.
Obviously, inverse semigroups and regular $\star$-semigroups are regular semigroups. On the other hand, \cite[Theorem 5.1.1]{Howie} says that
\begin{equation}\label{inverse}
\mbox{ a regular semigroup } (S, \cdot) \mbox{ is inverse if and only if }   ef=fe \mbox{ for all } e,f\in E(S,\cdot).
\end{equation}
 Moreover, a regular semigroup  $(S, \cdot)$ is called
\begin{itemize}
\item  {\em orthodox} if $(E(S,\cdot), \cdot)$ is a  semigroup (i.e. $ef\in E(S,\cdot)$ for all $e,f\in E(S,\cdot)$),\,\, (\cite[Section 6.2]{Howie})
\item  {\em completely regular}  if $a\, {\mathcal H}\, a^2$ for all $a\in S$,\,\, (\cite[Theorem 2.2.5 and Proposition 4.1.1]{Howie})
\item {\em locally inverse} if $(eSe, \cdot)$ is an inverse  semigroup  for each $e\in E(S,\cdot)$,   (\cite[Section 6.1]{Howie})
\item{\em a Clifford semigroup} if $ex=xe$ for all $x\in S$ and $e\in E(S,\cdot)$.\,\, (\cite[Theorem 4.2.1]{Howie})
\end{itemize}
The above five classes of regular semigroups have been studied extensively (see Howie \cite{Howie}, Lawson \cite{Lawson}, Petrich \cite{Petrich}, Petrich and Reilly \cite{Petrich-Reilly} for details).
In the following statements, we shall give the characterizations of  regular $\star$-semigroups which are also inverse (respectively, orthodox, completely regular, locally inverse, Clifford) semigroups.
\begin{lem}\label{zhengzexingbanqun5}Let $(S,\cdot,\, \ast)$ be a regular $\star$-semigroup. Then
$(S, \cdot, \ast)$ is an inverse semigroup if and only if $ef=fe$ for all $e,f\in P(S,\cdot)$.
\end{lem}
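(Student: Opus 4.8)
The plan is to reduce the statement to the classical characterization recorded in (\ref{inverse}), which asserts that a regular semigroup is inverse precisely when all of its idempotents commute. Since every regular $\star$-semigroup is in particular a regular semigroup, and since $P(S,\cdot)\subseteq E(S,\cdot)$ by Lemma \ref{1.2}(1), the two conditions ``$ef=fe$ for all $e,f\in E(S,\cdot)$'' and ``$ef=fe$ for all $e,f\in P(S,\cdot)$'' are the natural objects to compare, and the whole proof amounts to showing they are equivalent in the present setting.

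For the forward implication I would simply observe that if $(S,\cdot,\ast)$ is inverse, then by (\ref{inverse}) every pair of idempotents commutes; restricting to the subset $P(S,\cdot)$ of $E(S,\cdot)$ immediately gives $ef=fe$ for all projections $e,f$.

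The substance lies in the converse. Assuming that projections commute, I would first invoke Lemma \ref{1.2}(2), which asserts that a product $ef$ of two projections is itself a projection exactly when $ef=fe$; under our hypothesis this holds for every $e,f\in P(S,\cdot)$, so $(P(S,\cdot))^2\subseteq P(S,\cdot)$. Combining this with the identity $E(S,\cdot)=(P(S,\cdot))^2$ supplied by Lemma \ref{1.2}(4) yields $E(S,\cdot)\subseteq P(S,\cdot)$, and since the reverse inclusion is automatic (again by Lemma \ref{1.2}(1)) we obtain $E(S,\cdot)=P(S,\cdot)$. In other words, the commutativity of the projections forces every idempotent to be a projection.

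Once $E(S,\cdot)=P(S,\cdot)$ is established, the conclusion is immediate: any two idempotents are then projections and hence commute by hypothesis, so by (\ref{inverse}) the semigroup $(S,\cdot)$ is inverse. The only genuinely structural step, and the one I would expect to need the most care, is the passage to $E(S,\cdot)=P(S,\cdot)$; everything surrounding it is a direct application of the cited facts. I do not anticipate a serious obstacle, since Lemma \ref{1.2} packages exactly the information needed to convert ``projections commute'' into ``idempotents coincide with projections.''
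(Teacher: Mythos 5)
Your proof is correct, and the converse direction takes a route that differs from the paper's. The paper argues directly: given idempotents $x,y\in E(S,\cdot)$, it writes $x=ef$, $y=gh$ with $e,f,g,h\in P(S,\cdot)$ via Lemma \ref{1.2}(4) and then shuffles adjacent projections, $efgh=egfh=gefh=gehf=ghef$, to conclude $xy=yx$ and invoke (\ref{inverse}). You instead use Lemma \ref{1.2}(2) — which the paper's proof does not touch — to upgrade the hypothesis ``projections commute'' to ``$P(S,\cdot)$ is closed under products,'' and then Lemma \ref{1.2}(4) collapses $E(S,\cdot)=(P(S,\cdot))^2$ into $P(S,\cdot)$, so that $E(S,\cdot)=P(S,\cdot)$ and commutativity of idempotents is immediate. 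Both arguments are short and rest on the same two ingredients, Lemma \ref{1.2} and (\ref{inverse}); the paper's is a purely computational four-letter shuffle that never leaves the hypothesis, while yours passes through the stronger structural fact that in this situation every idempotent is a projection — a cleaner intermediate statement that is worth knowing in its own right (it is the regular $\star$-analogue of the fact that inverse semigroups have ``enough'' projections), at the modest cost of one extra citation.
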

\begin{proof}Since $P(S, \cdot)\subseteq E(S, \cdot)$, the necessity follows from (\ref{inverse}). To see the sufficiency, let $x,y\in E(S, \cdot)$. Then $x=ef$ and $y=gh$ for some $e,f,g,h\in P(S, \cdot)$ by Lemma \ref{1.2} (4). In view of the given condition, we have $xy=efgh=egfh=gefh=gehf=ghef=yx.$ This together with (\ref{inverse}) gives that $(S, \cdot, \ast)$ is an inverse semigroup.
\end{proof}

\begin{lem}[Theorem 3.2 in \cite{Nordahl-Scheiblich}]\label{zicu6}Let $(S,\cdot,\,  \ast)$ be a regular $\star$-semigroup.
Then $(S, \cdot, \ast)$ is orthodox if and only if $efg=(efg)^2$  $(\mbox{\rm i.e. } efg\in E(S, \cdot))$ for all $e,f,g\in P(S,\cdot)$.

\end{lem}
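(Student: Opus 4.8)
The plan is to prove both implications, with the forward direction being essentially immediate and the reverse direction carrying all the content. For the necessity, I would simply observe that orthodoxy means $E(S,\cdot)$ is closed under multiplication, and that every projection is an idempotent since $P(S,\cdot)\subseteq E(S,\cdot)$ by Lemma \ref{1.2}(1). Hence for any three projections $e,f,g\in P(S,\cdot)$ the product $efg$ lies in $E(S,\cdot)$, which is exactly the statement that $efg=(efg)^2$.

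For the sufficiency, I assume $efg\in E(S,\cdot)$ for all $e,f,g\in P(S,\cdot)$ and aim to show that $E(S,\cdot)$ is a subsemigroup, i.e.\ that $xy\in E(S,\cdot)$ whenever $x,y\in E(S,\cdot)$. The crucial structural input is Lemma \ref{1.2}(4), which identifies $E(S,\cdot)$ with the set $(P(S,\cdot))^2$ of products of two projections. Accordingly I would first write $x=ab$ and $y=cd$ with $a,b,c,d\in P(S,\cdot)$, so that $xy=abcd$ is a priori a product of four projections, one more than the hypothesis directly controls.

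The key step, and the one place where the hypothesis does genuine work, is to collapse this length-four product down to a length-three one. I would apply the hypothesis to $b,c,d$ to get $bcd\in E(S,\cdot)$, and then invoke Lemma \ref{1.2}(4) a second time to rewrite this idempotent as $bcd=pq$ for suitable projections $p,q\in P(S,\cdot)$. Substituting yields $xy=abcd=a(bcd)=apq$, a product of the three projections $a,p,q$, which is idempotent by hypothesis. Therefore $xy\in E(S,\cdot)$, and $(S,\cdot,\ast)$ is orthodox.

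I expect the only real subtlety to be recognizing that the identity $E(S,\cdot)=(P(S,\cdot))^2$ is precisely the lever needed: it allows one to trade a length-four product of projections for a length-three one, so the hypothesis on triples suffices without any induction on the number of projections (although the same device would immediately prove that an arbitrary product of projections is idempotent). Everything else is formal, and no appeal to the $\ast$-operation beyond what is already packaged into Lemma \ref{1.2} should be necessary.
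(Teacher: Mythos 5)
The paper does not actually prove this lemma---it is quoted as Theorem 3.2 of \cite{Nordahl-Scheiblich}---so there is no in-paper argument to compare yours against. Your proof is correct and self-contained modulo Lemma \ref{1.2}. The forward direction is immediate from $P(S,\cdot)\subseteq E(S,\cdot)$ and closure of $E(S,\cdot)$ under products in an orthodox semigroup. For the converse, the reduction $xy=abcd=a(bcd)=apq$ is exactly the right move: the hypothesis applied to $b,c,d$ puts $bcd$ in $E(S,\cdot)$, the identity $E(S,\cdot)=(P(S,\cdot))^2$ from Lemma \ref{1.2}(4) rewrites that idempotent as a product $pq$ of two projections, and a second application of the hypothesis to the triple $a,p,q$ shows $xy$ is idempotent. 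Both directions of Lemma \ref{1.2}(4) (every idempotent is a product of two projections, and every product of two projections is idempotent) are used legitimately, and, as you note, the same length-reduction device would handle arbitrary products of projections without induction. I see no gap.
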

\begin{lem}\label{zicu7}For a regular $\star$-semigroup $(S,\cdot,\,  \ast)$,   the following statements are equivalent:
\begin{itemize}
\item[(1)] $(S, \cdot, \ast)$ is completely regular.
\item[(2)] $(S, \cdot, \ast)$ satisfies the axiom $xx^{\ast}=xx^{\ast}x^{\ast}xxx^{\ast}$.
\item[(3)] $(S, \cdot, \ast)$ satisfies the axiom $x^{\ast}x=x^{\ast}xxx^{\ast}x^{\ast}x$.
\end{itemize}
\end{lem}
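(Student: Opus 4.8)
The plan is to reduce the statement to the $\mathcal H$-relation characterization of complete regularity and then translate that relation into the two displayed axioms by means of Lemma \ref{1.2}. First I would record the elementary computation $(a^2)^\ast=(aa)^\ast=a^\ast a^\ast$. Combining this with Lemma \ref{1.2}(5), the relation $a\,\mathcal R\,a^2$ is equivalent to $aa^\ast=a^2(a^2)^\ast=aaa^\ast a^\ast$, and $a\,\mathcal L\,a^2$ is equivalent to $a^\ast a=(a^2)^\ast a^2=a^\ast a^\ast aa$; call these identities $(\mathrm R)$ and $(\mathrm L)$. By Lemma \ref{1.2}(6), $a\,\mathcal H\,a^2$ holds if and only if both $(\mathrm R)$ and $(\mathrm L)$ hold, so statement (1) is exactly the assertion that $(\mathrm R)$ and $(\mathrm L)$ hold for all $a\in S$. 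I would also note the $\star$-duality: since $\star$ is an involution with $x^{\ast\ast}=x$, replacing $x$ by $x^\ast$ carries axiom (2) to axiom (3) and carries $(\mathrm R)$ to $(\mathrm L)$, so it suffices to treat one side and invoke this substitution for the other. This already gives the equivalence of (2) and (3) with essentially no work.

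For the implication $(1)\Rightarrow(2)$ I would use only the $\mathcal L$-part. Assuming $(\mathrm L)$, the right-hand side of (2) collapses directly: $xx^\ast x^\ast xxx^\ast=x(x^\ast x^\ast xx)x^\ast=x(x^\ast x)x^\ast=(xx^\ast x)x^\ast=xx^\ast$, where the middle step is $(\mathrm L)$ and the last uses $xx^\ast x=x$. This is short and poses no difficulty.

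The step I expect to be the main obstacle is $(2)\Rightarrow(1)$, namely reconstructing the full $\mathcal H$-relation from the single identity (2). Here the idea is to multiply (2) on the right by $x$ and collapse the trailing factors $xx^\ast x=x$, obtaining $x=xx^\ast x^\ast xx$; then multiply on the left by $x^\ast$ and collapse the leading factors $x^\ast x x^\ast=x^\ast$ (Remark \ref{inverse-regular*}), which yields precisely $x^\ast x=x^\ast x^\ast xx$, i.e.\ $(\mathrm L)$, and hence $a\,\mathcal L\,a^2$. Applying the same two-sided multiplication trick to the dual identity (3) (equivalently, applying the argument to $x^\ast$ in place of $x$) produces $aa^\ast=aaa^\ast a^\ast$, i.e.\ $(\mathrm R)$, and hence $a\,\mathcal R\,a^2$. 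Lemma \ref{1.2}(6) then gives $a\,\mathcal H\,a^2$ for every $a$, which is complete regularity.

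Assembling the pieces, the logical skeleton is: (2)$\Leftrightarrow$(3) by the substitution $x\mapsto x^\ast$; (1)$\Rightarrow$(2) by the collapse above using $(\mathrm L)$; and (2)$\Rightarrow$(1) by extracting $(\mathrm L)$ from (2) and $(\mathrm R)$ from (3) and invoking Lemma \ref{1.2}(6). The only genuinely delicate point is choosing the correct one-sided multiplications in the last step so that the idempotent-type relations $xx^\ast x=x$ and $x^\ast x x^\ast=x^\ast$ telescope the long words down to the desired two identities; everything else is bookkeeping with $(a^2)^\ast=a^\ast a^\ast$ and the translations furnished by Lemma \ref{1.2}.
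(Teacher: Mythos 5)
Your proposal is correct and follows essentially the same route as the paper: the equivalence of (2) and (3) via the involution, the derivation of (2) from the $\mathcal L$-part of $x\,\mathcal H\,x^2$ supplied by Lemma \ref{1.2}(6), and the recovery of $x^\ast x=x^\ast x^\ast xx$ from (2) by sandwiching with $x$ and $x^\ast$, with the $\mathcal R$-side obtained by duality. The paper performs the last collapse in one step as $x^\ast x=x^\ast\cdot xx^\ast\cdot x$ with (2) substituted in the middle, but this is the same manipulation as your two one-sided multiplications.
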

\begin{proof} Since $``\ast"$ is an involution on $S$,  it follows that (2) is equivalent to (3). Let $x\in S$. If (1) holds, then $x{\mathcal H}x^2$ and so by Lemma \ref{1.2} (6), we have $x^\ast x=(x^2)^\ast x^2=x^\ast x^\ast xx$, and so $xx^\ast=x(x^\ast x)x^\ast =x(x^\ast x^\ast xx) x^\ast=xx^\ast x^\ast x xx^\ast.$ Thus (2) holds. Conversely, assume that (2) holds.  Then   have $xx^{\ast}=xx^{\ast}x^{\ast}xxx^{\ast}$  and   $$x^\ast x=x^\ast\cdot xx^{\ast}\cdot x=x^\ast\cdot xx^{\ast}x^{\ast}xxx^{\ast} \cdot x=x^\ast x^\ast xx=(x^2)^\ast x^2,$$  which gives that $x\,{\mathcal L}\, x^2$ by Lemma \ref{1.2} (5).  Moreover, in this case, (3) is also true. By the dual argument, we have $x {\mathcal R} x^2$. This implies that $x{\mathcal H} x^2$. Thus (1) holds.
\end{proof}
\begin{lem}\label{zicu4}Let $(S,\cdot,\, \ast)$ be a regular $\star$-semigroup. Then
$(S,\cdot,\, \ast)$ is locally inverse if and only if $efege=egefe$ for all $e,f,g\in P(S,\cdot)$.
\end{lem}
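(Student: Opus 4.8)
The plan is to reduce the statement about all local submonoids $eSe$ to those based at projections, and then to recognize the resulting commutativity condition through Lemma \ref{zhengzexingbanqun5}.

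First I would observe that for each projection $p\in P(S,\cdot)$ the local submonoid $(pSp,\cdot,\ast)$ is itself a regular $\star$-semigroup: it is closed under $\ast$ because $(pap)^\ast=p a^\ast p\in pSp$, and the three axioms of (\ref{shishi1.2}) are simply inherited. Its projections are exactly $P(pSp,\cdot)=\{pfp\mid f\in P(S,\cdot)\}$; indeed $pfp\in P(S,\cdot)$ by the $aea^\ast$ clause of Lemma \ref{1.2}(1) (with $a=p$), while conversely any projection of $pSp$ lies in $P(S,\cdot)\cap pSp$ and hence has this form. Applying Lemma \ref{zhengzexingbanqun5} to $pSp$ then shows that $pSp$ is an inverse semigroup if and only if $(pfp)(pgp)=(pgp)(pfp)$, i.e. $pfpgp=pgpfp$, for all $f,g\in P(S,\cdot)$. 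Letting $p$ range over $P(S,\cdot)$, this proves that $pSp$ is inverse for every projection $p$ if and only if the identity $efege=egefe$ holds for all $e,f,g\in P(S,\cdot)$.

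Next I would handle the passage between projections and arbitrary idempotents. The necessity is immediate: if $(S,\cdot,\ast)$ is locally inverse, then in particular $pSp$ is inverse for every $p\in P(S,\cdot)\subseteq E(S,\cdot)$, so the previous paragraph yields the desired identity. For the sufficiency I must upgrade ``$pSp$ inverse for all $p\in P(S,\cdot)$'' to ``$eSe$ inverse for all $e\in E(S,\cdot)$''. Given an idempotent $e$, I would put $f=ee^\ast$, which is a projection by Lemma \ref{1.2}(1), and check that $e\,{\mathcal R}\,f$: both are idempotents with $ef=e^2e^\ast=ee^\ast=f$ and $fe=ee^\ast e=e$. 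The decisive step is then to exhibit an isomorphism $eSe\cong fSf$. I would define $\alpha\colon eSe\to fSf,\ x\mapsto fxf$ and $\beta\colon fSf\to eSe,\ y\mapsto eye$, and verify — using only $ef=f$, $fe=e$ and the consequence $efe=e$ — that each is a homomorphism and that $\beta\alpha$ and $\alpha\beta$ are the respective identity maps. Since $f$ is a projection, $fSf$ is an inverse semigroup, hence so is its isomorphic copy $eSe$, and local inverseness follows.

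The main obstacle is precisely this last reduction from idempotents to projections, namely establishing that the local submonoids at the ${\mathcal R}$-related idempotents $e$ and $ee^\ast$ are isomorphic; the arithmetic confirming that $\alpha$ and $\beta$ are mutually inverse homomorphisms (for instance the identity $xfy=xy$ for $x,y\in eSe$, which makes $\alpha$ multiplicative) is the heart of the argument. Everything else is either a routine inheritance of the $\star$-structure by $pSp$ or a bookkeeping translation through Lemma \ref{zhengzexingbanqun5}.
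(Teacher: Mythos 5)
Your proof is correct, and its skeleton coincides with the paper's: both directions ultimately apply Lemma \ref{zhengzexingbanqun5} to the local submonoid $pSp$ at a projection $p$, after observing that $(pSp,\cdot,\ast)$ is a regular $\star$-semigroup with $P(pSp,\cdot)=\{pfp\mid f\in P(S,\cdot)\}$, so that $pSp$ is inverse exactly when $pfpgp=pgpfp$ for all $f,g\in P(S,\cdot)$. The one place where you genuinely diverge is the reduction of ``$eSe$ inverse for every idempotent $e$'' to ``$pSp$ inverse for every projection $p$'': the paper disposes of this by citing \cite[Lemma 1.2]{Imaoka-Inata-Yokoyama}, whereas you prove it from scratch by setting $f=ee^\ast\in P(S,\cdot)$, noting $ef=e^2e^\ast=f$ and $fe=ee^\ast e=e$ (so $e\,{\mathcal R}\,f$ and $efe=e$), and exhibiting the mutually inverse semigroup homomorphisms $x\mapsto fxf$ and $y\mapsto eye$ between $eSe$ and $fSf$. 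The computations do close up as you claim: for $x,x'\in eSe$ one has $(fxf)(fx'f)=fx(efe)x'f=fxx'f$ since $ff=f$ and $xfx'=x(efe)x'=xx'$, while $e(fxf)e=(ef)x(fe)=fxe=x$ and $f(eye)f=(fe)y(ef)=eyf=y$, so the two maps are mutually inverse homomorphisms; as $fSf$ is inverse by the first part, so is its isomorphic copy $eSe$. Your route buys self-containedness (replacing an external citation by an elementary, verifiable isomorphism of local submonoids at ${\mathcal R}$-related idempotents) at the cost of some extra arithmetic; the paper's route buys brevity. Both are valid.
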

\begin{proof}
Assume that $(S,\cdot,\, \ast)$ is locally inverse and $e,f,g\in P(S,\cdot)\subseteq E(S,\cdot)$. Then  $(eSe, \cdot)$ is an inverse semigroup. Moreover, by Lemma \ref{1.2} (1) we have   $efe, ege\in P(S, \cdot)\cap eSe\subseteq E(eSe, \cdot)$.  By (\ref{inverse}),  $efege=efe\cdot ege=ege\cdot efe=egefe$.

Conversely, assume that the given condition holds. \cite[Lemma 1.2]{Imaoka-Inata-Yokoyama} says that $(S,\cdot,\, \ast)$ is locally inverse if and only if $(eSe,\cdot)$ is inverse for each $e\in P(S, \cdot)$. Now let $e \in P(S, \cdot)$. Then $e^\ast=e$,  and so $(exe)^\ast=e^\ast x^\ast e^\ast=ex^\ast e\in eSe$ for all $x\in S$. This implies that $(eSe, \cdot, \ast)$ is a regular $\star$-semigroup as $(eSe, \cdot)$ is a semigroup obviously.  Take arbitrary $f,g\in P(eSe, \cdot)$. Then $f, g\in P(S, \cdot)$ and $f=efe, g=ege$. By the given condition, we have $fg=efeege=efege=egefe=gf$. By Lemma \ref{zhengzexingbanqun5}, $(eSe, \cdot)$  is inverse. Thus $(S,\cdot,\, \ast)$ is locally inverse.
\end{proof}
\begin{lem}\label{zicu5}
Let $(S,\cdot,\,  \ast)$ be a regular $\star$-semigroup. Then $(S,\cdot,\,  \ast)$  is completely regular and locally inverse if and only if $xx^\ast y^\ast yxy=xy$ for all $x,y\in S$.
\end{lem}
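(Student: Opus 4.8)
The plan is to handle the two implications separately, after first recasting the two target properties as the projection identities provided by Lemmas~\ref{zicu7} and~\ref{zicu4}. Writing $g=xx^\ast$ and $e=x^\ast x$ (both projections by Lemma~\ref{1.2}(1)), the complete regularity identity of Lemma~\ref{zicu7}(2) is precisely $geg=g$, that is $xx^\ast(x^\ast x)xx^\ast=xx^\ast$; and local inverseness is, by Lemma~\ref{zicu4}, the identity $efege=egefe$ for all $e,f,g\in P(S,\cdot)$. A useful preliminary observation for the latter is that $efege$ and $egefe$ are mutual $\ast$-images, since $(efege)^\ast=egefe$; hence the local-inverse identity is equivalent to the assertion that $efege$ is a projection (a self-adjoint element) for all projections $e,f,g$. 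Throughout I write the hypothesis as $ss^\ast t^\ast t\,st=st$ for all $s,t\in S$.

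For the implication ``$xx^\ast y^\ast y\,xy=xy\ \Rightarrow$ completely regular'', I would apply the hypothesis to the pair $(s,t)=(xx^\ast,x)$. Since $xx^\ast$ is a projection, the left-hand side collapses to $(xx^\ast)(x^\ast x)(xx^\ast)\,x=(geg)x$, while the right-hand side is $xx^\ast\cdot x=x$; thus $(geg)x=x$. Because $geg\in P(S,\cdot)$ by Lemma~\ref{1.2}(1) and any projection $p$ with $px=x$ satisfies $p\,xx^\ast=xx^\ast$ (multiply on the right by $x^\ast$), the choice $p=geg$ gives $geg\cdot g=g$, i.e. $geg=g$, which is complete regularity by Lemma~\ref{zicu7}. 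For local inverseness I would feed projections into the hypothesis: with $(s,t)=(e,f)$ one obtains $efef=ef$, so products of two projections are idempotent; with $(s,t)=(efe,ge)$ (legitimate since $efe\in P(S,\cdot)$) one obtains $(efege)^2=efege$, so $efege$ is idempotent. It then remains to upgrade ``idempotent'' to ``self-adjoint'', which by the observation above is exactly the identity of Lemma~\ref{zicu4}; combining the idempotence of $efege$ and of $egefe$ with the involution is where I expect the bulk of the bookkeeping in this half.

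For the converse (``completely regular and locally inverse $\Rightarrow$ the identity'') I would first reduce to a statement about projections. Put $z=xy$, $g=xx^\ast$, $f=y^\ast y$ and $h=zz^\ast=x\,yy^\ast\,x^\ast$ (a projection by Lemma~\ref{1.2}(1)). Since $gx=x$ we have $gz=z$, and since $hz=z$, the target $gf\,z=z$ is equivalent, upon right multiplication by $z^\ast$, to the purely idempotent identity $gfh=h$. A short computation using $x^\ast x x^\ast=x^\ast$ gives $gh=hg=h$, so $h\le g$ and $h$ lies in the local submonoid $gSg$, which is an inverse semigroup by local inverseness. Using $gh=h$ one rewrites $gfh=(gfg)h$, and as $gfg$ and $h$ are commuting idempotents of the inverse semigroup $gSg$, one gets $gfh=h(gfg)$; hence $gfh=h$ is equivalent to $h\le gfg$ in $gSg$. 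Finally, the map $\varphi\colon(x^\ast x)S(x^\ast x)\to gSg$, $s\mapsto xsx^\ast$, is an isomorphism of $\ast$-semigroups (its inverse is $u\mapsto x^\ast u x$), so it transports $h\le gfg$ to the comparison $h''\le q$ between the projections $q=(yx)^\ast(yx)$ and $h''=(x^\ast x)\,yy^\ast\,(x^\ast x)$ of the corner $(x^\ast x)S(x^\ast x)$.

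The hard part will be verifying this last comparison $h''\le q$. Here both local inverseness (the commuting of projections in the corner, i.e. Lemma~\ref{zhengzexingbanqun5} applied to the inverse semigroup $(x^\ast x)S(x^\ast x)$) and complete regularity (the identities $yy^\ast=yy^\ast\,y^\ast y\,yy^\ast$ and $geg=g$ of Lemma~\ref{zicu7}) must be used together to relate, after conjugation by $x$, the two distinct projections $y^\ast y$ and $yy^\ast$ attached to $y$. The naive obstruction one meets—products landing in a strictly smaller component of the underlying semilattice of completely simple semigroups, so that the comparison looks false at the level of individual projections—is exactly what local inverseness repairs inside the corner. Once $h''\le q$ is established, applying $\varphi$ returns $gfh=h$, whence $gf\,z=z$, i.e. $xx^\ast y^\ast y\,xy=xy$, completing the argument.
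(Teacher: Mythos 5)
Your reduction of the two properties to the projection identities of Lemmas \ref{zicu7} and \ref{zicu4} is sound, and your derivation of complete regularity from the hypothesis (via $(s,t)=(xx^\ast,x)$) is correct and essentially the paper's own argument (the paper substitutes $y=x^\ast x$). The rest of the proof, however, has two genuine gaps. For ``identity $\Rightarrow$ locally inverse'', your substitutions $(s,t)=(e,f)$ and $(s,t)=(efe,ge)$ only yield that $ef$ and $efege$ are idempotent, and idempotence of both $efege$ and its star $egefe$ does \emph{not} imply that they coincide: in any regular $\star$-semigroup $ef$ and $fe$ are both idempotent for projections $e,f$, yet they are generally distinct. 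So the step you defer as ``bookkeeping'' is precisely the missing mathematical content. The paper instead substitutes $x=e$, $y=feg$ into the hypothesis, which produces the relation $efeg=ege\,efeg$; from this one gets $efege=egefege=e(gefeg)e$, which lies in $P(S,\cdot)$ by Lemma \ref{1.2}(1) (conjugation-closure of projections), and only then does Lemma \ref{1.2}(2) give the commutation $efe\cdot ege=ege\cdot efe$, i.e.\ $efege=egefe$. Without a substitution of this kind your outline cannot close.

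For the converse, your reduction of $xx^\ast y^\ast y\,xy=xy$ to the comparison $h\le gfg$ (equivalently $h''\le q$ after transporting by $\varphi$) is a correct reformulation, but that comparison is the entire difficulty and you do not prove it. The paper's proof of this half hinges on a specific mechanism you never invoke: complete regularity gives $(aba)u=ab$ for a suitable $u$, hence $ab\,\mathcal{R}\,aba$, hence $ab(ab)^\ast=(aba)(aba)^\ast$ by Lemma \ref{1.2}(5), which after substituting $a\mapsto b^\ast$, $b\mapsto a^\ast$ yields $b^\ast a^\ast ab=b^\ast a^\ast b^\ast bab$; local inverseness is then applied to the two particular projections $a^\ast b^\ast ba$ and $a^\ast abb^\ast a^\ast a$ inside the corner determined by $a^\ast a$. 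Your sketch correctly predicts that both hypotheses must interact inside a corner, but without producing the $\mathcal{R}$-class identity (or an equivalent computation) the comparison $h''\le q$ remains unestablished, so the converse is not proved as written.
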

\begin{proof}
Let $(S,\cdot,\ast)$ be completely regular and locally inverse, and  $a,b\in S$. Then
$$(aba)(bb^{\ast}a^{\ast}b^{\ast}a^{\ast}ab)=ab ab (ab)^\ast(ab)^\ast  ab=ab\cdot (ab)^\ast   ab ab (ab)^\ast(ab)^\ast  ab =ab (ab)^\ast ab=ab$$ by Lemma \ref{zicu7}.
As $(ab)a=aba$, we can obtain $ab{\mathcal R}aba$. By Lemma \ref{1.2} (5),
\begin{equation}\label{2.55}
abb^{\ast}a^{\ast}=ab(ab)^\ast=(aba)(aba)^\ast=abaa^{\ast}b^{\ast}a^{\ast}.
\end{equation}
Substituting  $a$ by $b^\ast$ and replacing  $b$   by $a^\ast$   in (\ref{2.55}), respectively, we have
\begin{equation}\label{2.56}
b^\ast a^\ast ab=b^\ast a^\ast a^{\ast\ast}b^{\ast\ast}=b^\ast a^\ast b^\ast b^{\ast\ast}a^{\ast\ast}b^{\ast\ast}=b^\ast a^\ast b^\ast bab.
\end{equation}
Denote $e=a^\ast a, f=a^\ast b^\ast ba=(ba)^\ast ba, g=a^\ast a bb^\ast a^\ast a$. Then $e,f,g\in P(S, \cdot)$ by Lemma \ref{1.2} (1). It is easy to see that $efe=f$ and $ege=g$.
Observe that $(S,\cdot,\ast)$  is locally inverse, it follows that $fg=efeege=efege=egefe=egeefe=gf$ by Lemma \ref{zicu4}. Thus
\begin{eqnarray*}\label{wl}
\nonumber &&aa^{\ast}b^{\ast}bab=aa^{\ast}b^{\ast}bab(ab)^\ast ab=(aa^{\ast}b^{\ast}b)abb^{\ast}a^{\ast}ab=(aa^{\ast}b^{\ast}b)aa^\ast abb^{\ast}a^{\ast}ab\\
\nonumber&=&a(a^{\ast}b^{\ast}ba)(a^{\ast}abb^{\ast}a^{\ast}a)b=a(a^{\ast}abb^{\ast}a^{\ast}a)(a^{\ast}b^{\ast}ba)b\,\,\,\,\,\,\,\,\,\,\,\,\,\,(\mbox{by }fg=gf)\\
\nonumber&=&abb^{\ast}a^{\ast}b^{\ast}bab=ab(b^{\ast}a^{\ast}b^{\ast}bab)=ab(b^{\ast}a^{\ast}ab)=ab(ab)^\ast ab=ab. \,\,\,\,\,\,\,\,\,\,\,\,\,\, (\mbox{by (\ref{2.56})})
\end{eqnarray*}
Conversely, assume that $(S,\cdot,\,  \ast)$ satisfies the axiom
\begin{equation}\label{2.57} xx^\ast y^\ast yxy=xy.
\end{equation}
Let $y=x^\ast x$ in (\ref{2.57}). Then $xx^\ast x^\ast x x=xx^\ast (x^\ast x)^\ast x^\ast x xx^\ast x=xx^\ast x=x$, and so $xx^\ast x^\ast xxx^\ast=xx^\ast$. By Lemma \ref{zicu7}, $(S,\cdot,\,  \ast)$  is completely regular.
Let $e,f,g\in P(S,\cdot)$ and take $x=e$ and $y=feg$ in (\ref{2.57}). Then $ee^\ast (feg)^\ast feg e feg=efeg$ and so $ege\cdot efe\cdot ege\cdot efe\cdot g=eegeffegefeg=efeg$.
By Lemma \ref{1.2} (1), (4), we have $ege, efe\in P(S, \cdot)$ and $ege efe\in E(S, \cdot)$. This implies that $efeg=egeefeg$ and hence $efeege=efege=egeefege=egefege\in P(S, \cdot)$ by Lemma \ref{1.2} (1).  In view of  Lemma \ref{1.2} (2) and the fact $ege, efe\in P(S, \cdot)$, we obtain $efege=efeege=egeefe=egefe$. By Lemma \ref{zicu4}, $(S,\cdot,\,  \ast)$  is locally inverse.
\end{proof}
\begin{lem}\label{zicu8}Let $(S,\cdot,\,  \ast)$ be a regular $\star$-semigroup. Then
$(S,\cdot,\,  \ast)$ is orthodox and locally inverse if and only if $afgb=agfb$ for all  $a,b\in S$ and $f,g\in P(S, \cdot)$.
\end{lem}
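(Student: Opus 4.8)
The plan is to reduce the whole equivalence to a statement about $P(S,\cdot)$ and then treat the two implications separately.

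First I would record the reduction. Since $a=aa^\ast a$ and $b=bb^\ast b$ for all $a,b\in S$, writing $e=a^\ast a$ and $k=bb^\ast$ (both in $P(S,\cdot)$ by Lemma \ref{1.2}(1)) gives $afgb=a(efgk)b$ and $agfb=a(egfk)b$ for all $f,g\in P(S,\cdot)$. Hence the identity $afgb=agfb$ ($a,b\in S$, $f,g\in P(S,\cdot)$) holds if and only if the four--projection identity $efgk=egfk$ holds for all $e,f,g,k\in P(S,\cdot)$: the latter is the special case $a=e$, $b=k$, and the displayed equalities give the converse. So it suffices to prove that $(S,\cdot,\ast)$ is orthodox and locally inverse if and only if $efgk=egfk$ for all $e,f,g,k\in P(S,\cdot)$.

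For sufficiency, assume $efgk=egfk$ for all projections (equivalently, $afgb=agfb$ for arbitrary $a,b$). I would verify the criteria of Lemmas \ref{zicu6} and \ref{zicu4} by \emph{sorting} short products of projections, one swap at a time. For local inverseness, the swap with $a=e$, $b=e$ gives $efge=egfe$, while the swap applied to $efege$ (exchanging the middle $f$ and $e$) gives $efege=efge$ and likewise $egefe=egfe$; chaining these yields $efege=egefe$, which is the condition in Lemma \ref{zicu4}. For orthodoxy, three successive swaps reduce $(efg)^2=efgefg$ to $efg$ (for instance $efgefg\to efegfg\to efefg\to efg$, using $gg=g$, $ff=f$, $ee=e$ after each swap), so $efg\in E(S,\cdot)$ and Lemma \ref{zicu6} applies. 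This direction is routine.

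The substantial direction is necessity. Assume $(S,\cdot,\ast)$ is orthodox and locally inverse. By Lemma \ref{zicu6}, $E(S,\cdot)$ is closed under multiplication, i.e.\ a band; by Lemma \ref{zicu4}, for every $e\in P(S,\cdot)$ the projections $efe$ and $ege$ commute, so the local submonoid $eSe$ is inverse and its idempotents form a semilattice. Thus $E(S,\cdot)$ is a band all of whose local submonoids are semilattices, i.e.\ a \emph{normal band}, and I would then invoke the identity $efgk=egfk$ valid in every normal band. The main obstacle is exactly this last step: passing from the conjugated (``local'') commutativity $efege=egefe$ supplied by Lemma \ref{zicu4} to the un-conjugated identity $efgk=egfk$, i.e.\ removing the interior idempotents and allowing \emph{different} outer projections $e$ and $k$. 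I expect to close this either structurally, using the description of a normal band as a strong semilattice of rectangular bands---in a rectangular band a product depends only on its first and last factors, so $efgk$ and $egfk$ coincide componentwise (see, e.g., \cite{Howie,Petrich})---or, keeping the argument internal to the present framework, by combining orthodoxy (which contracts repeated projection blocks via $(pq)^2=pq$) with repeated applications of Lemma \ref{zicu4}; the bookkeeping in this elementary route is where the real work lies.
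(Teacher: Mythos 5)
Your reduction to the four--projection identity $efgk=egfk$ is correct (note $afgb=a(a^\ast a)fg(bb^\ast)b$), and your sufficiency argument is complete and matches the paper's: the swaps you list do yield $(efg)^2=efg$ and $efege=egefe$, so Lemmas \ref{zicu6} and \ref{zicu4} apply. The problem is the necessity direction, where you have correctly located -- but not closed -- the essential step. Everything up to ``$E(S,\cdot)$ is a band all of whose local submonoids $eE(S,\cdot)e$ are semilattices'' is fine (orthodoxy gives the band, and $E(eSe)=eE(S,\cdot)e$ so local inverseness gives the semilattices). But the implication ``locally a semilattice $\Rightarrow$ normal, hence satisfies $wxyz=wyxz$'' is precisely the nontrivial content here, and your proposal only \emph{announces} two possible ways to prove it (``I expect to close this either structurally \ldots or \ldots the bookkeeping \ldots is where the real work lies''). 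As written, the proof is therefore incomplete: local commutativity gives you $(axa)(aya)=(aya)(axa)$, i.e.\ $axaya=ayaxa$, and passing from this conjugated identity to $axya=ayxa$ and then to the four--variable form genuinely requires the structure theory of (normal) bands or an equivalent computation; it is not a formality.

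The paper closes this direction in one line by citing Yamada \cite{Yamada} (Lemma~1 and Theorem~1 of ``Regular semigroups whose idempotents satisfy permutation identities''), which states exactly that an orthodox, locally inverse regular semigroup satisfies $axyb=ayxb$ for all $a,b\in S$ and $x,y\in E(S,\cdot)$; restricting $x,y$ to projections finishes the proof. Your structural route (a band whose local submonoids are semilattices is a normal band, i.e.\ a strong semilattice of rectangular bands, in which a product of several factors landing in a fixed component depends only on its first and last factors) is in substance the proof of Yamada's theorem, so the plan is sound; but to make your write-up a proof you must either carry out that argument or cite the theorem precisely, as the paper does.
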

\begin{proof}Assume that $(S,\cdot,\,  \ast)$ is orthodox and locally inverse. By \cite[Lemma 1 and Theorem 1]{Yamada}, we have $axyb=ayxb$ for all $a,b\in S$ and $x,y\in E(S, \cdot)$, and so $afgb=agfb$ for all $a,b\in S$ and $f,g\in P(S, \cdot)$ as $P(S, \cdot)\subseteq E(S, \cdot)$. Conversely, assume that the given condition holds and $e, f, g\in P(S, \cdot)$. Then $(efg)^2=efgefg=eeffgg=efg$ and $efege=egefe$ by using the given condition several times. By Lemmas \ref{zicu6} and \ref{zicu4}, $(S,\cdot,\,  \ast)$ is orthodox and locally inverse.
\end{proof}

\begin{lem}\label{zicu9}For a regular $\star$-semigroup $(S,\cdot,\,  \ast)$,   the following statements are equivalent:
\begin{itemize}
\item[(1)]$(S,\cdot,\,  \ast)$ is completely regular, orthodox and locally inverse.
\item[(2)]$(S,\cdot,\,  \ast)$ satisfies the axiom $xy=xxx^{\ast}y$.
\item[(3)]$(S,\cdot,\,  \ast)$ satisfies the axiom $yx=yx^{\ast}xx$.
\item[(4)]$(S,\cdot,\,  \ast)$ satisfies the axiom $xyz=xyxx^{\ast}z$.
\item[(5)]$(S,\cdot,\,  \ast)$ satisfies the axiom $zyx=zx^{\ast}xyx$.
\item[(6)]$(S,\cdot,\,  \ast)$ satisfies the axiom $xyz=xyx^{\ast} xz$.
\item[(7)]$(S,\cdot,\,  \ast)$ satisfies the axiom $zyx= zxx^{\ast}yx$.
\end{itemize}
\end{lem}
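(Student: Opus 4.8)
The plan is to use the involution to collapse the seven statements into essentially four. Applying $\ast$ to an identity and using $x^{\ast\ast}=x$ together with $(xy)^\ast=y^\ast x^\ast$ from (\ref{shishi1.2}), then relabelling the (now $\ast$-primed) variables, should turn each left-handed axiom into its right-handed partner: for instance the $\ast$ of $xy=xxx^\ast y$ is $y^\ast x^\ast=y^\ast x x^\ast x^\ast$, which is exactly the axiom in (3) after renaming $y^\ast,x^\ast$. I expect this to give (2)$\Leftrightarrow$(3), (4)$\Leftrightarrow$(5) and (6)$\Leftrightarrow$(7). Since the three properties in (1) are each self-dual, it then suffices to tie (1), (2), (4), (6) together in one cycle of implications and invoke the duality to cover the rest.

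For the implications among the axioms and the extraction of the structural properties I would proceed as follows. The cleanest link is (4)$\Rightarrow$(2): substituting $y=x^\ast x$ into $xyz=xyxx^\ast z$ and using $xx^\ast x=x$ collapses the left side to $xz$ and the right side to $xxx^\ast z$. Next, (2)$\Rightarrow$ completely regular: putting $b=x^\ast$ in $xy=xxx^\ast y$ gives $xx^\ast=x^2x^{\ast 2}$, and applying (2) with $x$ replaced by $x^\ast$ (then $b=x$) gives $x^\ast x=x^{\ast 2}x^2$; by Lemma \ref{1.2}(5) these say $x\,\mathcal{R}\,x^2$ and $x\,\mathcal{L}\,x^2$, whence $x\,\mathcal{H}\,x^2$ and complete regularity follows through Lemma \ref{zicu7}. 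The workhorse for orthodoxy is the idempotent-absorption identity obtained by putting $x=e\in P(S,\cdot)$ into (4) (or (6)), namely $eyz=eyez$; reading this and its $\ast$-dual $zye=zeye$ as contractions, one computes $(efg)^2=efgefg=efgfg=efg$ for $e,f,g\in P(S,\cdot)$, so $efg\in E(S,\cdot)$ and orthodoxy follows from Lemma \ref{zicu6}.

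The forward direction, (1)$\Rightarrow$(4) and (1)$\Rightarrow$(6), I would handle by inserting and then swapping projections: rewrite the relevant letters using $x=xx^\ast x$ and $y=yy^\ast y$ so that two adjacent projections appear, move them past one another with the identity $afgb=agfb$ of Lemma \ref{zicu8}, and absorb the redundant factor using local inversity in the form $xx^\ast y^\ast yxy=xy$ from Lemma \ref{zicu5}.

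The hardest part, and the step I would spend the most care on, is the converse production of local inversity, that is, deducing $efege=egefe$ (Lemma \ref{zicu4}), equivalently $xx^\ast y^\ast yxy=xy$ (Lemma \ref{zicu5}), from a single one-sided axiom. The absorption identity $eyz=eyez$ reduces the goal to the pure swap $efge=egfe$, but this swap is not reachable by the boundary-duplication moves alone and must be forced by substituting non-idempotent elements into (4)/(6) and combining an axiom with its $\ast$-dual. The most delicate case is the return to (1) from (2): since (2) becomes trivial once $x$ is a projection, orthodoxy and local inversity cannot be read off the band directly but must be propagated from the group parts through the $\star$-structure, so I would route (2) back to (1) through (4) rather than attack it head-on, and then verify that the resulting cycle through (1), (4), (2), (6) — together with the three dualities — actually visits every one of the seven statements.
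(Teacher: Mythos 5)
Your overall architecture --- involution duality to collapse (2)$\Leftrightarrow$(3), (4)$\Leftrightarrow$(5), (6)$\Leftrightarrow$(7), then a cycle through (1), (2), (4), (6) --- is exactly the paper's, and several of your steps are sound: (4)$\Rightarrow$(2) by substituting $y=x^\ast x$, (2)$\Rightarrow$ completely regular via Lemma \ref{1.2}(5), and (4)$\Rightarrow$ orthodox via the contractions $eyz=eyez$ and $zye=zeye$. But two essential implications are named and deferred rather than proved, and without them the cycle does not close. First, you never supply (2)$\Rightarrow$(4) (nor any other implication out of (2) beyond complete regularity); since your only link between (2) and (4) runs in the direction (4)$\Rightarrow$(2), statement (2) ends up as a consequence of the others but not an equivalent of them. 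You correctly observe that (2) trivializes on projections, so something non-obvious is needed; the paper closes this by starting from $abab(ab)^{\ast}c=abc$ (an instance of (2) with $x=ab$) and replacing $a$ by $ab$, $b$ by $b^{\ast}b$ and $c$ by $aa^{\ast}c$, after which both sides simplify to give $abc=abaa^{\ast}c$. Nothing in your sketch produces this substitution.

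Second, the derivation of local inversity from (4) is only described as ``the hardest part'' and not carried out, and your diagnosis of what it requires is off: it is not necessary to substitute non-idempotent elements. The paper gets it purely from projections: (4) and its dual (5) give $efg=efeg$ and $efg=egfg$ for $e,f,g\in P(S,\cdot)$, hence $efeg=egfg$; substituting the projection $ege$ for $g$ yields $efe\cdot ege=egefege\in P(S,\cdot)$, so the projections $efe$ and $ege$ commute by Lemma \ref{1.2}(2), which is exactly the condition $efege=egefe$ of Lemma \ref{zicu4}. (Complete regularity from (4) is likewise a one-line substitution, $x(x^{\ast}x)(x^{\ast}x)=x(x^{\ast}x)xx^{\ast}(x^{\ast}x)$, feeding Lemma \ref{zicu7}.) A smaller gap of the same kind: you assert that (6) can be folded into the cycle, but the only worked link involving (6) is the orthodoxy contraction; the paper's (4)$\Leftrightarrow$(6) again needs a specific substitution ($x=a^{\ast}$, $y=abaa^{\ast}$, $z=c$, then left-multiply by $a$). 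Until (2)$\Rightarrow$(4), (4)$\Rightarrow$ locally inverse, and the (6)-link are actually exhibited, the seven statements have not been shown equivalent.
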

\begin{proof}Since $``\ast"$ is an involution on $S$,  it follows that (2) is equivalent to (3), (4)  is equivalent to (5), and  (6) is equivalent to (7), respectively.
In the following statements, we shall show  that (1) implies   (2), (2) implies   (4), (4) implies (1),  and (4) is equivalent (6), respectively.

(1) $\Longrightarrow$ (2). Assume  that (1) holds and $a,b,c\in S$. By Lemmas \ref{zicu8} and \ref{zicu7},
$$a^{\ast}aaa^{\ast}b=a^{\ast}a\underline{a^{\ast}aaa^{\ast}} b=a^{\ast}a\underline{aa^{\ast}a^{\ast}a}b=\underbrace{a^\ast aa a^\ast a^\ast a}b=\underbrace{a^{\ast}a}b,$$
which implies that $aaa^{\ast}b=aa^{\ast}aaa^{\ast}b=aa^{\ast}ab=ab$.

(2) $\Longrightarrow$ (4). Assume that  the axiom $xy=xxx^{\ast}y$ is satisfied and $a,b,c\in S$. Then $abab(ab)^{\ast}c=abc$. In this equation,  replacing $a$ by $ab$, $b$ by $b^{\ast}b$ and $c$ by $aa^{\ast}c$, respectively, we obtain that
$abb^{\ast}babb^{\ast}b(abb^{\ast}b)^{\ast}aa^{\ast}c=abb^{\ast}baa^{\ast}c=abaa^{\ast}c.$
Since $abab(ab)^{\ast}c=abc$, we have
$$abb^{\ast}babb^{\ast}b(abb^{\ast}b)^{\ast}aa^{\ast}c=abab(ab)^{\ast}aa^{\ast}c
=ababb^{\ast}a^{\ast}aa^{\ast}c=ababb^{\ast}a^{\ast}c=abab(ab)^{\ast}c=abc.$$
Thus $abc=abaa^{\ast}c$, and so (4) holds.

(4) $\Longrightarrow$ (1). Assume that (4) holds. Then (5) also holds.
Let $e,f,g\in P(S, \cdot)$. By items (4) and (5) in the present proposition and Lemma \ref{1.2} (1), (4), we have
$$efg=efee^{\ast}g=(efe)g\in (P(S, \cdot))^{2}=E(S, \cdot),\,\,  efg=eg^{\ast}gfg=e(gfg)\in (P(S, \cdot))^{2}=E(S, \cdot).$$
By Lemma \ref{zicu6}, $(S,\cdot,\, \ast)$ is orthodox.  Moreover, we have $efeg=egfg$ for all $e,f,g\in P(S, \cdot)$. On the other hand,  we have $efe,ege\in P(S, \cdot)$ by Lemma \ref{1.2} (1). Replacing $g$ by $ege$ in the equation $efeg=egfg$,   we obtain   $efe
ege=eegefege=egefege\in P(S, \cdot)$  from Lemma \ref{1.2} (1). In view of Lemma \ref{1.2} (2), we have $efe ege=ege efe$ and so $efege=egefe$. Now Lemma \ref{zicu4} gives that $(S,\cdot,\, \ast)$ is locally inverse.
Finally, let $x\in S$. By item (4), we have $x(x^{\ast}x)(x^{\ast}x)=x(x^{\ast}x)xx^{\ast}(x^{\ast}x),$ and so $x=xxx^{\ast}x^{\ast}x$. This gives that $x^{\ast}x=x^\ast xxx^{\ast}x^{\ast}x$. By Lemma \ref{zicu7},   $S$ is completely regular. Thus (1) holds.

(4) $\Longleftrightarrow$ (6). Assume that $S$ satisfies the axiom $xyz=xyxx^{\ast}z$. Let $a ,b, c\in S$  and take $x=a^\ast, y=abaa^\ast, z=c$. Then
$a^\ast(abaa^\ast)c=a^\ast(abaa^\ast)a^\ast a^{\ast\ast}c=a^\ast(abaa^\ast)a^\ast ac.$ Multiply both sides of this equation by $a$ to the left, it follows that   $aa^\ast(abaa^\ast)c=aa^\ast(abaa^\ast)a^\ast ac$, and so $ab\cdot aa^\ast \cdot c=ab\cdot aa^\ast \cdot a^\ast ac$. By the given axiom, we have
$abc=ab\cdot aa^\ast \cdot c=ab\cdot aa^\ast \cdot a^\ast ac=aba^\ast ac,$ which gives (6). Dually, we can prove that (6) implies that (4).
\end{proof}
Lemma \ref{zicu9} gives the following corollary which will be used frequently in the sequel.
\begin{coro}\label{zhang2}Let $(S,\cdot,\,  \ast)$ be a regular $\star$-semigroup which is completely regular, orthodox and locally inverse. Then for all $x,y,z\in S$, we have
$$xyx^{\ast} xz=xyz=xyxx^{\ast}z,\,\, zxx^{\ast}yx=zyx=zx^{\ast}xyx,$$$$ x^\ast yx^{\ast} xz=x^\ast yz=x^\ast yxx^{\ast}z,\,\, zxx^{\ast}yx^\ast =zyx^\ast =zx^{\ast}xyx^\ast,$$  and so $xyy^\ast z=x(yy^\ast z)=x(yy^\ast y^\ast y^{\ast\ast} z)=(x yy^\ast y^\ast y) z=(xy^\ast y) z=xy^\ast yz$.
\end{coro}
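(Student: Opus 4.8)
The plan is to read every identity off the equivalent axioms collected in Lemma \ref{zicu9}. Since $(S,\cdot,\ast)$ is assumed completely regular, orthodox and locally inverse, statement (1) of that lemma holds, hence so do all of its equivalent forms (2)--(7). The four identities in the first displayed line are then literally four of these axioms: $xyz=xyxx^\ast z$ is item (4), $xyx^\ast xz=xyz$ is item (6), $zyx=zx^\ast xyx$ is item (5), and $zxx^\ast yx=zyx$ is item (7). So no argument is needed there beyond invoking Lemma \ref{zicu9}.

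For the second displayed line I would simply substitute $x^\ast$ for $x$ throughout the first line and use the involution law $x^{\ast\ast}=x$ from (\ref{shishi1.2}). For instance, replacing $x$ by $x^\ast$ in item (4) gives $x^\ast yz=x^\ast y\,x^\ast x^{\ast\ast}\,z=x^\ast y x^\ast x z$, which is the first equality of the second line; the first equality $zxx^\ast yx^\ast=zyx^\ast$ comes from item (5) under $x\mapsto x^\ast$, and the two remaining equalities come the same way from items (6) and (7). Each of these is a one-line verification once one remembers $x^{\ast\ast}=x$.

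For the final chain $xyy^\ast z=\cdots=xy^\ast yz$, the only genuine content is two applications of Lemma \ref{zicu9} sandwiched between associativity steps. First, applying item (6) with the roles $a=y$, $b=y^\ast$ (so that the inserted factor $a^\ast a$ is $y^\ast y$) rewrites $yy^\ast z=yy^\ast\,y^\ast y\,z$, and since $y^{\ast\ast}=y$ this is exactly the passage $x(yy^\ast z)=x(yy^\ast y^\ast y^{\ast\ast}z)$ after multiplying on the left by $x$. After regrouping by associativity it remains to see $xyy^\ast y^\ast y=xy^\ast y$; this is precisely item (7), read with $z=x$, pattern variable $x$ taken to be $y$, and pattern variable $y$ taken to be $y^\ast$, so that the right-hand side $zxx^\ast yx$ equals $x\,yy^\ast\,y^\ast\,y$ and collapses to $zyx=xy^\ast y$. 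A final associativity step produces $xy^\ast yz$.

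In short, I expect the corollary to carry no conceptual difficulty: it is a bookkeeping consequence of Lemma \ref{zicu9}, obtained by directly quoting items (4)--(7), by the substitution $x\mapsto x^\ast$ for the second line, and by a short associativity-plus-(6)-plus-(7) computation for the last chain. The only step worth double-checking is that last chain, where one must match the pattern variables in items (6) and (7) correctly and keep track of $x^{\ast\ast}=x$; I anticipate that this pattern-matching, rather than any substantive obstacle, is the sole point demanding care.
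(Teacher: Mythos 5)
Your proposal is correct and follows exactly the route the paper intends: the corollary is stated without proof as an immediate consequence of Lemma \ref{zicu9}, with the first two displayed lines being items (4)--(7) (and their images under the substitution $x\mapsto x^\ast$ together with $x^{\ast\ast}=x$), and the final chain being precisely the item-(6)-then-item-(7) computation you describe. Your pattern-matching in the last chain is accurate, so there is nothing to add.
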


\begin{lem}\label{zicu10}For a regular $\star$-semigroup $(S,\cdot,\,  \ast)$,   the following statements are equivalent:
\begin{itemize}
\item[(1)]$(S,\cdot,\,  \ast)$ is a Clifford semigroup.
\item[(2)]$(S,\cdot,\,  \ast)$ satisfies the axiom $xx^\ast=x^\ast x$.
\item[(3)]$(S,\cdot,\,  \ast)$ is completely regular and inverse.
\end{itemize}
\end{lem}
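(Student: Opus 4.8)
The plan is to establish the cycle (1) $\Rightarrow$ (2) $\Rightarrow$ (3) $\Rightarrow$ (1), throughout exploiting that $xx^\ast$ and $x^\ast x$ are projections (Lemma \ref{1.2}(1)), that $S$ is inverse precisely when projections commute (Lemma \ref{zhengzexingbanqun5}), and the completely regular criterion of Lemma \ref{zicu7}. For (1) $\Rightarrow$ (2), I would start from the identity $xx^\ast = x(x^\ast x)x^\ast$, which holds in every regular $\star$-semigroup because $xx^\ast x = x$. Since $x^\ast x$ is idempotent and hence central under (1), I can slide it to the front to get $xx^\ast = (x^\ast x)(xx^\ast)$, and replacing $x$ by $x^\ast$ yields the companion identity $x^\ast x = (xx^\ast)(x^\ast x)$. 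As two central idempotents commute, the right-hand sides agree, forcing $xx^\ast = x^\ast x$.

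For (2) $\Rightarrow$ (3), complete regularity is immediate: feeding (2) into the word $xx^\ast x^\ast xxx^\ast$ collapses it to $(xx^\ast)^3 = xx^\ast$, which is exactly condition (2) of Lemma \ref{zicu7}. The substantive part is inverseness, i.e. that any projections $e,f$ commute. Applying axiom (2) to $x=ef$ (so that $x^\ast = fe$) gives $efe = fef$, whence $(ef)^2 = efe = (ef)^3$. Here I would invoke complete regularity: the $\mathcal H$-class of $ef$ is a group, so from $(ef)^2 = (ef)^3$ cancellation shows $ef$ is idempotent and $ef = (ef)^2 = efe$. But $efe \in P(S,\cdot)$ is a projection, hence self-adjoint, so $ef = (ef)^\ast = fe$. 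By Lemma \ref{zhengzexingbanqun5}, $S$ is inverse.

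For (3) $\Rightarrow$ (1), I would first recover axiom (2): using Lemma \ref{zicu7} together with the commuting of projections, the word $(xx^\ast)(x^\ast x)(xx^\ast)$ reduces to $xx^\ast\cdot x^\ast x$, and symmetrically $x^\ast x$ reduces to the same product, giving $xx^\ast = x^\ast x$. Since $S$ is inverse, every idempotent is a projection (if $a=ef=fe$ then $a^\ast = fe = a$), so I may write $u := aa^\ast = a^\ast a$ for the two-sided local identity of $a$, with $ua = au = a$. To prove idempotents are central, fix $e \in E(S,\cdot)=P(S,\cdot)$ and $a\in S$ and apply (2) to $ae$: a short computation gives $aea^\ast = eu$; right-multiplying by $a$ and simplifying with $au=ua=a$ and the commuting of idempotents yields $ae = ea$. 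Hence idempotents are central and $S$ is a Clifford semigroup.

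The main obstacle is the inverseness step inside (2) $\Rightarrow$ (3). Axiom (2) only directly delivers the weaker band identity $efe = fef$ on projections, which in general does \emph{not} imply $ef = fe$; the gap is genuinely bridged only by feeding in complete regularity to upgrade $(ef)^2 = (ef)^3$ to idempotency of $ef$. Making this interaction between the two hypotheses precise — as opposed to the purely formal rewriting used in the other implications — is where the care is needed.
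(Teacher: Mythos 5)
Your argument is correct, but it is organized and justified differently from the paper's, so a comparison is worth recording. The paper proves the two equivalences $(1)\Leftrightarrow(2)$ and $(2)\Leftrightarrow(3)$ rather than your cycle; your $(1)\Rightarrow(2)$ and your recovery of the identity $xx^\ast=x^\ast x$ from (3) coincide with the paper's computations essentially verbatim, and your centrality argument $aea^\ast=eu$, $ae=aue=a(a^\ast ea)=u(ea)=ea$ is a close cousin of the paper's $ex=exx^\ast x=\cdots=xe$. The real divergence is the inverseness step. The paper derives $gh=hg$ for projections \emph{purely equationally} from axiom (2): it computes $ghg=gh(gh)^\ast=(gh)^\ast gh=hgh$, and then uses the fact that $gh$ is \emph{automatically} idempotent --- Lemma \ref{1.2}(4) gives $E(S,\cdot)=(P(S,\cdot))^2$ in any regular $\star$-semigroup --- to conclude $gh=(gh)^2=g(hgh)=ghg=hgh=hg$. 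Your route instead first establishes complete regularity and then cancels inside the group $\mathcal H$-class of $ef$ to upgrade $(ef)^2=(ef)^3$ to idempotency of $ef$. That is logically sound, since complete regularity is already available at that point, but your closing diagnosis --- that the gap between $efe=fef$ and $ef=fe$ ``is genuinely bridged only by feeding in complete regularity'' --- is not accurate in this setting: $(ef)^2=ef$ holds for free by Lemma \ref{1.2}(4), so axiom (2) alone already forces $ef=efe\in P(S,\cdot)$ and hence $ef=(ef)^\ast=fe$. The paper's version is therefore slightly stronger in spirit (it shows (2) implies inverseness with no detour through $\mathcal H$-classes), while yours buys nothing extra; everything else in your proposal goes through as written.
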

\begin{proof}
(1) $\Longrightarrow$ (2).  Assume that (1) holds. Then $ae=ea$ for all $a\in S$ and $e\in E(S,\cdot)$. Let $x\in S$. Since $x^\ast x\in E(S,\cdot)$, it follows that  $x=xx^\ast x=x^\ast xx$, and so $xx^\ast=x^\ast xxx^\ast$. Replacing $x$ by $x^\ast$, we have
$x^\ast x=xx^\ast x^\ast x$. Thus $xx^\ast=x^\ast xxx^\ast= xx^\ast x^\ast x=x^\ast x$.

(2) $\Longrightarrow$ (1).  Assume that (2) holds. Let $g,h\in P(S, \cdot)$.  Then by Lemma \ref{1.2} (1), $$ghg=ghhg=ghh^\ast g^\ast=(gh)(gh)^\ast=(gh)^\ast gh=h^\ast g^\ast gh=hggh=hgh.$$ This together with Lemma \ref{1.2} (4) implies that $gh=ghgh=g ghg=ghg$. Dually, we have $hg=hgh$. Thus $gh=ghg=hgh=hg$.
By Lemma \ref{zhengzexingbanqun5}, $(S,\cdot,\,  \ast)$  is inverse. On the other hand, let $x\in S$ and $ u\in E(S, \cdot)$. By Lemma \ref{1.2} (4), there exist $e,f\in P(S, \cdot)$ such that $u=ef$. As $(S,\cdot,\,  \ast)$  is inverse and $xx^\ast\in P(S, \cdot)$, by Lemma \ref{zhengzexingbanqun5} and item (2) this  implies that
$$ex=exx^\ast x=xx^\ast ex=x(ex)^\ast ex=xex(ex)^\ast=xe xx^\ast e=xxx^\ast ee=xx^\ast x e=xe.$$ Similarly, we can show that $xf=fx$. So we have $xu=xef=exf=efx=ux$. Thus $(S,\cdot,\,  \ast)$ is a Clifford semigroup.

(2) $\Longrightarrow$ (3).  Assume that (2) holds. Then $xx^\ast x^\ast x xx^\ast=xx^\ast xx^\ast xx^\ast=xx^\ast$ for all $x\in S$. By Lemma \ref{zicu7}, $(S,\cdot,\,  \ast)$ is completely regular. On the other hand, by the proof of ``(2) $\Longrightarrow$ (1)'',   $(S,\cdot,\,  \ast)$ is also inverse. Thus (3) hold.

(3) $\Longrightarrow$ (2).  Assume that (3) holds. Then by Lemmas \ref{zicu7} and \ref{zhengzexingbanqun5}, we have
 $$xx^\ast=xx^\ast (x^\ast x xx^\ast)=xx^\ast (xx^\ast x^\ast x)=xx^\ast x^\ast x=(xx^\ast x^\ast x)x^\ast x= (x^\ast x xx^\ast) x^\ast x=x^\ast x,$$
which implies that (2) is true.
\end{proof}
\begin{coro}\label{zhang1}
Let $(S,\cdot,\,  \ast)$ be a regular $\star$-semigroup which is commutative. Then $(S,\cdot,\,  \ast)$ is  completely regular, orthodox and locally inverse. In particular, if  $(S,\cdot,\,  \ast)$ satisfies the axiom $x=x^\ast$, then  for all  $a,b\in S$, we have
 $a^\ast=a=a^3$ and $ab=ba$, and so $(S,\cdot,\,  \ast)$ is  completely regular, orthodox and locally inverse in this case.
\end{coro}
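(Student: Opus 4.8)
The plan is to reduce both assertions to the equational characterization provided by Lemma~\ref{zicu9}, thereby avoiding any direct verification of complete regularity, orthodoxy and local inverseness from their definitions. For the first assertion I would check condition~(2) of that lemma, namely the identity $xy=xxx^{\ast}y$. The key observation is that the first axiom in (\ref{shishi1.2}) gives $xx^{\ast}x=x$, and commutativity permits swapping the last two factors, so that $xxx^{\ast}=xx^{\ast}x=x$. Multiplying on the right by an arbitrary $y\in S$ then yields $xxx^{\ast}y=xy$, which is exactly Lemma~\ref{zicu9}(2). Consequently $(S,\cdot,\ast)$ is completely regular, orthodox and locally inverse.

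For the second assertion I would first argue that the hypothesis $x=x^{\ast}$ already forces commutativity, so that the situation falls under the first assertion. Here the crucial tool is the third axiom in (\ref{shishi1.2}), $(xy)^{\ast}=y^{\ast}x^{\ast}$: since every element coincides with its $\ast$-image, I obtain $xy=(xy)^{\ast}=y^{\ast}x^{\ast}=yx$ for all $x,y\in S$, which is the desired identity $ab=ba$. The relation $a^{\ast}=a$ is the hypothesis itself, while $a=aa^{\ast}a=a^{3}$ follows from the first axiom in (\ref{shishi1.2}) after substituting $a^{\ast}=a$. Having established commutativity, the final claim that $(S,\cdot,\ast)$ is completely regular, orthodox and locally inverse is precisely the content of the first assertion.

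There is no substantial obstacle in this corollary; the only point requiring care is the bookkeeping of which of the three $\star$-axioms to invoke at each step. The mild temptation to avoid is verifying the three regularity properties separately via Lemmas~\ref{zicu7}, \ref{zicu6} and \ref{zicu4}. Routing everything through the single equational condition in Lemma~\ref{zicu9}(2) reduces each assertion to a one-line computation, and it is worth emphasising that the identity $ab=ba$ in the second assertion is genuinely derived from $x=x^{\ast}$ rather than assumed, which is what makes that case a self-contained strengthening.
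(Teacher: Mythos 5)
Your proposal is correct and follows essentially the same route as the paper: both verify the identity $xy=xxx^{\ast}y$ of Lemma~\ref{zicu9}(2) via commutativity and $xx^{\ast}x=x$, and both derive $ab=(ab)^{\ast}=b^{\ast}a^{\ast}=ba$ and $a=aa^{\ast}a=a^{3}$ from the axiom $x=x^{\ast}$ before invoking the first assertion.
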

\begin{proof} Let $x,y\in S$. Then $xxx^\ast y=x(xx^\ast)y=x(x^\ast x)y=xy$ by the commutativity. By Lemma \ref{zicu9}, $(S,\cdot,\,  \ast)$ is  completely regular, orthodox and locally inverse. If  $(S,\cdot,\,  \ast)$ satisfies the axiom $x=x^\ast$, then  we have
$a=a^{\ast}, ab=(ab)^{\ast}=b^{\ast}a^{\ast}=ba$ and $ a=aa^{\ast}a=aaa=a^{3}$ for all $a,b\in S$,
and so  $(S,\cdot,\,  \ast)$ is  commutative. Thus the remaining result also holds.
\end{proof}

\section{Regular $\star$-semibraces induced by regular $\star$-semigroups}\label{brace}
In this section, we study left (respectively, right, two-sided)  regular $\star$-semibraces which are  induced by regular $\star$-semigroups.
We first recall the notion of left (respectively, right, two-sided)  regular $\star$-semibraces.  Recall that a (2,2,1)-type algebra $(S,+,\cdot,\,\ast)$ is a non-empty set $S$ equipped with two binary operations $``+"$, $``\cdot"$  and  a map $\ast: S\rightarrow S,\,\, x\mapsto x^\ast$.
\begin{defn}\label{left regular*}Let $(S,+,\cdot,\, \ast)$ be a (2,2,1)-type algebra  such that  $(S, +)$ is a semigroup and $(S,\cdot,\, \ast)$ is a regular $\star$-semigroup. Then $(S,+,\cdot,\, \ast)$ is called a {\em left regular $\star$-semibrace} if the following axiom is satisfied:
\begin{equation}\label{lw1}x(y+z)=xy+x(x^{\ast}+z).
\end{equation}
Dually, $(S,+,\cdot,\,\ast)$ is called a {\em right regular $\star$-semibrace} if the following axiom is satisfied:
\begin{equation}\label{lw2}(z+y)x=(z+x^{\ast})x+yx.
\end{equation}
Moreover, $(S,+,\cdot,\, \ast)$ is called a {\em two-sided regular $\star$-semibrace} if both (\ref{lw1}) and  (\ref{lw2}) are satisfied.
\end{defn}
\begin{remark}\label{left regular**}
Left (respectively, right, two-sided)  regular $\star$-semibraces were introduced actually by Miccoli in \cite{Miccoli2} under the name of ``left (respectively, right, two-sided) involution semibraces". Since involution semigroups in literature are semigroups equipped with a general involution  and regular $\star$-semigroups are just special  involution semigroups, we tend to use the term  left (respectively, right, two-sided)  regular $\star$-semibraces.
\end{remark}
By Remark \ref{inverse-regular*}, left (respectively, right, two-sided) inverse semibraces are left (respectively,  right, two-sided)
regular $\star$-semibraces, and so left (respectively, right, two-sided) braces, skew left braces and  left semibraces are all left (respectively,
right, two-sided) regular $\star$-semibraces. Motivated by \cite[Examples 1--4]{Catino-Mazzotta-Stefanelli},  we shall investigate some left (respectively, right, two-sided)  regular $\star$-semibraces induced regular $\star$-semigroups.
\begin{prop}\label{lw3}Let $(S,\cdot,\, \ast)$ be a regular $\star$-semigroup. Define the binary operations ``$+$" and $``\oplus"$ on $S$ as follows: For all $a,b\in S$,  $a+b=ab,\, a\oplus b=ba.$ Then $(S,+, \cdot,\, \ast)$ (respectively, $(S,\oplus, \cdot,\, \ast)$) is a left (or equivalently, right, two-sided) regular ${\star}$-semibrace if and only if $(S,\cdot,\,  \ast)$ is completely regular, orthodox and locally inverse.
\end{prop}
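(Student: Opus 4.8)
The plan is to substitute the two definitions of addition directly into the semibrace axioms and recognise the resulting identities among the equivalent conditions of Lemma \ref{zicu9}. Since $(S,\cdot)$ is associative, both $(S,+)$ and $(S,\oplus)$ are semigroups for free, so the entire content of the proposition lies in the distributive axioms (\ref{lw1}) and (\ref{lw2}); no hypothesis beyond those of Lemma \ref{zicu9} will be needed.

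First I would treat $(S,+,\cdot,\,\ast)$. Using $a+b=ab$ and the convention that multiplication binds tighter than addition, the left axiom (\ref{lw1}), namely $x(y+z)=xy+x(x^{\ast}+z)$, has its two sides simplify to $xyz$ and $xyxx^{\ast}z$ respectively; thus (\ref{lw1}) holds for all $x,y,z$ precisely when $xyz=xyxx^{\ast}z$, which is Lemma \ref{zicu9}(4). An identical computation turns the right axiom (\ref{lw2}) into $zyx=zx^{\ast}xyx$, which is Lemma \ref{zicu9}(5). Since, by Lemma \ref{zicu9}, each of (4) and (5) is equivalent to condition (1), $(S,+,\cdot,\,\ast)$ is a left (equivalently, right, two-sided) regular $\star$-semibrace exactly when $(S,\cdot,\,\ast)$ is completely regular, orthodox and locally inverse.

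For $(S,\oplus,\cdot,\,\ast)$ I would repeat the same process with $a\oplus b=ba$. Here the left axiom (\ref{lw1}) collapses to $xzy=xzx^{\ast}xy$ and the right axiom (\ref{lw2}) to $yzx=yxx^{\ast}zx$; after relabelling the free variables these are exactly Lemma \ref{zicu9}(6) and Lemma \ref{zicu9}(7). Invoking the equivalences of Lemma \ref{zicu9} once more gives the claim for $\oplus$, and the two-sided and ``equivalently'' assertions follow because each one-sided axiom already forces the full condition (1).

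The computations are entirely routine; the only points requiring care are respecting the multiplication-over-addition precedence convention when expanding the axioms, and matching the variable names so that the $\oplus$-identities line up with conditions (6) and (7) rather than (4) and (5). There is no genuine obstacle here, since the substantive equivalences among the seven conditions were already established in Lemma \ref{zicu9}.
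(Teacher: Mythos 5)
Your proposal is correct and follows essentially the same route as the paper: both reduce the axioms (\ref{lw1}) and (\ref{lw2}) under $a+b=ab$ (respectively $a\oplus b=ba$) to the identities $xyz=xyxx^{\ast}z$, $zyx=zx^{\ast}xyx$, $xyz=xyx^{\ast}xz$, $zyx=zxx^{\ast}yx$, and then invoke the equivalences of Lemma \ref{zicu9}. The computations and the matching with conditions (4)--(7) are exactly as in the paper's proof.
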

\begin{proof} In this case, $(S, +)$ (respectively, $(S,\oplus)$) is a semigroup obviously.   Let $a,b,c\in S$. Then
$$a(b+c)=abc,\,ab+a(a^{\ast}+c)=abaa^{\ast}c,\, (c+b)a=cba,\,(c+a^{\ast})a+ba=ca^{\ast}aba,$$
$$a(b\oplus c)=acb,\,ab\oplus a(a^{\ast}\oplus c)= aca^{\ast}ab,\, (c\oplus b)a=bca,\,(c\oplus a^{\ast})a\oplus ba=ba a^{\ast} ca.$$
By Lemma \ref{zicu9}, the result follows.
\end{proof}

\begin{prop}\label{2.9}
Let $(S,\cdot,\, \ast)$ be a regular $\star$-semigroup. Define the binary operations ``$+$" and  ``$\oplus$" on $S$ as follows: For all $a,b\in S$,
$a+b=a^{\ast}b^{\ast},\,\,\,  a\oplus b=b^{\ast}a^{\ast}.$  Then the following statements are equivalent:
\begin{itemize}
\item[(1)] $(S, +)$ (respectively, $(S,\oplus)$) is a  semigroup.
\item[(2)]$(S,+, \cdot,\, \ast)$  (respectively, $(S,\oplus, \cdot,\, \ast)$) satisfies the axiom (\ref{lw1}): $x(y+z)=xy+x(x^{\ast}+z)$ (respectively, $x(y\oplus z)=xy\oplus x(x^{\ast}\oplus z)$).
\item[(3)]$(S,+, \cdot,\, \ast)$ (respectively, $(S,\oplus, \cdot,\, \ast)$) satisfies the axiom (\ref{lw2}): $(z+y)x=(z+x^{\ast})x+yx$ (respectively, $(z\oplus y)x=(z\oplus x^{\ast})x\oplus yx$).
\item[(4)]$(S,\cdot,\, \ast)$ satisfies the axiom $x=x^{\ast}$.
\end{itemize}
Thus, $(S,+, \cdot,\, \ast)$ (respectively, $(S,\oplus, \cdot,\,\ast)$) is a left (or equivalently, right, two-sided) regular ${\star}$-semibrace if and only if $(S,\cdot,\ast)$ satisfies the axiom $x=x^{\ast}$.
\end{prop}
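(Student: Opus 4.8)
The plan is to make (4) the hub of the equivalence. The implication (4) $\Longrightarrow$ (1), (2), (3) is the easy half: if $x=x^\ast$ for all $x$, then Corollary \ref{zhang1} shows $(S,\cdot,\ast)$ is commutative and completely regular, orthodox and locally inverse, and moreover $a+b=a^\ast b^\ast=ab$ while $a\oplus b=b^\ast a^\ast=ba$, which are precisely the operations treated in Proposition \ref{lw3}. Since $(S,\cdot,\ast)$ is completely regular, orthodox and locally inverse, that proposition immediately yields that $(S,+,\cdot,\ast)$ and $(S,\oplus,\cdot,\ast)$ are left (equivalently right, two-sided) regular $\star$-semibraces, i.e. exactly (1), (2) and (3). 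The substance is therefore to prove that each of (1), (2), (3) forces $x=x^\ast$; by the left--right duality between $+$ and $\oplus$ (and between (\ref{lw1}) and (\ref{lw2})) it suffices to treat the $+$-versions, so I will handle (1) $\Longrightarrow$ (4) and (2) $\Longrightarrow$ (4), the case (3) $\Longrightarrow$ (4) being dual. The first step in each is to expand the relevant axiom using (\ref{shishi1.2}): associativity of $(S,+)$ becomes the single identity $bac^\ast=a^\ast cb$ for all $a,b,c$, while (\ref{lw1}) for $+$, after renaming the free variable $y^\ast\mapsto y$, becomes $xyz^\ast=yx^\ast z(x^\ast)^2$ for all $x,y,z$.

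For (1) $\Longrightarrow$ (4) I would work with $bac^\ast=a^\ast cb$. Putting $c=a$ gives $baa^\ast=a^\ast ab$, and then the two substitutions $b=a^\ast a$ and $b=aa^\ast$ collapse, through the common term $a^\ast aaa^\ast$ (using $aa^\ast a=a$ from Remark \ref{inverse-regular*}), to $a^\ast a=aa^\ast$; by Lemma \ref{zicu10} the semigroup is Clifford. With the projection $e_a:=aa^\ast=a^\ast a$ now a two-sided local identity for $a$ and $a^\ast$, substituting $c=e_a$ yields $ba=a^\ast b$ for all $a,b$, whence $b=a$ gives $a^2=a^\ast a=e_a$. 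Finally, evaluating $e_a a$ in two ways — as $aa^\ast a=a$ and as $a^\ast a\cdot a=a^\ast a^2=a^\ast aa^\ast=a^\ast$ — forces $a=a^\ast$.

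For (2) $\Longrightarrow$ (4) I start from $xyz^\ast=yx^\ast z(x^\ast)^2$. Setting $z=x$ and simplifying the right-hand side via $x^\ast x(x^\ast)^2=(x^\ast)^2$ produces $xyx^\ast=y(x^\ast)^2$; applying the involution $\ast$ and renaming converts this to $xyx^\ast=x^2y$. Taking $y=x^\ast x$ in the latter and using $xx^\ast x=x$ gives the key relation $x^2=xx^\ast$. Applying this relation to $x^\ast$ in place of $x$, and applying $\ast$ to it, yields $(x^\ast)^2=x^\ast x$ and $(x^\ast)^2=xx^\ast$ respectively, so $x^\ast x=xx^\ast$ (Clifford again). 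Then $x^2=xx^\ast=x^\ast x=e_x$, and exactly the same two-way evaluation of $e_x x$ as above delivers $x=x^\ast$. The implication (3) $\Longrightarrow$ (4) is obtained verbatim after replacing (\ref{lw1}) by (\ref{lw2}) and $+$ by $\oplus$. The concluding ``Thus'' then follows by definition, since a left (resp. right, two-sided) regular $\star$-semibrace structure amounts to (1) together with (2) (resp. (3), both).

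The main obstacle lies entirely in the structural half of each hard implication: one must manufacture the commutativity of projections $aa^\ast=a^\ast a$ out of a single ``mixed'' identity, and the precise sequence of substitutions ($c=a$, then $b=a^\ast a$ and $b=aa^\ast$ for associativity; $z=x$, then the involution, then $y=x^\ast x$ for (\ref{lw1})) is what must be found, since none of these is dictated in advance. Once the Clifford property and the relation $x^2=xx^\ast=e_x$ are secured, the passage to $x=x^\ast$ is forced uniformly by computing $e_x x$ in two ways, so this final step is routine and identical across all three implications.
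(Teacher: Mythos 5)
Your proof is correct, and in the two implications you carry out in full it takes a genuinely different computational path from the paper's. For (1)\,$\Rightarrow$\,(4) the paper substitutes $x=z=aa^\ast,\,y=a^\ast$, then $y=z=a,\,x=a^\ast$, then $y=z=a,\,x=a^\ast a$ into the expanded identity and juggles powers ($a^\ast=a^3$, $a=a^5$, $a^2=a^4$) before concluding $a=a^3=a^\ast$; you instead extract the Clifford identity $aa^\ast=a^\ast a$ first, then the strong intermediate law $ba=a^\ast b$, and finish by evaluating $e_aa$ two ways. For (2)\,$\Rightarrow$\,(4) the paper again works through $a^\ast=a^5$, $a=a^7$, $a^2=a^6$, whereas your route via $xyx^\ast=x^2y$ and $x^2=xx^\ast$ is shorter; I checked every substitution in both of your arguments and they are all valid, as is your use of Corollary \ref{zhang1} together with Proposition \ref{lw3} for the converse direction (a cleaner justification than the paper's bare ``it is easy to see''). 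Two bookkeeping caveats. First, the claim that ``left--right duality'' reduces the $\oplus$-statements to the $+$-statements is an overstatement for (2) and (3): the available symmetries (passing to the opposite multiplication and conjugating by $\ast$) yield $+$-(2)\,$\Leftrightarrow$\,$+$-(3) and $\oplus$-(2)\,$\Leftrightarrow$\,$\oplus$-(3), but they never carry the $+$-family onto the $\oplus$-family, since the expanded $\oplus$-identities mix the addition built from $\cdot^{op}$ with the original multiplication; the $\oplus$ case therefore needs its own (entirely analogous) computation, which is why the paper says ``can be proved similarly'' rather than invoking a formal duality. Second, your description of (3)\,$\Rightarrow$\,(4) as obtained by ``replacing (\ref{lw1}) by (\ref{lw2}) and $+$ by $\oplus$'' literally describes the $\oplus$-version of (3); the reduction that actually works, and is what the paper does, is that the expanded $+$-(3) identity is the $\ast$-image, after renaming variables, of the expanded $+$-(2) identity. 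Neither caveat affects the substance of the argument, only which cases are genuinely covered by symmetry versus by a parallel calculation.
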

\begin{proof}We only prove the case for  $(S, +, \cdot,\,  \ast)$, the case for  $(S,\oplus, \cdot,\,  \ast)$ can be proved similarly.

(1) $\Longrightarrow$ (4). Assume that (1) holds. Then $(S,\cdot,\, \ast)$ satisfies the axiom
\begin{equation}\label{2.9.1}
yxz^{\ast}=(x+y)+z=x+(y+z)=x^{\ast}zy.
\end{equation}
Let $a\in S$ and take $x=z=aa^{\ast},y=a^{\ast}$ in (\ref{2.9.1}). Then
$a^{\ast}=a^{\ast}(aa^{\ast})(aa^{\ast})^{\ast}=(aa^{\ast})^{\ast}aa^{\ast}a^{\ast} =aa^{\ast}a^{\ast}.$
Let $y=z=a,x=a^{\ast}$ in (\ref{2.9.1}). Then $aa^{\ast}a^{\ast}=(a^{\ast})^{\ast}aa=a^{3}$. This implies that $a^{\ast}=a^{3}$, and so $a=aa^{\ast}a=aa^{3}a=a^{5}.$ Let $y=z=a,x=a^{\ast}a$  in (\ref{2.9.1}). Then $a^{\ast}aaa=(aa^{\ast})^{\ast}aa=aa^{\ast}aa^{\ast}=aa^{\ast}$. So
$a^{2}=aa=aa^{5}=a^{3}a^{3}=a^{\ast}a^{3}=aa^{\ast}=aa^{3}=a^{4}.$ Thus $a=a^{5}=a^{4}a=a^{2}a=a^{3}=a^{\ast}.$

(2) $\Longrightarrow$ (4). Assume that (2) holds.  Then $(S,\cdot,\, \ast)$ satisfies the axiom
\begin{equation}\label{2.9.2}
xy^{\ast}z^{\ast}=x(y+z)=xy+x(x^{\ast}+z)=xy+x(x^{\ast})^{\ast}z^{\ast}=(xy)^{\ast}(x(x^{\ast})^{\ast}z^{\ast})^{\ast}=y^{\ast}x^{\ast}zx^{\ast}x^{\ast}.
\end{equation}
Let $a\in S$ and take $x=a^{\ast},y=a^{\ast},z=a$ in (\ref{2.9.2}). Then
\begin{eqnarray*}
a^{\ast}=a^{\ast}aa^{\ast}=a^{\ast}(a^{\ast})^{\ast}a^{\ast}=(a^{\ast})^{\ast}(a^{\ast})^{\ast}a(a^{\ast})^{\ast}(a^{\ast})^{\ast}=aaaaa=a^{5},
\end{eqnarray*}
and so $a=aa^{\ast}a=aa^{5}a=a^{7}$. Take $x=a^{\ast}a,y=z=a^{\ast}$ in (\ref{2.9.2}). Then $a^{\ast}a(a^{\ast})^{\ast}(a^{\ast})^{\ast}=(a^{\ast})^{\ast}(a^{\ast}a)^{\ast}a^{\ast}(a^{\ast}a)^\ast (a^{\ast}a)^\ast.$
That is, $a^{\ast}aaa=aa^{\ast}aa^{\ast}a^{\ast}a=aa^{\ast}a^{\ast}a$. Since $a^{\ast}=a^{5}$ and  $a=a^{7}$,
$$a^{2}=aa^{7}=a^{8}=a^{\ast}aaa=aa^{\ast}a^{\ast}a=aa^{5}a^{5}a=a^{7}a^{5}=aa^{5}=a^{6},$$
and so $a=a^{7}=a^{5}a^{2}=a^{5}a^{6}=a^{5}aa^{5}=a^{\ast}aa^{\ast}=a^{\ast}$.

(3) $\Longrightarrow$ (4). Assume that (3) is true. Then $(S,\cdot, \ast)$ satisfies the axiom
\begin{equation}\label{2.9.3}
z^{\ast}y^{\ast}x=(z+y)x=(z+x^{\ast})x+yx=z^{\ast}(x^{\ast})^{\ast}x+yx=(z^{\ast}xx)^{\ast}(yx)^{\ast}=x^{\ast}x^{\ast}zx^{\ast}y^{\ast}.
\end{equation}
Since $\ast$ is an involution, (\ref{2.9.3}) is equivalent to its dual axiom: $xy^{\ast}z^{\ast}=y^{\ast}x^{\ast}zx^{\ast}x^{\ast}.$ This axiom is exactly (\ref{2.9.2}). By the proof of ``(2) $\Longrightarrow$ (4)", $(S,\cdot,\, \ast)$ satisfies the axiom $x=x^{\ast}$.

(4) $\Longrightarrow$ (1), (2), (3). If (4) holds, then by Corollary \ref{zhang1}, it is easy to see that (\ref{2.9.1}), (\ref{2.9.2}) and  (\ref{2.9.3}) hold. That is,  (1), (2) and (3) are true.
\end{proof}

\begin{prop}\label{2.11}
Let $(S,\cdot,\,  \ast)$ be a regular $\star$-semigroup. Define the binary operations ``$+$"  and $``\oplus"$on $S$ as follows: For all $a,b\in S$,
$a+b=a^{\ast}b,\,\,\, a\oplus b=ab^\ast.$  Then the following statements hold:
\begin{itemize}
\item[(A)] $(S, +, \cdot,\, \ast)$ (respectively, $(S, \oplus, \cdot,\, \ast)$) satisfies the axiom (\ref{lw1}): $x(y+z)=xy+x(x^{\ast}+z)$ (respectively, the axiom (\ref{lw2}): $(z\oplus y)x=(z\oplus x^{\ast})x\oplus yx$) if and only if $(S,\cdot, \ast)$ is commutative.
\item[(B)]  The following statements are equivalent:
\begin{itemize}
\item[(1)]$(S, +)$ (respectively, $(S, \oplus)$) is a  semigroup.
\item[(2)]$(S,+, \cdot,\, \ast)$ (respectively, $(S, \oplus, \cdot, \ast)$) satisfies the axiom (\ref{lw2}): $(z+y)x=(z+x^{\ast})x+yx$ (respectively, (\ref{lw1}): $x(y\oplus z)=xy\oplus x(x^{\ast}\oplus z)$).
\item[(3)]$(S,\cdot,\, \ast)$ satisfies the axiom $x=x^{\ast}$.
\end{itemize}
\item[(C)] If item (3) in Part (B) holds, then  $(S,\cdot, \ast)$ is commutative. Thus, $(S,+, \cdot,\, \ast)$ (respectively, $(S, \oplus, \cdot,\, \ast)$)  is a left (or equivalently, right, two-sided) regular ${\star}$-semibrace if and only if $(S,\cdot,\ast)$ satisfies the axiom $x=x^{\ast}$.
\end{itemize}
\end{prop}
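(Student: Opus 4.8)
The plan is to translate each structural requirement on the three operations into an identity living purely in the regular $\star$-semigroup $(S,\cdot,\ast)$, and then read off the three claimed characterizations. Since $\ast$ is an involutive anti-automorphism, applying it throughout interchanges left and right multiplication and sends the operation $+$ (with $a+b=a^\ast b$) to the operation $\oplus$ (with $a\oplus b=ab^\ast$); under this symmetry the axiom (\ref{lw1}) for $+$ corresponds to (\ref{lw2}) for $\oplus$, and associativity of $(S,+)$ corresponds to associativity of $(S,\oplus)$. Hence I would only treat the $+$-statements and obtain the $\oplus$-statements by this dual argument, exactly as in Proposition \ref{2.9}.

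For Part (B), expanding associativity of $(S,+)$ gives $(a+b)+c=b^\ast ac$ and $a+(b+c)=a^\ast b^\ast c$, so statement (1) amounts to the identity $x^\ast yz=y^\ast x^\ast z$ for all $x,y,z$; expanding (\ref{lw2}) likewise shows statement (2) amounts to $z^\ast yx=x^\ast xzyx$. The implication (3)$\Rightarrow$(1),(2) is immediate from Corollary \ref{zhang1}: the hypothesis $x=x^\ast$ forces $(S,\cdot,\ast)$ to be commutative with $a=a^3$, and both identities then collapse to a rearrangement of commuting factors. For (1)$\Rightarrow$(3) I would substitute $(x,y,z)=(a,a,a^\ast),\,(a,a^\ast,a),\,(a^\ast,a,a)$ into $x^\ast yz=y^\ast x^\ast z$, obtaining respectively $a=a^3$, then $(a^\ast)^2a=a$ (whose $\ast$-image is $a^\ast a^2=a^\ast$), and $a^3=a^\ast a^2$; chaining these yields $a=a^3=a^\ast a^2=a^\ast$. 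For (2)$\Rightarrow$(3) the substitutions $(x,y,z)=(a^\ast,a,a),\,(a^\ast,a,a^\ast)$ into $z^\ast yx=x^\ast xzyx$ give $a^2a^\ast=a^\ast$ and $a^2a^\ast=a(a^\ast)^2$, whence $a(a^\ast)^2=a^\ast$; since the $\ast$-image of the first identity is $a(a^\ast)^2=a$, again $a=a^\ast$. These are elementary power computations in the spirit of Proposition \ref{2.9}.

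Part (A) is the heart of the proposition. Expanding (\ref{lw1}) shows it is equivalent to $xy^\ast z=y^\ast x^\ast x^2 z$, i.e. (after replacing $y$ by $y^\ast$) to the identity $xyz=yx^\ast x^2 z$. The direction ``commutative $\Rightarrow$ (\ref{lw1})'' is direct: commutativity gives $x^\ast x^2=xx^\ast x=x$, so the right-hand side becomes $yxz=xyz$. For the reverse direction I would first specialize $x$ to a projection $e\in P(S,\cdot)$, which turns the identity into $eyz=yez$; applying $\ast$ and relabelling yields the companion identity $zye=zey$. Setting $z=e$ in each produces $eye=ye$ and $eye=ey$, so $ey=ye$: \emph{every projection is central}. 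In particular $x^\ast x^2=x^\ast x\cdot x=x\cdot x^\ast x=xx^\ast x=x$, so the identity simplifies to $xyz=yxz$ for all $x,y,z$. To convert this into genuine commutativity I would cancel on the right by the projection $f=(xy)^\ast xy$, which by centrality of projections equals $x^\ast x\,y^\ast y=(yx)^\ast yx$; then $xy=xyf=yxf=yx$, where the outer equalities use $xy(xy)^\ast xy=xy$ and $yx(yx)^\ast yx=yx$. I expect this final cancellation, together with the passage to central projections, to be the main obstacle, since one cannot simply cancel the trailing variable $z$ in $xyz=yxz$ without exploiting the $\star$-regular structure to manufacture a suitable right identity.

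Finally, Part (C) records that $x=x^\ast$ forces commutativity by Corollary \ref{zhang1}, after which the equivalences assemble as follows. By Definition \ref{left regular*}, $(S,+,\cdot,\ast)$ is a left regular $\star$-semibrace precisely when $(S,+)$ is a semigroup and (\ref{lw1}) holds. By Part (B) the first condition is equivalent to $x=x^\ast$, and $x=x^\ast$ already implies commutativity and hence (\ref{lw1}) by Part (A); conversely any left regular $\star$-semibrace has $(S,+)$ a semigroup and so satisfies $x=x^\ast$. Thus the left (and, by the same bookkeeping together with Part (B), the right and two-sided) regular $\star$-semibrace conditions all reduce to the single axiom $x=x^\ast$, and the $\oplus$-statements follow by the dual argument indicated above.
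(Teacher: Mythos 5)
Your proof is correct, and the overall strategy (rewrite each structural condition as an identity in $(S,\cdot,\ast)$, then specialize variables) matches the paper's; the genuine divergence is in the reverse direction of Part (A). There the paper first specializes to projections to get $ef=fef\in P(S,\cdot)$, concludes via Lemma \ref{zhengzexingbanqun5} that $(S,\cdot,\ast)$ is inverse, proves complete regularity with a second substitution, invokes Lemma \ref{zicu10} to obtain the Clifford condition (\ref{zhongxin}), and finally extracts commutativity from one more substitution ($x=b$, $y=a^{\ast}$, $z=b^{\ast}b$) in the original identity. You instead prove centrality of projections in one stroke from the pair of identities $eyz=yez$ and $zye=zey$, which collapses the axiom to $xyz=yxz$, and then finish with the cancellation $xy=xy\,f=yx\,f=yx$ for $f=(xy)^{\ast}xy=(yx)^{\ast}yx$; this last step is the right way to handle the fact that one cannot simply cancel the trailing $z$. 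Your route is shorter and essentially self-contained, bypassing the inverse/completely-regular/Clifford chain, at the cost of not recording along the way that such an $S$ is a Clifford semigroup (a fact the paper's argument produces as a by-product and reuses in neighbouring propositions). Your substitutions in Part (B) differ in detail from the paper's (which takes $x=a$, $y=z=a^{\ast}$ and squeezes out $a=a^{\ast}$ through powers of $a$) but are of exactly the same elementary character, and your Part (C) and the $\ast$-duality reduction of the $\oplus$-statements to the $+$-statements coincide with the paper's treatment.
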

\begin{proof}We only prove the case for  $(S, +, \cdot,\,  \ast)$, the case for  $(S,\oplus, \cdot,\,  \ast)$ can be proved similarly.
We first prove part (A).  Assume that $(S, +, \cdot,\, \ast)$ satisfies the axiom (\ref{lw1}). Then $(S,\cdot,\, \ast)$ satisfies the axiom
\begin{equation}\label{2.12.1}
xy^{\ast}z=x(y+z)=xy+x(x^{\ast}+z)=xy+x(x^{\ast})^{\ast}z=(xy)^{\ast}x(x^{\ast})^{\ast}z=y^{\ast}x^{\ast}xxz.
\end{equation}
Let $e,f\in P(S, \cdot)$ and take $x=e,y=z=f$ in (\ref{2.12.1}). Then $ef=ef^\ast f=f^{\ast\ast}eeef=fef\in P(S, \cdot)$,  and  so  $ef=fe$ by Lemma \ref{1.2} (1), (2). This implies that $(S,\cdot,\, \ast)$ is inverse by Lemma \ref{zhengzexingbanqun5}. Let $a\in S$ and take $x=y=z=a$ in (\ref{2.12.1}). Then
$a=aa^{\ast}a=a^{\ast}a^{\ast}aaa$, and so $a^{\ast}=(a^{\ast}a^{\ast}aaa)^{\ast}=a^{\ast}a^{\ast}a^{\ast}aa$. Furthermore, we have $a^{\ast}(a^{\ast}a)=a^{\ast}a^{\ast}a^{\ast}aa(a^{\ast}a)=a^{\ast}a^{\ast}a^{\ast}aa=a^{\ast}$ and so $a(a^\ast a^\ast a)aa^\ast=aa^\ast aa^\ast=aa^\ast$.
This implies that $(S,\cdot,\, \ast)$ is completely regular by Lemma \ref{zicu7}. So Lemma \ref{zicu10} gives that $(S,\cdot,\, \ast)$ is a Clifford semigroup, and so
\begin{equation}\label{zhongxin}
\mbox{\bf Clifford condition}\hspace{1cm} ex=xe  \mbox{ for all } x\in S  \mbox{ and } e\in E(S, \cdot).
\end{equation}
Let $a,b\in S$ and take $x=b, y=a^{\ast}, z=b^{\ast}b$ in (\ref{2.12.1}). Then $$ba=bb^{\ast}ba=bab^{\ast}b=b(a^{\ast})^{\ast}b^{\ast}b=(a^{\ast})^{\ast}b^{\ast}bbb^{\ast}b=ab^{\ast}bb=abb^{\ast}b=ab.$$
This gives that $(S,\cdot, \ast)$ is commutative.

Conversely, assume that $(S,\cdot, \ast)$ is commutative and $x,y,z\in S$. Then $y^{\ast}x^{\ast}xxz=xx^{\ast}xy^{\ast}z=xy^{\ast}z$ and so (\ref{2.12.1}) holds. That is, $(S, +, \cdot,\, \ast)$ satisfies (\ref{lw1}).

Now we prove Part (B).

(1) $\Longrightarrow$ (3). Assume that (1) holds. Then $(S,\cdot,\, \ast)$ satisfies the axiom
\begin{equation}\label{2.11.1}
y^{\ast}xz=(x^{\ast}y)^{\ast}z=x^{\ast}y+z=(x+y)+z=x+(y+z)=x+y^{\ast}z=x^{\ast}y^{\ast}z.
\end{equation}
Let $a\in S$ and take $x=a,y=z=a^{\ast}$ in (\ref{2.11.1}). Then
$aaa^{\ast}=(a^{\ast})^{\ast}aa^{\ast}=a^{\ast}(a^{\ast})^{\ast}a^{\ast}=a^{\ast}aa^{\ast}=a^{\ast}.$
This shows  $a=(aaa^{\ast})^{\ast}=(a^{\ast})^{\ast}a^{\ast}a^{\ast}=aa^{\ast}a^{\ast}=aa^{\ast}(aaa^{\ast})=aaa^{\ast}=a^{\ast},$ and so (3) holds.

(2) $\Longrightarrow$ (3). Assume that (2) holds. Then $(S,\cdot,\, \ast)$ satisfies the axiom
\begin{equation}\label{2.11.2}
z^{\ast}yx=(z+y)x=(z+x^{\ast})x+yx=z^{\ast}x^{\ast}x+yx=(z^{\ast}x^{\ast}x)^{\ast}yx=x^{\ast}xzyx.
\end{equation}
Let $a\in S$ and take $x=a, z=y=a^{\ast}$ in (\ref{2.11.2}). Then
$a=aa^{\ast}a=(a^{\ast})^{\ast}a^{\ast}a=a^{\ast}aa^{\ast}a^{\ast}a=a^{\ast}a^{\ast}a.$
This yields that $a^{\ast}=(a^{\ast}a^{\ast}a)^{\ast}=(a^{\ast}a)a=a^{\ast}a(a^{\ast}a^{\ast}a)=a^{\ast}a^{\ast}a=a,$ and so (3) holds.

(3)$\Longrightarrow$ (1), (2).  If (3) holds, then by Corollary \ref{zhang1}, it is easy to see that (\ref{2.11.1}) and (\ref{2.11.2}) are true, and so items (1) and (2) hold.

Finally, we consider Part (C). If item (3) in Part (B) holds,  then by Corollary \ref{zhang1}  $(S, \cdot, \ast)$ is commutative.  The final result follows from Parts (A) and (B).
\end{proof}

\begin{prop}\label{wliu2}
Let $(S,\cdot,\, \ast)$ be a regular $\star$-semigroup. Define the binary operations ``$+$"  and $``\oplus"$ on $S$ as follows: For all $a,b\in S$,
$a+b=b^{\ast}a,\,\,\, a\oplus b=ba^{\ast}.$ Then the following statements hold:
\begin{itemize}
\item[(1)] $(S, +)$ (respectively, $(S, \oplus)$) is a  semigroup if and only if $(S,\cdot,\, \ast)$ satisfies the axiom $x=x^{\ast}$.
\item[(2)] $(S, +, \cdot,\, \ast)$ (respectively, $(S, \oplus, \cdot,\, \ast)$) satisfies the axiom (\ref{lw2}): $(z+y)x=(z+x^{\ast})x+yx$ (respectively, the axiom (\ref{lw1}): $x(y\oplus z)=xy\oplus x(x^{\ast}\oplus z)$) if and only if $(S,\cdot, \ast)$ is commutative.
\item[(3)]If $(S, +, \cdot,\, \ast)$ satisfies  the axiom (\ref{lw1}): $x(y+z)=xy+x(x^{\ast}+z)$ (respectively, the axiom (\ref{lw2}): $(z\oplus y)x=(z\oplus x^{\ast})x \oplus yx$), then
$(S,\cdot,\, \ast)$ is completely regular, orthodox and locally inverse.
\item[(4)] If $(S,\cdot,\, \ast)$ satisfies the axiom $x=x^{\ast}$, then  $(S,\cdot, \ast)$ is commutative and (\ref{lw1}) (respectively, (\ref{lw2})) is satisfied. Thus, $(S,+, \cdot,\, \ast)$ (respectively, $(S, \oplus, \cdot,\, \ast)$) is a left (or equivalently, right, two-sided) regular ${\star}$-semibrace if and only if $(S,\cdot,\, \ast)$ satisfies the axiom $x=x^{\ast}$.
\end{itemize}
\end{prop}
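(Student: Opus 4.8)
The plan is to follow exactly the template of Propositions \ref{2.9} and \ref{2.11}: since $a+b=b^{\ast}a$, each of the four assertions can be rephrased as a single identity holding in the regular $\star$-semigroup $(S,\cdot,\ast)$, after which the structural Lemmas \ref{zhengzexingbanqun5}, \ref{zicu9}, \ref{zicu10}, Lemma \ref{1.2} and Corollary \ref{zhang1} finish the job. As in the earlier proofs, I would only treat the operation $+$; the operation $\oplus$ (with $a\oplus b=ba^{\ast}$) is its left--right mirror, obtained by reading every product in the opposite order, so it is handled by the same computations with the roles of (\ref{lw1}) and (\ref{lw2}) interchanged.

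For part (1), associativity of $+$ unwinds to the identity $c^{\ast}b^{\ast}a=b^{\ast}ca$ for all $a,b,c$ (because $(a+b)+c=c^{\ast}b^{\ast}a$ while $a+(b+c)=b^{\ast}ca$). First I would extract $a=a^{3}$: substituting $c=a^{\ast}$ gives $a\,b^{\ast}a=b^{\ast}a^{\ast}a$, and then $b=a^{\ast}$ in this yields $a^{3}=a$ (the right-hand side collapses to $a$ via $aa^{\ast}a=a$), while $b=a$ yields $a=(a^{\ast})^{2}a$. On the other hand, substituting $b=c^{\ast}$ in the associativity identity gives $c^{\ast}ca=c^{2}a$; with $a=c$ and $c^{3}=c$ this gives $c^{\ast}c^{2}=c$, hence, applying $\ast$ and renaming, $(a^{\ast})^{2}a=a^{\ast}$. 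Comparing $a=(a^{\ast})^{2}a$ with $(a^{\ast})^{2}a=a^{\ast}$ forces $a=a^{\ast}$. The converse is immediate: if $x=x^{\ast}$ then $(S,\cdot,\ast)$ is commutative by Corollary \ref{zhang1}, so $a+b=b^{\ast}a=ba=ab$ and $(S,+)$ inherits associativity from $(S,\cdot)$.

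Part (3) is the shortest. The axiom (\ref{lw1}) rewrites as $xz^{\ast}y=xzx^{\ast}xy$; putting $z=x$ and using $xx^{\ast}x=x$ gives $xx^{\ast}y=x^{2}y$, and then $y=x$ gives $x=x^{3}$. Left-multiplying $xx^{\ast}y=x^{2}y$ by $x$ and applying $x^{3}=x$ yields $xxx^{\ast}y=xy$, which is precisely condition (2) of Lemma \ref{zicu9}; hence $(S,\cdot,\ast)$ is completely regular, orthodox and locally inverse. For part (2), the axiom (\ref{lw2}) rewrites as $y^{\ast}zx=x^{\ast}y^{\ast}xzx$. Restricting to projections $e,f,g\in P(S,\cdot)$ and then setting $g=e$ gives $fe=efe$; since $efe\in P(S,\cdot)$ by Lemma \ref{1.2}(1), we get $fe\in P(S,\cdot)$, so $fe=ef$ by Lemma \ref{1.2}(2), and $(S,\cdot,\ast)$ is inverse by Lemma \ref{zhengzexingbanqun5}. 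Next, taking $z=x^{\ast}$ gives $y^{\ast}x^{\ast}x=x^{\ast}y^{\ast}x$, whose $\ast$-image (after renaming) is $b^{\ast}ba=b^{\ast}ab$; specializing $a=b^{\ast}$ after left-multiplying by $b$, and specializing $a=bb^{\ast}$, one obtains $bb^{\ast}=bb^{\ast}\cdot b^{\ast}b$ and $b^{\ast}b\cdot bb^{\ast}=b^{\ast}b$, so $bb^{\ast}=b^{\ast}b$ by commutativity of projections. Thus $(S,\cdot,\ast)$ is Clifford by Lemma \ref{zicu10}, and the projections $bb^{\ast}$ are now central; feeding this back into $b^{\ast}ba=b^{\ast}ab$ and multiplying on the right by $b^{\ast}$ yields $ab^{\ast}=b^{\ast}a$ for all $a,b$, i.e.\ $(S,\cdot,\ast)$ is commutative. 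Conversely, if $(S,\cdot,\ast)$ is commutative then $x^{\ast}y^{\ast}xzx=y^{\ast}z\,(x^{\ast}xx)=y^{\ast}zx$, so (\ref{lw2}) holds.

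Finally, part (4) and the closing equivalence follow by assembling the above. If $x=x^{\ast}$ then $(S,\cdot,\ast)$ is commutative with $a=a^{3}$ (Corollary \ref{zhang1}), and a direct check shows $xzx^{\ast}xy=zyx^{3}=zyx=xz^{\ast}y$, so (\ref{lw1}) holds; together with the converse of part (2) this gives both (\ref{lw1}) and (\ref{lw2}). Combined with part (1), I conclude that $(S,+,\cdot,\ast)$ is a left (equivalently right, equivalently two-sided) regular $\star$-semibrace exactly when $(S,+)$ is a semigroup, that is, exactly when $x=x^{\ast}$. The step I expect to be the main obstacle is the forward direction of part (2): passing from the raw identity all the way to full commutativity requires the three-step climb inverse $\Rightarrow$ Clifford $\Rightarrow$ commutative, and each step hinges on choosing the right specialization of $x,y,z$; keeping track of exactly where $\ast$ lands under the asymmetric operation $a+b=b^{\ast}a$ is where an index or order slip is most likely.
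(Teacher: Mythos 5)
Your proposal is correct and follows essentially the same route as the paper: each condition is unwound into a single identity in $(S,\cdot,\ast)$ and then disposed of by specialization together with the structural lemmas (Lemmas \ref{zhengzexingbanqun5}, \ref{zicu7}, \ref{zicu9}, \ref{zicu10} and Corollary \ref{zhang1}), passing through inverse $\Rightarrow$ Clifford $\Rightarrow$ commutative exactly as the paper does in part (2). The only differences are cosmetic — the paper handles part (1) by noting the identity coincides with (\ref{2.11.1}) and citing Proposition \ref{2.11}, and it uses slightly different substitutions in parts (2) and (3) — and all of your computations check out.
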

\begin{proof}We only prove the case for  $(S, +, \cdot,\,  \ast)$, the case for  $(S,\oplus, \cdot,\,  \ast)$ can be proved similarly.

(1) The associativity of $``+"$ is equivalent to the axiom
\begin{equation}\label{2.13.1}
y^{\ast}zx=(z^{\ast}y)^{\ast}x=x+(y+z)=(x+y)+z=y^{\ast}x+z=z^{\ast}y^{\ast}x.
\end{equation}
Clearly, (\ref{2.13.1}) is exactly (\ref{2.11.1}). By the proof of Proposition \ref{2.11}, the result follows.

(2) Assume that $(S,+, \cdot,\, \ast)$ satisfies (\ref{lw2}). Then $(S,\cdot,\ast)$ satisfies the axiom
\begin{equation}\label{2.14.1}
y^{\ast}zx=(z+y)x=(z+x^{\ast})x+yx=(x^{\ast})^{\ast}zx+yx=(yx)^{\ast}xzx=x^{\ast}y^{\ast}xzx.
\end{equation}
Let $e,f\in P(S, \cdot)$ and take $x=f, y=z=e$ in (\ref{2.14.1}). Then $ef=e^\ast ef=f^\ast e^\ast fef=fefef\in P(S, \cdot)$, and so $ef=fe$ by Lemma \ref{1.2} (1) and (2). This implies that $(S,\cdot,\ast)$ is inverse by Lemma \ref{zhengzexingbanqun5}. Let $a\in S$ and take $x=a, y=z=a^{\ast}$ in (\ref{2.14.1}).  Then $a=aa^{\ast}a=(a^{\ast})^{\ast}a^{\ast}a=a^{\ast}(a^{\ast})^{\ast}aa^{\ast}a=a^{\ast}aa,$ and so $aa^\ast=(aa^\ast)\cdot a \cdot a^\ast=aa^\ast(a^\ast aa)a^\ast$. This shows that   $(S,\cdot,\ast)$  is completely regular by Lemma \ref{zicu7}. Thus $(S,\cdot,\, \ast)$ is a Clifford semigroup by Lemma \ref{zicu10}.  Let $a, b\in S$ and take $y=a^{\ast}, z=b, x=b^{\ast}$ in (\ref{2.14.1}). Then $abb^{\ast}=(a^{\ast})^{\ast}bb^{\ast}=(b^{\ast})^{\ast}(a^{\ast})^{\ast}b^{\ast}bb^{\ast}=bab^{\ast}.$ So $ab=(abb^{\ast})b=(bab^{\ast})b=bb^{\ast}ba\overset{(\ref{zhongxin})}{=}ba.$ This gives that $(S,\cdot, \ast)$ is commutative.

Conversely, if $(S,\cdot, \ast)$ is commutative, then for all $x, y, z\in S$, we have $x^{\ast}y^{\ast}xzx=y^{\ast}zxx^{\ast}x=y^{\ast}zx$. This implies that (\ref{2.14.1}) holds. That is, $(S,+,\cdot,\, \ast)$ satisfies (\ref{lw2}).

(3) Assume that $(S,+, \cdot,\, \ast)$ satisfies (\ref{lw1}).  Then $(S,\cdot,\,\ast)$ satisfies the axiom:
\begin{equation}\label{2.15.1}
xz^{\ast}y=x(y+z)=xy+x(x^{\ast}+z)=xy+xz^{\ast}x^{\ast}=(xz^{\ast}x^{\ast})^{\ast}xy=xzx^{\ast}xy.
\end{equation}
Let $a,b\in S$ and take $x=y=z=a$ in (\ref{2.15.1}). Then $a=aa^{\ast}a=aaa^{\ast}aa=a^{3}$. On the other hand, let $x=a, y=b, z=aa^{\ast}$ in (\ref{2.15.1}). Then we have $a(aa^{\ast})^{\ast}b=aaa^{\ast}a^{\ast}ab$. That is, $aaa^{\ast}b=aaa^{\ast}a^{\ast}ab. $  Since $a=a^{3}$, we have $aaa^{\ast}a^{\ast}ab=aaa^{\ast}a^{\ast}a^{3}b=(aaa^{\ast}a^{\ast}aa)ab=aaab=a^{3}b=ab,$  which gives that $aaa^{\ast}b=ab$. By Lemma \ref{zicu9}, $(S,\cdot,\ast)$ is completely regular, orthodox and locally inverse.

(4) If $(S,\cdot,\, \ast)$ satisfies the axiom $x=x^{\ast}$, then    $(S, \cdot, \ast)$ is commutative by Corollary \ref{zhang1}. This implies that
$xzx^\ast xy=xx^\ast xzy=xzy=xz^\ast y$ for all $x,y,z\in S$. By (\ref{2.15.1}),  $(S,\cdot,\, \ast)$ satisfies the axiom (\ref{lw1}).
Now the final statement follows from (1), (2) and (3).
\end{proof}

\begin{remark}\label{zhuji1} In the environment of Proposition  \ref{wliu2} (3), we can not infer that $(S,\cdot,\,\ast)$ is commutative. For example,
let $X=\{1,2\}$. Define a binary operation $``\cdot"$ and a unary operation $``\ast"$ on $S=X\times X$ as follows: $$(\forall (a, b),(c, d)\in S)\,\,\, (a, b)(c,d)=(a,d),\,\, (a,b)^\ast=(b,a).$$ Then  $(S, \cdot,\, \ast)$ forms a regular $\star$-semigroup obviously.
Define the binary operations ``$+$" and  ``$\oplus$" on $S$ as follows: For all $x, y\in S$,  $x+y=y^{\ast}x,\,\, x\oplus y=yx^\ast.$ Then by (\ref{2.15.1}), we have
$$x(y+z)=xz^{\ast}y=xy=xzx^{\ast}xy=xy+x(x^{\ast}+z)$$
$$(\mbox{respectively},\, (z\oplus y)x=yz^\ast x=yx=yxx^\ast zx=(z\oplus x^{\ast})x \oplus yx)$$
for all $x, y, z\in S$. This implies that (\ref{lw1}) (respectively,  (\ref{lw2})) holds. Obviously, $(S,\cdot, \ast)$ is not commutative.

On the other hand, the following example shows that the converse of Proposition   \ref{wliu2} (3) is not true. Let $G=\{e,x,y\}$ be a cyclic group, where $e$ is the identity. Define $e^{\ast}=e, x^{\ast}=x^{-1}=y, y^{\ast}=y^{-1}=x.$ Obviously, $(G,\cdot,\, \ast)$ is a completely regular, orthodox, locally inverse and commutative regular ${\star}$-semigroup. Define ``$+$" and  ``$\oplus$" on $G$ as follows: For all $a,b\in G$, $a+b=b^{\ast}a,\,\, a\oplus b=ba^\ast.$
As $x(x+x)=xx^\ast x=xe=x$ and  $$xx+x(x^{\ast}+x)=y+xx^\ast y=y+y=y^\ast y=e,$$ it follows that $(G, +, \cdot,\, \ast)$ does not satisfy the axiom (\ref{lw1}).
Similarly, we have $(x\oplus x)x=x\not=e= (x\oplus x^\ast)x\oplus xx$, and so $(G, \oplus, \cdot,\, \ast)$ does not satisfy the axiom (\ref{lw2}).
\end{remark}
In the following statements, we consider another types of left (respectively, right, two sided) regular $\star$-semibraces.
\begin{prop}\label{lww3}Let $(S,\cdot,\, \ast)$ be a regular $\star$-semigroup. Define the binary operations ``$+$" and ``$\oplus$" on $S$ as follows: For all $a,b\in S$,
$a+b=aa^{\ast}b,\,\,  a\oplus b=ab^\ast b.$ Then the following statements hold:
\begin{itemize}
\item[(A)]The following statements are equivalent:
\begin{itemize}
\item[(1)] $(S,+)$ (respectively,  $(S,\oplus)$) is a semigroup.
\item[(2)] $(S, \cdot,\, \ast)$  is orthodox and locally inverse.
\item[(3)]$(S,+, \cdot,\, \ast)$ (respectively, $(S,\oplus, \cdot,\, \ast)$) satisfies the axiom (\ref{lw1}): $x(y+z)=xy+x(x^{\ast}+z)$ (respectively, the axiom (\ref{lw2}): $(z\oplus y)x=(z\oplus x^{\ast})x\oplus yx$).
\end{itemize}
\item[(B)]$(S,+, \cdot,\, \ast)$ (respectively, $(S,\oplus, \cdot,\,  \ast)$) satisfies the axiom (\ref{lw2}): $(z+y)x=(z+x^{\ast})x+yx$ (respectively, the axiom (\ref{lw1}): $x(y\oplus z)=xy\oplus x(x^{\ast}\oplus z)$) if and only if $(S,\cdot,\ \ast)$ is completely regular and locally inverse.
\item[(C)] $(S,+, \cdot,\, \ast)$ (respectively, $(S,\oplus, \cdot,\, \ast)$) is a left (respectively, right) regular $\star$-semibrace if and only if $(S,\cdot, \, \ast)$ is orthodox and locally inverse, and $(S,+, \cdot,\, \ast)$ (respectively, $(S,\oplus, \cdot,\, \ast)$) is   right (respectively, left) (or equivalently, two-sided) regular $\star$-semibrace if and only if $(S,\cdot,\, \ast)$ is completely regular, orthodox  and locally inverse.
\end{itemize}
\end{prop}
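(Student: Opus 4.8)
The plan is to obtain the final statement, Part~(C), directly from Parts~(A) and~(B), so I first record what must be proved and then sketch how those two parts are established. By Definition~\ref{left regular*}, calling $(S,+,\cdot,\,\ast)$ a \emph{left} regular $\star$-semibrace means precisely that $(S,+)$ is a semigroup, $(S,\cdot,\,\ast)$ is a regular $\star$-semigroup, and axiom (\ref{lw1}) holds; the right and two-sided versions replace or add (\ref{lw2}). Hence the whole content of (C) is the combination of the equivalences in (A) and (B) with this definition, and the genuine work lies in (A) and (B).

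For Part~(A) the first step is to translate everything into word identities in $(S,\cdot,\,\ast)$ via $a+b=aa^{\ast}b$. A short computation gives $x(x^{\ast}+z)=xz$, so (\ref{lw1}) becomes $xyy^{\ast}z=xyy^{\ast}x^{\ast}xz$, and the associativity of $+$ becomes $efec=efc$ for all $e,f\in P(S,\cdot)$ and $c\in S$ (taking $e=aa^{\ast}$, $f=bb^{\ast}$). The implications from ``orthodox and locally inverse'' to each identity are routine using $afgb=agfb$ from Lemma~\ref{zicu8}. The crucial reverse step is to start from $efec=efc$, apply $\ast$ to get its dual $defe=dfe$, and then for projections $e,f$ and arbitrary $a,b$ observe that $a\,efe\,b=aefb$ (first identity with $c=b$) while $a\,efe\,b=afeb$ (dual with $d=a$); thus $aefb=afeb$, and Lemma~\ref{zicu8} returns orthodoxy and local inverseness. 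The equivalence with (\ref{lw1}) then follows by specialising $x$ to a projection in $xyy^{\ast}z=xyy^{\ast}x^{\ast}xz$.

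For Part~(B) I would likewise translate (\ref{lw2}) into $zz^{\ast}yx=zz^{\ast}x^{\ast}x\,zz^{\ast}yx$. The direction from ``completely regular and locally inverse'' to this identity is handled using Lemma~\ref{zicu5} together with the commutation of projections inside the local submonoids $eSe$ (Lemma~\ref{zicu4}), while for the converse I would extract complete regularity through the substitution matching Lemma~\ref{zicu7} and local inverseness by feeding suitable projections into Lemma~\ref{zicu4}. I expect this converse to be the main obstacle of the whole proposition: unlike in (A), orthodoxy is \emph{not} available, so Lemma~\ref{zicu8} cannot be used to slide projections past one another, and one is forced to reason locally, keeping track of which projection bounds which.

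Granting (A) and (B), Part~(C) is pure bookkeeping. A left regular $\star$-semibrace on $(S,+,\cdot,\,\ast)$ requires both that $(S,+)$ be a semigroup and that (\ref{lw1}) hold; by (A) each of these is equivalent to ``$(S,\cdot,\,\ast)$ is orthodox and locally inverse'', so the left case is equivalent to exactly that condition. A right regular $\star$-semibrace additionally requires (\ref{lw2}); combining the equivalence just noted with (B) yields the conjunction of ``orthodox and locally inverse'' and ``completely regular and locally inverse'', i.e.\ completely regular, orthodox and locally inverse. The two-sided case adds (\ref{lw1}) on top of the right case, but that axiom is already guaranteed once this stronger condition holds, so two-sided coincides with right, giving the ``or equivalently'' clause. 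Finally, the $\oplus$-statements are the left--right duals, obtained by repeating the same computations for $a\oplus b=ab^{\ast}b$ (equivalently by passing to the opposite semigroup); this is exactly why in (A) and (B) the roles of (\ref{lw1}) and (\ref{lw2}) are interchanged for $\oplus$, and that interchange produces the swap of ``left'' and ``right'' in the statement of (C).
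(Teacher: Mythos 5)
Your overall architecture --- translate everything into $\ast$-semigroup identities, settle (A) and (B) with the lemmas of Section~2, and read off (C) as bookkeeping from Definition \ref{left regular*} --- is exactly the paper's, and your Part (A) is correct. Your step $(1)\Rightarrow(2)$ is in fact a genuinely different and cleaner argument than the paper's: the paper feeds specific projection substitutions separately into Lemma \ref{zicu6} (orthodoxy) and Lemma \ref{zicu4} (local inverseness), whereas you combine the associativity identity $efec=efc$ with its $\ast$-dual $defe=dfe$ to obtain the single permutation identity $aefb=afeb$ and invoke Lemma \ref{zicu8} once. Your closing of the cycle via $(2)\Rightarrow(3)$ (slide $x^\ast x$ past $yy^\ast$ by Lemma \ref{zicu8}) and $(3)\Rightarrow(1)$ (specialize $x$ to a projection in $xyy^\ast z=xyy^\ast x^\ast xz$) is also sound, and Part (C) is indeed immediate once (A) and (B) are in hand, exactly as in the paper.

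The one genuine gap is the converse direction of Part (B), which you leave as a sketch and flag as ``the main obstacle of the whole proposition''. It is not an obstacle: the translated axiom (\ref{lw2}) reads $zz^\ast yx=zz^\ast x^\ast x\,zz^\ast yx$, and setting $z=y$ collapses it to $yx=yy^\ast x^\ast x\,yx$, which is verbatim the characterizing identity of Lemma \ref{zicu5} (with the roles of $x$ and $y$ swapped); that lemma immediately returns ``completely regular and locally inverse''. Your proposed detour through Lemmas \ref{zicu7} and \ref{zicu4} is not a dead end, but it would amount to re-proving the hard half of Lemma \ref{zicu5} from scratch (the projection computation $efeg=(ege)(efe)(ege)(efe)g$ and the verification that $egefege\in P(S,\cdot)$ before Lemma \ref{1.2}(2) can be applied), and as written your proof of (B) is therefore incomplete. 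For the forward direction of (B) your plan --- Lemma \ref{zicu5} plus commuting projections inside the local submonoid $cc^\ast Scc^\ast$ via Lemma \ref{zicu4} --- matches the paper's computation with $y=cc^\ast b$, $x=a$.
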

\begin{proof}We only prove the case for  $(S, +, \cdot,\,  \ast)$, the case for  $(S,\oplus, \cdot,\,  \ast)$ can be proved similarly.

{\em Part (A)}:
(1) $\Longrightarrow$ (2). Let $(S,+)$ be a semigroup. Then $(S,\cdot,\ast)$ satisfies the axiom:
\begin{equation}\label{2.3}
  xx^{\ast}yy^{\ast}xx^{\ast}z=xx^{\ast}y(xx^{\ast}y)^{\ast}z=(x+y)+z=x+(y+z)=xx^{\ast}yy^{\ast}z.
\end{equation}
Let $e,f,g\in P(S, \cdot)$. Let $x=f, y=g, z=e$ in (\ref{2.3}).  Then we have $fgfe=ff^\ast gg^\ast ff^\ast e=ff^\ast gg^\ast e=fge$ by Lemma \ref{1.2} (1). Since $fgf\in P(S, \cdot)$ by  Lemma \ref{1.2} (1), we have
$fge=fgf e\in E(S, \cdot)$ by Lemma \ref{1.2} (4), and so $(S,\cdot,\, \ast)$ is orthodox by Lemma $\ref{zicu6}$. On the other hand, take $x=e, y=f$ and $z=ge$ in (\ref{2.3}). Then $efege=ee^\ast ff^\ast ee^\ast ge=ee^\ast ff^\ast ge=efge$ as $e$ is idempotent and $e^\ast=e$.  This together with Lemma \ref{1.2} (1) and the fact that $fgfe=fge$   implies that $efe, ege\in  P(S, \cdot)$ and
$efeege=efege=e(fge)=efgfe\in P(S,\cdot)$. By Lemma \ref{1.2} (2),  $efege=efe ege=ege efe=egefe$. Thus $(S,\cdot,\, \ast)$ is locally inverse by Lemma \ref{zicu4}.

(2) $\Longrightarrow$ (3). Let $(S,\cdot,\ast)$  be orthodox and locally inverse and $x,y,z\in S$. Then
$$xy+x(x^{\ast}+z)=xy(xy)^{\ast}x(x^{\ast}x^{\ast\ast}z)$$$$=xyy^{\ast}x^{\ast}xx^{\ast}xz=x(yy^{\ast}x^{\ast}x)z=x(x^{\ast}x yy^{\ast})z=xyy^{\ast}z=x(y+z)$$ by Lemma \ref{zicu8} and the fact that $x^\ast x, yy^\ast \in P(S)$. Thus (\ref{lw1}) holds.

(3) $\Longrightarrow$ (1). Assume that (\ref{lw1}) holds. Then by the last paragraph we  obtain that $xyy^{\ast}z=xyy^{\ast}x^{\ast}xz$ for all $x,y,z\in S$. Thus for all  $a,b,c\in S$,  we have
$$a+(b+c)=(aa^{\ast})bb^{\ast}c=aa^{\ast}bb^{\ast}(aa^{\ast})^{\ast}aa^{\ast}c =aa^{\ast}bb^{\ast}aa^{\ast}c=(aa^\ast b)(aa^\ast b)^\ast c=(a+b)+c.$$ This gives that $(S,+)$ is a semigroup.

{\em Part (B)}:  Assume that the axiom (\ref{lw2}) holds. Then the following axiom is satisfied:
\begin{equation}\label{2.4}
\begin{array}{cc}
zz^\ast yx=(z+y)x=(z+x^{\ast})x+yx\\[3mm]
=zz^\ast x^\ast x+yx =zz^\ast x^\ast x (zz^\ast x^\ast x)^\ast yx=zz^\ast x^\ast xzz^\ast yx.
\end{array}
\end{equation}
Let $z=y$ in (\ref{2.4}). Then $yx=yy^\ast yx=yy^\ast x^\ast x yy^\ast yx=yy^\ast x^\ast xyx$. By Lemma \ref{zicu5}, $(S,\cdot,\ast)$ is completely regular and locally inverse.  Conversely, if  $(S,\cdot,\ast)$ is completely regular and locally inverse, then the axiom $yy^\ast x^\ast xyx=yx$ holds by Lemma \ref{zicu5}. Let $a, b, c\in S$ and take $y=cc^\ast b$ and $x=a$. Then $cc^\ast b (cc^\ast b)^\ast a^\ast a cc^\ast b a=cc^\ast b a$. Since $(S,\cdot,\ast)$ is locally inverse and $cc^\ast, bb^\ast, a^\ast a\in P(S, \cdot)$,  we have $cc^\ast bb^\ast cc^\ast a^\ast a cc^\ast=cc^\ast a^\ast a cc^\ast bb^\ast cc^\ast$ by Lemma \ref{zicu4}. Moreover, $cc^\ast bb^\ast$ is idempotent by Lemma \ref{1.2} (4). Thus
$$cc^\ast b a=cc^\ast b (cc^\ast b)^\ast a^\ast a cc^\ast b a=cc^\ast b b^\ast cc^\ast a^\ast a cc^\ast b a$$$$= cc^\ast a^\ast a cc^\ast bb^\ast cc^\ast ba=cc^\ast a^\ast a cc^\ast bb^\ast cc^\ast bb^\ast ba=cc^\ast a^\ast a   cc^\ast bb^\ast ba=cc^\ast a^\ast a   cc^\ast ba.$$
By (\ref{2.4}), the axiom (\ref{lw2}) is satisfied.

{\em Part (C)}: It follows from Parts (A) and  (B) immediately.
\end{proof}

\begin{prop}\label{lww2}
Let $(S,\cdot,\, \ast)$ be a regular $\star$-semigroup. Define the binary operations``$+$" and  ``$\oplus$" on $S$ as follows: For all $a,b\in S$,
$a+b=a^{\ast}ba,\,\, a\oplus b=bab^\ast.$ Then the following statements are equivalent:
\begin{itemize}
\item[(1)]$(S, +, \cdot,\, \ast)$ (respectively, $(S, \oplus, \cdot,\, \ast)$) satisfies the axiom (\ref{lw1}): $x(y+z)=xy+x(x^{\ast}+z)$ (respectively, the axiom (\ref{lw1}): $x(y\oplus z)=xy\oplus x(x^{\ast} \oplus z)$).
\item[(2)]$(S, +, \cdot,\, \ast)$ (respectively, $(S, \oplus, \cdot,\, \ast)$) satisfies the axiom (\ref{lw2}): $(z+y)x=(z+x^{\ast})x+yx$ (respectively, the axiom (\ref{lw2}): $(z\oplus y)x=(z\oplus x^{\ast})x\oplus yx$).
\item[(3)]$(S,\cdot,\, \ast)$  is commutative.
\end{itemize}
If the above condition (3) holds,  then $(S, +)$ (respectively, $(S, \oplus)$) is a semigroup.  Thus $(S,+, \cdot,\, \ast)$ (respectively, $(S, \oplus, \cdot,\, \ast)$) is a left (or equivalently, right, two-sided) regular ${\star}$-semibrace if and only if $(S,\cdot,\, \ast)$  is commutative.
\end{prop}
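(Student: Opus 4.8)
The plan is to follow the pattern of Propositions \ref{2.11} and \ref{wliu2}: translate the two semibrace axioms into identities living purely in $(S,\cdot,\ast)$, settle $(3)\Rightarrow(1),(2)$ by direct verification, and extract commutativity from each of $(1)$ and $(2)$ through a chain of substitutions. Since $a\oplus b=(b^\ast+a^\ast)^\ast$, the $\oplus$-case is the $\ast$-mirror of the $+$-case, so I would only treat $(S,+,\cdot,\ast)$. Using $a+b=a^\ast b a$, a direct expansion shows that axiom (\ref{lw1}) is equivalent to
\[ x y^\ast z y = y^\ast x^\ast x x z x^\ast x y \qquad (\star) \]
while axiom (\ref{lw2}) is equivalent to the longer identity
\[ z^\ast y z x = x^\ast z^\ast x z\, y x\, z^\ast x^\ast z x \qquad (\star\star). \]

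For $(3)\Rightarrow(1),(2)$ I would assume $(S,\cdot,\ast)$ commutative; then Corollary \ref{zhang1} already gives that it is completely regular, orthodox and locally inverse, and, more usefully, that every word may be freely reordered. Using the elementary consequence $x^2x^\ast=xx^\ast x=x$ of commutativity, both sides of $(\star)$ collapse to $yy^\ast zx$ and both sides of $(\star\star)$ to $zz^\ast yx$, so $(\star)$ and $(\star\star)$ hold. The same reordering shows $a+b=aa^\ast b$ and $(a+b)+c=aa^\ast bb^\ast c=a+(b+c)$, so $(S,+)$ is a semigroup; hence $(S,+,\cdot,\ast)$ is a (two-sided) regular $\star$-semibrace, which also yields the concluding biconditional once $(1)\Rightarrow(3)$ is in hand.

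The heart of the argument is $(1)\Rightarrow(3)$. First I would substitute $x=e$, $y=z=f$ with $e,f\in P(S,\cdot)$ into $(\star)$: the left side reduces to $ef$ and the right side to the self-adjoint palindrome $fefef$, so $ef=fefef$, and applying $\ast$ gives $fe=(ef)^\ast=(fefef)^\ast=fefef=ef$. Thus projections commute and $(S,\cdot,\ast)$ is inverse by Lemma \ref{zhengzexingbanqun5}. Next, working modulo the inverse identities, I would feed further substitutions (for instance $x=y=z=a$, which yields $a^2=(a^\ast)^2a^4$, together with substitutions leaving one variable free) into Lemma \ref{zicu7} to force $xx^\ast=x^\ast x$; by Lemma \ref{zicu10} this makes $(S,\cdot,\ast)$ a Clifford semigroup, so all idempotents are central. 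In the Clifford setting $(\star)$ simplifies (using $x^\ast x x=x$ and centrality of $x^\ast x$) to $x y^\ast z y=y^\ast x z\,(x^\ast x)\,y$, and a final targeted substitution into this reduced identity collapses to $ab=ba$, giving $(3)$. The implication $(2)\Rightarrow(3)$ runs through the same three stages applied to $(\star\star)$.

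The main obstacle is the middle stage of $(1)\Rightarrow(3)$ (and its analogue for $(2)$): promoting ``projections commute'' to the full Clifford condition $xx^\ast=x^\ast x$ and then to genuine commutativity $ab=ba$. Unlike the inverse step, which is dispatched cleanly by the $\ast$-palindrome trick, this requires careful bookkeeping of several length-eight $\ast$-words and a judicious choice of the closing substitution, exactly as in the endgames of Propositions \ref{2.11} and \ref{wliu2}; the payoff is that once idempotents are central the words telescope. I expect no difficulty in $(3)\Rightarrow(1),(2)$ or in the $\oplus$-mirror, which I would handle by the stated duality.
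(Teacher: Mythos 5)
Your reduction of the two axioms to the pure $(\cdot,\ast)$-identities $(\star)$ and $(\star\star)$ is exactly the paper's (\ref{2.5.1}) and (\ref{2.5.2}); your verification of $(3)\Rightarrow(1),(2)$ and of the associativity of $``+"$ under commutativity is sound; and the observation $a\oplus b=(b^{\ast}+a^{\ast})^{\ast}$ correctly justifies disposing of the $\oplus$-case by duality. The first stage of your $(1)\Rightarrow(3)$ is also correct: putting $x=e$, $y=z=f$ with $e,f\in P(S,\cdot)$ into $(\star)$ gives $ef=fefef$, and since $fefef$ is a $\ast$-palindrome this forces $fe=ef$, hence $(S,\cdot,\ast)$ is inverse by Lemma \ref{zhengzexingbanqun5}. (The paper does not take this detour; it reaches the Clifford condition directly.)

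The decisive middle and closing stages, however, are only announced, and the one concrete substitution you offer for the middle stage does not work. With $x=y=z=a$, the identity $(\star)$ gives $a^{2}=(a^{\ast})^{2}a^{4}$; once $(S,\cdot,\ast)$ is known to be inverse this merely says $(b^{-1}b)b=b$ for $b=a^{2}$, a statement about squares that is nowhere near the Clifford condition $xx^{\ast}=x^{\ast}x$ for arbitrary $x$. The paper instead substitutes $x=a^{\ast}$, $y=z=aa^{\ast}$ into $(\star)$ to obtain $a^{\ast}=aa^{\ast}a^{\ast}$, whence $a^{\ast}a=aa^{\ast}\cdot a^{\ast}a\in P(S,\cdot)$; Lemma \ref{1.2} (2) then gives $aa^{\ast}a^{\ast}a=a^{\ast}aaa^{\ast}$, replacing $a$ by $a^{\ast}$ gives $a^{\ast}aaa^{\ast}=aa^{\ast}$, so $a^{\ast}a=aa^{\ast}$ and Lemma \ref{zicu10} yields the Clifford property. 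The closing substitution is $x=b$, $y=a^{\ast}$, $z=a$, which, using centrality of idempotents, collapses $(\star)$ to $ba=ab$. The corresponding substitutions for $(2)\Rightarrow(3)$ (namely $x=a$, $y=z=aa^{\ast}$ and then $x=a$, $y=b$, $z=a^{\ast}$ in $(\star\star)$) are likewise absent from your proposal. Since these substitutions are precisely where the proof lives, and the only candidate you exhibit is vacuous, the proposal as written has a genuine gap --- even though the three-stage architecture you describe (inverse/Clifford, then commutativity, with $(3)\Rightarrow(1),(2)$ by direct reordering) is the right one and is essentially the route the paper follows.
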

\begin{proof} We only prove the case for  $(S, +, \cdot,\,  \ast)$, the case for  $(S,\oplus, \cdot,\,  \ast)$ can be proved similarly.

(1) $\Longrightarrow$ (3). Assume that (1) holds. Then $(S,\cdot,\ast)$ satisfies the axiom:
\begin{equation}\label{2.5.1}
xy^{\ast}zy=x(y+z)=xy+x(x^{\ast}+z)=xy+xx^{\ast\ast} zx^\ast=y^{\ast}x^{\ast}xxzx^{\ast}xy.
\end{equation}
Let $a\in S$ and take  $x=a^\ast$, $y=z=aa^\ast$ in (\ref{2.5.1}). Then
$$a^{\ast}=a^{\ast}aa^{\ast}=a^{\ast}(aa^{\ast})^{\ast}(aa^{\ast})(aa^{\ast})
\overset{(\ref{2.5.1})}{=}(aa^{\ast})^{\ast}a^{\ast\ast}a^{\ast}a^{\ast}(aa^{\ast})a^{\ast\ast}a^{\ast}(aa^{\ast})= aa^{\ast}a^{\ast},$$
and so $aa^\ast a^\ast a=a^\ast a\in P(S, \cdot)$. Replacing $a$ by $a^\ast$, we have $a^\ast aaa^\ast=a^\ast a^{\ast\ast} a^{\ast\ast} a^\ast=a^{\ast\ast}a^\ast=aa^\ast$.
By Lemma \ref{1.2} (2), we have $a^\ast a=aa^\ast a^\ast a=a^\ast a aa^\ast=aa^\ast$. Thus $(S,\cdot,\, \ast)$ is a Clifford semigroup by Lemma \ref{zicu10}.
Let $a,b\in S$ and take $x=b$, $y=a^\ast$ and  $z=a$ in (\ref{2.5.1}). Then
$$ba=baa^\ast a\overset{aa^\ast=a^\ast a}{=}baaa^\ast=ba^{\ast\ast}aa^\ast\overset{(\ref{2.5.1})}{=}a^{\ast\ast} b^\ast bb ab^\ast ba^\ast=ab^{\ast}bbab^{\ast}ba^{\ast}$$$$\overset{(\ref{zhongxin})}{=}ab^{\ast}bbb^{\ast}baa^{\ast}\overset{bb^\ast=b^\ast b}{=}abb^{\ast}bb^{\ast}baa^{\ast}=abaa^\ast\overset{(\ref{zhongxin})}{=}aaa^\ast b\overset{aa^\ast=a^\ast a}{=}aa^\ast ab=ab.$$

(2) $\Longrightarrow$ (3). Assume that (2) holds. Then the following  axiom is satisfied:
\begin{equation}\label{2.5.2}
z^{\ast}yzx=(z+y)x=(z+x^{\ast})x+yx=z^\ast x^\ast zx+yx=x^{\ast}z^{\ast}xzyxz^{\ast}x^{\ast}zx.
\end{equation}
Let $a\in S$ and take  $x=a, y=z=aa^\ast$   in (\ref{2.5.2}). Then
$$
a=aa^{\ast}a=(aa^{\ast})^{\ast}(aa^{\ast})(aa^{\ast})a
\overset{(\ref{2.5.2})}{=}a^{\ast}(aa^{\ast})^{\ast}a(aa^{\ast})(aa^{\ast})a(aa^{\ast})^{\ast}a^{\ast}(aa^{\ast})a
=a^{\ast}aaaa^{\ast}a^{\ast}a,$$ and so  $a^{\ast}aa=(a^{\ast}a)(a^{\ast}aaaa^{\ast}a^{\ast}a)=a^{\ast}aaaa^{\ast}a^{\ast}a=a$.
This gives  that $a^\ast a a a^\ast =aa^\ast \in P(S, \cdot)$.  Replacing $a$ by $a^\ast$, we have $aa^\ast a^\ast a=a^\ast a$.
By Lemma \ref{1.2} (2), we have $aa^\ast=a^\ast aaa^\ast=aa^\ast a^\ast a=a^\ast a$. Thus $(S,\cdot,\, \ast)$ is a Clifford semigroup by Lemma \ref{zicu10}.
Let $a,b\in S$ and take $x=a$, $y=b$ and $z=a^\ast$ in (\ref{2.5.2}). Then
$$ab=aa^\ast a b\overset{(\ref{zhongxin})}{=}aba^{\ast}a=a^{\ast\ast}ba^{\ast}a\overset{(\ref{2.5.2})}{=}a^{\ast}a^{\ast\ast}aa^{\ast}baa^{\ast\ast}a^{\ast}a^{\ast}a
=a^{\ast}aaa^{\ast}baaa^{\ast}a^{\ast}a$$$$\overset{(\ref{zhongxin})}{=}ba^{\ast}aaa^{\ast}aaa^{\ast}a^{\ast}a\overset{aa^\ast=a^\ast a}{=}baa^\ast aa^\ast aa^\ast aa^\ast a=ba.$$

(3) $\Longrightarrow$ (1), (2).
Assume that $(S,\cdot, \ast)$ is commutative and $x,y,z\in S$. Then $y^\ast x^\ast xxzx^\ast xy=xx^\ast xx^\ast xy^\ast zy=xy^\ast zy.$ This implies that (\ref{2.5.1}) holds. Similarly, we can show that (\ref{2.5.2}) also holds. So (1) and (2) are true.

Assume that $(S,\cdot, \ast)$ is commutative and  $a,b,c\in S$. Then
$$(a+b)+c=a^\ast b a+c=(a^\ast ba)^\ast c a^\ast ba=a^\ast b^\ast ac a^\ast b a$$$$=a^\ast a a^\ast ab^\ast bc=aa^\ast bb^\ast c=a^\ast b^\ast cba=a+(b^\ast cb)=a+(b+c).$$ This implies that $(S, +)$ is a semigroup.  Thus the final conclusion is true.
\end{proof}

\begin{prop}\label{li2.6}
Let $(S,\cdot,\, \ast)$ be a regular $\star$-semigroup. Define the binary operations ``$+$" and  ``$\oplus$" on $S$ as follows: For all $a,b\in S$,
$a+b=aba^{\ast},\,\,\, a\oplus b=b^\ast a b.$ Then the following statements are equivalent:
\begin{itemize}
\item[(1)]$(S,+, \cdot,\, \ast)$ (respectively, $(S,\oplus, \cdot,\, \ast)$) satisfies the axiom (\ref{lw1}): $x(y+z)=xy+x(x^{\ast}+z)$ (respectively, the axiom (\ref{lw1}): $x(y\oplus z)=xy\oplus x(x^{\ast}\oplus z)$).
\item[(2)]$(S,+, \cdot,\, \ast)$ (respectively, $(S,\oplus, \cdot,\, \ast)$) satisfies the axiom (\ref{lw2}): $(z+y)x=(z+x^{\ast})x+yx$ (respectively, the axiom (\ref{lw2}): $(z\oplus y)x=(z\oplus x^{\ast})x\oplus yx$).
\item[(3)]$(S,\cdot, \ast)$ is commutative.
\end{itemize}
If the above condition (3) holds,  then $(S, +)$ (respectively, $(S, \oplus)$) is a semigroup.  Thus $(S,+, \cdot,\, \ast)$ (respectively, $(S,\oplus, \cdot,\,  \ast)$) is a left (or equivalently, right, two-sided) regular ${\star}$-semibrace if and only if $(S,\cdot, \ast)$ is commutative.
\end{prop}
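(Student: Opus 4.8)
The plan is to follow the template already established for Proposition \ref{lww2}, treating only the operation $+$ since the case of $\oplus$ is entirely symmetric (obtained by reversing all products and interchanging the roles of the two factors). First I would translate each structural axiom into an identity in the pure language of $(S,\cdot,\ast)$. Using $a+b=aba^\ast$ together with $x^\ast+z=x^\ast z x$ and $(xy)^\ast=y^\ast x^\ast$, the axiom (\ref{lw1}) becomes
\begin{equation*}
xyzy^\ast=xyxx^\ast zxy^\ast x^\ast,
\end{equation*}
while, using $z+x^\ast=zx^\ast z^\ast$, the axiom (\ref{lw2}) becomes
\begin{equation*}
zyz^\ast x=zx^\ast z^\ast xyxx^\ast zxz^\ast .
\end{equation*}
The logical skeleton is then to prove (1)$\Rightarrow$(3), (2)$\Rightarrow$(3), and (3)$\Rightarrow$(1),(2), and finally to verify that commutativity makes $(S,+)$ a semigroup.

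For (1)$\Rightarrow$(3) and (2)$\Rightarrow$(3) I would argue exactly as in Proposition \ref{lww2}: feed carefully chosen arguments into the two displayed identities so as to first force the Clifford condition and then full commutativity. Concretely, I expect the substitution $x=a^\ast$, $y=z=aa^\ast$ (and its analogue for the second identity) to collapse, after repeated use of $a^\ast aa^\ast=a^\ast$ and of Lemma \ref{1.2}, to a relation yielding $aa^\ast=a^\ast a$; then Lemma \ref{zicu10} shows $(S,\cdot,\ast)$ is a Clifford semigroup, so every idempotent is central, that is, (\ref{zhongxin}) holds. A second substitution (taking $x$, $y$, $z$ to be $b$, $a^\ast$, $a$ in a suitable order) should then let me commute idempotents past arbitrary elements and cancel them down to $ab=ba$.

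The converse direction (3)$\Rightarrow$(1),(2) is routine: assuming $(S,\cdot,\ast)$ commutative, each $xx^\ast$ is a central idempotent and $x^2x^\ast=xx^\ast x=x$, so the right-hand sides of both identities collapse---after gathering like factors---to $x(yy^\ast)z$ and $(zz^\ast)yx$ respectively, which equal the corresponding left-hand sides. For the semigroup claim I would compute $(a+b)+c=aba^\ast cab^\ast a^\ast$ and $a+(b+c)=abcb^\ast a^\ast$ and check, using commutativity and the idempotency of $aa^\ast$ and $bb^\ast$, that both equal $(aa^\ast)(bb^\ast)c$. The final assertion then follows formally: by Definition \ref{left regular*} a left (respectively right, two-sided) regular $\star$-semibrace requires $(S,+)$ to be a semigroup together with the relevant axiom, and all these requirements have just been shown equivalent to commutativity, with Corollary \ref{zhang1} guaranteeing that a commutative regular $\star$-semigroup indeed validates the remaining hypotheses.

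The main obstacle will be the bookkeeping in (1)$\Rightarrow$(3) and (2)$\Rightarrow$(3): the products on the right-hand sides are appreciably longer than in Proposition \ref{lww2}, so pinning down the exact sequence of substitutions and cancellations that first isolates $aa^\ast=a^\ast a$ and then delivers $ab=ba$ is the delicate step; everything else is mechanical.
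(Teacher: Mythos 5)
Your proposal is correct and follows essentially the same route as the paper's proof: the same translation of (\ref{lw1}) and (\ref{lw2}) into the identities $xyzy^\ast=xyxx^\ast zxy^\ast x^\ast$ and $zyz^\ast x=zx^\ast z^\ast xyxx^\ast zxz^\ast$, the same two-stage derivation (first the Clifford condition $aa^\ast=a^\ast a$ via a substitution of the form $y=z=x^\ast x$ and Lemmas \ref{1.2} and \ref{zicu10}, then commutativity via a second substitution from $\{a,a^\ast,b\}$), and the same closing computation showing both $(a+b)+c$ and $a+(b+c)$ reduce to $aa^\ast bb^\ast c$. The only differences are cosmetic (your substitutions are the paper's with $a$ and $a^\ast$ interchanged, which yields the dual relation $a=a^\ast aa$ instead of $a=aaa^\ast$ and works equally well).
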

\begin{proof} We only prove the case for  $(S, +, \cdot,\,  \ast)$, the case for  $(S,\oplus, \cdot,\,  \ast)$ can be proved similarly.

(1) $\Longrightarrow$ (3).  Assume that (1) holds. Then $(S,\cdot,+,\ast)$ satisfies the axiom
\begin{equation}\label{2.6.1}
xyzy^{\ast}=x(y+z)=xy+x(x^{\ast}+z)=xyxx^{\ast}zxy^{\ast}x^{\ast}.
\end{equation}
Let $a,b\in S$ and take $x=a$, $y=z=a^\ast a$ in (\ref{2.6.1}). Then
$$a=aa^{\ast}a=a(a^{\ast}a)(a^{\ast}a)(a^{\ast}a)^{\ast}
\overset{(\ref{2.6.1})}{=}a(a^{\ast}a)aa^{\ast}(a^{\ast}a)a(a^{\ast}a)^{\ast}a^{\ast}
=aaa^\ast a^\ast aa a^\ast aa^\ast=aa a^{\ast}.
$$
This gives that  $a^\ast a a a^\ast =a^\ast a \in P(S, \cdot)$.  Replacing $a$ by $a^\ast$, we have $aa^\ast a^\ast a=aa^\ast$. By Lemma \ref{1.2} (2), we have $aa^\ast=a^\ast a$. Thus $(S,\cdot,\, \ast)$ is a Clifford semigroup by Lemma \ref{zicu10}.
Let $x=a^\ast$, $y=b$ and $z=a$ in (\ref{2.6.1}). Then
$$a^{\ast}bab^{\ast}\overset{(\ref{2.6.1})}{=}a^{\ast}ba^{\ast}a^{\ast\ast}aa^{\ast}b^{\ast}a^{\ast\ast}
=a^{\ast}ba^{\ast}a aa^{\ast}b^{\ast}a $$$$\overset{aa^\ast=a^\ast a}{=}
a^{\ast}baa^{\ast}aa^{\ast}b^{\ast}a\overset{(\ref{zhongxin})}{=}a^\ast bb^\ast a a^{\ast}aa^{\ast}a=a^\ast bb^\ast a\overset{(\ref{zhongxin})}{=}a^\ast abb^\ast,$$ and so
$ab=a(a^{\ast}abb^{\ast})b=a(a^{\ast}bab^{\ast})b\overset{(\ref{zhongxin})}{=}ba a^\ast a b^\ast b=ba b^\ast b\overset{(\ref{zhongxin})}{=}bb^\ast ba=ba.$

(2) $\Longrightarrow$ (3). Assume that (2) holds. Then $(S,\cdot,+,\, \ast)$ satisfies the axiom
\begin{equation}\label{2.6.2}
zyz^{\ast}x=(z+y)x=(z+x^{\ast})x+yx=zx^{\ast}z^{\ast}xyxx^{\ast}zxz^{\ast}.
\end{equation}
Let $a,b\in S$ and take $x=a$ and $y=z=aa^\ast$ in (\ref{2.6.2}). Then
$$a=(aa^{\ast})(aa^\ast)(aa^{\ast})^{\ast}a\overset{(\ref{2.6.2})}{=}(aa^{\ast})a^{\ast}(aa^{\ast})^{\ast}a(aa^\ast)(aa^{\ast})(aa^{\ast})a(aa^{\ast})^{\ast}
=aa^{\ast}a^{\ast}a aa^{\ast}aaa^{\ast},$$ and so $a\cdot aa^\ast=aa^{\ast}a^{\ast}a aa^{\ast}aaa^{\ast}\cdot aa^\ast=aa^{\ast}a^{\ast}a aa^{\ast}aaa^{\ast}=a$. This gives that $a^\ast a a a^\ast =a^\ast a \in P(S, \cdot)$.   Replacing $a$ by $a^\ast$, we have $aa^\ast a^\ast a=aa^\ast$. By Lemma \ref{1.2} (2), we have $aa^\ast=a^\ast a$. Thus $(S,\cdot,\, \ast)$ is a Clifford semigroup by Lemma \ref{zicu10}.
Let $x=b^\ast , y=b,  z=a^\ast$ in (\ref{2.6.2}). Then
$$a^\ast bab^\ast=a^\ast ba^{\ast\ast}b^\ast\overset{(\ref{2.6.2})}=a^{\ast}b^{\ast\ast}a^{\ast\ast}b^{\ast}bb^{\ast}b^{\ast\ast}a^{\ast}b^{\ast}a^{\ast\ast}=
a^{\ast}bab^{\ast}bb^{\ast}ba^{\ast}b^{\ast}a=
a^{\ast}bab^{\ast}ba^{\ast}b^{\ast}a$$$$\overset{(\ref{zhongxin})}=a^{\ast}bb^{\ast}b aa^{\ast}b^{\ast}a=a^{\ast}b
aa^{\ast}b^{\ast}a\overset{(\ref{zhongxin})}=a^{\ast}aa^{\ast} b b^{\ast}a=a^{\ast} b b^{\ast}a\overset{(\ref{zhongxin})}=a^{\ast}a b b^{\ast},$$ and so
$ab=a(a^{\ast}abb^{\ast})b=a(a^\ast bab^\ast)b\overset{(\ref{zhongxin})}=b(aa^\ast a)b^\ast b=bab^\ast b\overset{(\ref{zhongxin})}=bb^\ast ba=ba.$

(3) $\Longrightarrow$ (1), (2).  If $(S,\cdot,\,\ast)$ is commutative, then it is easy to check that (\ref{2.6.1}) and (\ref{2.6.2}) hold and so (1) and (2) are true.

Assume that $(S,\cdot, \ast)$ is commutative and  $a,b,c\in S$. Then
$$(a+b)+c=a b a^\ast+c=(a ba^\ast) c (a ba^\ast)^\ast=aba^\ast c a b^\ast a^\ast$$$$=aa^\ast aa^\ast bb^\ast c=aa^\ast bb^\ast c=abcb^\ast a^\ast=a+bcb^\ast =a+(b+c).$$ This implies that $(S, +)$ is a semigroup.  Thus the final conclusion is true.
\end{proof}
\begin{remark}\label{zhuji}In Propositions \ref{lww2}   and \ref{li2.6}, if $(S,+, \cdot,\,\ast)$ is commutative, then  $``+"$ and $``\oplus"$ are associative. However, the converse is not true in general. Consider the regular $\star$-semigroup $S$ appeared in Remark \ref{zhuji1}. In that regular $\star$-semigroup,  one can prove easily that the operations  $``+"$ and  $``\oplus"$ defined in  Propositions \ref{lww2} and \ref{li2.6}  are associative. Obviously, $(S,\cdot,\, \ast)$ is not commutative.
\end{remark}

\section{Solutions of the Yang-Baxter equation and the maps associated  to (2,2,1)-type algebras}\label{solution}
In this section, we  give some set-theoretic  solutions of the Yang-Baxter equation  by using the maps associated  to (2,2,1)-type algebras which are induced by regular $\star$-semigroups and obtained in the previous section. Let $(S,+,\cdot,\, \ast)$ be a (2,2,1)-type algebra. For all $a,b\in S$, define
\begin{equation}\label{lw4}\lambda_{a}: S\rightarrow S,\,\, b\mapsto a(a^{\ast}+b),\,\,\,\, \rho_{b}: S\rightarrow S,\,\, a\mapsto (a^{\ast}+b)^{\ast}b.
\end{equation}
That is, $\lambda_{a}(b)=a(a^{\ast}+b)$ and $\rho_{b}(a)=(a^{\ast}+b)^{\ast}b$ for $a,b\in S$. Then we call the map
\begin{equation}\label{lw41} r_{S}:S\times S\longrightarrow S\times S,\,\, (a,b)\longmapsto (\lambda_{a}(b),\rho_{b}(a))
\end{equation} {\em the map associated to the algebra $(S,+,\cdot,\,\ast)$}. Recall that $r_S$ is a solution of the Yang-Baxter equation if and only if (\ref{jie1})--(\ref{jie3}) in Section 1 hold.

\begin{prop}\label{wwliu3}Let $(S,\cdot,\, \ast)$ be a regular $\star$-semigroup. Define the binary operations ``$+$" and ``$\oplus$" on $S$ as follows: For all $a,b\in S$,  $a+b=ab$ and $a\oplus b=ba.$   Then the map $r_S$ associated to $(S,+,\cdot,\,\ast)$  $(\mbox{respectively, }(S,\oplus, \cdot,\, \ast))$ is a solution if and only if $(S,\cdot,\,\ast)$ is completely regular, orthodox and locally inverse.
\end{prop}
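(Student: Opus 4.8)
The plan is to make the associated maps explicit and then prove the two implications separately, in each case reducing everything to the rewriting identities of Corollary~\ref{zhang2} and the characterisation of Lemma~\ref{zicu9}. As in the previous propositions I would treat only the operation ``$+$'', the case of ``$\oplus$'' being entirely analogous (indeed $(S,\oplus)$ is the opposite semigroup of $(S,+)$). Since $a+b=ab$, formula (\ref{lw4}) yields $\lambda_a(b)=a(a^{\ast}+b)=aa^{\ast}b$ and $\rho_b(a)=(a^{\ast}+b)^{\ast}b=(a^{\ast}b)^{\ast}b=b^{\ast}ab$, so that
$$\lambda_a(b)=aa^{\ast}b,\qquad \rho_b(a)=b^{\ast}ab \quad\text{for all } a,b\in S.$$
I would then substitute these two formulas into the solution criterion (\ref{jie1})--(\ref{jie3}).

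For the sufficiency, assume $(S,\cdot,\ast)$ is completely regular, orthodox and locally inverse, so that all the rewriting rules of Corollary~\ref{zhang2} are available. I would verify (\ref{jie1}), (\ref{jie2}) and (\ref{jie3}) by direct computation. For example, in (\ref{jie1}) the left-hand side is $\lambda_x\lambda_y(z)=xx^{\ast}yy^{\ast}z$; expanding the right-hand side with the formulas above and then repeatedly deleting factors $uu^{\ast}$ that sit behind a leading $u$ (the rule $uvuu^{\ast}w=uvw$) together with the interchange $uvv^{\ast}w=uv^{\ast}vw$ collapses it to the same word $xx^{\ast}yy^{\ast}z$. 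Equations (\ref{jie2}) and (\ref{jie3}) are handled in the same mechanical way, the only subtlety being the bookkeeping of the projections $xx^{\ast},x^{\ast}x,yy^{\ast},y^{\ast}y$; hence I would display only the decisive cancellations rather than every step. Alternatively, Proposition~\ref{lw3} makes $(S,+,\cdot,\ast)$ a two-sided regular $\star$-semibrace under these hypotheses, so one could instead check Miccoli's sufficient condition, but I would keep the self-contained computation.

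For the necessity, which I expect to be the main obstacle, assume $r_S$ is a solution. My goal is to recover the axiom $xy=xxx^{\ast}y$ of Lemma~\ref{zicu9}(2), which by that lemma is equivalent to being completely regular, orthodox and locally inverse. The difficulty is twofold. First, one must choose substitutions in (\ref{jie1})--(\ref{jie3}), typically letting some of $x,y,z$ range over projections $e,f,g\in P(S,\cdot)$ or setting $y=x$, $z=x^{\ast}x$ and the like, that separately expose the orthodox condition of Lemma~\ref{zicu6}, the locally inverse condition $efege=egefe$ of Lemma~\ref{zicu4}, and the complete regularity condition of Lemma~\ref{zicu7}. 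Second, each equation delivers only a functional identity ``$Pz=Qz$ for all $z$'' (for (\ref{jie2}) a two-sided identity $PzR=P'zR'$); to pass to an element equality I would substitute for the free variable a suitable projection such as $P^{\ast}P$ and use the regularity relation $P=PP^{\ast}P$ to force $P=Q$. Assembling the relations so obtained and invoking Lemmas~\ref{zicu6}, \ref{zicu4} and \ref{zicu7} (equivalently Lemma~\ref{zicu9}) shows that $(S,\cdot,\ast)$ is completely regular, orthodox and locally inverse, which finishes the proof.
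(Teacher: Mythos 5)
Your formulas $\lambda_a(b)=aa^{\ast}b$ and $\rho_b(a)=b^{\ast}ab$ and your treatment of sufficiency coincide with the paper's: the paper verifies (\ref{jie1})--(\ref{jie3}) by exactly the kind of mechanical cancellation via Corollary \ref{zhang2} that you describe, and it likewise disposes of the $\oplus$ case by duality. That half of the proposal is fine.

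The necessity direction, however, is where the proposal stops short, and the difficulties you anticipate are not the ones that actually arise. The paper needs only equation (\ref{jie1}) and only the single substitution $y=x$: then $\lambda_x\lambda_y(z)=\lambda_{\lambda_x(y)}\lambda_{\rho_y(x)}(z)$ becomes $xx^{\ast}z=xx^{\ast}xx^{\ast}xx^{\ast}\,x^{\ast}x\,xx^{\ast}\,x^{\ast}x\,z$, which collapses (using $xx^{\ast}x=x$ and the fact that $(xx^{\ast})(x^{\ast}x)$ is idempotent by Lemma \ref{1.2}(4)) to $xx^{\ast}z=xx^{\ast}x^{\ast}xz$; left-multiplying by $x^{\ast}$ gives $x^{\ast}z=x^{\ast}x^{\ast}xz$, and replacing $x$ by $x^{\ast}$ yields $xz=xxx^{\ast}z$, which is axiom (2) of Lemma \ref{zicu9}. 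Two of your stated worries are therefore illusory: there is no need to pass from a functional identity ``$Pz=Qz$ for all $z$'' to an element equality, because the target axiom of Lemma \ref{zicu9}(2) is itself an identity with the free variable $z$ still attached; and there is no need to expose orthodoxy, local inverseness and complete regularity separately through projection substitutions, since the one axiom delivers all three at once. As written, your necessity argument is a description of a search rather than a proof --- you list several candidate substitutions (one of which, $y=x$, is the right one) but carry none of them out, so the crux of the proposition is left unestablished.
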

\begin{proof} We only prove the case for $(S,+,\cdot,\,\ast)$, the case for $(S,\oplus,\cdot,\,\ast)$ can be proved dually. Let $x,y,z\in S$. Then
$$\lambda_{x}(y)=x(x^{\ast}+y)=xx^{\ast}y,\,\, \rho_{y}(x)=(x^{\ast}+y)^{\ast}y=(x^{\ast}y)^{\ast}y=y^{\ast}xy.$$
Moreover, we have
\begin{equation}\label{zhang3}
\lambda_{\lambda_{x}(y)}\lambda_{\rho_{y}(x)}(z)=\lambda_{xx^\ast y}\lambda_{y^{\ast}xy}(z)
=xx^{\ast}yy^{\ast}xx^{\ast}y^{\ast}xyy^{\ast}x^{\ast}yz,\,\, \lambda_{x}\lambda_{y}(z)=xx^{\ast}yy^{\ast}z.
\end{equation}
If $(S,\cdot,\,\ast)$ is completely regular, orthodox and locally inverse,  by Corollary \ref{zhang2} we have   $$xx^{\ast}yy^{\ast}xx^{\ast}y^{\ast}xyy^{\ast}x^{\ast}yz=xx^{\ast}(y\cdot y^{\ast}xx^{\ast}y^{\ast}x\cdot yy^{\ast}\cdot x^{\ast}yz)
=xx^{\ast}yy^{\ast}(x\cdot x^{\ast}y^{\ast}\cdot xx^{\ast}\cdot yz)$$$$=x\cdot x^{\ast}yy^{\ast}\cdot xx^{\ast}\cdot y^{\ast}  yz
=xx^{\ast}(y\cdot y^{\ast} \cdot y^{\ast}  y\cdot z)=xx^{\ast}yy^{\ast}z.$$
By (\ref{zhang3}), we have $\lambda_{\lambda_{x}(y)}\lambda_{\rho_{y}(x)}(z)=\lambda_{x}\lambda_{y}(z)$.
Secondly,  by Corollary \ref{zhang2},
$$\rho_{\rho_{z}(y)}\rho_{\lambda_{y}(z)}(x)=(z^{\ast}yz)^{\ast}\cdot (yy^{\ast}z)^{\ast}\cdot x\cdot yy^{\ast}z\cdot z^{\ast}yz
=z^{\ast}y^{\ast}zz^{\ast}yy^{\ast}xyy^{\ast}zz^{\ast}yz$$
$$=z^{\ast}(y^{\ast}\cdot zz^{\ast}\cdot yy^{\ast}xyy^{\ast}\cdot zz^{\ast} \cdot y\cdot z) =z^{\ast}y^{\ast}  yy^{\ast}xyy^{\ast} y z
=z^{\ast}y^{\ast} x y z=\rho_{z}\rho_{y}(x).$$
Finally, we have
$\rho_{\lambda_{y}(z)}(x)=\rho_{yy^\ast z}(x)=(yy^\ast z)^\ast x yy^\ast z$ and so
$$\lambda_{\rho_{\lambda_{y}(z)}(x)}\rho_{z}(y)=(yy^{\ast}z)^{\ast}xyy^{\ast}z((yy^{\ast}z)^{\ast}xyy^{\ast}z)^{\ast}\cdot z^{\ast}yz
$$$$=z^{\ast}yy^{\ast}xyy^{\ast}zz^{\ast}yy^{\ast}x^{\ast}yy^{\ast}zz^{\ast}yz
=z^{\ast}\cdot yy^{\ast}xyy^{\ast} \cdot zz^{\ast}\cdot yy^{\ast}x^{\ast}yy^{\ast}\cdot zz^{\ast}\cdot yz $$$$=z^{\ast}yy^{\ast}xyy^{\ast} yy^{\ast}x^{\ast}yy^{\ast} yz=(z^{\ast}\cdot yy^{\ast}\cdot x\cdot yy^\ast\cdot  x^{\ast}\cdot  y)z=z^\ast xx^\ast yz $$ by Corollary \ref{zhang2}.
On the other hand, we have $\lambda_{\rho_{y}(x)}(z)=\lambda_{y^\ast xy}(z)=y^\ast x y (y^\ast xy)^\ast z$  and so
$$\rho_{\lambda_{\rho_{y}(x)}(z)}\lambda_{x}(y)
=(y^{\ast}xy(y^{\ast}xy)^{\ast}z)^{\ast}\cdot xx^{\ast}y
\cdot y^{\ast}xy(y^{\ast}xy)^{\ast}z $$
$$=z^{\ast}y^{\ast}xyy^{\ast}x^{\ast}yxx^{\ast}yy^{\ast}xyy^{\ast}x^{\ast}yz=z^{\ast}(y^{\ast}\cdot x\cdot yy^{\ast}\cdot x^{\ast}yxx^{\ast}\cdot yy^{\ast}\cdot x\cdot yy^{\ast}\cdot x^{\ast}yz)$$$$=z^{\ast}y^{\ast} x   x^{\ast}yxx^{\ast} yz =(z^{\ast}y^{\ast} \cdot x   x^{\ast}\cdot y  x \cdot  x^{\ast})yz=z^{\ast}y^{\ast}  y  x  x^{\ast}yz=(z^{\ast}\cdot y^{\ast}  y \cdot  x  x^{\ast}\cdot y)z= z^{\ast}  x  x^{\ast} y z $$ by Corollary \ref{zhang2}. Hence $\lambda_{\rho_{\lambda_{y}(z)}(x)}\rho_{z}(y)=z^\ast xx^\ast yz=\rho_{\lambda_{\rho_{y}(x)}(z)}\lambda_{x}(y)$.  Thus $r_S$ is a solution.

Conversely, assume that $r_S$ is a solution   and $x,y,z\in S$. Then by (\ref{zhang3}), we have
$xx^{\ast}yy^{\ast}z=xx^{\ast}yy^{\ast}xx^{\ast}y^{\ast}xyy^{\ast}x^{\ast}yz.$ Take  $y=x$ in the above equation, it follows that
$$xx^{\ast}z=xx^{\ast}xx^{\ast}z=xx^{\ast} xx^{\ast} xx^{\ast} x^{\ast}x  xx^{\ast} x^\ast x z=x(x^{\ast} x^{\ast}xxx^{\ast} x^{\ast})x z =xx^{\ast} x^{\ast}x z,$$  and so $x^{\ast}z=x^{\ast}xx^{\ast}z=x^{\ast}xx^{\ast}x^{\ast}xz=x^{\ast}x^{\ast}xz.$ Substituting $x$ by $x^\ast$, we have $xz=xxx^{\ast}z$. By Lemma \ref{zicu9},  $(S,\cdot,\,\ast)$ is completely regular, orthodox and locally inverse.
\end{proof}
\begin{prop}\label{lwwww}
Let $(S,\cdot,\, \ast)$ be a regular $\star$-semigroup. Define the binary operations ``$+$" and ``$\oplus$" on $S$ as follows: For all $a,b\in S$,  $a+b=a^\ast b^\ast,\, a\oplus b=b^\ast a^\ast$. If the map $r_S$ associated to $(S,+,\cdot,\,\ast)$  $(\mbox{respectively, }(S,\oplus, \cdot,\, \ast))$ is a solution, then $(S,\cdot,\,\ast)$ is completely regular, orthodox and locally inverse. If $(S,\cdot,\, \ast)$ satisfies the axiom $x=x^\ast$, then $r_S$ is a solution.
\end{prop}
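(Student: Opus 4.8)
The plan is to record the two partial maps explicitly and then treat the two implications of the statement separately; throughout I would write out only the ``$+$'' case, the ``$\oplus$'' case being its left--right mirror and proved dually (as elsewhere in the paper). Substituting $a+b=a^\ast b^\ast$ into (\ref{lw4}) and using $a^{\ast\ast}=a$ together with $(uv)^\ast=v^\ast u^\ast$ gives $a^\ast+b=ab^\ast$, hence
\[
\lambda_a(b)=a(a^\ast+b)=a^2b^\ast,\qquad \rho_b(a)=(a^\ast+b)^\ast b=ba^\ast b .
\]
In particular the ``clean'' sides of the solution conditions are $\lambda_x\lambda_y(z)=x^2z(y^\ast)^2$ and $\rho_z\rho_y(x)=zy^\ast xy^\ast z$, while the composite sides are long $\ast$-words; everything below is a comparison of such words.

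For the implication ``$x=x^\ast\Rightarrow$ solution'' I would invoke Corollary \ref{zhang1}: the axiom $x=x^\ast$ forces $(S,\cdot)$ to be commutative with $a=a^3$ for all $a$, so $a^2\in P(S,\cdot)$, $a^4=a^2$ and $a^5=a$. Under these hypotheses the maps collapse to $\lambda_a(b)=a^2b$ and $\rho_b(a)=ab^2$, and each of (\ref{jie1})--(\ref{jie3}) reduces, after expanding both sides and commuting factors, to an identity between monomials in $x,y,z$ whose exponents agree modulo $a^3=a$, $a^4=a^2$. Concretely both sides of (\ref{jie1}) equal $x^2y^2z$, both sides of (\ref{jie2}) equal $xy^2z^2$, and (\ref{jie3}) is handled the same way; this direction is routine power bookkeeping and needs no further structure theory.

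The substantive implication is ``solution $\Rightarrow$ completely regular, orthodox and locally inverse'', where I would assume (\ref{jie1})--(\ref{jie3}) and feed in carefully chosen arguments, aiming at one of the equivalent axioms of Lemma \ref{zicu9}. Putting $x=e$, $y=f\in P(S,\cdot)$ but keeping $z$ free in (\ref{jie1}) yields the master identity $ezf=efefzfefef$. Taking $z$ also to be a projection and then setting it equal to $e$ and to $f$ forces $ef=(ef)^4=(ef)^5$, whence $ef\in E(S,\cdot)$ for all $e,f\in P(S,\cdot)$; pressing the same identity further (with the consequent reductions $efef=ef$ and $fefef=fef$) should supply the commutation $efege=egefe$, so that $(S,\cdot,\ast)$ is orthodox and locally inverse by Lemmas \ref{zicu6}, \ref{zicu4} and \ref{zicu8}.

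The main obstacle I anticipate is complete regularity. The trailing factor $(y^\ast)^2$ in $\lambda_x\lambda_y(z)=x^2z(y^\ast)^2$ survives on both sides of (\ref{jie1}), so the clean single substitution $y=x$ that worked in Proposition \ref{wwliu3} here degenerates into a relation valid in \emph{every} regular $\star$-semigroup and gives nothing; moreover the all-projection specialisations already hold in inverse semigroups that are not completely regular, so they cannot see complete regularity either. The information must instead be coaxed out by keeping an argument general: putting $e=xx^\ast$, $f=x^\ast x$ and $z=x$ in the master identity makes the left side collapse to $x$, producing an identity $x=w(x,x^\ast)$ that fails outside the completely regular case. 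Reducing this long $\ast$-word --- with no cancellation available, using only the orthodoxy and local inversity already in hand via Lemma \ref{zicu8} and Corollary \ref{zhang2} --- to the axiom $xx^\ast=xx^\ast x^\ast xxx^\ast$ of Lemma \ref{zicu7} is where the real work lies; once all three properties are assembled, Lemma \ref{zicu9} packages the conclusion.
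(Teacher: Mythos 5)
Your formulas $\lambda_a(b)=a^2b^\ast$, $\rho_b(a)=ba^\ast b$ and the direction ``$x=x^\ast\Rightarrow r_S$ is a solution'' are correct (the paper disposes of that direction even faster: under $x=x^\ast$ the operation $a+b=a^\ast b^\ast$ becomes $a+b=ab$, so Corollary \ref{zhang1} together with Proposition \ref{wwliu3} finishes it). The converse direction, however, has two concrete defects as written.

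First, the step you actually write down for orthodoxy proves nothing: ``$ef\in E(S,\cdot)$ for all $e,f\in P(S,\cdot)$'' holds in \emph{every} regular $\star$-semigroup by Lemma \ref{1.2}~(4), and by Lemma \ref{zicu6} orthodoxy is the assertion that products of \emph{three} projections are idempotent. Your substitutions $z=e$ and $z=f$ only recover the automatic fact $(ef)^2=ef$, and nowhere do you produce a statement about a triple $efg$; the phrase ``should supply the commutation $efege=egefe$'' carries the entire burden, and even if supplied it would yield local inversity (Lemma \ref{zicu4}), not orthodoxy. Your master identity $ezf=efzfef$ does in fact contain enough information --- e.g.\ comparing $z=fgf$ with $z=g$ gives $egf=efgf$, then $z=gef$ and $z=fge$ give $efgef=efgf$, whence $(efg)^2=efgfg=ef(gfg)=e(gfg)f(gfg)=egfg=efg$, and $efege=egefe$ follows by applying $\ast$ to $egefe=efegefe$ --- but none of this is in your text. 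The paper sidesteps the difficulty by substituting the non-projection idempotent $x=fg$ (with $y=f$, $z=ef$) into the full identity $xxzy^\ast y^\ast=xxy^\ast xxy^\ast zy^\ast xy^\ast y^\ast xy^\ast$, a move your projections-only master identity never makes.

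Second, your plan for complete regularity is circular: Corollary \ref{zhang2} is a statement about semigroups that are \emph{already} completely regular (as well as orthodox and locally inverse), so it cannot be invoked to reduce $x=(xx^\ast x^\ast x)\,x\,(xx^\ast x^\ast x)$ to the axiom of Lemma \ref{zicu7}. (One can check that this identity does force complete regularity without any such appeal: multiplying on the right by $x^\ast$ yields $xx^\ast=xx^\ast\cdot x^\ast x\cdot x^2(x^2)^\ast$, from which $xx^\ast=xx^\ast\cdot x^2(x^2)^\ast=x^2(x^2)^\ast$, and dually $x^\ast x=(x^2)^\ast x^2$, so $x\,{\mathcal H}\,x^2$ --- but that is not the argument you give.) You also overlook the much shorter route the paper takes: complete regularity is not extracted from (\ref{jie1}) at all but from (\ref{jie2}), where the single substitution $x=y=z$ collapses the $\rho$-identity at once to $x=x^2(x^2)^\ast x$ because $x^2(x^2)^\ast\in P(S,\cdot)$, giving axiom (3) of Lemma \ref{zicu7} immediately.
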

\begin{proof}
We only prove the case for $(S,+,\cdot,\,\ast)$, the case for $(S,\oplus,\cdot,\,\ast)$ can be proved dually. Let $x,y,z\in S$.
Then $$\lambda_{x}(y)=x(x^{\ast}+y)=xx^{\ast\ast}y^\ast=xxy^\ast,\,\,\,\, \rho_{y}(x)=(x^{\ast}+y)^{\ast}y=(xy^\ast)^{\ast}y=yx^{\ast}y.$$
Let $r_S$ be a solution. Then  for all $x,y,z\in S$,
\begin{equation}\label{lll1}
\begin{array}{cc}
xxzy^\ast y^\ast =xx(yyz^\ast)^\ast=\lambda_{x}\lambda_{y}(z)=\lambda_{\lambda_{x}(y)}\lambda_{\rho_{y}(x)}(z)\\[3mm]
xxy^\ast xxy^\ast (yx^\ast y yx^\ast y z)^\ast=xxy^\ast xxy^\ast z y^\ast xy^\ast y^\ast xy^\ast,
\end{array}
\end{equation}
\begin{equation}\label{lll2}
\begin{array}{cc}
zy^\ast xy^\ast z=\rho_{z}(yx^\ast y)=\rho_{z}\rho_{y}(x)=\rho_{\rho_{z}(y)}\rho_{\lambda_{y}(z)}(x)\\[3mm]
zy^\ast z(yyz^\ast x^\ast yyz^\ast)^\ast zy^\ast z =zy^\ast zzy^\ast y^\ast xz y^\ast y^\ast zy^\ast z.
\end{array}
\end{equation}
Let $e,f,g\in P(S, \cdot)$.  By Lemma \ref{1.2} (1), (4),  we have  $fg, fe, ef, e, f, g\in E(S, \cdot)$ and  $e^\ast=e, f^\ast=f, g^\ast=g$. Take $x=fg, z=ef, y=f$ in (\ref{lll1}). Then $$fgef=fgfgefff=fgfg ef f^\ast f^\ast\overset{(\ref{lll1})}=fg fg f^\ast fg fg f^\ast  ef   f^\ast fg f^\ast  f^\ast fg f^\ast=fgfefgf.$$ In view of the fact $(efg)^\ast=g^\ast f^\ast e^\ast=gfe$ and $e, g, fg\in E(S, \cdot)$, we obtain that $$e(fgef)g=e(fgfefgf)g=efgfefg=efggfeefg=efg(efg)^\ast efg=efg.$$ which gives that $(S,\cdot,\,\ast)$ is orthodox by Lemma \ref{zicu6}. On the other hand, take $x=f, z=feg, y=e$ in (\ref{lll1}). Then
$ff  feg   e^\ast e^\ast =ff e^\ast ff e^\ast   feg   e^\ast f e^\ast e^\ast fe^\ast$ and so $fege=fegefe$ by the fact $fe\in E(S, \cdot)$.  This implies that $efege=efegefe$ and so
$egefe=(efege)^\ast=(efegefe)^\ast=efegefe=efege$.  By Lemma \ref{zicu4}, $(S,\cdot,\,\ast)$ is locally inverse.  Finally, Take $x=y=z$ in (\ref{lll2}).
Then $xx^\ast xx^\ast x=xx^\ast xxx^\ast x^\ast xx x^\ast x^\ast xx^\ast x$, and hence $x=xxx^\ast x^\ast x$. This gives that $x^\ast x=x^\ast xxx^\ast x^\ast x$. By Lemma \ref{zicu4}, $(S,\cdot,\,\ast)$ is  completely regular.

If $(S,\cdot,\, \ast)$  satisfies the axiom $x=x^\ast$, then by Corollary \ref{zhang1}, $(S,\cdot,\,\ast)$ is completely regular, orthodox and locally inverse. In view of the axiom $x=x^\ast$ and Proposition \ref{wwliu3}, we obtain that  $r_S$ is a solution in this case.
\end{proof}
\begin{remark}Let $(S,\cdot,\, \ast)$ be a regular $\star$-semigroup. Define the binary operations ``$+$" and ``$\oplus$" on $S$ as follows: For all $a,b\in S$,  $a+b=a^\ast b^\ast,\, a\oplus b=b^\ast a^\ast$.
The converse  of the two statements in Proposition \ref{lwwww} is not true. On one hand, let $G=\{e,x,y\}$ be a cyclic group, where $e$ is the identity. Define $e^{\ast}=e, x^{\ast}=x^{-1}=y, y^{\ast}=y^{-1}=x.$ Obviously, $(G,\cdot,\, \ast)$ is a completely regular, orthodox, locally inverse and commutative regular ${\star}$-semigroup. Since $(G, \cdot, \ast)$ is commutative, the operation $``+"$ and  $``\oplus"$ coincide. In this case, one can obtain that $\lambda_{e}\lambda_{x}(e)=x\not= e=\lambda_{\lambda_{e}(x)}\lambda_{\rho_{x}(e)}(e)$ by routine calculations. Thus, the map $r_G$ associated to $(G, \cdot, \ast)$ $(\mbox{respectively, }(G,\oplus, \cdot,\, \ast))$ is not a solution.
On the other hand, consider the regular $\star$-semigroup $(S,+,\cdot,\,\ast)$ appeared in Remark \ref{zhuji1}. It is routine to check that the map $r_S$ associated to $(S,+,\cdot,\,\ast)$  $(\mbox{respectively, }(S,\oplus, \cdot,\, \ast))$ is a solution. However, $(S,\cdot,\, \ast)$  does not satisfy the axiom $x=x^\ast$.
\end{remark}
\begin{prop}\label{wwliu4}Let $(S,\cdot,\, \ast)$ be a regular $\star$-semigroup. Define the binary operations ``$+$" and ``$\oplus$" on $S$ as follows: For all $a,b\in S$,  $a+b=a^\ast b,\, a\oplus b=b^\ast a.$   Then the map $r_S$ associated to $(S,+,\cdot,\,\ast)$  $(\mbox{respectively, }(S,\oplus, \cdot,\, \ast))$ is a solution if and only if $(S,\cdot,\,\ast)$ is completely regular, orthodox and locally inverse.
\end{prop}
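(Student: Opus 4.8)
The plan is to first record the two structure maps. Since $a+b=a^{\ast}b$ gives $x^{\ast}+y=(x^{\ast})^{\ast}y=xy$, the associated map has $\lambda_{x}(y)=x(x^{\ast}+y)=xxy$ and $\rho_{y}(x)=(x^{\ast}+y)^{\ast}y=(xy)^{\ast}y=y^{\ast}x^{\ast}y$. These are the exact analogues of the maps in Proposition \ref{wwliu3}, except that the idempotent factor $xx^{\ast}$ there is now replaced by $xx$; thus both $\lambda_{x}$ and $\rho_{y}$ are genuinely ``quadratic'' and carry no projection, which is what makes the bookkeeping here heavier. With these formulas in hand I would verify the claim by checking the solution identities (\ref{jie1})--(\ref{jie3}) directly.

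For the sufficiency (if $(S,\cdot,\ast)$ is completely regular, orthodox and locally inverse, then $r_{S}$ is a solution) I would substitute the above formulas into each of (\ref{jie1}), (\ref{jie2}) and (\ref{jie3}) and reduce both sides to a common normal form using only Corollary \ref{zhang2}. The two rewriting rules that do all the work are the swap $ww^{\ast}=w^{\ast}w$ valid inside an arbitrary product (the last identity of Corollary \ref{zhang2}) and the deletion of a factor $xx^{\ast}$ or $x^{\ast}x$ against a matching boundary letter $x$ (the remaining identities of Corollary \ref{zhang2}). This is the same strategy as in the proof of Proposition \ref{wwliu3}; it is longer here because of the extra letters, but it is entirely mechanical.

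For the necessity I would specialize the three solution identities. Restricting $x,y,z$ to projections $e,f,g$ in (\ref{jie1}) collapses it to $efg=(ef)^{4}g$, and a parallel projection substitution in (\ref{jie2}) (and, if necessary, (\ref{jie3})) should supply the further relations needed to conclude, via Lemma \ref{zicu6}, that $efg\in E(S,\cdot)$ (orthodoxy) and, via Lemma \ref{zicu4}, that $efege=egefe$ (local inverseness). To reach complete regularity I would instead use a ``diagonal'' substitution such as $y=x$, which yields $x^{4}z=x^{6}(x^{\ast})^{3}xz$, or $y=x^{\ast}x$, which yields $x^{2}z=x^{4}(x^{\ast})^{3}xz$; after combining such a power identity with the orthodoxy and local inverseness just obtained, together with the defining $\star$-identities $ww^{\ast}w=w$ and $w^{\ast}ww^{\ast}=w^{\ast}$, I expect to recover the criterion $xx^{\ast}=xx^{\ast}x^{\ast}xxx^{\ast}$ of Lemma \ref{zicu7}. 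Lemma \ref{zicu9} then packages all three properties and finishes this direction.

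The hard part will be this necessity direction. Unlike Proposition \ref{wwliu3}, where setting $y=x$ immediately produced the short relation $xx^{\ast}z=xx^{\ast}x^{\ast}xz$ and hence an axiom of Lemma \ref{zicu9} in a single step, here the absence of a projection factor forces each specialization to produce a high-degree word; the genuine work is to massage these words, using the basic $\star$-identities and the projection relations above, down to the clean equational forms demanded by Lemmas \ref{zicu6}, \ref{zicu4} and \ref{zicu7}. Finally, the statement for $(S,\oplus,\cdot,\ast)$ follows by the dual argument, exactly as indicated in the proposition, so I would carry out only the $``+"$ case in detail.
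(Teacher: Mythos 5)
Your formulas $\lambda_x(y)=xxy$, $\rho_y(x)=y^{\ast}x^{\ast}y$ and your sufficiency argument (reduce both sides of (\ref{jie1})--(\ref{jie3}) via Corollary \ref{zhang2}) coincide with the paper's proof, and that half is fine. The gap is in the necessity direction, which you yourself flag as the hard part but for which your proposed substitutions do not work. Restricting (\ref{jie1}) to projections $e,f,g$ does give $efg=(ef)^4g$, but putting $g=f$ in that relation yields $ef=(ef)^4$, and then $efg=(ef)^4g$ collapses back to a triviality; the relation $ef=(ef)^4$ for projections is far too weak to force $efg\in E(S,\cdot)$, so orthodoxy cannot be extracted this way. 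Likewise your diagonal substitutions $y=x$ or $y=x^{\ast}x$ produce high-degree power identities whose reduction to the criterion $x^{\ast}x=x^{\ast}xxx^{\ast}x^{\ast}x$ of Lemma \ref{zicu7} is exactly the step you leave open, and you propose to use orthodoxy and local inverseness to do it, whereas in fact complete regularity has to be (and in the paper is) obtained independently and first.

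The missing ideas are the specific non-projection substitutions. The paper derives the two identities
$xxyyz=xxyxxyy^{\ast}x^{\ast}yy^{\ast}x^{\ast}yz$ and $z^{\ast}y^{\ast}xyz=z^{\ast}yzz^{\ast}y^{\ast}y^{\ast}xyyzz^{\ast}y^{\ast}z$,
and then: (i) for complete regularity it puts $x=yy^{\ast}$ and $z=y^{\ast}y$ into the \emph{second} ($\rho$-)identity, which lands directly on Lemma \ref{zicu7}; (ii) for orthodoxy it puts $x=e$, $y=fg$, $z=g$ into the first identity --- the key point being that $y$ is taken to be an idempotent $fg$ that is a \emph{product} of projections rather than a projection, and using $(efg)^{\ast}=gfe$ one gets $efg=efgefg$; (iii) for local inverseness it first specializes the first identity to idempotents with $y=z$ to prove $y^{\ast}xy=y^{\ast}x^{\ast}y$ for idempotents $x,y$ (this step already uses the orthodoxy just established), and then sets $x=feg$, $y=e$ to obtain $efege=egefe$ and invoke Lemma \ref{zicu4}. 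Without substitutions of this kind your plan stalls at word identities that do not reduce to the hypotheses of Lemmas \ref{zicu6}, \ref{zicu4} and \ref{zicu7}.
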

\begin{proof}We only prove the case for $(S,+,\cdot,\,\ast)$, the case for $(S,\oplus,\cdot,\,\ast)$ can be proved dually. Let $x,y,z\in S$.
Then $$\lambda_{x}(y)=x(x^{\ast}+y)=xxy,\,\,\,\, \rho_{y}(x)=(x^{\ast}+y)^{\ast}y=(xy)^{\ast}y=y^{\ast}x^{\ast}y.$$ Moreover, we have
\begin{equation}\label{ll1}
\lambda_{x}\lambda_{y}(z)=\lambda_{x}(yyz)= xxyyz,\,\,\, \lambda_{\lambda_{x}(y)}\lambda_{\rho_{y}(x)}(z)=xxyxxyy^{\ast}x^{\ast}yy^{\ast}x^{\ast}yz,
\end{equation}
\begin{equation}\label{ll2}
\begin{array}{cc}
\rho_{z}\rho_{y}(x)=\rho_{z}(y^{\ast}x^{\ast}y)=z^{\ast}y^{\ast}xyz,\\[3mm]
 \rho_{\rho_{z}(y)}\rho_{\lambda_{y}(z)}(x)=(z^\ast y^\ast z)^\ast ((yyz)^\ast x^\ast yyz)^\ast z^\ast y^\ast z=z^{\ast}yzz^{\ast}y^{\ast}y^{\ast}xyyzz^{\ast}y^{\ast}z.
\end{array}
\end{equation}

Assume that $(S,\cdot,\,\ast)$ is completely regular, orthodox and locally inverse. Firstly,
$$
xxyxxyy^{\ast}x^{\ast}yy^{\ast}x^{\ast}yz=x((xy)\cdot x\cdot (xy)  (xy)^{\ast}\cdot yy^{\ast}x^{\ast}yz)
$$$$=xxyxyy^{\ast}x^{\ast}yz
=xx(y\cdot x\cdot yy^{\ast}\cdot x^{\ast}yz)=xxyxx^{\ast}yz
=x(x\cdot y\cdot x x^{\ast}\cdot yz)=xxyyz
$$ by Corollary \ref{zhang2}.
By (\ref{ll1}), we have $\lambda_{x}\lambda_{y}(z)=\lambda_{\lambda_{x}(y)}\lambda_{\rho_{y}(x)}(z)$. Secondly,
$$
z^{\ast}yzz^{\ast}y^{\ast}y^{\ast}xyyzz^{\ast}y^{\ast}z=z^{\ast}(yz\cdot z^{\ast}y^{\ast}y^{\ast}xy\cdot yz(yz)^{\ast}\cdot z)
$$$$=z^{\ast}yzz^{\ast}y^{\ast}y^{\ast}xyz
=z^{\ast}\cdot (yz)(yz)^{\ast}\cdot y^{\ast}x\cdot yz=z^{\ast}y^{\ast}xyz
$$by Corollary \ref{zhang2}.
By (\ref{ll2}),  $\rho_{z}\rho_{y}(x)=\rho_{\rho_{z}(y)}\rho_{\lambda_{y}(z)}(x)$. Finally,  ${\rho_{\lambda_{y}(z)}(x)}=(yyz)^\ast x^\ast yyz$ and
$$
 \lambda_{\rho_{\lambda_{y}(z)}(x)}\rho_{z}(y)=\lambda_{(yyz)^\ast x^\ast yyz}(z^\ast y^\ast z)
=((yyz)^\ast x^\ast\cdot  yyz  (yyz)^\ast\cdot  x^\ast \cdot yyz)z^\ast y^\ast z
$$$$=(yyz)^{\ast}x^{\ast}x^{\ast}yyz(z^{\ast}y^{\ast}z)=(yz)^{\ast}\cdot y^{\ast}x^{\ast}x^{\ast}y\cdot (yz)  (yz)^{\ast}\cdot z
=(yz)^{\ast}y^{\ast}x^{\ast}x^{\ast}yz=z^{\ast}y^{\ast}y^{\ast}x^{\ast}x^{\ast}yz
$$ by Corollary \ref{zhang2}.
Moreover, we have $\lambda_{\rho_{y}(x)}(z)=y^\ast x^\ast y y^\ast x^\ast y z$ and by Corollary \ref{zhang2},
\begin{eqnarray*}
&&\rho_{\lambda_{\rho_{y}(x)}(z)}\lambda_{x}(y)=\rho_{y^\ast x^\ast y y^\ast x^\ast y z}(xxy)
=(y^\ast x^\ast y y^\ast x^\ast y z)^\ast (xxy)^\ast (y^\ast x^\ast y y^\ast x^\ast y z)\\
&=&z^{\ast}y^{\ast}xyy^{\ast}xyy^{\ast}x^{\ast}x^{\ast}y^{\ast}x^{\ast}yy^{\ast}x^{\ast}yz
=z^{\ast}(y^{\ast}\cdot x \cdot y y^{\ast}\cdot x\cdot y y^{\ast}\cdot x^{\ast}x^{\ast}y^{\ast}x^{\ast}\cdot yy^{\ast}\cdot x^{\ast}yz) \\
&=&z^{\ast}y^{\ast} x   x x^{\ast}x^{\ast}y^{\ast}x^{\ast}  x^{\ast}yz= (z^{\ast}y^{\ast} x   \cdot x x^{\ast} \cdot x^{\ast}y^{\ast}\cdot x^{\ast})  x^{\ast}yz= (z^{\ast}y^{\ast} x    x^{\ast}y^{\ast}  x^{\ast})  x^{\ast}yz\\
&=&(z^{\ast}y^{\ast} \cdot x    x^{\ast}\cdot y^{\ast} \cdot  x^{\ast})  x^{\ast}yz=z^{\ast}y^{\ast} y^{\ast} x^{\ast}  x^{\ast}yz.
\end{eqnarray*}
This implies that $\lambda_{\rho_{\lambda_{y}(z)}(x)}\rho_{z}(y)=\rho_{\lambda_{\rho_{y}(x)}(z)}\lambda_{x}(y)$. Thus $r_S$ is a solution.

Conversely, let $r_S$ be a solution. Then by (\ref{ll1}) and (\ref{ll2}), the following axioms hold:
\begin{equation}\label{ll3}
 xxyyz=xxyxxyy^{\ast}x^{\ast}yy^{\ast}x^{\ast}yz,\,\,\,z^{\ast}y^{\ast}xyz=z^{\ast}yzz^{\ast}y^{\ast}y^{\ast}xyyzz^{\ast}y^{\ast}z.
\end{equation}
Replacing $x$ and $z$ by $yy^\ast$ and $y^\ast y$ in the second identity in (\ref{ll3}), respectively, by direct calculations we have $y^\ast y=y^\ast y yy^\ast y^\ast y$, and so $(S, \cdot, \ast)$ is completely regular by Lemma \ref{zicu7}.

Let $e,f,g\in P(S, \cdot)\subseteq E(S,\cdot)$ and take $x=e, y=fg$ and $z=g$ in the first identity in (\ref{ll3}). Observe that $(efg)^\ast=g^\ast f^\ast e^\ast=gfe$ and $(fg)^\ast=g^\ast f^\ast=gf, (fg)^2=fg$ by Lemma \ref{1.2} (1), (4),  it follows that
$$efg=eefgfgg\overset{(\ref{ll3})}{=}eefgeefg(fg)^\ast e^\ast fg (fg)^\ast e^\ast fgg=efgefggfefggfefg$$$$=
efg efg gfe efg gfe efg=efg efg (efg)^\ast efg (efg)^\ast efg=efgefg, $$  which implies that $(S, \cdot, \ast)$ is orthodox by Lemma \ref{zicu6}.
Finally, assume that $x, y, z$ are idempotents and $y=z$ in the first identity in (\ref{ll3}). Since  $(S, \cdot, \ast)$ is orthodox,  we have
$xy=xyy^\ast x^\ast y$ and so  $$y^\ast x^\ast y=(xy)^\ast y=(xy)^\ast xy (xy)^\ast y=y^\ast x^\ast (xy y^\ast x^\ast y)=y^\ast x^\ast xy=(xy)^\ast xy.$$
This implies that $y^\ast x y=(y^\ast x^\ast y)^\ast=((xy)^\ast xy)^\ast=(xy)^\ast xy=y^\ast x^\ast y.$  Since  $(S, \cdot, \ast)$ is orthodox and
$e,f,g\in P(S, \cdot)\subseteq E(S, \cdot)$,  we have $feg, e\in E(S, \cdot)$. Take $x=feg$ and $y=e$.  Then $efege=e^\ast feg e=y^\ast x y=y^\ast x^\ast y =e^\ast (feg)^\ast e=egefe$. By Lemma \ref{zicu4},
$(S, \cdot, \ast)$ is  locally inverse.
\end{proof}
\begin{prop}\label{wwliu5}Let $(S,\cdot,\, \ast)$ be a regular $\star$-semigroup. Define the binary operations ``$+$" and ``$\oplus$" on $S$ as follows: For all $a,b\in S$,  $a+b=ab^\ast,\, a\oplus b=ba^\ast.$   Then  $r_S$ associated to $(S,+,\cdot,\,\ast)$  $(\mbox{respectively, }(S,\oplus, \cdot,\,  \ast))$ is a solution  if and only if $(S,\cdot,\,\ast)$ is completely regular, orthodox and locally inverse.
\end{prop}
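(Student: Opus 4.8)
The plan is to mirror the proofs of Propositions \ref{wwliu3} and \ref{wwliu4}, whose conclusion is identical; as there, I would argue only for $(S,+,\cdot,\ast)$ and obtain the $\oplus$-case by the dual argument. First I would compute the two maps in (\ref{lw4}). Since $x^{\ast}+y=x^{\ast}y^{\ast}$, we get
\[
\lambda_{x}(y)=x(x^{\ast}+y)=xx^{\ast}y^{\ast},\qquad \rho_{y}(x)=(x^{\ast}+y)^{\ast}y=(x^{\ast}y^{\ast})^{\ast}y=yxy .
\]
The salient feature is the asymmetry between the two maps: $\rho_{y}(x)=yxy$ contains no $\ast$, whereas $\lambda_{x}(y)=xx^{\ast}y^{\ast}$ carries the projection $xx^{\ast}$ together with $y^{\ast}$. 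I would then expand the three solution identities (\ref{jie1})--(\ref{jie3}); for example (\ref{jie1}) reads $xx^{\ast}zyy^{\ast}=xx^{\ast}y^{\ast}yxx^{\ast}zyxyy^{\ast}x^{\ast}y^{\ast}$ and (\ref{jie2}) reads $zyxyz=zyz\,yy^{\ast}z^{\ast}x\,yy^{\ast}z^{\ast}\,zyz$, with (\ref{jie3}) a comparable long word.

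For sufficiency I would assume $(S,\cdot,\ast)$ completely regular, orthodox and locally inverse and verify (\ref{jie1})--(\ref{jie3}) one at a time, collapsing the long right-hand sides with the absorption rules of Corollary \ref{zhang2}: repeatedly each block $xyxx^{\ast}z$ is replaced by $xyz$, each $zxx^{\ast}yx$ by $zyx$, and each $yy^{\ast}$ by $y^{\ast}y$ when it is flanked on both sides. With the right bracketing these reductions are routine and run exactly as in the two cited propositions, so I expect no difficulty here.

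The necessity direction is where the work lies. Assuming $r_{S}$ is a solution, the three displayed words become identities valid for all $x,y,z$, and I must distil from them the structural properties, most economically by producing one of the equivalent axioms of Lemma \ref{zicu9}. I would treat complete regularity, orthodoxy and local inverseness in turn: a single-variable specialization fed into Lemma \ref{zicu7} for complete regularity, and restrictions of (\ref{jie1}) and (\ref{jie2}) to projections $e,f,g\in P(S,\cdot)$ --- reading off $efg\in E(S,\cdot)$ via Lemma \ref{zicu6} and $efege=egefe$ via Lemma \ref{zicu4} --- for the other two, imitating the idempotent substitutions of Proposition \ref{wwliu4}. The main obstacle is the choice of substitutions: because the identities are words of thirteen factors and the obvious symmetric choices (such as $y=x^{\ast}$) collapse both sides to the same expression and yield nothing, genuinely asymmetric specializations are needed, and one must be careful not to presuppose the very properties being proved. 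I expect complete regularity to be the subtlest step, precisely because $\rho_{y}(x)=yxy$ never introduces a $\ast$, so the information forcing the $\mathcal{H}$-condition $a\,\mathcal{H}\,a^{2}$ must be coaxed out of the $\lambda$-identity (\ref{jie1}) rather than the $\rho$-identity (\ref{jie2}).
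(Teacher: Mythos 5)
Your proposal follows the paper's proof essentially verbatim: the same formulas $\lambda_x(y)=xx^\ast y^\ast$ and $\rho_y(x)=yxy$, the same expanded identities, sufficiency via the reduction rules of Corollary \ref{zhang2} and Lemma \ref{zicu9} exactly as in Propositions \ref{wwliu3} and \ref{wwliu4}, and necessity by specializing the identities and invoking Lemmas \ref{zicu7}, \ref{zicu6} and \ref{zicu4}. The one piece you leave open --- the choice of substitutions in the necessity direction --- is settled in the paper by taking $y=z=xx^\ast$ in the $\lambda$-identity (giving $xxx^\ast x^\ast=xx^\ast$, hence complete regularity), $x=e$, $y=eg$, $z=f$ with $e,f,g\in P(S,\cdot)$ in the $\lambda$-identity (giving $efege=egefe$, hence local inverseness), and $x=e$, $y=z=fg$ in the $\rho$-identity (giving $efgefg=efg$, hence orthodoxy); note that, contrary to your closing remark, the expanded $\rho$-identity is not $\ast$-free, since $\rho_{\lambda_y(z)}$ injects the word $yy^\ast z^\ast$, and indeed the paper extracts orthodoxy from it.
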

\begin{proof}
We only prove the case for $(S,+,\cdot,\,\ast)$, the case for $(S,\oplus,\cdot,\,\ast)$ can be proved dually. Let $x,y,z\in S$.
Then $$\lambda_{x}(y)=x(x^{\ast}+y)=xx^\ast y^\ast,\,\,\,\, \rho_{y}(x)=(x^{\ast}+y)^{\ast}y=(x^\ast y^\ast )^{\ast}y=yxy.$$ Moreover, we have
\begin{equation}\label{ll4}
\begin{array}{cc}
\lambda_{x}\lambda_{y}(z)=\lambda_{x}(yy^\ast z^\ast)= xx^\ast z yy^\ast,\\[3mm]
 \lambda_{\lambda_{x}(y)}\lambda_{\rho_{y}(x)}(z)=xx^\ast y^\ast (xx^\ast y^\ast)^\ast(yxy(yxy)^\ast z^\ast)^\ast=xx^\ast y^\ast yx x^\ast z yxy y^\ast x^\ast y^\ast,
\end{array}
\end{equation}
\begin{equation}\label{ll5}
\rho_{z}\rho_{y}(x)=\rho_{z}(yxy)=z y xyz,\,\,\,
\rho_{\rho_{z}(y)}\rho_{\lambda_{y}(z)}(x)=zyz   y y^{\ast}z^\ast  xyy^\ast z^{\ast}zyz,
\end{equation}
\begin{equation} \label{ll6}
\begin{array}{cc}
\lambda_{\rho_{\lambda_{y}(z)}(x)}\rho_{z}(y)=\lambda_{yy^\ast z^\ast xyy^\ast z^\ast}(zyz)=yy^\ast z^\ast xy y^\ast z^\ast zyy^\ast x^\ast zyy^\ast z^\ast y^\ast z^\ast, \\[3mm]
\rho_{\lambda_{\rho_{y}(x)}(z)}\lambda_{x}(y)=\rho_{yxy (yxy)^\ast z^\ast}(xx^\ast y^\ast)=yxyy^\ast x^\ast y^\ast z^\ast xx^\ast y^\ast yxyy^\ast x^\ast y^\ast z^\ast.
\end{array}
\end{equation}
Assume that $(S,\cdot,\,\ast)$ is completely regular, orthodox and locally inverse. Then by using Lemma \ref{zicu9} and some calculations similar to those used in Propositions  \ref{wwliu3} and \ref{wwliu4}, we can obtain that $$\lambda_{x}\lambda_{y}(z)=\lambda_{\lambda_{x}(y)}\lambda_{\rho_{y}(x)}(z),\,\,\, \rho_{z}\rho_{y}(x)=\rho_{\rho_{z}(y)}\rho_{\lambda_{y}(z)}(x),$$
$$\lambda_{\rho_{\lambda_{y}(z)}(x)}\rho_{z}(y)=yy^\ast z^\ast xx^\ast y^\ast z^\ast=\rho_{\lambda_{\rho_{y}(x)}(z)}\lambda_{x}(y)$$ by (\ref{ll4})--(\ref{ll6}). Thus $r_S$ is a solution.

Conversely, let $r_S$ be a solution. Then by (\ref{ll4}) and (\ref{ll5}), the following axioms hold:
\begin{equation}\label{ll7}
xx^\ast y^\ast yx x^\ast z yxy y^\ast x^\ast y^\ast= xx^\ast z yy^\ast,\,\,\, zyz   y y^{\ast}z^\ast  xyy^\ast z^{\ast}zyz=z y xyz.
\end{equation}
Take $y=z=xx^\ast$ in the first axiom in (\ref{ll7}). Then $$xxx^\ast x^\ast=xx^\ast (xx^\ast)^\ast xx^\ast xx^\ast xx^\ast xx^\ast x xx^\ast (xx^\ast)^\ast x^\ast (xx^\ast)^\ast\overset{(\ref{ll7})}{=} xx^\ast xx^\ast xx^\ast (xx^\ast)=xx^\ast,$$ which implies that $x^\ast (xxx^\ast x^\ast) x=x^\ast (xx^\ast) x=x^\ast x$, and so $(S,\cdot,\,\ast)$ is completely regular by Lemma \ref{zicu7}. Let $e,f,g\in P(S, \cdot)$. Take $x=e, y=eg$ and $z=f$ in the first axiom in (\ref{ll7}). By Lemma \ref{1.2} (1), (4),  we have $eg\in E(S, \cdot)$ and
$$egefege=eegeegeefegeeggeege=ee^\ast (eg)^\ast egee^\ast f eg e eg  (eg)^\ast e^\ast (eg)^\ast$$$$\overset{(\ref{ll7})}{=} ee^\ast f eg(eg)^\ast=eefegge=efege,$$
which implies that $egefe=e^\ast g^\ast e^\ast f^\ast e^\ast=(efege)^\ast=(egefege)^\ast=egefege=efege.$ In view of Lemma \ref{zicu4}, $(S,\cdot,\,\ast)$ is locally inverse.
Finally, take $x=e, y=fg=z$ in the second  axiom in (\ref{ll7}). Then by Lemma \ref{1.2} (1), (3), (4), we have $x, y, z, x^\ast, y^\ast, z^\ast \in E(S, \cdot)$ and so
$fggfefg=(fg)(fg)^\ast e fg=(fg)e(fg)=fgefg$. This implies  that $e(fggfefg)=e(fgefg)$. Since $(efg)^\ast=g^\ast f^\ast e^\ast=gfe$, it follows that
$$efgefg=efggfefg=efggfeefg=efg(efg)^\ast efg=efg.$$ Thus $(S,\cdot,\,\ast)$ is orthodox by Lemma \ref{zicu6}.
\end{proof}

\begin{prop}\label{wwliu1}Let $(S,\cdot,\, \ast)$ be a regular ${\star}$-semigroup.  Define the binary operation ``$+$"  on $S$ as follows: For all $a,b\in S$, $a+b=aa^{\ast}b.$   Then the map $r_S$ associated to $(S,\cdot, +,\, \ast)$ is a solution.
\end{prop}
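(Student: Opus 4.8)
The plan is to compute the two induced families of maps explicitly from $a+b=aa^\ast b$ and then verify the three equations (\ref{jie1})--(\ref{jie3}) directly, the whole point being that every right-hand factor that arises turns out to be a projection. First I would substitute the operation into (\ref{lw4}). Since $x^\ast+b=x^\ast x^{\ast\ast}b=x^\ast x b$, the axioms $xx^\ast x=x$ and $(ab)^\ast=b^\ast a^\ast$ of (\ref{shishi1.2}) give
$$\lambda_a(b)=a(a^\ast a b)=ab,\qquad \rho_b(a)=(a^\ast a b)^\ast b=b^\ast a^\ast a b=(ab)^\ast(ab).$$
Thus $r_S(x,y)=(xy,(xy)^\ast(xy))$, and by Lemma \ref{1.2}(1) each value $\rho_b(a)=b^\ast(a^\ast a)b$ lies in $P(S,\cdot)$, hence is a self-adjoint idempotent. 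This single observation is what makes all the nested compositions collapse, and it is why no extra hypothesis on $(S,\cdot,\ast)$ is needed here.

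For (\ref{jie1}) I would compute $\lambda_x\lambda_y(z)=xyz$ on one side and $\lambda_{\lambda_x(y)}\lambda_{\rho_y(x)}(z)=(xy)(y^\ast x^\ast x y)z=(xy)(xy)^\ast(xy)\,z$ on the other; the latter equals $xyz$ by $aa^\ast a=a$ with $a=xy$, so (\ref{jie1}) holds. For (\ref{jie2}) and (\ref{jie3}) the key is to record the repeated inputs in the normal form $a^\ast f a$ and $a^\ast a$, where $a=yz$ and $f=x^\ast x\in P(S,\cdot)$. A short calculation then gives $\rho_{\lambda_y(z)}(x)=(yz)^\ast(x^\ast x)(yz)=a^\ast f a=:u$ and $\rho_z(y)=(yz)^\ast(yz)=a^\ast a=:v$, and moreover $\rho_z\rho_y(x)=z^\ast(y^\ast x^\ast x y)z=u$ after collapsing the inner projection $\rho_y(x)=y^\ast x^\ast x y=w$ via $w^\ast w=w$.

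The two remaining equalities then reduce to projection identities flowing from $aa^\ast a=a$ alone. For (\ref{jie2}) the right-hand side is $\rho_{\rho_z(y)}(u)=v^\ast u^\ast u v=vuv$, and regrouping $vuv=(a^\ast a a^\ast)f(a a^\ast a)=a^\ast f a=u$ matches the left-hand side. For (\ref{jie3}) the left-hand side is $\lambda_u(v)=uv=a^\ast f(a a^\ast a)=a^\ast f a=u$, while the right-hand side is $\rho_s(t)=s^\ast(t^\ast t)s$ with $s=\lambda_{\rho_y(x)}(z)=wz$ and $t=\lambda_x(y)=xy$, so that $t^\ast t=w$ and, by idempotency of $w$, $\rho_s(t)=z^\ast w\,w\,wz=z^\ast w z=u$; hence both sides equal $u$. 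I expect the only genuine obstacle to be the bookkeeping of the iterated $\lambda$/$\rho$ compositions together with the discipline of recognizing at each stage which subword is a projection, so that $e^\ast=e$ and $e^2=e$ may be applied. I would emphasize that, in contrast with Propositions \ref{wwliu3}--\ref{wwliu5}, no completely regular, orthodox, or locally inverse assumption enters: only $aa^\ast a=a$, $a^\ast a a^\ast=a^\ast$, and Lemma \ref{1.2}(1) are used, which is exactly why $r_S$ is a solution for \emph{every} regular $\star$-semigroup.
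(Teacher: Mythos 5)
Your proposal is correct and follows essentially the same route as the paper: both compute $\lambda_x(y)=xy$ and $\rho_y(x)=(xy)^\ast xy$ from $a+b=aa^\ast b$ and then verify (\ref{jie1})--(\ref{jie3}) using only $aa^\ast a=a$, $a^\ast aa^\ast=a^\ast$, the involution laws, and the fact that elements of the form $b^\ast(a^\ast a)b$ are projections, with no extra hypotheses on $(S,\cdot,\ast)$. The paper simply writes both sides of each identity directly as $(xyz)^\ast xyz$ rather than introducing your auxiliary notation $a,f,u,v,w$, but the computations coincide.
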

\begin{proof}Let $x,y,z\in S$. Then we have $\lambda_{x}(y)=x(x^{\ast}+y)=xx^{\ast}(x^{\ast})^{\ast}y=xx^{\ast}xy=xy$ and
$$\rho_{y}(x)=(x^{\ast}+y)^{\ast}y=(x^{\ast}(x^{\ast})^{\ast}y)^{\ast}y=(x^{\ast}xy)^{\ast}y=y^{\ast}x^{\ast}xy=(xy)^\ast xy.$$
Observe that
$$\lambda_{x}\lambda_{y}(z)=\lambda_{x}(yz)=xyz=xy(xy)^{\ast}xyz=\lambda_{xy}\lambda_{(xy)^{\ast}xy}(z)=\lambda_{\lambda_{x}(y)}\lambda_{\rho_{y}(x)}(z);$$
$$\rho_{z}\rho_{y}(x)=\rho_{z}((xy)^{\ast}xy)=((xy)^{\ast}xyz)^{\ast}(xy)^{\ast}xyz=(xyz)^{\ast}xy(xy)^{\ast}xyz=(xyz)^{\ast}xyz, $$
$$ \rho_{\rho_{z}(y)}\rho_{\lambda_{y}(z)}(x)=\rho_{(yz)^{\ast}yz}\rho_{yz}(x)=\rho_{(yz)^{\ast}yz}((xyz)^{\ast}xyz)
$$$$=((xyz)^{\ast}xyz (yz)^{\ast}yz)^{\ast}(xyz)^{\ast}xyz (yz)^{\ast}yz=((xyz)^{\ast}xyz)^{\ast}(xyz)^{\ast}(xyz)=(xyz)^{\ast}xyz;$$
$$\lambda_{\rho_{\lambda_{y}(z)}(x)}\rho_{z}(y)=\lambda_{\rho_{yz}(x)}\rho_{z}(y)
 =\lambda_{(xyz)^{\ast}xyz}((yz)^{\ast}yz)=(xyz)^{\ast}xyz(yz)^{\ast}yz=(xyz)^{\ast}xyz,$$
$$\rho_{\lambda_{\rho_{y}(x)}(z)}\lambda_{x}(y)=\rho_{\lambda_{(xy)^{\ast}xy}(z)}\lambda_{x}(y)=\rho_{(xy)^{\ast}xyz}(xy)=(xy (xy)^{\ast}xyz)^{\ast}xy
(xy)^{\ast}xyz=(xyz)^{\ast}xyz.$$
Thus $r_S$   is a solution.
\end{proof}
\begin{prop}\label{wwliu2}Let $(S,\cdot,\, \ast)$ be a regular ${\star}$-semigroup.  Define the binary operation ``$+$"  on $S$ as follows: For all $a,b\in S$, $a+b=ab^{\ast}b.$   Then the map $r_S$ associated to $(S,\cdot, +,\, \ast)$ is a solution  if and only if  $(S,\cdot,\, \ast)$   is completely regular, orthodox and locally inverse.
\end{prop}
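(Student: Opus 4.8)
The plan is first to make the maps $\lambda$ and $\rho$ explicit in terms of the $\star$-operation, since everything reduces to checking the three solution criteria (\ref{jie1})--(\ref{jie3}) for $r_S$. Because $a+b=ab^{\ast}b$, for all $a,b\in S$ we have $a^{\ast}+b=a^{\ast}b^{\ast}b$ and $(a^{\ast}+b)^{\ast}=(a^{\ast}b^{\ast}b)^{\ast}=b^{\ast}ba$, and hence
$$\lambda_{a}(b)=a(a^{\ast}+b)=aa^{\ast}b^{\ast}b,\qquad \rho_{b}(a)=(a^{\ast}+b)^{\ast}b=b^{\ast}bab.$$
With these two formulas in hand every term appearing in (\ref{jie1})--(\ref{jie3}) becomes a word in $x,y,z$ and their $\star$-images in which the projections $aa^{\ast}$, $a^{\ast}a$ and $b^{\ast}b$ occur repeatedly.

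For the sufficiency I would assume that $(S,\cdot,\ast)$ is completely regular, orthodox and locally inverse and verify (\ref{jie1})--(\ref{jie3}) by direct expansion, following the pattern of Propositions \ref{wwliu3}, \ref{wwliu4} and \ref{wwliu5}. For instance one computes $\lambda_{x}\lambda_{y}(z)=xx^{\ast}z^{\ast}zyy^{\ast}z^{\ast}z$, and similarly the more elaborate right-hand sides of (\ref{jie1})--(\ref{jie3}). The strategy is then to collapse these long words using the rewriting rules of Corollary \ref{zhang2} (namely $uvu^{\ast}uw=uvw=uvuu^{\ast}w$, their duals, and $uvv^{\ast}w=uv^{\ast}vw$), moving projections past one another until each of the two sides of a given identity reaches a common normal form; as in Proposition \ref{wwliu5} I expect the two sides of (\ref{jie3}) to meet at a single symmetric word. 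This part is routine but bookkeeping-heavy.

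For the necessity I would assume $r_S$ is a solution, so that (\ref{jie1})--(\ref{jie3}) hold identically, and then extract the three structural properties by specialization, mirroring the necessity argument of Proposition \ref{wwliu4}. The identity coming from (\ref{jie2}) reads $z^{\ast}zy^{\ast}yxyz=\cdots$; substituting projection-type elements for $x$ and $z$ (for example $x=yy^{\ast}$, $z=y^{\ast}y$, or $x=y=z$) should yield an instance of the criterion $x^{\ast}x=x^{\ast}xxx^{\ast}x^{\ast}x$ of Lemma \ref{zicu7}, giving complete regularity. Orthodoxy and local inversity should come from the identity coming from (\ref{jie1}) by taking $x,y,z$ among projections $e,f,g\in P(S,\cdot)$ (with choices such as $y=fg$, $z=g$, or $x=feg$, $y=e$): after using $e^{\ast}=e$ and $(efg)^{\ast}=gfe$, the equations are designed to collapse to $efg=(efg)^{2}$ and $efege=egefe$, which are exactly the criteria of Lemmas \ref{zicu6} and \ref{zicu4}. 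Once all three properties are in place, the conclusion follows (and conversely one may repackage them through Lemma \ref{zicu9}).

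The hard part will be the necessity direction: the words produced by (\ref{jie1})--(\ref{jie3}) are genuinely long, and the real difficulty is to guess substitutions that isolate one property at a time and force the bulk of each word to cancel, rather than degenerating into a tautology. In the sufficiency direction the only serious risk is computational error in the reductions, which I would control by making each application of Corollary \ref{zhang2} explicit, exactly as was done in Propositions \ref{wwliu3}--\ref{wwliu5}.
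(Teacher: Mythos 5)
Your proposal follows essentially the same route as the paper: the explicit formulas $\lambda_a(b)=aa^{\ast}b^{\ast}b$ and $\rho_b(a)=b^{\ast}bab$ are exactly the paper's, sufficiency is done there too by expanding (\ref{jie1})--(\ref{jie3}) and collapsing via Corollary \ref{zhang2}, and necessity is likewise obtained by specializing the two identities from (\ref{jie1}) and (\ref{jie2}) at projections and invoking Lemmas \ref{zicu7}, \ref{zicu4} and \ref{zicu6}. The only difference is in the precise substitutions (the paper takes $y=z=xx^{\ast}$ in the first identity for complete regularity, $x=e$, $y=g$, $z=fe$ for local inversity, and $x=g$, $y=f$, $z=e$ in the second identity for orthodoxy), which is within the latitude your sketch already allows.
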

\begin{proof}Let $x,y,z\in S$.
Then $$\lambda_{x}(y)=x(x^{\ast}+y)=xx^\ast y^\ast y,\,\,\,\, \rho_{y}(x)=(x^{\ast}+y)^{\ast}y=(x^\ast y^\ast y)^{\ast}y=y^\ast yxy.$$ Moreover, we have
\begin{equation}\label{lll4}
\begin{array}{cc}
\lambda_{x}\lambda_{y}(z)=\lambda_{x}(yy^\ast z^\ast z)= xx^\ast z^\ast z yy^\ast z^\ast z,\\[2mm]
 \lambda_{\lambda_{x}(y)}\lambda_{\rho_{y}(x)}(z)=xx^\ast y^\ast y xx^\ast z^\ast z y^\ast y x yy^\ast x^\ast y^\ast y   z^\ast z,
\end{array}
\end{equation}
\begin{equation}\label{lll5}
\begin{array}{cc}
\rho_{z}\rho_{y}(x)=\rho_{z}(y^{\ast}y x y)=z^\ast z y^\ast y  xyz,\\[2mm]
\rho_{\rho_{z}(y)}\rho_{\lambda_{y}(z)}(x)=z^\ast y^\ast z^\ast z yz yy^\ast z^\ast z xy y^\ast z^\ast z yz,
\end{array}
\end{equation}
\begin{equation} \label{lll6}
\begin{array}{cc}
\lambda_{\rho_{\lambda_{y}(z)}(x)}\rho_{z}(y)=z^\ast z yy^\ast z^\ast z x yy^\ast z^\ast z yy^\ast x^\ast z^\ast z yy^\ast z^\ast   y^\ast z^\ast z yz, \\[2mm]
\rho_{\lambda_{\rho_{y}(x)}(z)}\lambda_{x}(y)=z^\ast z y^\ast y xy y^\ast x^\ast y^\ast yx yy^\ast x^\ast y^\ast y z^\ast z xx^\ast y^\ast y xy y^\ast x^\ast y^\ast y z^\ast z.
\end{array}
\end{equation}
Assume that $(S,\cdot,\,\ast)$ is completely regular, orthodox and locally inverse. Using Corollary  \ref{zhang2} and some calculations similar to those used in Propositions  \ref{wwliu3} and \ref{wwliu4}, we can obtain
$$\lambda_{x}\lambda_{y}(z)=xx^\ast   yy^\ast z^\ast z=xx^\ast   y^\ast  y z^\ast z=\lambda_{\lambda_{x}(y)}\lambda_{\rho_{y}(x)}(z),\,\,\, \rho_{z}\rho_{y}(x)=z^\ast z xyz=\rho_{\rho_{z}(y)}\rho_{\lambda_{y}(z)}(x),$$
$$\lambda_{\rho_{\lambda_{y}(z)}(x)}\rho_{z}(y)=z^\ast zxx^\ast z^\ast y^\ast y z=z^\ast zxx^\ast y^\ast y z^\ast y^\ast yz= z^\ast zxx^\ast y^\ast y z^\ast  z =\rho_{\lambda_{\rho_{y}(x)}(z)}\lambda_{x}(y)$$ by (\ref{lll4})--(\ref{lll6}). Thus $r_S$ is a solution.

Conversely, let $r_S$ be a solution. Then by (\ref{lll4}) and (\ref{lll5}), the following axioms hold:
\begin{equation}\label{lll7}
\begin{array}{cc}
xx^\ast z^\ast z yy^\ast z^\ast z=xx^\ast y^\ast y xx^\ast z^\ast z y^\ast y x yy^\ast x^\ast y^\ast y  z^\ast z,\\[2mm]
z^\ast z y^\ast y  xyz=z^\ast y^\ast z^\ast z yz yy^\ast z^\ast z xy y^\ast z^\ast z yz.
\end{array}
\end{equation}
Take $y=z=xx^\ast$ in the first axiom in (\ref{lll7}). Then
$$xx^\ast (xx^\ast)^\ast xx^\ast xx^\ast (xx^\ast)^\ast (xx^\ast)^\ast xx^\ast
$$$$=xx^\ast (xx^\ast)^\ast (xx^\ast) xx^\ast (xx^\ast)^\ast (xx^\ast) (xx^\ast)^\ast (xx^\ast) x (xx^\ast)(xx^\ast)^\ast x^\ast (xx^\ast)^\ast (xx^\ast)   (xx^\ast)^\ast (xx^\ast).$$
This implies that $xx^\ast=xx^\ast xxx^\ast x^\ast xx^\ast=xx x^\ast x^\ast,$
which gives that $x^\ast (xxx^\ast x^\ast) x=x^\ast (xx^\ast) x=x^\ast x$, and so $(S,\cdot,\,\ast)$ is completely regular by Lemma \ref{zicu7}.

Let $e,f,g\in P(S, \cdot)$ and take $x=e, y=g$ and $z=fe$ in the first axiom in (\ref{lll7}). Then
$$ee^\ast (fe)^\ast fe gg^\ast (fe)^\ast (fe)=ee^\ast g^\ast g ee^\ast (fe)^\ast (fe) g^\ast g e gg^\ast e^\ast g^\ast g  (fe)^\ast (fe),$$ which yields  that
$efegefe=ege\cdot efe \cdot ege \cdot ege \cdot ege \cdot  efe$ by Lemma \ref{1.2} (1).  By Lemma \ref{1.2} (1), (4),  we have $ege, efe \in P(S, \cdot)$ and $ege efe\in E(S, \cdot)$, and so $efegefe=egeefe=egefe$. This implies that $efege=(egefe)^\ast=(efegefe)^\ast=efegefe=egefe.$ In view of Lemma \ref{zicu4}, $(S,\cdot,\,\ast)$ is locally inverse.

Let $e,f,g\in P(S, \cdot)$ and take $x=g, y=f, z=e$ in the second  axiom in (\ref{lll7}). Then
$e^\ast e f^\ast f  gfe=e^\ast f^\ast e^\ast e fe ff^\ast e^\ast e gf f^\ast e^\ast e fe$. Since $e, f, ef, fe\in E(S, \cdot)$ and $e^\ast=e, f^\ast=f$ by Lemma \ref{1.2} (1), (4), we have $efgfe=efegfe$, and so $$efgefe=(efegfe)^\ast =(efgfe)^\ast=efgfe=efegfe,$$
$$efg=efg(efg)^\ast efg=(efg gfe) efg=(efgfe) efg=(efgefe) efg=efg (efeef)g=efgefg.$$
Thus $(S,\cdot,\,\ast)$ is orthodox by Lemma \ref{zicu6}.
\end{proof}
\begin{prop}\label{lwwwww}
Let $(S,\cdot,\, \ast)$ be a regular $\star$-semigroup. Define the binary operations ``$+$" and $``\oplus"$ on $S$ as follows: For all $a,b\in S$,  $a+b=aba^\ast, \, a\oplus b=bab^\ast.$ If the map $r_S$ associated to $(S,+,\cdot,\,\ast)$  $(\mbox{respectively, }(S,\oplus, \cdot,\, \ast))$ is a solution, then $(S,\cdot,\,\ast)$ is completely regular, orthodox and locally inverse. If $(S,\cdot,\, \ast)$ is commutative, then $r_S$ is a solution.
\end{prop}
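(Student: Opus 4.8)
The plan is to follow the template of Propositions \ref{wwliu3}--\ref{wwliu2}: record the explicit forms of $\lambda$ and $\rho$, then prove the two implications separately. Since $a+b=aba^{\ast}$ gives $x^{\ast}+b=x^{\ast}bx$, one computes
\[
\lambda_{x}(y)=x(x^{\ast}+y)=xx^{\ast}yx,\qquad \rho_{y}(x)=(x^{\ast}+y)^{\ast}y=x^{\ast}y^{\ast}xy,
\]
so this is exactly the $(2,2,1)$-type algebra of Proposition \ref{li2.6}. I would substitute these into (\ref{jie1}) and (\ref{jie2}), using $(xx^{\ast}yx)^{\ast}=x^{\ast}y^{\ast}xx^{\ast}$ and $(x^{\ast}y^{\ast}xy)^{\ast}=y^{\ast}x^{\ast}yx$ to reduce the adjoints, and record the two resulting word identities on $(S,\cdot,\ast)$ as displayed axioms, playing the role that (\ref{ll3}), (\ref{ll7}) and (\ref{lll7}) play in the earlier proofs.

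For the sufficiency direction, assume $(S,\cdot,\ast)$ is commutative. By Corollary \ref{zhang1} it is then completely regular, orthodox and locally inverse with $xx^{\ast}=x^{\ast}x$, and commutativity collapses the maps to $\lambda_{x}(y)=xx^{\ast}xy=xy$ and $\rho_{y}(x)=x^{\ast}xy^{\ast}y$, the latter a product of projections. I would then check (\ref{jie1})--(\ref{jie3}) directly: under commutativity every term in these equations becomes a product of the commuting idempotent projections $x^{\ast}x,\ y^{\ast}y,\ z^{\ast}z$ (with one extra leading factor in (\ref{jie1})), so both sides of each equation reduce to $xyz$ for (\ref{jie1}) and to $x^{\ast}xy^{\ast}yz^{\ast}z$ for (\ref{jie2}) and (\ref{jie3}). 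This is the routine part.

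For the necessity direction, assuming $r_{S}$ is a solution, I would extract the three properties one at a time, as in Propositions \ref{wwliu4}, \ref{wwliu5} and \ref{wwliu2}. An appropriate specialization (such as $x=y=z$, or $y=z=xx^{\ast}$) in one of the displayed axioms should force an identity of the type recognized by Lemma \ref{zicu7}, giving complete regularity. Taking $x,y,z$ to be projections $e,f,g$ under a suitable assignment should produce $efg=(efg)^{2}$, hence orthodoxy by Lemma \ref{zicu6}; a second projection assignment, combined with the involution manipulation $egefe=(efege)^{\ast}=(egefege)^{\ast}=egefege=efege$ used in the proof of Proposition \ref{wwliu5}, should give $efege=egefe$ and hence local inversity by Lemma \ref{zicu4}.

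The main obstacle is the necessity direction: because $a+b=aba^{\ast}$ puts $a$ on both sides of $b$, the substituted solution identities are long alternating words in $x,x^{\ast},y,y^{\ast},z,z^{\ast}$, and the delicate point is to find substitutions isolating each of complete regularity, orthodoxy and local inversity without collapsing the words prematurely. I expect the choice of projection assignments and the repeated recognition of subwords as projections or idempotents via Lemma \ref{1.2} to be the fiddly step, whereas the commutative sufficiency direction is essentially mechanical.
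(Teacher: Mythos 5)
Your setup is correct: $\lambda_x(y)=xx^{\ast}yx$ and $\rho_y(x)=x^{\ast}y^{\ast}xy$ match the paper, and your commutative (sufficiency) direction does go through — both sides of (\ref{jie1}) collapse to $xyz$ and both sides of (\ref{jie2}) and (\ref{jie3}) collapse to the product of projections $x^{\ast}xy^{\ast}yz^{\ast}z$, as you claim. The paper reaches the same conclusion more cheaply: commutativity turns $aba^{\ast}$ into $aa^{\ast}b$ and $bab^{\ast}$ into $ab^{\ast}b$, so the operations coincide with those of Propositions \ref{wwliu1} and \ref{wwliu2}, whose associated maps are already known to be solutions (using Corollary \ref{zhang1} to supply the hypotheses of Proposition \ref{wwliu2}). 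Your direct verification is a legitimate substitute for that reduction.

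The necessity direction, however, is where the substance of this proposition lies, and there you have only described a search strategy, not a proof. You do guess the right substitution for complete regularity ($y=z=xx^{\ast}$ in the identity coming from (\ref{jie1}) yields $x=xx^{\ast}x^{\ast}xx$, hence $x^{\ast}x=x^{\ast}xxx^{\ast}x^{\ast}x$, which is the form Lemma \ref{zicu7} recognizes). But for orthodoxy and local inversity you offer no substitution at all, and these are genuinely delicate. The paper takes $x=e$, $y=f$, $z=fg$ with $e,f,g\in P(S,\cdot)$ in the (\ref{jie1})-identity, uses Lemma \ref{1.2} to recognize $e,f,ef,fe$ as idempotents so that the long word reduces to $efgfe=efgefe$, applies $\ast$ to obtain $efgfe=efegfe$ as well, and only then bootstraps $efg=efg(efg)^{\ast}efg=efggfeefg=\cdots=efgefg$ to conclude orthodoxy via Lemma \ref{zicu6}; local inversity then requires the (\ref{jie2})-identity with $x=f$, $y=e$, $z=g$, combined with the orthodoxy just established and a further substantial chain of reductions, before $efege=egefe$ drops out and Lemma \ref{zicu4} applies. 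None of this is routine pattern-matching against Propositions \ref{wwliu4}--\ref{wwliu2}: because $a+b=aba^{\ast}$ sandwiches $b$ between $a$ and $a^{\ast}$, the words are longer, and the argument has to interleave the two solution identities with the partial conclusions already obtained. As it stands, the necessity half of your proposal is a plausible plan with the hard steps left blank.
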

\begin{proof}
We only prove the case for $(S,+,\cdot,\,\ast)$, the case for $(S,\oplus,\cdot,\,\ast)$ can be proved dually. Let $x,y,z\in S$.
Then $$\lambda_{x}(y)=x(x^{\ast}+y)=xx^\ast y x^{\ast\ast} =xx^\ast y x,\,\,\,\, \rho_{y}(x)=(x^{\ast}+y)^{\ast}y=(x^\ast y x^{\ast\ast})^{\ast}y=x^{\ast}y^\ast xy.$$
Let $r_S$ be a solution. Then  for all $x,y,z\in S$,
\begin{equation}\label{llll1}
\begin{array}{cc}
xx^\ast yy^\ast zyx=\lambda_{x}(yy^\ast zy) =\lambda_{x}\lambda_{y}(z)=\lambda_{\lambda_{x}(y)}\lambda_{\rho_{y}(x)}(z)\\[2mm]
=\lambda_{xx^\ast yx} \lambda_{x^\ast y^\ast xy}(z)=xx^\ast yxx^\ast y^\ast xx^\ast x^\ast y^\ast xy y^\ast x^\ast yxz x^\ast y^\ast xy xx^\ast yx,
\end{array}
\end{equation}
\begin{equation}\label{llll2}
\begin{array}{cc}
y^\ast x^\ast yx z^\ast x^\ast y^\ast xyz=\rho_z(x^\ast y^\ast xy)=\rho_{z}\rho_{y}(x)=\rho_{\rho_{z}(y)}\rho_{\lambda_{y}(z)}(x) \\[2mm]
=\rho_{y^\ast z^\ast yz}\rho_{yy^\ast zy} (x)=y^\ast z^\ast yy^\ast x^\ast yy^\ast zyx z^\ast y^\ast zy   x^\ast y^\ast z^\ast yy^\ast xy y^\ast z y y^\ast z^\ast yz.
\end{array}
\end{equation}
Let $y=z=xx^\ast$ in  (\ref{llll1}). Then
$$xx^\ast (xx^\ast)(xx^\ast)^\ast (xx^\ast) (xx^\ast)x=xx^\ast (xx^\ast)xx^\ast (xx^\ast)^\ast xx^\ast x^\ast$$$$\cdot\,\, (xx^\ast)^\ast x(xx^\ast) (xx^\ast)^\ast x^\ast (xx^\ast)x(xx^\ast) x^\ast (xx^\ast)^\ast x(xx^\ast) xx^\ast (xx^\ast)x.$$ This gives that $x=xx^\ast x^\ast xx$ and so $xx^\ast=xx^\ast x^\ast xxx^\ast$. By Lemma \ref{zicu4}, $(S,\cdot,\,\ast)$ is  completely regular.
Let $e,f,g\in P(S, \cdot)$ and take $x=e, y=f, z=fg$ in (\ref{llll1}). Then
$$ee^\ast ff^\ast (fg) fe=ee^\ast f ee^\ast f^\ast ee^\ast e^\ast f^\ast eff^\ast e^\ast fe (fg) e^\ast f^\ast ef ee^\ast fe$$
and so $efgfe=efgefe$ by the fact that $e=e^\ast, f=f^\ast$ and $e,f,ef,fe\in E(S, \cdot)$ (using Lemma \ref{1.2}). This implies that
\begin{equation}\label{llll3}
efgefe=efgfe=(efgfe)^\ast=(efgefe)^\ast =efe gfe,
\end{equation}
$$efg=efg(efg)^\ast efg=efggfe efg$$$$=efgfefg=(efgfe)fg=(efgefe)fg=efg(efef)g=efgefg.$$
Thus $(S,\cdot,\,\ast)$ is orthodox by Lemma \ref{zicu6}.
Take $x=f, y=e$ and $z=g$ in   (\ref{llll2}). Then
\begin{equation}\label{llll4}e^\ast f^\ast ef g^\ast f^\ast e^\ast feg=e^\ast g^\ast ee^\ast f^\ast ee^\ast gef g^\ast e^\ast ge   f^\ast e^\ast g^\ast ee^\ast f ee^\ast g ee^\ast g^\ast eg.
\end{equation}
Since $(S,\cdot,\,\ast)$ is orthodox and $e,f,g$ are projections, we have $e^\ast=e, f^\ast=f, g^\ast=g$ and $ef, fe, ge, egef, gefe, gef, feg$ are also idempotents. So by (\ref{llll3}), we obtain that
$$e^\ast f^\ast ef g^\ast f^\ast e^\ast feg=efefgfefeg=(efgfe)g=(efegfe)g=e(fegfeg)=efeg.$$
As $ge, egef, gefe, feg, gef$ are idempotents and $efegfe=efgfe$ (by (\ref{llll3})),
$$e^\ast g^\ast ee^\ast f^\ast ee^\ast gef g^\ast e^\ast ge   f^\ast e^\ast g^\ast ee^\ast f ee^\ast g ee^\ast g^\ast eg=
egef egef \cdot gegefegefe\cdot gege\cdot g$$$$=egef \cdot ge fe\cdot geg=e\cdot gefgef\cdot egeg=egefeg=ege(fegfeg)$$$$=eg(efegfe)g=eg(efgfe)g=egefgfeg.$$
In view of (\ref{llll4}), we have $efeg=egefgfeg$,  and so $efege=egefgfege$. This implies that
$$efege=egefgfege=(egefgfege)^\ast=(efege)^\ast=egefe.$$ By Lemma \ref{zicu4}, $(S,\cdot,\,\ast)$ is locally inverse.

If $(S,\cdot,\, \ast)$ is commutative, then by Corollary \ref{zhang1}, $(S,\cdot,\,\ast)$ is completely regular, orthodox and locally inverse. In view of Propositions \ref{wwliu1} and \ref{wwliu2}, we obtain that  $r_S$ is a solution.
\end{proof}

\begin{remark}Let $(S,\cdot,\, \ast)$ be a regular $\star$-semigroup. Define the binary operations ``$+$" and $``\oplus"$ on $S$ as follows: For all $a,b\in S$,  $a+b=aba^\ast, \, a\oplus b=bab^\ast.$ If $(S,\cdot,\,\ast)$ is completely regular, orthodox and locally inverse, then  $r_S$  may not be a solution.  In fact, consider the symmetric group $(S_{3},\cdot, \ast)$, where $S_{3}=\{(1),(12),(13),(23),(123),(132)\}$ and $x^\ast$ is the inverse of $x$ in this group for all $x\in S_3$.  Obviously, $(S_{3},\cdot, \ast)$ is a completely regular, orthodox and locally inverse regular $\star$-semigroup.
Moreover, by (\ref{llll1}) we have  $$\lambda_{(12)}\lambda_{(13)}((12))=(12)(13)(12)=(23)\not= (13)
=\lambda_{\lambda_{(12)}((13))}\lambda_{\rho_{(13)}((12))}((12))$$ in $(S_{3},+, \cdot, \ast)$, and so $r_{S_3}$ associated to  $(S_{3},+, \cdot, \ast)$ is not a solution.  Dually, we can show that  $r_{S_3}$ associated to  $(S_{3},\oplus, \cdot, \ast)$ is not a solution.

On the other hand, assume that  $r_S$ is a solution. Then $(S, \cdot, \ast)$ may not be commutative. For example, consider the regular $\star$-semigroup $(S, +, \cdot, \ast)$ in Remark \ref{zhuji1}. Then  $a+b=aa^\ast$  for all $a,b\in S$, and in $(S, +, \cdot, \ast)$, we have $\lambda_x(y)=x$  and $\rho_y(x)=x^\ast y$. Moreover,  we  obtain
$$\lambda_{x}\lambda_{y}(z)=x=\lambda_{\lambda_{x}(y)}\lambda_{\rho_{y}(x)}(z),\,\, \rho_{z}\rho_{y}(x)=y^\ast z=\rho_{\rho_{z}(y)}\rho_{\lambda_{y}(z)}(x)$$ and $\lambda_{\rho_{\lambda_{y}(z)}(x)}\rho_{z}(y)=x^\ast y=\rho_{\lambda_{\rho_{y}(x)}(z)}\lambda_{x}(y)$ by routine calculations. So  $r_S$ associated to $(S, +, \cdot, \ast)$ is a solution. Dually, $r_S$ associated to $(S, \oplus, \cdot, \ast)$ is also a solution. However, $(S,  \cdot, \ast)$ is not commutative obviously.
\end{remark}

\begin{remark}\label{wwliu6}Let $(S,\cdot,\, \ast)$ be a regular ${\star}$-semigroup.  Define the binary operations ``$+$" and $``\oplus"$ on $S$ as follows: For all $a,b\in S$,  $a+b=a^\ast b a$ and $a\oplus b=b^\ast a b$. If $(S,\cdot,\,  \ast)$ is commutative, then by the statements in the last paragraph in the proof of Proposition \ref{lwwwww}, we obtain that  $r_S$ is a solution. However, up to now, we have not obtained satisfactory sufficient and  necessary (or necessary) conditions under which $r_S$ is a solution.
\end{remark}

\begin{remark} By  Propositions  \ref{lw3}, \ref{2.9}, \ref{2.11}, \ref{wliu2}, \ref{lww3}, \ref{lww2} and \ref{li2.6} in Section \ref{brace} and Propositions  \ref{wwliu3}, \ref{lwwww}, \ref{wwliu4},  \ref{wwliu5},  \ref{wwliu1},  \ref{wwliu2}, \ref{lwwwww} and Remark \ref{wwliu6} in Section \ref{solution}, we have seen that the map associated to left and two-sided  regular $\star$-semibraces involved in Section \ref{brace} are all solutions of the Yang-Baxter equation by Corollary \ref{zhang1}. Observe that the right regular $\star$-semibrace $(S, \oplus, \cdot \ast)$  provided in Proposition \ref{lww3} may not be a solution by Proposition \ref{wwliu2}.
\end{remark}

Let $(S,+,\cdot,\, ^\ast)$ be a left regular $\star$-semibrace. Denote the set of maps from $S$ to itself by ${\mathcal T}(S)$.  It is well known that
$({\mathcal T}(S), \cdot)$ forms a semigroup with the composition of maps (i.e. $(\alpha \beta)(x)=\alpha(\beta(x))$ for all $x\in S$ and $\alpha, \beta \in {\mathcal T}(S)$)
Assume that  $a,b\in S$ and $\lambda_a$ and $\rho_b$ are defined as in (\ref{lw4}).
Then we have the following maps:
$$\lambda: S\longrightarrow {\mathcal T}({S}),\,\,a\longmapsto \lambda_{a},\,\,\,\,  \rho: S\longrightarrow {\mathcal T}({S}),\,\, b\longmapsto \rho_{b}.$$
In the following statements, we shall consider some properties of these maps.

\begin{prop}\label{+zitongtai}
Let $(S,+,\cdot,\,^\ast)$ be a left regular $\star$-semibrace and $a, b, c \in S$.  Then $$\lambda_{a}(b+c)=\lambda_{a}(b)+\lambda_{a}(c).$$
\end{prop}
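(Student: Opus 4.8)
The plan is to reduce the claim to a single application of the defining axiom (\ref{lw1}) of a left regular $\star$-semibrace, together with the associativity of the additive semigroup $(S,+)$. By the definition given in (\ref{lw4}) we have $\lambda_{a}(b)=a(a^{\ast}+b)$, so the left-hand side unfolds as
$$\lambda_{a}(b+c)=a\bigl(a^{\ast}+(b+c)\bigr).$$

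First I would use that $(S,+)$ is a semigroup to regroup the sum inside the parentheses: since $``+"$ is associative, $a^{\ast}+(b+c)=(a^{\ast}+b)+c$. This reassociation is the only structural fact about $(S,+)$ that the argument requires, and it is precisely what brings the expression into the shape to which the semibrace axiom applies in the next step.

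Next I would apply axiom (\ref{lw1}), namely $x(y+z)=xy+x(x^{\ast}+z)$, with the substitution $x=a$, $y=a^{\ast}+b$, and $z=c$. This yields
$$a\bigl((a^{\ast}+b)+c\bigr)=a(a^{\ast}+b)+a(a^{\ast}+c).$$
Recognizing the two summands on the right as $\lambda_{a}(b)$ and $\lambda_{a}(c)$ respectively then gives the desired identity $\lambda_{a}(b+c)=\lambda_{a}(b)+\lambda_{a}(c)$.

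There is no real obstacle here: once the correct regrouping is performed, the statement is essentially a repackaging of the semibrace axiom, and no properties of the regular $\star$-semigroup $(S,\cdot,\ast)$ beyond (\ref{lw1}) are needed. The only point requiring (minimal) care is choosing the substitution so that the $x^{\ast}$ appearing in the axiom lines up with the $a^{\ast}$ built into the definition of $\lambda_{a}$; taking $y=a^{\ast}+b$ rather than $y=b$ is what makes the term $x(x^{\ast}+z)=a(a^{\ast}+c)$ emerge as exactly $\lambda_{a}(c)$.
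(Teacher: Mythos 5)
Your proof is correct and is essentially identical to the paper's: both unfold $\lambda_a(b+c)=a(a^\ast+b+c)$ and apply axiom (\ref{lw1}) with $y=a^\ast+b$ and $z=c$ to obtain $a(a^\ast+b)+a(a^\ast+c)$. The only difference is that you make the appeal to associativity of $``+"$ explicit, which the paper leaves implicit (and the paper's displayed equation contains a typo, writing $a(a^\ast+z)$ for $a(a^\ast+c)$).
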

\begin{proof}
In fact, by the axiom (\ref{lw1}) we have $$\lambda_{a}(b+c)=a(a^\ast+b+c)=a(a^\ast+b)+a(a^\ast+z)=\lambda_{a}(b)+\lambda_{a}(c),$$ as required.
\end{proof}
Let $(S,+,\cdot,\, ^\ast)$ be a left regular $\star$-semibrace and $a\in S$.  By Proposition \ref{+zitongtai}, $\lambda_a$ is an endomorphism of $(S, +)$.
The following example shows that $\lambda_a$ may not be an anti-endomorphism of $(S, +)$ even if  $(S,+,\cdot,\, ^\ast)$ is a skew left brace.
\begin{ex}Consider symmetric group $(S_{3},\cdot)$, where $$S_{3}=\{(1),(12),(13),(23),(123),(132)\}.$$ Define a binary operation $``+"$ on $S_{3}$ as follows:  For all $x,y\in S_{3}$,  $x+y=xy$. Then one can see that $(S_{3},+,\cdot)$ is a skew left brace easily. In this skew left brace, we have $$\lambda_{(12)}((13)+(12))=\lambda_{(12)}((123))=(12)((12)^{-1}+(123))=(12)(12)(123)=(123),$$
$$\lambda_{(12)}((12))=(12)((12)^{-1}+(12))=(12)(12)(12)=(12),$$
$$\lambda_{(12)}((13))=(12)((12)^{-1}+(13))=(12)(12)(13)=(13).$$ This implies that $$\lambda_{(12)}((12))+\lambda_{(12)}((13))=(12)+(13)=(12)(13)=(132)\not=(123)=\lambda_{(12)}((13)+(12)),$$  and so  $\lambda_{(12)}$ is not an anti-endomorphism of $(S_{3},+)$.
\end{ex}
From \cite[Lemma 2.12]{Jespers-Van Antwerpen}, $\lambda$ is a morphism from $(S, \cdot)$ to $({\mathcal T}(S), \cdot)$ for a left semibrace (in particular, left brace, skew left brace)  $(S,+,\cdot)$.
However, the situation is different for left inverse  semibraces and left regular $\star$-semibraces, which can be illustrated by the following example.
\begin{ex}Define two binary operations $``\cdot"$ and $``+"$ on $S=\{0,1\}$  as follows:
$$0\cdot 0=0\cdot 1=1\cdot 0=0,\, 1\cdot 1=1,\,\,\,\, 0+1=0+0=0,\, 1+0=1+1.$$
Then one can easily show that $(S,+,\cdot)$ forms a left inverse semibrace, and $0^{-1}=0, 1^{-1}=1$ in $(S, \cdot)$.  In this left inverse semibrace, we have  $$\lambda_{1\cdot 0}(1)=\lambda_{0}(1)=0\cdot(0^{-1}+1)=0\cdot (0+1)=0\cdot 0=0,$$ $$\lambda_{1}(\lambda_{0}(1))=\lambda_{1}(0)=1\cdot (1^{-1}+0)=1\cdot (1+0)=11=1.$$ This implies that  $\lambda$ is not a morphism from $(S,\cdot)$ to $({\mathcal T}(S), \cdot)$.
\end{ex}
Let $(S, +, \cdot)$ be a left brace and $x\in S$. The following example shows
that $\rho_x$  may not be an endomorphism (respectively, an anti-endomorphism) of $(S, +)$, and $\lambda$ (respectively, $\rho$) may not be an anti-morphism (respectively, a morphism) from $(S, \cdot)$ to $({\mathcal T}(S), \cdot)$.
\begin{ex}Consider the dihedral group $(D_{8},\cdot)$, where $$D_{8}=\langle a,b\mid a^{4}=b^{2}=1, b^{-1}ab=a^{-1}\rangle=\{e,a,a^{2},a^{3},b,ba,ba^{2},ba^{3}\}.$$ Define a binary operation $``+"$ on $D_{8}$ as follows:
$$\begin{tabular}{c|cccccccc}
+: & $e$ & $a$ & $a^{2}$ & $a^{3}$ & $b$ & $ba$ & $ba^{2}$ & $ba^{3}$\\
\hline
    $e$ & $e$ & $a$ & $a^{2}$ & $a^{3}$ & $b$ & $ba$ & $ba^{2}$ & $ba^{3}$ \\
    $a$ & $a$ &$e$ & $ba^{2}$ & $ba^{3}$ & $ba$ & $b$ & $a^{2}$ & $a^{3}$ \\
    $a^{2}$ & $a^{2}$ & $ba^{2}$ & $e$ & $b$ & $a^{3}$ & $ba^{3}$ & $a$ & $ba$ \\
    $a^{3}$ & $a^{3}$ & $ba^{3}$ & $b$ & $e$ & $a^{2}$ & $ba^{2}$ & $ba$ & $a$ \\
    $b$ & $b$ & $ba$ & $a^{3}$ & $a^{2}$ &$e$ & $a$ & $ba^{3}$ & $ba^{2}$ \\
    $ba$ & $ba$ & $b$ & $ba^{3}$ & $ba^{2}$ & $a$ &$e$ & $a^{3}$ & $a^{2}$ \\
    $ba^{2}$ & $ba^{2}$ & $a^{2}$ & $a$ & $ba$ & $ba^{3}$ & $a^{3}$ & $e$& $b$ \\
    $ba^{3}$ & $ba^{3}$ & $a^{3}$ & $ba$ & $a$ & $ba^{2}$ & $a^{2}$ & $b$ & $e$
\end{tabular} \hspace{.5cm}$$
It is easy to see that $(D_{8},+)\cong \mathbb{Z}_{2}\times \mathbb{Z}_{2}\times \mathbb{Z}_{2}$, and one can check that $(D_{8},+,\cdot)$ forms a left brace.
Since $$\rho_{b}(ba)=((ba)^{-1}+b)^{-1}b=(ba+b)^{-1}b=(a)^{-1}b=ba,$$
$$\rho_{b}(b)=(b^{-1}+b)^{-1}b=(b+b)^{-1}b=(e)^{-1}b=b,$$ we have $\rho_{b}(ba)+\rho_{b}(b)=ba+b=a$.
Observe that $$\rho_{b}(ba+b)=\rho_{b}(a)=(a^{-1}+b)^{-1}b=(a^{3}+b)^{-1}b=(a^{2})^{-1}b=ba^{2},$$
it follows that $\rho_{b}$ is not an endomorphism on $(D_{8},+)$. As $(D_{8},+)$ is commutative,   $\rho_{b}$ is not an anti-endomorphism on $(D_{8},+)$.
Moreover, we have $$\lambda_{b(ba)}(b)=\lambda_{a}(b)=a(a^{-1}+b)=a(a^{3}+b)=aa^{2}=a^{3},$$
$$\lambda_{ba}(\lambda_{b}(b))=\lambda_{ba}(b(b^{-1}+b))=\lambda_{ba}(b(b+b))=\lambda_{ba}(be)$$$$=\lambda_{ba}(b)=ba((ba)^{-1}+b)=ba(ba+b)=baa=ba^{2}.$$   This implies that $\lambda$ is  not an anti-morphism from $(D_{8},\cdot)$ to $({\mathcal T}(D_{8}), \cdot)$. Finally, observe that
$$\rho_{b(ba)}(b)=\rho_{a}(b)=(b^{-1}+a)^{-1}a=(b+a)^{-1}a=(ba)^{-1}a=ba^{2},$$
$$\rho_{b}(\rho_{ba}(b))=\rho_{b}((b^{-1}+ba)^{-1}(ba))=\rho_{b}((a+ba)^{-1}(ba))=\rho_{b}(a^{-1}ba)$$$$=\rho_{b}(ba^{2})
=((ba^{2})^{-1}+b)^{-1}b=(ba^{2}+b)^{-1}b=(ba^{3})^{-1}b=a.$$
it follows that $\rho$ is not a morphism from $(D_{8},\cdot)$ to $({\mathcal T}(D_{8}), \cdot)$.
\end{ex}
Let $(S, +, \cdot)$ be a left cancellative semibrace (in particular, skew left brace) and $b\in S$. From \cite[Proposition 6]{c24}, $\rho_b$ is an anti-morphism from $(S, \cdot)$ from $({\mathcal T}(S), \cdot)$. However, the situation is different for general left semibraces, which can be illustrated by the following example.
\begin{ex}Consider the cyclic group $(S,\cdot)$, where $S=\langle a\rangle=\{e,a,a^2,a^3\}$. Define a binary operation $``+"$ on $S$ as follows:
$$\begin{tabular}{c|cccc}
+ & $e$ & $a$ & $a^{2}$ & $a^{3}$\\
\hline
    $e$ & $e$ & $e$ & $a^{2}$ & $a^{2}$\\
    $a$ & $a$ &$a$ & $a^{3}$ & $a^{3}$\\
    $a^{2}$ & $a^{2}$ & $a^{2}$ & $e$ & $e$\\
    $a^{3}$ & $a^{3}$ & $a^{3}$ & $a$ & $a$
\end{tabular} \hspace{.5cm}$$
One can show that $(S,+,\cdot)$ forms a left semibrace. In this left semibarce, we have $$\rho_{aa}(a)=\rho_{a^{2}}(a)=(a^{-1}+a^{2})^{-1}a^{2}=(a^{3}+a^{2})^{-1}a^{2}=a^{-1}a^{2}=a^3 a^2=a,$$
$$\rho_{a}(\rho_{a}(a))=\rho_{a}((a^{-1}+a)^{-1}a)=\rho_{a}((a^{3}+a)^{-1}a)=\rho_{a}((a^{3})^{-1}a)$$$$=\rho_{a}(a^{2})
=((a^{2})^{-1}+a)^{-1}a=(a^{2}+a)^{-1}a=(a^{2})^{-1}a=a^2 a=a^{3}.$$
This implies that $\rho$ is not an anti-morphism from $(S,\cdot)$ to $({\mathcal T}(S), \cdot)$.
\end{ex}

Let $(S,+,\cdot)$ be a left brace and $x\in S$. In the following example, we shall show that $\lambda_x$ (respectively, $\rho_x$) may not be an endomorphism (or anti-endomorphism) of $(S, \cdot)$.
\begin{ex}Consider the additive group $(\mathbb{Z}_{8},+)$ of integers modulo $8$, where $\mathbb{Z}_{8}=\langle1\rangle=\{0,1,2,3,4,5,6,7\}$. Define a binary operation ``$\cdot$" on $\mathbb{Z}_{8}$ as follows:
$$\begin{tabular}{r|rrrrrrrr}
$\cdot$ & 0 & 1 & 2 & 3 & 4 & 5 & 6 & 7\\
\hline
    0 & 0 & 1 & 2 & 3 & 4 & 5 & 6 & 7 \\
    1 & 1 & 0 & 7 & 6 & 5 & 4 & 3 & 2 \\
    2 & 2 & 7 & 4 & 1 & 6 & 3 & 0 & 5 \\
    3 & 3 & 6 & 1 & 4 & 7 & 2 & 5 & 0 \\
    4 & 4 & 5 & 6 & 7 & 0 & 1 & 2 & 3 \\
    5 & 5 & 4 & 3 & 2 & 1 & 0 & 7 & 6 \\
    6 & 6 & 3 & 0 & 5 & 2 & 7 & 4 & 1 \\
    7 & 7 & 2 & 5 & 0 & 3 & 6 & 1 & 4
\end{tabular}$$
One can check that $(\mathbb{Z}_{8},\cdot)\cong (\mathbb{Z}_{2}, +)\times (\mathbb{Z}_{4}, +)$, and $(\mathbb{Z}_{8}, +, \cdot)$ is a left brace with $0^{-1}=0 $ and $1^{-1}=1, 2^{-1}=6$. In this left brace, we have
$$\lambda_{1}(1\cdot1)=\lambda_{1}(0)=1\cdot (1^{-1}+0)=1\cdot(1+0)=1\cdot 1=0.$$
$$\lambda_{1}(1)=1\cdot (1^{-1}+1)=1\cdot (1+1)=1\cdot2=7,\,\,\, \lambda_{1}(1)\cdot \lambda_{1}(1)=7\cdot7=4.$$
This implies that  $\lambda_{1}$ is not an endomorphism of $(Z_{8},\cdot)$. Since $(Z_{8},\cdot)$ is commutative, it follows that  $\lambda_{1}$ is not an anti-endomorphism of $(Z_{8},\cdot)$. On the other hand, observe that
$$\rho_{1}(1\cdot 1)=\rho_{1}(0)=(0^{-1}+1)^{-1}\cdot 1=(0+1)^{-1}\cdot 1=1^{-1}\cdot 1=1\cdot 1=0,$$
$$\rho_{1}(1)=(1^{-1}+1)^{-1}\cdot 1=(1+1)^{-1}\cdot 1=2^{-1}\cdot 1=6\cdot 1=3,\,\,\,\, \rho_{1}(1)\cdot \rho_{1}(1)=3\cdot 3=4.$$ This together with the fact that $(\mathbb{Z}_{8},\cdot)$ is commutative implies that $\rho_{1}$ is neither an endomorphism nor an anti-endomorphism of  $(\mathbb{Z}_{8},\cdot)$
\end{ex}


\section{Set-theoretic solutions of the Yang-Baxter equation associated to weak left $\star$-braces}
In this section, we introduce   weak left  $\star$-braces which generalize
 weak left braces and form  a subclass of left regular $\star$-semibraces. After giving some
necessary  structural properties of weak left  $\star$-braces, we show that the map  associated to a weak left $\star$-brace  is always a solution  of the Yang-Baxter equation. Assume that $(S,+,\, -)$ and $(S,\cdot,\, \ast)$ are regular $\star$-semigroups, where $-: S\rightarrow S,\, x\mapsto -x$ and $\ast: S\rightarrow S,\, x\mapsto x^\ast$. To avoid parentheses,  throughout this section we always  assume that the multiplication has higher precedence than the addition and write $x+(-y)$ as $x-y$ for all $x, y\in S$.

\begin{defn} Let $(S,+,\, -)$ and $(S,\cdot,\, \ast)$ be regular $\star$-semigroups. Then $(S,+,\cdot,\, -,\, \ast)$ is called a {\em weak  left $\star$-brace} if the following axiom holds:
\begin{equation}\label{lw5}x(y+z)=xy-x+xz,\,\,\, -x+x=xx^\ast.
\end{equation}
\end{defn}
\begin{remark}\label{tousheji}Let  $(S,+,\cdot,\, -,\, \ast)$ be a weak  left $\star$-brace. By Lemma \ref{1.2} (1) and the second identity in (\ref{lw5}),  the sets of projections $P(S,+)$ and $P(S, \cdot)$ of $(S, +, -)$ and $(S, \cdot, \ast)$ coincide. That is,
\begin{equation}\label{toushejiyizhi}
\begin{array}{cc}
P(S,+)=\{e\in E(S, +)\mid -e=e\}=\{x-x\mid x\in S\}=\{-x+x\mid x\in S\}\\[2mm]
=\{xx^\ast\mid x\in S\}=\{x^\ast x\mid x\in S\}=\{f\in E(S, \cdot)\mid f^\ast=f \}=P(S, \cdot).
\end{array}
\end{equation}
We  denote it by $P(S)$ and call  it {\em the set of projections of the  weak  left $\star$-brace $(S,+,\cdot,\, -,\, \ast)$}.
\end{remark}
\begin{remark}\label{5.3}
Assume that $(S, +)$ and $(S, \cdot)$ are inverse semigroups. For each $x\in S$,  denote  the inverses of $x$ in  the inverse semigroups $(S, +)$ and  $(S, \cdot)$ by $-x$ and $x^\ast$, respectively. Then $(S, +, -)$ and $(S, \cdot, \ast)$ forms   regular $\star$-semigroups. Conversely, let $(S,+,\, -)$ and $(S,\cdot,\, \ast)$ be regular $\star$-semigroups which are also inverse. Then by Remark \ref{inverse-regular*}, for each $x\in S$, $x^\ast$ and $-x$ must be the unique inverses of $x$ in $(S, \cdot)$ and $(S, +)$, respectively. Thus  weak  left braces (in particular, skew left braces) are necessarily weak  left  $\star$-braces, and a weak  left $\star$-brace $(S,+,\cdot,\, -,\, \ast)$  is a  weak left brace if and only if $(S, +)$ and $(S, \cdot)$ are inverse semigroups.
\end{remark}
The following proposition gives a kind of weak left $\star$-braces which induced by regular $\star$-semigroups.

\begin{prop}\label{lizi}Let $(S,\cdot,\, \ast)$ be a regular $\star$-semigroup. Define two binary operations  $``+"$, $``\oplus"$ and two unary operations $``-"$, $\ominus$ on $S$ as follows: For all $a,b\in S$, $a+b=ab, -a=a^\ast$ and $a\oplus b=ba, \ominus a=a^\ast$. Then $(S, \oplus, \ominus)$ also forms a regular $\star$-semigroup. Moreover, we have the following results:
\begin{itemize}
\item[(1)] $(S,+, \cdot,\,-, \ast)$ is a weak left $\star$-brace if and only if $(S, \cdot,\ast)$ is a Clifford semigroup.
\item[(2)] $(S,\oplus, \cdot,\,\ominus, \ast)$ is a weak left $\star$-brace if and only if $(S,\cdot,\, \ast)$ is completely regular, orthodox and locally inverse.
\end{itemize}
\end{prop}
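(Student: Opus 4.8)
The plan is to dispose of the preliminary assertion first and then, in each of the two items, translate the two defining axioms of a weak left $\star$-brace into identities in $(S,\cdot,\ast)$ and match them against Lemmas \ref{zicu9} and \ref{zicu10}. For the preliminary claim that $(S,\oplus,\ominus)$ is again a regular $\star$-semigroup, I would simply note that $(S,\oplus)$ is the opposite semigroup of $(S,\cdot)$, so associativity is immediate, and that $\ominus=\ast$ still verifies the three axioms of (\ref{shishi1.2}): $a\oplus\ominus a\oplus a=aa^\ast a=a$, $\ominus\ominus a=a$, and $\ominus(a\oplus b)=(ba)^\ast=a^\ast b^\ast=\ominus b\oplus\ominus a$. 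Thus $(S,\oplus,\ominus)$ is a regular $\star$-semigroup with no extra hypothesis, so in both items the ``additive'' regular $\star$-structure required by the definition of a weak left $\star$-brace is available for free.

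For item (1), since $+$ is $\cdot$ and $-a=a^\ast$, the algebra $(S,+,-)$ coincides with $(S,\cdot,\ast)$ and is automatically a regular $\star$-semigroup. Recalling that multiplication has higher precedence, the first axiom $x(y+z)=xy-x+xz$ unfolds to $xyz=xyx^\ast xz$, while the second axiom $-x+x=xx^\ast$ unfolds to $x^\ast x=xx^\ast$. By Lemma \ref{zicu10}(2) the second identity says exactly that $(S,\cdot,\ast)$ is a Clifford semigroup. Hence I would argue: if $(S,+,\cdot,-,\ast)$ is a weak left $\star$-brace then its second axiom forces the Clifford condition; conversely, if $(S,\cdot,\ast)$ is Clifford then the second axiom holds by Lemma \ref{zicu10}(2), and the first holds because idempotents are central, so $xyx^\ast xz=x(x^\ast x)yz=(xx^\ast x)yz=xyz$. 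This yields the equivalence in (1).

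For item (2), using $a\oplus b=ba$ and $\ominus a=a^\ast$, the second axiom $\ominus x\oplus x=xx^\ast$ becomes the tautology $xx^\ast=xx^\ast$ and carries no information, whereas the first axiom $x(y\oplus z)=xy\ominus x\oplus xz$ unfolds — computing $(xy)\oplus x^\ast=x^\ast xy$ and then $\oplus(xz)$ — to $xzy=xzx^\ast xy$, i.e. $xyz=xyx^\ast xz$ after relabeling the universally quantified variables. By the equivalence of items (1) and (6) in Lemma \ref{zicu9}, this identity holds if and only if $(S,\cdot,\ast)$ is completely regular, orthodox and locally inverse, which gives (2).

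The only genuinely delicate points are the careful evaluation of the right-hand sides under the reversed product $\oplus$ together with the precedence convention, and — in item (1) — the observation that the two axioms together collapse to the single Clifford condition: by Lemma \ref{zicu10} a Clifford $\star$-semigroup is completely regular and inverse, hence orthodox and locally inverse, so the first axiom $xyz=xyx^\ast xz$ is automatically subsumed and need not be imposed separately. I expect the bookkeeping in the $\oplus$-computation to be the main place where an order-error could creep in, so I would write those two unfoldings out in full and then invoke Lemmas \ref{zicu9} and \ref{zicu10} verbatim.
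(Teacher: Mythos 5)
Your proposal is correct and follows essentially the same route as the paper: in both items you unfold the two axioms of (\ref{lw5}) under the given operations (obtaining $x^\ast x=xx^\ast$ and $xyz=xyx^\ast xz$ for $+$, and the tautology $xx^\ast=xx^\ast$ together with $xzy=xzx^\ast xy$ for $\oplus$) and then invoke Lemma \ref{zicu10} and Lemma \ref{zicu9} exactly as the authors do, including the use of centrality of idempotents in the converse of item (1). No gaps.
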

\begin{proof}Obviously, $(S, \oplus, \ominus)$ is  a regular $\star$-semigroup. Now we prove items (1) and (2). Let $x,y,z\in S$.

(1) If $(S,+, \cdot,\,-, \ast)$ is a weak left $\star$-brace, then by (\ref{lw5}),  we have $x^\ast x=-x+x= xx^\ast$. This implies that $S$ is a Clifford semigroup by Lemma \ref{zicu10}. Conversely, if $(S, \cdot,\ast)$ is a Clifford semigroup, then $-x+x=x^\ast x=xx^\ast$ by Lemma \ref{zicu10} again, and
$x^\ast xy=yx^\ast x$ by the fact that $E(S, \cdot)$ is contained in the center of $(S, \cdot,\ast)$.  This yields that $$xy-x+xz=xy+(-x)+xz=xyx^\ast xz=xx^\ast x yz=xyz=x(y+z).$$ Thus
$(S,+, \cdot,\,-, \ast)$ is a weak left $\star$-brace.

(2) Obviously, we have $\ominus x\oplus x=xx^\ast$. Observe that $x(y\oplus z)=xzy$ and $$xy\ominus x\oplus xz=xy\oplus (\ominus x)\oplus xz=xy\oplus x^\ast \oplus xz=xzx^\ast xy.$$
Thus $(S,\oplus, \cdot,\,\ominus, \ast)$ is a weak left $\star$-brace if and only if $(S,\cdot,\,\ast)$ satisfies the axiom $xzy=xzx^\ast xy$. By Lemma \ref{zicu9}, this is equivalent to the fact that  $(S,\cdot,\, \ast)$ is completely regular, orthodox and locally inverse.
\end{proof}
\begin{remark}Since completely regular, orthodox and locally inverse regular $\star$-semigroups (for example, the regular $\star$-semigroup $(S, \cdot,\, \ast)$ appeared in Remark \ref{zhuji1}) are not necessarily inverse,  there are weak left $\star$-braces which are not weak left braces by Proposition \ref{lizi}.
\end{remark}

The following lemma collect some basic equalities in  weak left $\star$-braces which will be used  frequently   in the sequel.
\begin{lem}\label{lw6} Let $(S,+,\cdot,-,\ast)$ be a  weak left $\star$-brace,  $a,b\in S$ and $e\in P(S)$.
\begin{itemize}
\item[(1)]   $a^{\ast\ast}=a,\, aa^\ast a=a,\, (ab)^\ast=b^\ast a^\ast, a^\ast aa^\ast=a^\ast.$
\item[(2)] $-(-a)=a, a-a+a=a,\, -(a+b)=-b-a,\, -a+a-a=-a.$
\item[(3)]$-aa^\ast=aa^\ast=aa^\ast+aa^\ast,\, -a^\ast a=a^\ast a=a^\ast a+a^\ast  a.$
\item[(4)] $(a-a)^\ast=a-a=(a-a)(a-a),\, (-a+a)^\ast=-a+a=(-a+a)(-a+a)$.
\item[(5)] $a+aa^\ast=a=a-aa^\ast,\,  aa^\ast-a=-a$.
\item[(6)] $(-a)(-a)^\ast=a-a,\, (a-a)(-a)=-a,\,(-a+a)a=a$.
\item[(7)] $(-a+a)(a-a)=a+(-a+a)(-a),\, (a-a)(-a+a)=-a+(a-a)a$.
\item[(8)] $ab-ab+a(b+c)=a(b+c)=a(b+a^\ast ac)$.
\item[(9)] $e^\ast=-e=ee=e+e=e-e=ee^\ast=e^\ast e=-e+e=e$.
\item[(10)]$-a+ab=a(a^\ast+b)$.
\end{itemize}
\end{lem}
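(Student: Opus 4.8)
The plan is to treat the ten identities not as independent computations but as a short dependency chain, proving the ``pure'' ones first and then bootstrapping the rest from the two defining axioms in (\ref{lw5}). Items (1) and (2) require no work beyond unwinding the definition: (1) is just the list of regular $\star$-semigroup axioms (\ref{shishi1.2}) for $(S,\cdot,\ast)$ together with $a^\ast a a^\ast=a^\ast$ from Remark \ref{inverse-regular*}, and (2) is the verbatim additive translation of the same axioms for $(S,+,-)$ (with $a^{\ast\ast}=a$ becoming $-(-a)=a$, $aa^\ast a=a$ becoming $a-a+a=a$, and so on). I would state these immediately and refer back to them freely.

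Next I would dispatch the ``projection'' identities (9), (3) and (4), all of which rest on the identification $P(S,+)=P(S,\cdot)=P(S)$ recorded in Remark \ref{tousheji}, equation (\ref{toushejiyizhi}). The point is that $aa^\ast$, $a^\ast a$, $a-a$ and $-a+a$ all belong to $P(S)$, and every $e\in P(S)$ is simultaneously a projection for both structures, hence satisfies $e^\ast=e$, $-e=e$, $ee=e$ and $e+e=e$; this gives (9) at once, yields $-aa^\ast=aa^\ast$, $aa^\ast+aa^\ast=aa^\ast$ (and the $a^\ast a$ analogues) for (3), and $(a-a)^\ast=a-a=(a-a)(a-a)$ together with its $-a+a$ counterpart for (4). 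From here I would derive (5): since the second axiom in (\ref{lw5}) reads $-a+a=aa^\ast$, I get $a+aa^\ast=a+(-a+a)=a-a+a=a$ by (2), then $a-aa^\ast=a+aa^\ast=a$ using $-aa^\ast=aa^\ast$ from (3), and $aa^\ast-a=(-a+a)-a=-a$ again by (2).

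The remaining identities are then one-line applications of the axioms. For (6) I would substitute $x=-a$ into $-x+x=xx^\ast$, so that $a-a=-(-a)+(-a)=(-a)(-a)^\ast$; the other two parts of (6) follow by $bb^\ast b=b$ with $b=-a$ and by $aa^\ast a=a$. For (10) I apply the first axiom with $y=a^\ast$, $z=b$ to get $a(a^\ast+b)=aa^\ast-a+ab$, and then collapse $aa^\ast-a$ to $-a$ via (5). For (8) I expand $a(b+c)=ab-a+ac$ and use $ab-ab+ab=ab$ from (2) for the first equality, and $a(a^\ast ac)=(aa^\ast a)c=ac$ from (1) for the second. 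The genuinely two-operation identities are those in (7): I would apply the first axiom with $x=-a+a$, $y=a$, $z=-a$ (so the left side becomes $(-a+a)(a-a)$), then simplify the right side $(-a+a)a-(-a+a)+(-a+a)(-a)$ using $(-a+a)a=a$ from (6), $-(-a+a)=-a+a$ from (4), and finally $a+(-a+a)=a+aa^\ast=a$ from (5); the second part of (7) is the symmetric computation with $x=a-a$, $y=-a$, $z=a$, where $-a+(a-a)=-a$ is supplied by (2).

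I expect the main obstacle to be conceptual rather than computational: it is crucial that the additive and multiplicative projection sets coincide, i.e. (\ref{toushejiyizhi}), because this is exactly what licenses the ``cross'' facts $-aa^\ast=aa^\ast$ and $(a-a)^\ast=a-a$ and makes the mixed expressions in (5)--(7) reduce. Once that identification and items (1)--(4) are in place, everything else is a careful but mechanical application of the two axioms, and the only step where one must genuinely interleave $+$ and $\cdot$ is (7); I would present (7) last and in full so the bookkeeping of the substitutions is transparent.
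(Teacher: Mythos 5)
Your proposal is correct and follows essentially the same route as the paper: items (1)--(6) from the regular $\star$-semigroup axioms together with the identification $P(S,+)=P(S,\cdot)$ of Remark \ref{tousheji}, and then (7), (8), (10) by direct expansion with the axiom $x(y+z)=xy-x+xz$ exactly as in the paper's displayed computations (including obtaining the second identity of (7) from the first by the substitution $a\mapsto -a$, which you phrase equivalently as the symmetric computation). The only cosmetic difference is that you derive (9) first and then (3), (4) from it, whereas the paper deduces (9) from (3), (4); both rest on the same equation (\ref{toushejiyizhi}).
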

\begin{proof}By the definition of regular $\star$-semigroups, (\ref{lw5}) and Remark \ref{tousheji}, items (1)--(6) are obvious. Now, we consider items (7), (8), (9)  and  (10). In fact, by  (\ref{lw5}) we have
$$ (-a+a)(a-a) =(-a+a)a-(-a+a)+(-a+a)(-a) $$$$=aa^\ast a-a+a+(-a+a)(-a) =a-a+a+(-a+a)(-a)=a+(-a+a)(-a).$$
This gives the first identity in (7). Replacing $a$ by $-a$ in the first identity, we can obtain  the second identity.
Again by  (\ref{lw5}) we have
$$ab-ab+a(b+c)=ab-ab+(ab-a+ac)=ab-a+ac=a(b+c)=ab-a+aa^{\ast}ac=a(b+a^\ast ac),$$
which gives item (8). Item (9) follows from items (3), (4) and Remark \ref{tousheji}. Finally, by (\ref{lw5}) and item (5), we have $a(a^\ast+ b)=aa^\ast -a +ab=-a+ab$, which gives (10).
\end{proof}
The following two propositions explore  the relationship between left regular $\star$-semibraces, weak left $\star$-braces and weak left braces, which generalize and enrich  the results in \cite[Proposition 16]{Catino-Mazzotta-Miccoli-Stefanelli} on weak left braces.

\begin{prop}\label{zuozhengze}Let $(S,\cdot, \ast)$ and $(S,+, -)$ be two  regular $\star$-semigroups. Then $(S,+,\cdot, -,\ast)$ is a weak left $\star$-brace if and only if $(S,+,\cdot,\ast)$ is a left regular $\star$-semibrace  and $-x+xy=x(x^{\ast}+y)$ for all $x,y\in S$.
\end{prop}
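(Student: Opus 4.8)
The plan is to prove the biconditional in the obvious two directions, exploiting the fact that the two defining axioms in (\ref{lw5}) can be repackaged cleanly. Recall that a weak left $\star$-brace requires that $(S,+,-)$ and $(S,\cdot,\ast)$ be regular $\star$-semigroups satisfying both $x(y+z)=xy-x+xz$ and $-x+x=xx^\ast$, whereas a left regular $\star$-semibrace (Definition \ref{left regular*}) requires only that $(S,+)$ be a semigroup, that $(S,\cdot,\ast)$ be a regular $\star$-semigroup, and that the axiom (\ref{lw1}), namely $x(y+z)=xy+x(x^\ast+z)$, hold. Since in the present statement $(S,+,-)$ is assumed to be a regular $\star$-semigroup in both formulations, the only genuine content is the interplay between the distributive law $xy-x+xz$, the projection-matching condition $-x+x=xx^\ast$, and the auxiliary identity $-x+xy=x(x^\ast+y)$.

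For the forward direction I would assume $(S,+,\cdot,-,\ast)$ is a weak left $\star$-brace and invoke Lemma \ref{lw6}(10), which already records exactly the identity $-a+ab=a(a^\ast+b)$; this immediately gives the required equation $-x+xy=x(x^\ast+y)$ for all $x,y\in S$. To obtain (\ref{lw1}) I would start from the weak-brace distributive law $x(y+z)=xy-x+xz$ and rewrite the right-hand side: since $-x+xz = x(x^\ast+z)$ by the same identity, and $xy-x+xz = xy + (-x+xz)$ by associativity of $+$ together with the convention that $-x+xz$ parses as $(-x)+(xz)$, I get $x(y+z)=xy+x(x^\ast+z)$, which is precisely (\ref{lw1}). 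Hence $(S,+,\cdot,\ast)$ is a left regular $\star$-semibrace and the extra identity holds.

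For the converse I would assume $(S,+,\cdot,\ast)$ is a left regular $\star$-semibrace with $(S,+,-)$ a regular $\star$-semigroup and $-x+xy=x(x^\ast+y)$ holding identically. The distributive law $x(y+z)=xy-x+xz$ then follows by reading the previous chain backwards: $xy+x(x^\ast+z)=xy+(-x+xz)=xy-x+xz$, while the left side equals $x(y+z)$ by (\ref{lw1}). The remaining, and genuinely load-bearing, task is to derive the projection condition $-x+x=xx^\ast$. Here I would specialize the hypothesis $-x+xy=x(x^\ast+y)$ by a judicious substitution: taking $y$ to be something that collapses $x^\ast+y$ to a projection. A natural first attempt is $y=x^{\ast\ast}=x$ combined with the regular $\star$-semigroup identities $xx^\ast x=x$ and $x^\ast xx^\ast=x^\ast$ in $(S,\cdot,\ast)$, together with the analogous additive identities $-x+x-x=-x$ and $x-x+x=x$ from Lemma \ref{lw6}(2) in $(S,+,-)$; the aim is to massage $-x+xy$ into $-x+x$ on one side while the right side $x(x^\ast+y)$ reduces to $xx^\ast$ via $x^\ast+y$ becoming the projection $x^\ast x$ or $xx^\ast$.

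The main obstacle, and the step I would spend the most care on, is precisely this derivation of $-x+x=xx^\ast$ from the single identity $-x+xy=x(x^\ast+y)$: it requires simultaneously using the multiplicative $\star$-structure to control $x^\ast+y$ and the additive $\star$-structure to control $-x+x$, and getting both sides to meet at the common projection. I expect the correct substitution to be one that makes $xy$ equal to a projection-like element (for instance choosing $y$ so that $xy=xx^\ast$, such as $y=x^\ast x x^\ast=x^\ast$), after which the left side is $-x+xx^\ast$ and, using Lemma \ref{lw6}(5)'s analogue $a-aa^\ast=a$ read additively, one coaxes this to $-x+x$; symmetrically the right side $x(x^\ast+x^\ast)$ should simplify through $x^\ast+x^\ast$ being idempotent to $xx^\ast$. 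Verifying that these reductions are valid \emph{without} already assuming the projection condition is the delicate point, and I would check it by tracking which identities belong to $(S,+,-)$ versus $(S,\cdot,\ast)$ at each step.
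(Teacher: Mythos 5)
Your forward direction is correct and coincides with the paper's: Lemma \ref{lw6}(10) gives $-x+xy=x(x^\ast+y)$, and rewriting $xy-x+xz$ as $xy+(-x+xz)=xy+x(x^\ast+z)$ yields (\ref{lw1}). The easy half of the converse (recovering $x(y+z)=xy-x+xz$ from (\ref{lw1}) together with the auxiliary identity) is also fine.

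The gap is exactly where you flagged it: the derivation of $-x+x=xx^\ast$ is never actually carried out, and the route you sketch would not close as stated. Your proposed reductions lean on Lemma \ref{lw6}(5) (``$a-aa^\ast=a$'', ``$aa^\ast-a=-a$'') and on treating $x^\ast+x^\ast$ as an idempotent that makes $x(x^\ast+x^\ast)$ collapse to $xx^\ast$; but Lemma \ref{lw6} is proved \emph{for weak left $\star$-braces}, i.e.\ under the very axiom $-x+x=xx^\ast$ you are trying to establish, so invoking it (or its ``analogue'') here is circular, and in a general regular $\star$-semigroup $(S,+,-)$ there is no a priori reason that $x^\ast+x^\ast$ is additively idempotent or that $x(x^\ast+x^\ast)=xx^\ast$. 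The substitution that actually works is $y=x^\ast x$ rather than $y=x$ or $y=x^\ast$: since $xy=xx^\ast x=x$, the auxiliary identity gives $-x+x=x(x^\ast+x^\ast x)=xx^\ast-x+x$ via the already-established distributive law. From this one deduces $x=-(-x)=-(xx^\ast-x)=x-xx^\ast$, then (specializing to $xx^\ast$) that $-xx^\ast=xx^\ast$, hence $xx^\ast=xx^\ast+xx^\ast$ and $x=x+xx^\ast$, and finally the chain $xx^\ast=x\bigl(x^\ast(xx^\ast+xx^\ast)\bigr)=x(x^\ast xx^\ast+x^\ast(x+xx^\ast))=x(x^\ast+x^\ast x)=-x+x$, using only (\ref{lw1}), the auxiliary identity, and the regular $\star$-semigroup identities $xx^\ast x=x$, $x^\ast xx^\ast=x^\ast$, $-x+x-x=-x$. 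None of these intermediate facts appear in your outline, so as written the converse is incomplete.
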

\begin{proof}Let $(S,+,\cdot, -,\ast)$ be a weak left $\star$-brace.  By Lemma \ref{lw6} (10), we have $-x+xy=x(x^{\ast}+y)$ for all $x,y\in S$.  This together with (\ref{lw5}) gives that
$$x(y+z)=xy-x+xz=xy+x(x^\ast+z)$$ for all $x,y,z\in S$, and so   the axiom (\ref{lw1}) holds. Thus $(S,+,\cdot,\ast)$ is a left regular $\star$-semibrace.

Conversely, let $(S,+,\cdot,\ast)$ be  a left regular $\star$-semibrace which satisfies the axiom  $-x+xy=x(x^{\ast}+y)$.    Then by (\ref{lw1}), $x(y+z)=xy+x(x^\ast+z)=xy-x+xz,$  and so the first identity in (\ref{lw5}) holds. Now we consider the second identity in (\ref{lw5}).  Let $x\in S$. Since $(S,+, -)$ and $(S,\cdot,\ast)$ are regular $\star$-semigroups, we have $-x+x-x=-x,\, x=-(-x)$ and $xx^\ast x=x.$ By the given axiom and the first identity in (\ref{lw5}), we have
$$-x+x=-x+xx^\ast x=x(x^\ast+x^\ast x)=xx^\ast -x+x x^\ast x=xx^\ast -x+x,$$
\begin{equation}\label{lw13}
x=-(-x)=-(-x+x-x)=-(xx^\ast -x+x-x)=-(xx^\ast -x)=x-xx^\ast.
\end{equation}
Replacing $x$ by $xx^\ast$ in (\ref{lw13}), we have  $xx^\ast=xx^\ast-xx^\ast (xx^\ast)^\ast=xx^\ast-xx^\ast$, and so $-xx^\ast=-(xx^\ast-xx^\ast)=xx^\ast-xx^\ast=xx^\ast.$
This implies that $xx^\ast=xx^\ast-xx^\ast=xx^\ast+xx^\ast $ and $ x=x-xx^\ast=x+xx^\ast$ by (\ref{lw13}).  Thus
\begin{eqnarray*}
xx^{\ast}&=&xx^{\ast}xx{^\ast}=xx^{\ast}(xx^{\ast}+xx^{\ast})\,\,\, \,\,\,\,(\mbox{since }xx^\ast=xx^\ast+xx^\ast)\\
&=&x(x^{\ast}(xx^{\ast}+xx^{\ast}))=x(x^{\ast}xx^{\ast}+x^{\ast}(x+xx^{\ast}))\,\,\, \,\,\,\,(\mbox{by (\ref{lw1}) and }x^{\ast\ast}=x)\\
&=&x(x^{\ast}+x^{\ast}x)\,\,\, \,\,\,\,(\mbox{since } x^\ast=x^\ast x x^\ast \mbox{and }x=x+xx^{\ast})\\
&=&-x+xx^{\ast}x=-x+x.\,\,\, \,\,\,\,(\mbox{by the given condition and }xx^\ast x=x)
\end{eqnarray*}
Thus the second identity in (\ref{lw5}) is also true, and so $(S,+,\cdot, -,\ast)$ is a weak left $\star$-brace.
\end{proof}
\begin{prop}\label{zuozhengze2}Let $(S, +, \cdot, \ast)$  be  a left regular $\star$-semibrace  such that $(S, +)$ is an inverse semigroup. Then $(S, +)$ is  a Clifford semigroup  and   $(S, +, \cdot)$ is a weak left brace.
\end{prop}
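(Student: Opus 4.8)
The plan is to realize $(S,+,\cdot)$ as a weak left brace by first upgrading it to a weak left $\star$-brace and then showing that both semigroup reducts are inverse. Since $(S,+)$ is inverse, I first equip it with its unique inversion $x\mapsto -x$; by Remark \ref{inverse-regular*} the triple $(S,+,-)$ is then a regular $\star$-semigroup whose projections are exactly its idempotents, each satisfying $-e=e$ and $e+e=e$. With this notation in place, Proposition \ref{zuozhengze} tells me that it suffices to verify the single identity $-x+xy=x(x^{\ast}+y)$ to conclude that $(S,+,\cdot,-,\ast)$ is a weak left $\star$-brace; thereafter Remark \ref{5.3} reduces the weak-left-brace claim to proving that $(S,\cdot)$ is inverse, and Lemma \ref{zicu10} reduces the Clifford claim to an identity on idempotents.

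The first concrete step, which I am confident is clean, is to show that every $\cdot$-projection is an additive idempotent. Substituting $y=x^{\ast}$ into the axiom (\ref{lw1}) gives
\begin{equation*}
x(x^{\ast}+z)=xx^{\ast}+x(x^{\ast}+z)\qquad(x,z\in S),
\end{equation*}
so $xx^{\ast}$ is a left $+$-identity for every element of the form $x(x^{\ast}+z)$. Choosing $z=-x^{\ast}+x^{\ast}$ and using $x^{\ast}+(-x^{\ast}+x^{\ast})=x^{\ast}$ (valid since $(S,+)$ is inverse), the right-hand factor collapses to $x\cdot x^{\ast}=xx^{\ast}$, whence $xx^{\ast}+xx^{\ast}=xx^{\ast}$. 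Thus $P(S,\cdot)\subseteq E(S,+)$, and because $(S,+)$ is inverse its idempotents commute, so any two $\cdot$-projections already commute with respect to $+$. This additive commutativity of projections is the bridge I intend to exploit.

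The heart of the argument is to establish the key identity $-x+xy=x(x^{\ast}+y)$ for all $x,y\in S$, combining (\ref{lw1}), the fact that each $\lambda_{x}$ is a $+$-endomorphism (Proposition \ref{+zitongtai}), and the inverse structure of $(S,+)$. Once this is done, Proposition \ref{zuozhengze} shows that $(S,+,\cdot,-,\ast)$ is a weak left $\star$-brace; in particular the second part of (\ref{lw5}), namely $-x+x=xx^{\ast}$, then holds automatically, Remark \ref{tousheji} gives $P(S):=P(S,+)=P(S,\cdot)$, and the identities of Lemma \ref{lw6} become available. I would then show that for projections $e,f\in P(S)$ the product $ef$ is again an additive idempotent, hence lies in $E(S,+)=P(S)=P(S,\cdot)$; since projections are $\ast$-fixed, this forces $ef=(ef)^{\ast}=fe$. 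Commutativity of all $\cdot$-projections then yields that $(S,\cdot)$ is inverse by Lemma \ref{zhengzexingbanqun5}, and Remark \ref{5.3} upgrades the weak left $\star$-brace to a weak left brace. Finally, using $-x+x=xx^{\ast}$ together with $(S,+)$ inverse, I verify the idempotent identity $x-x=-x+x$ of Lemma \ref{zicu10}(2), concluding that $(S,+)$ is Clifford.

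The main obstacle is the key identity $-x+xy=x(x^{\ast}+y)$ together with the claim that $ef$ is an additive idempotent for projections $e,f$: these are exactly the points where the one-sided axiom (\ref{lw1}) must be leveraged against the inverse structure of $(S,+)$. The manipulations are delicate because $\cdot$ does not distribute over $+$ — only the twisted rule (\ref{lw1}) and its weak-brace refinements in Lemma \ref{lw6}(5),(7),(10) are available — so I expect these computations to be the longest and most error-prone part, relying essentially on repeatedly rewriting sums of projections as $\lambda$-images and on the collapses $e+e=e$ and $-e=e$ established above.
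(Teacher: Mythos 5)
Your outline follows essentially the same strategy as the paper's proof: first show $P(S,\cdot)\subseteq E(S,+)$, then establish the identity $-x+xy=x(x^{\ast}+y)$, show multiplicative projections commute to get that $(S,\cdot)$ is inverse, verify $x-x=-x+x$ to get that $(S,+)$ is Clifford, and finish with Proposition~\ref{zuozhengze} and Remark~\ref{5.3}. Your first step is correct and even slightly slicker than the paper's: putting $y=x^{\ast}$, $z=-x^{\ast}+x^{\ast}$ in (\ref{lw1}) does give $xx^{\ast}+xx^{\ast}=xx^{\ast}$, and with Lemma~\ref{1.2}(1) this yields $P(S,\cdot)\subseteq E(S,+)$.

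However, there is a genuine gap: everything after that is only a plan. The identity $-x+xy=x(x^{\ast}+y)$, which you correctly identify as the heart of the matter, is never derived; in the paper it is the endpoint of a long chain of intermediate identities (roughly (\ref{ww2})--(\ref{ww9}): $xx^{\ast}-x=-x$, $x=x+xx^{\ast}$, $x+x(x^{\ast}+y)=xy$, $x-x+xy=xy$, $x=x(-x^{\ast}+x^{\ast})$, $x=-x+x+x$, and finally $x-x=xx^{\ast}$), none of which you prove or even state. Two of your subsequent claims also lean on unproved facts. First, ``$ef$ is an additive idempotent for $e,f\in P(S,\cdot)$'' is exactly what the paper extracts from the computation $ef=e+f$ (via (\ref{ww10})--(\ref{ww11})); it does not follow just from having a weak left $\star$-brace unless you additionally invoke something like Lemma~\ref{jiaohuan} ($ex=x+e$), which you do not. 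Second, your closing assertion that $x-x=-x+x$ follows from ``$-x+x=xx^{\ast}$ together with $(S,+)$ inverse'' is false as stated: in an inverse semigroup $x-x$ and $-x+x$ are distinct idempotents in general, and equating them is precisely the Clifford condition you are trying to prove; the paper needs $x=-x+x+x$ (equation (\ref{ww9})) to close this. Since these computations constitute essentially the entire content of the proposition, the proposal as written does not amount to a proof.
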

\begin{proof} Let $e,f\in P(S, \cdot)$. Then $e^\ast=e=ee$ and so $$e=ee=e(e-e+e)=ee+e(e^\ast-e+e)=e+e(e-e+e)=e+ee=e+e\in E(S, +)$$ by (\ref{lw1}).
Thus $P(S, \cdot)\subseteq E(S, +)$. This implies that
\begin{equation}\label{ww1} e-e=e=-e=e+e=ee=e^\ast,\,\,\, e+f=f+e \mbox{ for all } e,f\in P(S, \cdot)
\end{equation} by (\ref{inverse}) and the fact that $(S, +)$ is an inverse semigroup.
Let $x, y\in S$. Then $x^\ast x\in P(S, \cdot)\subseteq E(S, +)$, and so $x^\ast x+x^\ast x=x^\ast x$ by (\ref{ww1}).
By the axiom (\ref{lw1}),
\begin{equation}\label{ww2}
x+x(x^\ast+x^\ast x)=xx^\ast x+x(x^\ast+x^\ast x)=x(x^\ast x+x^\ast x)=xx^\ast x=x,
\end{equation}
$$x(x^\ast+x^\ast x)=x(x^\ast+x^\ast x+x^\ast x)=x(x^\ast+x^\ast x)+x(x^\ast+x^\ast x)\in E(S,+),$$ which implies that $-x(x^\ast+x^\ast x)=x(x^\ast+x^\ast x)$. By (\ref{ww2}) and (\ref{inverse semigroup}),
\begin{equation}\label{ww6}
-x=-(x+x(x^\ast+x^\ast x))=-x(x^\ast+x^\ast x)-x=x(x^\ast+x^\ast x)-x.
\end{equation}
This together with  (\ref{lw1}) yields that
\begin{equation}\label{ww3} xx^\ast-x=xx^\ast+x(x^\ast+x^\ast x)-x=x(x^\ast+x^\ast x)-x=-x.
\end{equation}
Since $xx^\ast\in P(S, \cdot)$, we have  $xx^\ast\in E(S,+)$ and so $-xx^\ast=xx^\ast$ by  (\ref{ww1}). In view of (\ref{ww3}),
\begin{equation}\label{ww4}   x=-(-x)=-(xx^\ast-x)=x-xx^\ast=x+xx^\ast.
\end{equation}
Replacing $x$ by $x^\ast$ in (\ref{ww3}) and (\ref{ww4}), we have
\begin{equation}\label{ww5}
x^\ast x-x^\ast=-x^\ast,\,\,\, x^\ast+x^\ast x=x^\ast.
\end{equation}
Since $y-y, x^\ast x\in E(S, +)$, we have $y-y+x^\ast x=x^\ast x+y-y$ by (\ref{ww1}). This together with  (\ref{ww5}) and (\ref{lw1}) implies that
$$x+x(x^\ast +y)=xx^\ast x+x(x^\ast +y)=x(x^\ast x+y)=x(x^\ast x+y-y+y)$$$$=x(y-y+x^\ast x+y)=x(y-y)+x(x^\ast+x^\ast x+y)=x(y-y)+x(x^\ast+y)=x(y-y+y)=xy$$
and so $x-x+xy=x-x+x+x(x^\ast+y)=x+x(x^\ast+y)=xy$. Thus we have
\begin{equation}\label{ww7}
x+x(x^\ast +y)=xy \mbox{ and }x-x+xy=xy \mbox{ for all } x,y\in S.
\end{equation}
Denote $u=-x^\ast+x^\ast$. Then $u\in E(S, +)$ and $uu^\ast\in P(S, \cdot)\subseteq E(S, +)$. By the fact that $(S, +)$ is inverse,  (\ref{inverse}), (\ref{ww4})  and the first identity in (\ref{ww5})  we have $$uu^\ast+uu^\ast=uu^\ast,\,\,\, uu^\ast+ u=u+uu^\ast=u,\,\,\, x^\ast x+u=x^\ast x-x^\ast +x^\ast=-x^\ast+x^\ast=u, $$$$x^\ast+u=x^\ast-x^\ast+x^\ast=x^\ast,\,\,\,
x^\ast+ uu^\ast=x^\ast +u+ uu^\ast=x^\ast+u=x^\ast.$$ This together with (\ref{lw1}) and (\ref{ww4}) implies that
$$xuu^\ast=x(uu^\ast+uu^\ast)=xuu^\ast+x(x^\ast+uu^\ast)=xuu^\ast+x(x^\ast+u)$$$$=x(uu^\ast+u)=xu=x(x^\ast x+u)=xx^\ast x+ x(x^\ast +u)=x+xx^\ast=x.$$
Thus
\begin{equation}\label{ww8}
x=xu=xuu^\ast=xu^\ast\overset{u=-x^\ast+x^\ast}=x(-x^\ast+x^\ast)^\ast \mbox{ for all } x \in S.
\end{equation}
Substituting $x$ by $x^\ast$ in (\ref{ww8}),  we have $$x^\ast=x^\ast(-x^{\ast\ast}+x^{\ast\ast})^\ast=x^\ast(-x+x)^\ast,\,\,\, x=x^{\ast\ast}=(x^\ast(-x+x)^\ast)^\ast=(-x+x)x.$$ This together with the second identity in (\ref{ww7}) yields that
\begin{equation}\label{ww9}
 \begin{array}{cc}
x=(-x+x)x=(-x+x)-(-x+x)+(-x+x)x=\\[2mm]
-x+x-x+x+ (-x+x)x=-x+x+(-x+x)x=-x+x+x.
\end{array}
\end{equation}
By (\ref{ww3}) and (\ref{ww9}), we have $$xx^\ast+x=xx^\ast-x+x+x=-x+x+x=x.$$
Since $xx^\ast=-xx^\ast$ by the fact that $xx^\ast\in P(S, \cdot)\subseteq E(S, +)$, we  have  $$-x=-(xx^\ast+x)=-x-xx^\ast=-x+xx^\ast.$$
By the second identity in (\ref{ww7}), it follows that $x-x=x-x+xx^\ast=xx^\ast\in P(S, \cdot) \mbox{ for all } x\in S.$
In particular, for every  $e\in E(S, +)$, we have $e=e-e\in  P(S, \cdot)$ as $(S, +)$ is an inverse semigroup.
This yields that $E(S, +)\subseteq P(S, \cdot)$ and so $E(S, +)= P(S, \cdot)$.

Let $e,f\in P(S, \cdot)=E(S, +)$.
By (\ref{lw1}) and (\ref{ww1}), we have
\begin{equation}\label{ww10}
 \begin{array}{cc}
ef=e(f+f)=ef+e(e^\ast +f)=ef+e(e+f)=ef+e(e+e+f)\\[2mm]
=ef+e(e+f+e)=ef+e(e^\ast+f+e)=e(f+f+e)=e(f+e)\\[2mm]
=ef+e(e^\ast +e)=ef+e(e+e)=ef+ee=ef+e.
\end{array}
\end{equation}
By (\ref{ww10})  we obtain that
\begin{equation}\label{ww11}
p(q+p)+p= pq+p=pq \mbox{ for all } p,q\in P(S, \cdot)=E(S, +).
\end{equation}
Since $(S, +)$ is inverse and $P(S, \cdot)=E(S, +)$, we have $$f+e=e+f\in E(S, +)=P(S, \cdot),\,\,\,  (e+f)(e+f)=(e+f)+(e+f)=e+f.$$
Take $p=e+f$ and $q=e$ in (\ref{ww11}). Then $$e+f=(e+f)+(e+f)=(e+f)(e+f)+(e+f)$$$$=(e+f)(e+e+f)+(e+f)\overset{(\ref{ww11})}=(e+f)e.$$ This together with (\ref{ww10}) and the fact that $e+f\in P(S, \cdot)$ implies that
$$e+f=(e+f)^\ast=((e+f)e)^\ast=e^\ast (e+f)^\ast=e(e+f)=e(f+e)=ef.$$  Dually, we have $f+e=fe$ and hence $ef=e+f=f+e=fe$. In view of Lemma \ref{zhengzexingbanqun5}, $(S, \cdot, \ast)$ forms an inverse semigroup.

Finally, substituting $x$ by $-x$ in (\ref{ww9}), we have $$-x=-(-x)+(-x)+(-x)=x-x-x,$$ and so $x=-(-x)=-(x-x-x)=x+x-x.$  Hence
$$-x+x=-x+x+x-x=x-x$$ by (\ref{ww9}). By Lemma \ref{zicu10}, $(S, +)$ is a Clifford semigroup as inverse semigroups are regular $\star$-semigroups.
Moreover, by the first identity in (\ref{ww7}), we have $x+x(x^\ast+y)=xy$, and so $$-x+xy=-x+x+x(x^\ast+y)=x-x+x(x^\ast+y)=x(x^\ast+y)$$ by
the second identity in (\ref{ww7}).
Up to now, we have known that $(S, +)$ is a Clifford semigroup, $(S, \cdot, \ast)$ is an inverse semigroup and $-x+xy=x(x^\ast+y)$ for all $x,y\in S$. By Proposition \ref{zuozhengze} and Remark \ref{5.3}, $(S, +, \cdot)$ forms a weak left brace.
\end{proof}

\begin{remark}\label{zuozhengze1}By Propositions \ref{zuozhengze} and \ref{zuozhengze2},  if $(S, \cdot)$ and $(S, +)$ are two inverse semigroups, then $(S,+,\cdot)$ is a weak left brace if and only if    $(S,+,\cdot)$ is a left inverse semibrace. However, the axiom $-x+xy=x(x^\ast+y)$ is necessary in Propositions \ref{zuozhengze} (see Remark \ref{ww12} below).
\end{remark}

To give more structural properties of weak left  $\star$-braces, we need a series of lemmas.
\begin{lem}\label{6.5} Let $(S,+,\cdot,-,\ast)$ be a weak left $\star$-brace. Then $xe(x^\ast +e)=x(x^\ast +e)$ for all  $x\in S$ and $e\in P(S)$.
\end{lem}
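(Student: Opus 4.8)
The plan is to show that both sides of the asserted identity equal the projection $xex^\ast$ (note $xex^\ast\in P(S)$ by Lemma \ref{1.2}(1)). First I would treat the left-hand side, which admits a clean telescoping. Writing $u=xe$, the left distributive axiom in (\ref{lw5}) gives $xe(x^\ast+e)=ux^\ast-u+ue$. Since $e\in P(S)$ we have $ee=e$ and $e^\ast=e$, so $ue=xee=xe=u$, and moreover $ux^\ast=xex^\ast=(xe)(ex^\ast)=(xe)(xe)^\ast=uu^\ast$. Hence $xe(x^\ast+e)=uu^\ast-u+u$, and by Lemma \ref{lw6}(5) ($aa^\ast-a=-a$) together with the second identity in (\ref{lw5}) ($-u+u=uu^\ast$) this collapses to $uu^\ast-u+u=-u+u=uu^\ast=xex^\ast$. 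So $xe(x^\ast+e)=xex^\ast$, using only the axioms, the facts $ee=e$, $e^\ast=e$, and the coincidence of projections (\ref{toushejiyizhi}).

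Next I would rewrite the right-hand side. By Lemma \ref{lw6}(10) we have $x(x^\ast+e)=-x+xe$; equivalently the distributive axiom gives $x(x^\ast+e)=xx^\ast-x+xe=-x+xe$ via $xx^\ast-x=-x$. Comparing with the first step, the whole lemma is therefore equivalent to the single identity $-x+xe=xex^\ast$. A conceptually cleaner route to the same reduction uses associativity of $``\cdot"$: since $e(x^\ast+e)=ex^\ast-e+ee=ex^\ast-e+e=ex^\ast+e$ (using $ee=e$ and $-e+e=e$ from Lemma \ref{lw6}(9)), we get $xe(x^\ast+e)=x\bigl(e(x^\ast+e)\bigr)=x(ex^\ast+e)$, so it suffices to prove that $e$ fixes $x^\ast+e$ on the left, i.e. $e(x^\ast+e)=x^\ast+e$, which is the same as $ex^\ast+e=x^\ast+e$.

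The main obstacle is exactly this identity $-x+xe=xex^\ast$ (equivalently $ex^\ast+e=x^\ast+e$): it genuinely mixes the additive and multiplicative structures and, unlike the left-hand side, does not follow formally from the one-sided distributive law. The reason the telescoping of the first paragraph fails here is the asymmetry that for $u=x$ one still has $ux^\ast=uu^\ast$ but now $ue=xe\ne u$, so the cancellation that produced $-u+u$ is unavailable. To resolve it I would reformulate the target as $\lambda_x(e)=xex^\ast$, i.e. as the statement that $\lambda_x$ carries the projection $e$ to the projection $xex^\ast$. I would then exploit that $\lambda_x$ is an additive endomorphism (Proposition \ref{+zitongtai}), so that $e=e+e$ forces $\lambda_x(e)=-x+xe$ to be additively idempotent, and combine this with the identities of Lemma \ref{lw6}(3)--(9) describing how projections interact with $+$, $-$ and $\ast$ (and with $-x+x=xx^\ast$), in order to show that the additive idempotent $-x+xe$ is in fact the $\ast$-projection $xex^\ast$. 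This identification is where the real work lies and is the step I expect to demand the most care.
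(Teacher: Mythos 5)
Your computation of the left-hand side is correct: with $u=xe$ the distributive axiom gives $xe(x^\ast+e)=ux^\ast-u+ue=uu^\ast-u+u=uu^\ast=xex^\ast$, using $ue=u$, $ux^\ast=(xe)(xe)^\ast$, Lemma \ref{lw6}(5) and the second axiom in (\ref{lw5}). Your rewriting of the right-hand side as $-x+xe$ via Lemma \ref{lw6}(10) is also correct. But at that point you have only reduced the lemma to the identity $-x+xe=xex^\ast$, and you do not prove it: you explicitly label it the step ``where the real work lies'' and offer only a plan. That plan, as stated, is not enough. Knowing that $\lambda_x(e)=-x+xe$ is an additive idempotent does not identify it, since $E(S,+)=(P(S,+))^2$ is in general strictly larger than $P(S)$ (Lemma \ref{1.2}(4)); and even showing $-x+xe\in P(S)$ would not pin it down as the particular projection $xex^\ast$. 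Worse, the identity you have deferred is essentially the first identity of Lemma \ref{liwei} (take $e=yy^\ast$, which exhausts $P(S)$), and the paper proves that identity \emph{using} Lemma \ref{6.5}. So your reduction replaces the lemma by a statement of at least equal depth, and any appeal to the later material would be circular.

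For comparison, the paper's proof avoids the identity $-x+xe=xex^\ast$ altogether. It first shows $xe=x(e+x^\ast x+e)$ by two applications of (\ref{lw5}), then observes that $e+x^\ast x+e=-e-x^\ast+x^\ast+e=-(x^\ast+e)+(x^\ast+e)=(x^\ast+e)(x^\ast+e)^\ast$ (using $-e=e$ and the second axiom in (\ref{lw5})), so that
\begin{equation*}
xe(x^\ast+e)=x\,(x^\ast+e)(x^\ast+e)^\ast\,(x^\ast+e)=x(x^\ast+e)
\end{equation*}
by $aa^\ast a=a$ in $(S,\cdot,\ast)$. If you want to salvage your route, you must supply an independent proof that $-x+xe=xex^\ast$; as it stands the proposal has a genuine gap.
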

\begin{proof}Let $x\in S$ and $e\in P(S)$. By Lemma \ref{lw6} (9), we have $e+e=e=-e$. By using (\ref{lw5}) several times, we obtain that
$$x(e+x^\ast x+ e)=x(e+x^\ast x)-x+xe=xe-x+xx^\ast x-x+xe$$$$=xe-x+x-x+xe=xe-x+xe=x(e+e)=xe,$$ which together with  (\ref{lw5}) implies that
$$\underline{xe}(x^\ast+e)=\underline{x(e+x^\ast x+ e)} (x^\ast+e)=x(-e-x^\ast +x^\ast +e) (x^\ast+e)$$$$=x (-(x^\ast+e) +x^\ast +e) (x^\ast+e)=x(x^\ast+e) (x^\ast+e)^\ast (x^\ast+e)=x(x^\ast+e).$$
Thus the desired result follows.
\end{proof}

\begin{lem}\label{liwei} Let $(S,+,\cdot,-,\ast)$ be a weak left $\star$-brace, $x, y\in S$ and $e,f\in P(S)$.  Then
$$ -x+xyy^\ast=(xy)(xy)^\ast\in P(S), \,\,\,
e+ef=e+e(f+e), \,\,\,
-x+xe=-xe+x. $$
\end{lem}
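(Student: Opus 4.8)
The plan is to prove the three identities in the order stated, using the first as the workhorse and then deriving the other two from it. Throughout I will freely invoke the brace axioms (\ref{lw5}), the projection facts of Lemma \ref{lw6}(9) (so that $-e=e=e+e$ and $e^\ast=e$ for $e\in P(S)$), the anti-automorphism rule $-(a+b)=-b-a$ from Lemma \ref{lw6}(2), and Lemma \ref{1.2}(1) to recognize any element of the form $aea^\ast$ (with $e\in P(S)$) as a projection.

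For the first identity I set $e:=yy^\ast\in P(S)$ and start from Lemma \ref{6.5}, which gives $xe(x^\ast+e)=x(x^\ast+e)$. On the left I expand with (\ref{lw5}) to obtain $xe(x^\ast+e)=xex^\ast-xe+xe$ (using $ee=e$), then replace $-xe+xe$ by $(xe)(xe)^\ast=xex^\ast$ via the second axiom of (\ref{lw5}); since $xex^\ast\in P(S)$ is additively idempotent by Lemma \ref{lw6}(9), the whole left side collapses to $xex^\ast$. On the right I use Lemma \ref{lw6}(10) to rewrite $x(x^\ast+e)=-x+xe$. Comparing the two sides yields the relation $-x+xe=xex^\ast$, and substituting $e=yy^\ast$ gives $-x+xyy^\ast=x(yy^\ast)x^\ast=(xy)(xy)^\ast\in P(S)$. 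I will record the consequence $-x+xe=xex^\ast$ for \emph{every} projection $e$, since both remaining identities rest on it.

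The second identity is the delicate one. Taking $x=e$ in the recorded relation gives $e+ef=efe$, which is a projection by Lemma \ref{1.2}(1), while a short computation with (\ref{lw5}) and $-e=e$ shows $e+e(f+e)=e+ef+e=efe+e$. Thus everything reduces to proving that the projection $efe$ absorbs $e$ on the right, i.e.\ $efe+e=efe$. The trick is to obtain the \emph{left}-absorption $e+efe=efe$ first, again from the recorded relation with $x=e$ and the projection $efe$ in place of $e$ (using $e\cdot efe=efe=efe\cdot e$), and then to apply the additive involution: since $e+efe=efe\in P(S)$ is fixed by $-$, while $-(e+efe)=-(efe)-e=efe+e$ by Lemma \ref{lw6}(2),(9), we conclude $efe+e=efe$. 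Combining, $e+e(f+e)=efe+e=efe=e+ef$. The third identity is then immediate from the first: applying $-$ to $-x+xe=xex^\ast\in P(S)$ and using $-(a+b)=-b-a$, $-(-x)=x$ together with the fact that a projection is fixed by $-$ yields $-xe+x=-(-x+xe)=-x+xe$.

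The main obstacle is the middle identity — specifically, recognizing that it is equivalent to the right-absorption $efe+e=efe$, and that this cannot be obtained by cancellation, because $(S,+)$ is only a regular $\star$-semigroup (not inverse), so additive projections need neither commute nor cancel. The resolution is to prove the left-absorption $e+efe=efe$ and transport it across the involution $-$; this is the genuinely nontrivial move. Once the recorded relation $-x+xe=xex^\ast$ is established, the rest is bookkeeping with (\ref{lw5}), Lemma \ref{lw6}, and Lemma \ref{1.2}(1).
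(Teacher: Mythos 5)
Your proof is correct. The first and third identities are handled exactly as in the paper: the first by combining Lemma \ref{6.5} with Lemma \ref{lw6}(10) and the axiom (\ref{lw5}) (the paper runs the same computation starting from $-x+xyy^\ast=x(x^\ast+yy^\ast)$ rather than from the two sides of Lemma \ref{6.5}, but the ingredients and the collapse $p+p=p$ for $p\in P(S)$ are identical), and the third by observing that $-x+xe$ is a projection and applying $-$. The only real divergence is the middle identity. The paper notes that $e+ef=-e+eff^\ast\in P(S)$, hence $e+ef=-(e+ef)=-ef+e$, and then finishes by a one-line substitution into $e+e(f+e)=(e+ef)-e+ee$. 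You instead identify $e+ef$ with the multiplicative conjugate $efe$ via your recorded relation $-x+xe=xex^\ast$, reduce the claim to the right-absorption $efe+e=efe$, and obtain that from the left-absorption $e+efe=efe$ transported across the involution $-$. Both arguments rest on the same mechanism — additive projections are fixed by $-$, so Lemma \ref{lw6}(2) lets you reverse sums of projections — but your route is slightly longer while yielding the reusable facts $-x+xe=xex^\ast$ and $e+efe=efe=efe+e$, whereas the paper's is more economical. Your framing of the ``obstacle'' is also accurate: cancellation is unavailable in $(S,+)$, and the involution trick is indeed what both proofs use to get around it.
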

\begin{proof}Firstly, we have
\begin{eqnarray*}
&& -x+xyy^\ast=x(x^\ast+yy^\ast)\,\,\,\, \,\,\,\, (\mbox{by Lemma \ref{lw6} (10)})\\
&=&xyy^\ast(x^\ast+yy^\ast)\,\,\,\, \,\,\,\, (\mbox{by Lemma \ref{6.5} and the fact } yy^\ast\in P(S))\\
&=&xyy^\ast x^\ast-xyy^\ast+xyy^\ast yy^\ast=xyy^\ast x^\ast-xyy^\ast+xyy^\ast\,\,\,\, \,\,\,\, (\mbox{by (\ref{lw5})})\\
&=&xyy^\ast x^\ast +xyy^\ast (xyy^\ast)^\ast\,\,\,\, \,\,\,\, (\mbox{by (\ref{lw5})})\\
&=&xyy^\ast x^\ast +xyy^\ast x^\ast=(xy)(xy)^\ast+(xy)(xy)^\ast=(xy)(xy)^\ast.\,\,\,\, \,\,\,\, (\mbox{by Lemma \ref{lw6} (3)})
\end{eqnarray*}
By Remark \ref{toushejiyizhi}, $-x+xyy^\ast=(xy)(xy)^\ast\in P(S)$.   Secondly, by the first identity in the lemma and Lemma \ref{lw6} (9), we have
$e+ef=-e+eff=-e+eff^\ast\in P(S)$, and so $e+ef=-(e+ef)=-ef-e=-ef+e$. This implies that $$e+e(f+e)=\underline{e+ef}-e+ee=\underline{-ef+e}-e+ee=-ef+e=e+ef$$ by (\ref{lw5}) and  Lemma \ref{lw6} (9).   Finally, by the first identity in the lemma and Lemma \ref{lw6} (9), we have $-x+xe=-x+xee=-x+xee^\ast\in P(S)$, and hence $-x+xe=-(-x+xe)=-xe+x$.
\end{proof}

Now, we can give our key lemma in this section.
\begin{lem}\label{jiaohuan}Let $(S,+,\cdot,-,\ast)$ be a weak left $\star$-brace,  $x\in S$ and $e\in P(S)$. Then $ex=x+e$.
In particular, we have $(-x+x)(x-x) =x-x-x+x.$
\end{lem}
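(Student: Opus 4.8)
The plan is to dispatch the ``in particular'' clause as an immediate corollary of the main identity $ex=x+e$, and then to prove that main identity by a chain of rewrites built on the preceding lemmas. For the reduction: once $ex=x+e$ is available for every $e\in P(S)$ and every $x\in S$, I would apply it with $e:=-x+x$, which lies in $P(S)$ since $-x+x=xx^\ast$ by the second identity of (\ref{lw5}) and Remark~\ref{tousheji}, and with the free variable specialized to $x-x$. This gives $(-x+x)(x-x)=(x-x)+(-x+x)=x-x-x+x$, which is exactly the asserted ``in particular'' equality. So the whole burden lies on proving $ex=x+e$.

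For that main identity I would first assemble two auxiliary identities. The first is
\[ x(x^\ast+e)=xex^\ast=-x+xe,\]
obtained by combining Lemma~\ref{6.5} (which inserts the idempotent, giving $x(x^\ast+e)=xe(x^\ast+e)$), the left distributive law (\ref{lw5}) applied to $xe(x^\ast+e)$, the second identity of (\ref{lw5}) in the form $-(xe)+xe=(xe)(xe)^\ast$, and Lemma~\ref{lw6}(10). Along the way one uses $ee=e$, the relation $(xe)(xe)^\ast=xex^\ast$, and the fact that $xex^\ast\in P(S)$ by Lemma~\ref{1.2}(1) together with $p+p=p$ for projections (Lemma~\ref{lw6}(9)). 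The second auxiliary identity is
\[ e(x+e)=ex+e,\]
which follows directly from (\ref{lw5}) with the substitution $(y,z)=(x,e)$ and the multiplier $e$, after collapsing $ee=e$ and $-e+e=e$ via Lemma~\ref{lw6}(9).

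It then remains to bridge the product $ex$ with the sum $x+e$. Here I would feed the first auxiliary identity together with Lemma~\ref{liwei} (especially $-x+xe=-xe+x$ and $-x+xyy^\ast=(xy)(xy)^\ast\in P(S)$) back into the distributive law and simplify using the additive regular $\star$-identities of Lemma~\ref{lw6}(2),(5),(9) and the coincidence $P(S,+)=P(S,\cdot)$ from Remark~\ref{tousheji}. \textbf{The hard part will be precisely this bridging step.} Because a weak left $\star$-brace carries only the one-sided distributive law (\ref{lw5}), and because $(S,+,-)$ is merely a regular $\star$-semigroup, its projections need not commute and there is no additive cancellation; hence no cheap symmetry converts a statement about left multiplication by $e$ into one about right addition of $e$. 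The delicate maneuver is to route every occurrence of $ex$ through the left distributive law while tracking the additive inverses of the non-commuting projections $x-x$, $-x+x=xx^\ast$, and $e$, using $-e=e$ to collapse them. I expect the computation to turn on establishing, in parallel, that $e$ is absorbed as a right additive unit of $ex$ (that is, $ex+e=ex$) and that $e(x+e)=x+e$; combined with the second auxiliary identity these force $x+e=ex+e=ex$, completing the proof.
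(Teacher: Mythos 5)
Your reduction of the ``in particular'' clause is exactly the paper's (apply $ex=x+e$ with $e=-x+x\in P(S)$ and $x$ replaced by $x-x$), and your two auxiliary identities are correct: $x(x^\ast+e)=xex^\ast=-x+xe$ does follow from Lemma \ref{6.5}, (\ref{lw5}), $xex^\ast\in P(S)$ and Lemma \ref{lw6}(10), and $e(x+e)=ex-e+ee=ex+e$ is precisely one of the identities the paper records. Your target decomposition is also the right one: given $e(x+e)=ex+e$, the two claims $ex+e=ex$ and $e(x+e)=x+e$ do force $x+e=e(x+e)=ex+e=ex$.

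The gap is that you never prove those two claims, and they are the entire content of the lemma; the sentence ``feed the first auxiliary identity together with Lemma \ref{liwei} back into the distributive law and simplify'' does not produce them, and your own phrasing (``I expect the computation to turn on...'') concedes this. In the paper, $e(x+e)=ex$ (equivalent to your $ex+e=ex$) is obtained by a genuinely non-obvious substitution: one replaces $x$ by $x+e$ in the tautology $(e+exx^\ast)ex=(ex)(ex)^\ast ex=ex$, computes $(x+e)(x+e)^\ast=-e-x+x+e=e+xx^\ast+e$, and collapses $e+e(x+e)(x+e)^\ast$ back down to $(ex)(ex)^\ast$ using the identity $e+ef=e+e(f+e)$ from Lemma \ref{liwei}; the resulting expression $(ex)(ex)^\ast(ex+e)$ is then expanded by (\ref{lw5}) and absorbed via Lemma \ref{lw6}(5). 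The claim $e(x+e)=x+e$ is not proved directly at all, but via its $\ast$-dual: one first shows $(x+e)^\ast e=(x+e)^\ast$ (a separate computation resting on Lemma \ref{lw6}(8) and the same formula for $(x+e)(x+e)^\ast$) and then applies the involution, $e(x+e)=((x+e)^\ast e)^\ast=x+e$. Neither the substitution trick nor the passage through the $\ast$-dual appears in your plan, and without some such maneuver the ``bridging step'' you flag as hard remains unproved. Note also that your first auxiliary identity, while true, is not actually needed: the paper routes everything through Lemma \ref{liwei} and the formula for $(x+e)(x+e)^\ast$ instead.
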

\begin{proof}By the first identity in Lemma \ref{liwei},  (\ref{lw5}) and Lemma \ref{lw6} (9),  we have
\begin{equation}\label{linshi1}
\begin{array}{cccc}
 e+exx^\ast =-e+exx^\ast =(ex)(ex)^\ast, \,\,\,\,\,\,
 (e+exx^\ast)ex=(ex)(ex)^\ast ex=ex,\\[3mm]
  e(x+e)=ex-e+ee=ex+e,\\[3mm]
  (x+e)(x+e)^\ast=-(x+e)+(x+e)=-e-x+x+e=e+xx^\ast +e.
\end{array}
\end{equation}
On one hand, replacing $x$ by $x+e$  in the second identity of (\ref{linshi1}), we have
\begin{eqnarray*}
&&e(x+e)=(e+e(x+e)(x+e)^\ast)e(x+e)\\
&=&(e+e(e+xx^\ast+e)) e(x+e)\,\, \,\,\, \,(\mbox{by the fourth identity in (\ref{linshi1})})\\
&=&(e+ee-e+e(xx^\ast+e)) e(x+e)\,\, \,\,\, \,(\mbox{by (\ref{lw5})})\\
&=&(e+e(xx^\ast+e))e(x+e)\,\, \,\,\, \,(\mbox{by  Lemma \ref{lw6} (9)})\\
&=&(e+exx^\ast)e(x+e)\,\, \,\,\, \,(\mbox{by  the second identity in Lemma \ref{liwei} and } e, xx^\ast\in P(S))\\
&=&(ex)(ex)^\ast (ex+e)\,\, \,\,\, \,(\mbox{by the first and third identities in (\ref{linshi1})})\\
&=&ex(ex)^\ast ex-ex(ex)^\ast+ex(ex)^\ast e=ex-ex(ex)^\ast+ exx^\ast e^\ast  e   \,\,\,\, (\mbox{by (\ref{lw5})})\\
&=&ex-ex(ex)^\ast+ exx^\ast e^\ast=ex-ex(ex)^\ast+ex(ex)^\ast=ex.\,\,\,\,  (\mbox{by Lemma \ref{lw6} (9) and (5)})
\end{eqnarray*}
On the other hand,
\begin{eqnarray*}
&&(x+e)^\ast e=(x+e)^\ast(e+e)\,\,\,\,\, (\mbox{since }e+e=e \mbox{ by Lemma \ref{lw6} (9)})\\
&=&(x+e)^\ast e \underline{-(x+e)^\ast+(x+e)^\ast e}\,\,\,\,\,\, (\mbox{by }(\ref{lw5}))\\
&=&(x+e)^\ast(-y)\underline{-(x+e)^\ast e+(x+e)^\ast}\,\,\,\,\,\, (\mbox{by  the third identity in Lemma \ref{liwei}})\\
&=&(x+e)^\ast e-(x+e)^\ast e+(x+e)^\ast\underbrace{(x+e)(x+e)^\ast}\\
&=&(x+e)^\ast e-(x+e)^\ast e+(x+e)^\ast\underbrace{(e+xx^\ast +e)}\,\,\,\,\,\, (\mbox{by }(\ref{linshi1}))\\
&=&(x+e)^\ast \underbrace{(e+xx^\ast +e)}\,\,\,\,\,\,\, (\mbox{by Lemma \ref{lw6} (8)})\\
&=& (x+e)^\ast\underbrace{(x+e)(x+e)^\ast}=(x+e)^\ast\,\,\,\,\,\, (\mbox{by }(\ref{linshi1}))
\end{eqnarray*}
Thus we have $$ex=e(x+e)=e^\ast (x+e)=((x+e)^\ast e)^\ast=((x+e)^\ast)^\ast=x+e$$ by Lemma \ref{lw6} (9).
The final result follows from the fact that $x-x, -x+x\in P(S)$.
\end{proof}
The following theorem gives some necessary and sufficient conditions under which a weak left  $\star$-brace  becomes  a weak left brace.
\begin{theorem}For a weak left $\star$-brace $(S,+,\cdot,-,\ast)$, the following conditions are equivalent:
\begin{itemize}
\item[(1)]$(S,\cdot,\ast)$ is an inverse semigroup.
\item[(2)]$(S,+,-)$ is an inverse semigroup.
\item[(3)]$(S,+,\cdot,\, -,\ast)$ is a weak left brace.
\item[(4)]$(S,+,\cdot,\, -,\ast)$ satisfies the axiom $a(-b)=a-ab+a$.
\item[(5)]$(S,+,\cdot,\, -,\ast)$ satisfies the axiom $ab=a+a(a^{\ast}+b)$.
\item[(6)]$(S,+,\cdot,\, -,\ast)$ satisfies the axiom $a+b=aa^{\ast}(a+b)$.
\item[(7)]$(S,+,\cdot,\, -,\ast)$ satisfies the axiom $a(a^\ast+a^\ast)=-a$.
\item[(8)]$(S,+,\cdot,\, -,\ast)$ satisfies the axiom $b^\ast(a^\ast+b)=b^{\ast}a^{\ast}-b^{\ast}$.
\end{itemize}
\end{theorem}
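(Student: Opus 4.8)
The plan is to prove the five ``axiom'' conditions (4)--(8) equivalent to the three ``structural'' conditions (1)--(3) by first pinning down the equivalence of (1), (2) and (3), and then running, for each axiom separately, a forward implication from (3) and a converse implication back to (1) or (2).

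First I would establish the core chain (1) $\iff$ (2) $\iff$ (3). The decisive tool is Lemma \ref{jiaohuan}, which gives $ex=x+e$ for every $x\in S$ and $e\in P(S)$, together with the identification $P(S,+)=P(S,\cdot)=P(S)$ from Remark \ref{tousheji} (see (\ref{toushejiyizhi})). Specialising $x$ to a projection $f$ yields $ef=f+e$ and, by symmetry, $fe=e+f$ for all $e,f\in P(S)$; hence the projections of $(S,\cdot)$ commute multiplicatively if and only if those of $(S,+)$ commute additively. Applying Lemma \ref{zhengzexingbanqun5} in turn to $(S,\cdot,\ast)$ and to $(S,+,-)$ then shows that $(S,\cdot,\ast)$ is inverse iff $(S,+,-)$ is inverse, i.e. (1) $\iff$ (2); and Remark \ref{5.3} upgrades this to (1) $\iff$ (3), since a weak left $\star$-brace is a weak left brace exactly when both $(S,+)$ and $(S,\cdot)$ are inverse.

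Next I would derive each of (4)--(8) from (3). Under (3) the algebra is a genuine weak left brace, so $(S,+)$ is a Clifford semigroup (its idempotents are additively central) and all the weak-brace identities are available; in particular the identity $x+x(x^\ast+y)=xy$ established in the proof of Proposition \ref{zuozhengze2} (see (\ref{ww7})) is precisely axiom (5). The remaining axioms (4), (6), (7), (8) reduce to mechanical manipulations: one rewrites the twisted products $a(a^\ast+\cdot)$ and $a(\cdot)$ via Lemma \ref{lw6} (10) and the defining identity (\ref{lw5}), and then collapses the result using additive centrality of the projections together with Lemma \ref{lw6} (5),(9). I would not grind through these routine verifications.

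The main obstacle is the converse, namely showing that each single axiom (4)--(8) already forces inverseness. The uniform strategy is to specialise the free variables to projections and then exploit $ex=x+e$ (Lemma \ref{jiaohuan}) together with the self-duality $-e=e$ of projections to extract $e+f=f+e$ for all $e,f\in P(S)$, which by Lemma \ref{zhengzexingbanqun5} is exactly (2). For example, rewriting (5) through Lemma \ref{lw6} (10) gives $(a-a)+ab=ab$; taking $a=e$ and $b=f$ in $P(S)$ and using $ef=f+e$ produces $e+f+e=f+e$, and applying the additive inverse (via $-(x+y)=-y-x$ and $-e=e$) turns this into $e+f+e=e+f$, whence $e+f=f+e$. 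The difficulty is that the five axioms interleave $+$, $\cdot$, $-$ and $\ast$ in genuinely different patterns, so the correct specialisation and the order of rewrites must be found anew for each case; I expect the axioms built on the most tangled products, such as (8)'s $b^\ast(a^\ast+b)$ and (4)'s $a(-b)$, to require the most careful preprocessing before the projection substitution forces commutativity.
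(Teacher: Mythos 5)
Your skeleton coincides with the paper's. The equivalence of (1), (2), (3) is obtained exactly as you describe: Lemma \ref{jiaohuan} gives $ef=f+e$ and $fe=e+f$ for $e,f\in P(S)$, so multiplicative commutativity of projections is the same as additive commutativity of projections, and Lemma \ref{zhengzexingbanqun5} together with Remark \ref{5.3} closes the loop. Your worked converse for (5) is correct and is, modulo translating between $+$ and $\cdot$ via $ef=f+e$, the paper's own computation: the paper writes it as $ef=e+e(e+f)=e+f+e=efe\in P(S,\cdot)$ and invokes Lemma \ref{1.2}\,(2), where you write it as $e+f+e=f+e=e+f$ and apply Lemma \ref{zhengzexingbanqun5} to $(S,+,-)$. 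For the forward direction (3) $\Rightarrow$ (4)--(8) the paper does not compute at all but cites \cite{Catino-Mazzotta-Miccoli-Stefanelli} and \cite{Catino-Mazzotta-Stefanelli2}; your plan to reprove these in situ (using (\ref{ww7}) for (5)) is viable, but it needs more than ``mechanical collapsing'' --- e.g.\ (7) requires $aa^\ast-a=-a$ together with additive centrality of $-a+a$, which in turn rests on $(S,+)$ being Clifford under hypothesis (3).

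The substantive omission is that four of the five converse implications --- the part you yourself identify as the main obstacle --- are left as ``to be found anew for each case.'' Your strategy does work for all of them, so nothing would fail, but they are not uniform instances of the (5) computation: for (4) one must first use $-f=f$ to turn $e(-f)$ into $ef$ before substituting; (6) produces $ef=fef$ rather than $efe$; (7) needs a longer chain through $ef=-(e+f)=-fe$ and $(fe)^\ast=ef$. And the case you predict to be the most delicate, (8), is in fact the only one that needs no projection analysis at all: substituting $b=a^\ast$ and applying Lemma \ref{lw6}\,(5) turns (8) verbatim into the axiom of (7), which is exactly how the paper dispatches it. So the proposal is a faithful and sound plan that matches the paper's route, but as written it carries out only one of the five implications that bear the weight of the theorem.
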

\begin{proof}
Firstly, let $e,f\in P(S)$. By Lemma \ref{jiaohuan}, we have $ef=f+e$, and so $ef=fe$ if and only if $f+e=e+f$. According to Lemma \ref{zhengzexingbanqun5}, we obtain that (1) is equivalent to (2), and so items (1), (2) and (3) are equivalent mutually.

Secondly, assume that (3) holds. Then item (6)  is true by \cite[Proposition 3]{Catino-Mazzotta-Stefanelli2}.   Moreover, by   \cite[Lemmas 1 and 2, and Proposition 9 (3)]{Catino-Mazzotta-Miccoli-Stefanelli}, we can obtain items (4), (5), (7) and (8).

 In the following statements, we shall prove that (4) implies (1),  (5) implies (1),  (6) implies (1),  (7) implies (1) and  (8) implies (1), respectively.

(4) $\Longrightarrow$ (1). Assume that (4) holds and $e,f\in P(S, \cdot)=P(S)$. Then by Lemma \ref{lw6} (9), we have $-e=e+e=e=e-e$ and $f=-f$. Moreover, Lemma \ref{jiaohuan} gives that $$ef=f+e,\,\, e+f+e=(e+f)+e=e(e+f)=e(fe)=efe.$$  This together with the axiom given in item (4) and Lemma \ref{1.2} (1) implies that
$$ef=e(-f)\overset{(4)}=e-ef+e=e-(f+e)+e=e-e-f+e=e+f+e=efe\in P(S, \cdot),$$  and so $ef=fe$ by  Lemma \ref{1.2} (2). By Lemma \ref{zhengzexingbanqun5}, $(S,\cdot,\ast)$ is an inverse semigroup.

(5) $\Longrightarrow$  (1).   Assume that (5) holds and $e,f\in P(S, \cdot)=P(S)$.  Then by the axiom in item (5), Lemma \ref{lw6} (9) and Lemma \ref{jiaohuan}, we have
$$ef\overset{(5)}{=}e+e(e^{\ast}+f)=e+e(e+f)=e+(e+f)+e=e+f+e=efe\in P(S, \cdot),$$and so $ef=fe$ by  Lemma \ref{1.2} (2). By Lemma \ref{zhengzexingbanqun5}, $(S,\cdot,\ast)$ is an inverse semigroup.

(6) $\Longrightarrow$  (1).   Assume that (6) holds and $e,f\in P(S, \cdot)=P(S)$. Then by Lemma \ref{jiaohuan}, the axiom in item (6) and Lemma \ref{lw6} (9), we have
$$ef=f+e\overset{(6)}{=}ff^{\ast}(f+e)=f(f+e)=(f+e)+f=fef\in P(S, \cdot), $$  and so $ef=fe$ by  Lemma \ref{1.2} (2). By Lemma \ref{zhengzexingbanqun5}, $(S,\cdot,\ast)$ is an inverse semigroup.

(7)$\Longrightarrow$  (1).  Assume that (7) holds and $e,f\in P(S, \cdot)=P(S)$. Then by  Lemma \ref{jiaohuan},  the axiom in item (7), Lemmas \ref{lw6} (9) and   \ref{1.2} (4), we have
$$ef=f+e=-f-e=-(e+f)=-fe\overset{(7)}=fe((fe)^{\ast}+(fe)^{\ast})=fe(e^\ast f^\ast+e^\ast f^\ast)$$$$=fe(ef+ef)=fe((f+e)+(f+e)) =f(((f+e)+(f+e))+e)$$$$=(((f+e)+(f+e))+e)+f=f+e+f=fef\in P(S,\cdot), $$ and so $ef=fe$ by  Lemma \ref{1.2} (2). By Lemma \ref{zhengzexingbanqun5}, $(S,\cdot,\ast)$ is an inverse semigroup.

(8)$\Longrightarrow$  (1).  Assume that (8) holds.  Replacing $b$ by $a^{\ast}$ in the axiom given in (8),  we have
$$a(a^{\ast}+a^{\ast})=a^{\ast\ast}(a^{\ast}+a^{\ast})\overset{(8)}=a^{\ast\ast}a^{\ast}-a^{\ast\ast}=aa^{\ast}-a=-a$$ by  Lemmas \ref{lw6} (5).
This is exactly the axiom in item (7).  By the statements in the previous paragraph,  $(S,\cdot,\ast)$ is an inverse semigroup.
\end{proof}

\begin{lem}\label{lw12}Let $(S,+,\cdot,-,\ast)$ be a weak left  $\star$-brace and $x,y\in S$. Then
$$x-y+y+y=x+y,\,\,\,\, y+y-y+x=y+x.$$
\end{lem}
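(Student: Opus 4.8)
The plan is to establish the two identities separately, first reducing the left-hand side of $x-y+y+y=x+y$ to a single expression and then deducing the second identity from the first by applying the additive inverse map. Although both identities are phrased purely in terms of $+$ and $-$, they are \emph{not} consequences of the regular $\star$-semigroup axioms of $(S,+,-)$ in isolation (for instance $-y+y+y=y$, the multiplicative shadow $a^\ast aa=a$, fails in general), so the argument must genuinely invoke the interaction axiom (\ref{lw5}) together with the key Lemma \ref{jiaohuan}, namely $ex=x+e$ for every projection $e\in P(S)$.

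For the first identity I would use the additive axiom $-y+y=yy^\ast$ from (\ref{lw5}) and associativity to rewrite $x-y+y+y=x+yy^\ast+y$, where $yy^\ast\in P(S)$. The heart of the proof is to show that both $x+yy^\ast+y$ and $x+y$ coincide with the single element $yy^\ast(x+y)$. For the short side, Lemma \ref{jiaohuan} gives $yy^\ast(x+y)=(x+y)+yy^\ast=x+(y+yy^\ast)=x+y$, using $y+yy^\ast=y$ from Lemma \ref{lw6}(5). For the other side I would expand $yy^\ast(x+y)$ by the brace axiom (\ref{lw5}) as $yy^\ast(x+y)=yy^\ast x-yy^\ast+yy^\ast y$; applying $yy^\ast y=y$ (Lemma \ref{lw6}(1)), $-yy^\ast=yy^\ast$ and $yy^\ast+yy^\ast=yy^\ast$ (Lemma \ref{lw6}(3)), and then $yy^\ast x=x+yy^\ast$ (Lemma \ref{jiaohuan}), this collapses to $x+yy^\ast+y=x-y+y+y$. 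Comparing the two computations yields $x-y+y+y=x+y$.

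For the second identity I would exploit that the additive inverse is a bijection with $-(-a)=a$ and $-(a+b)=-b-a$ (Lemma \ref{lw6}(2)), so $A=B$ holds iff $-A=-B$. Negating $y+y-y+x=y+x$ term by term turns it into $-x+y-y-y=-x-y$, which is exactly the first identity evaluated at the pair $(-x,-y)$; since the first identity holds for all elements of $S$, the second follows at once. The main obstacle is the first identity, and within it the decisive insight is to recognize $yy^\ast(x+y)$ (equivalently $(-y+y)(x+y)$) as the common value: this is what forces one to combine the projection-shifting Lemma \ref{jiaohuan} with the multiplicative absorption identities of Lemma \ref{lw6}, rather than attempting a purely additive manipulation, which cannot succeed.
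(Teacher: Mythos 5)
Your proof is correct and follows essentially the same route as the paper: both arguments pivot on the element $(-y+y)(x+y)=yy^\ast(x+y)$, use Lemma \ref{jiaohuan} to trade multiplication by the projection $yy^\ast$ for addition of it, expand via the brace axiom (\ref{lw5}), and then obtain the second identity by negating the first and substituting $(-x,-y)$. The only cosmetic difference is that you compute both sides down to the common middle term $x+yy^\ast+y$ rather than chaining equalities in one line.
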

\begin{proof}Observe that $-y+y\in P(S)$, it follows that
\begin{eqnarray*}
&&x+y=x+y+(-y+y)=(-y+y)(x+y)\,\,\,\,\,\,\, (\mbox{by Lemma }\ref{jiaohuan})\\
&=&(-y+y)x-(-y+y)+(-y+y)y\,\,\,\,\,\, (\mbox{by }(\ref{lw5}))\\
&=&x-y+y-y+y+y-y+y=x-y+y+y.\,\,\,\,\,\,\, (\mbox{by Lemma }\ref{jiaohuan})
\end{eqnarray*}
This implies that $$-y-y+y-x=-(x-y+y+y)=-(x+y)=-y-x.$$ Replacing $x,y$ by $-x, -y$ respectively, we have $y+y-y+x=y+x$.
\end{proof}

The following result gives the structures of the additive semigroup and multiplicative semigroup of a weak left $\star$-brace, which generalizes \cite[Theorem 8]{Catino-Mazzotta-Miccoli-Stefanelli}.
\begin{theorem}\label{zicu9tiaoxingzhi}Let $(S,+,\cdot,-,\ast)$ be a weak left $\star$-brace. Then $(S, +, -)$ is completely regular, orthodox and locally inverse, and $(S, \cdot, \ast)$ is orthodox and locally inverse.
\end{theorem}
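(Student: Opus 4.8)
The plan is to treat the two operations separately, exploiting the crucial bridge $ex=x+e$ for $e\in P(S)$ supplied by Lemma \ref{jiaohuan}, together with the characterizations of Lemmas \ref{zicu9} and \ref{zicu8} applied to each of the regular $\star$-semigroups $(S,+,-)$ and $(S,\cdot,\ast)$, which share the common projection set $P(S)$ by Remark \ref{tousheji}.

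For $(S,+,-)$ I would invoke Lemma \ref{zicu9}: a regular $\star$-semigroup is completely regular, orthodox and locally inverse if and only if it satisfies $xy=xxx^\ast y$, whose additive translation reads $x+x-x+y=x+y$. But this is exactly the second identity of Lemma \ref{lw12} after interchanging the roles of $x$ and $y$. Hence $(S,+,-)$ satisfies the required axiom, so it is completely regular, orthodox and locally inverse. This is the easy half.

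For $(S,\cdot,\ast)$ I would use the criterion of Lemma \ref{zicu8}: it is orthodox and locally inverse if and only if $afgb=agfb$ for all $a,b\in S$ and $f,g\in P(S)$. Fix such $a,b,f,g$. Applying $ex=x+e$ repeatedly (Lemma \ref{jiaohuan}) one rewrites $fgb=b+g+f$, and then left distributivity (\ref{lw5}) gives $afgb=a(b+g+f)=ab-a+ag-a+af$. By Lemma \ref{liwei} (together with Lemma \ref{lw6}(10)) the two increments collapse to projections, namely $-a+ag=aga^\ast=:P_g$ and $-a+af=afa^\ast=:P_f$, both lying in $P(S)$ by Lemma \ref{1.2}(1); thus $afgb=ab+P_g+P_f$ and, symmetrically, $agfb=ab+P_f+P_g$. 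Everything therefore reduces to the commutation $ab+P_g+P_f=ab+P_f+P_g$.

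The main obstacle is precisely this last commutation: additive projections need not commute in a completely regular, orthodox and locally inverse semigroup, and the additive form of Lemma \ref{zicu8} only permits transposing two projections when a further term trails them. The device that overcomes this is the observation that $P_f\le aa^\ast$, that is $aa^\ast P_f=aa^\ast afa^\ast=afa^\ast=P_f$, which via $ex=x+e$ becomes $P_f+aa^\ast=P_f$ (and likewise $P_g+aa^\ast=P_g$). Hence I may append a harmless trailing term, writing $ab+P_g+P_f=ab+P_g+P_f+aa^\ast$; now the additive form of Lemma \ref{zicu8}, available since $(S,+,-)$ is orthodox and locally inverse, legitimately swaps $P_g$ and $P_f$ to yield $ab+P_f+P_g+aa^\ast$; finally absorbing the trailing term through $P_g+aa^\ast=P_g$ gives $ab+P_f+P_g=agfb$. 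This establishes $afgb=agfb$, so $(S,\cdot,\ast)$ is orthodox and locally inverse, completing the proof.
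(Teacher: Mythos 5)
Your proof is correct, and while the additive half coincides with the paper's (the paper cites Lemma \ref{zicu9}(3) together with the first identity of Lemma \ref{lw12}, you cite the equivalent axiom (2) together with the second identity --- the same argument up to duality), the multiplicative half takes a genuinely different and in fact more thorough route. The paper only verifies the permutation identity on projections: it writes $efgh=h+g+f+e$ for $e,f,g,h\in P(S)$ via Lemma \ref{jiaohuan}, swaps $g$ and $f$ additively, and then concludes by appealing to ``Lemma \ref{zicu8} and its proof'', i.e.\ to the fact that the sufficiency direction of that lemma only ever instantiates the hypothesis at products of projections. You instead establish the hypothesis of Lemma \ref{zicu8} in full generality, $afgb=agfb$ for arbitrary $a,b\in S$: expanding $a(b+g+f)$ by (\ref{lw5}), collapsing the increments to the projections $P_g=-a+ag=aga^{\ast}$ and $P_f=-a+af=afa^{\ast}$ via Lemma \ref{liwei}, and --- this is the one nontrivial extra idea --- manufacturing the trailing term needed by the additive permutation identity through the absorption $P_f+aa^{\ast}=aa^{\ast}P_f=P_f$. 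What your version buys is a direct, self-contained application of Lemma \ref{zicu8} as stated, with no need to re-inspect its proof; what the paper's version buys is brevity, at the cost of leaning on an unstated weakening of the lemma's hypothesis. Both are valid; yours closes a small expository gap.
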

\begin{proof}
Lemma \ref{zicu9} (3) and the first identity in Lemma \ref{lw12} together imply that $(S, +, -)$ is completely regular, orthodox and locally inverse.   Let $e,f,g,h\in P(S)=P(S, \cdot)=P(S, +)$.
Then by Lemma \ref{zicu8}, we have $h+g+f+e=h+f+g+e$.  In view of Lemma \ref{jiaohuan}, we obtain that $$efgh=(fgh)+e=(gh)+f+e=h+g+f+e=h+f+g+e=egfh.$$
Thus $(S, \cdot, \ast)$ is orthodox and locally inverse by Lemma \ref{zicu8} and its proof.
\end{proof}
\begin{remark}\label{ww12}
Let $(S,\cdot, \ast)$ and $(S,+, -)$ be two  regular $\star$-semigroups and  $(S,+,\cdot,\, \ast)$ be  a left regular $\star$-semibrace such that $(S, +, -)$ is completely regular, orthodox and locally inverse, and $(S, \cdot, \ast)$ is orthodox and locally inverse.  Then $(S,+,\cdot, -,\ast)$ may not be  a weak left $\star$-brace.  For example, let $S=\{1,2\}\times \{1,2\}$ and denote $e=(1, 1),  g=(1, 2), h=(2, 1), f=(2,2)$.
Then $S=\{e, f, g, h\}$. Define $``+"$ and $``-"$ on $S$ as follows: $$\mbox{For all } (i,j), (k,l)\in S,\,\,\, (i,j)+(k,l)=(i,l), \,\,\,\,  -(i,j)=(j,i).$$
Then $(S, +, -)$ forms a regular $\star$-semigroup and $$-e=e, -f=f, -g=h, -h=g, f+e=h, f+f=f.$$ Moreover, by Lemma \ref{zicu9},  one can easily check that $(S, +, -)$ is completely regular, orthodox and locally inverse. On the other hand, let $(S, \cdot)$ be the   Klein  four-group with the identity $e$ and define $x^\ast=x$ for all $x\in S$. Then $ff=e$.
Obviously, $(S, \cdot, \ast)$ forms an orthodox and locally inverse  regular $\star$-semigroup by Lemma \ref{zicu8}. Moreover, one can check that the axiom (\ref{lw1}) holds routinely. Thus $(S,+,\cdot,\, \ast)$ is  a left regular $\star$-semibrace. In fact, this is a left semibrace.   However, we have $$-f+ff=f+e=h\not=e=ff=f(f+f)=f(f^\ast +f).$$ This implies that  $(S,+,\cdot, -,\ast)$  is not a weak left $\star$-brace by Proposition \ref{zuozhengze}.
\end{remark}

The following corollary generalizes \cite[Proposition 6]{Catino-Mazzotta-Miccoli-Stefanelli}.
\begin{coro}\label{danweiyuanyizhi}Let $(S,+,\cdot,-,\ast)$ be a weak left $\star$-brace. Then $(S,+,-)$ is a monoid if and only if $(S,\cdot,\ast)$ is a monoid.
\end{coro}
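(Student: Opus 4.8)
The plan is to show that a multiplicative (respectively additive) identity, once it exists, is automatically a projection lying in the common set $P(S)$, and then to transfer the identity property across the two operations using the key commutation identity $ex=x+e$ from Lemma \ref{jiaohuan}. The only genuine work is upgrading a one-sided identity to a two-sided one, and for this I will play the two involutions $-$ and $\ast$ against each other. Note that for either operation the phrase ``is a monoid'' just means the underlying semigroup possesses an identity element, since both $(S,+,-)$ and $(S,\cdot,\ast)$ are already regular $\star$-semigroups by hypothesis.

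For the direction ``$(S,\cdot,\ast)$ a monoid $\Longrightarrow$ $(S,+,-)$ a monoid'', I would start from a multiplicative identity $1$. By Lemma \ref{danweiyuan} applied to $(S,\cdot,\ast)$ we have $1^\ast=1\in P(S,\cdot)$, and $P(S,\cdot)=P(S)$ by Remark \ref{tousheji}, so $1$ is a projection. Lemma \ref{jiaohuan} then gives $x+1=1\cdot x=x$ for all $x$, so $1$ is a \emph{right} additive identity. To obtain the left identity I would apply the involution $-$: since $-1=1$ by Lemma \ref{lw6}(9) and $-(x+1)=-1-x=1+(-x)$ by Lemma \ref{lw6}(2), while $-(x+1)=-x$ from the right-identity property, one deduces $1+(-x)=-x$ for all $x$. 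Because $-$ is an involution (hence a bijection, $-(-a)=a$), the element $-x$ ranges over all of $S$, whence $1+w=w$ for every $w\in S$. Thus $1$ is a two-sided additive identity and $(S,+,-)$ is a monoid.

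For the converse direction I would argue symmetrically but swap the roles of the two involutions. Let $1$ be an additive identity; by Lemma \ref{danweiyuan} applied to $(S,+,-)$ we get $1\in P(S,+)=P(S)$, so again $1$ is a projection. Lemma \ref{jiaohuan} now yields $1\cdot x=x+1=x$, making $1$ a \emph{left} multiplicative identity. To upgrade this I would apply $\ast$ rather than $-$: from $1\cdot x^\ast=x^\ast$ together with $(ab)^\ast=b^\ast a^\ast$, $1^\ast=1$, and $a^{\ast\ast}=a$ (Lemma \ref{lw6}(1)), I get $x\cdot 1=(1\cdot x^\ast)^\ast=(x^\ast)^\ast=x$. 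Hence $1$ is a two-sided multiplicative identity and $(S,\cdot,\ast)$ is a monoid.

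The conceptual obstacle is precisely the passage from a one-sided to a two-sided identity, which fails for arbitrary semigroups; here it is forced by the rich structure, and the point is to recognize that the correct tool in each direction is the \emph{matching} involution (the additive $-$ when repairing addition, the multiplicative $\ast$ when repairing multiplication). A secondary point to keep honest is the legitimacy of invoking Lemma \ref{jiaohuan} for the identity element at all, which rests on the coincidence $P(S,+)=P(S,\cdot)=P(S)$ of Remark \ref{tousheji} combined with Lemma \ref{danweiyuan}; once that is in place, the two computations are short and essentially routine.
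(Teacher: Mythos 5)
Your proof is correct, and its skeleton coincides with the paper's: in both directions one first identifies the identity element as a projection in $P(S)$ via Lemma \ref{danweiyuan} and Remark \ref{tousheji}, and then obtains one side of the identity property from the commutation rule $ex=x+e$ of Lemma \ref{jiaohuan}. The only divergence is in how the one-sided identity is upgraded to a two-sided one. The paper reapplies Lemma \ref{jiaohuan} after inserting the appropriate idempotent: it computes $x\cdot 0=x(x^\ast x\cdot 0)=x(0+x^\ast x)=xx^\ast x=x$ in one direction and $1+x=(1+x-x)+x=(x-x)\cdot 1+x=x-x+x=x$ in the other. You instead apply the matching involution ($\ast$ to repair multiplication, $-$ to repair addition), using its anti-homomorphism property, the fact that the identity is fixed by it, and its bijectivity. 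Both arguments are equally short and rest on the same key lemma; yours makes the symmetry between the two directions slightly more transparent, while the paper's avoids the (harmless but easy-to-overlook) surjectivity step needed to pass from $1+(-x)=-x$ for all $x$ to $1+w=w$ for all $w$.
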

\begin{proof} Let $(S,+,-)$ be a monoid and $0$ be the identity, $x\in S$. Then $0, x^\ast x\in P(S)$ by Lemma \ref{danweiyuan} and Remark \ref{tousheji}. According to Lemma \ref{jiaohuan},  we have $$0\cdot x=x+0=x,\,\, x\cdot 0=x (x^\ast x\cdot 0)=x(0+x^\ast x)=xx^\ast x=x.$$   This implies that $0$ is the identity in $(S,\cdot,\ast)$.
Conversely, let $(S,\cdot,\ast)$ be a monoid and $1$ be the identity, $x\in S$. Then Lemma \ref{danweiyuan} and Remark \ref{tousheji} imply that $1\in P(S)$ and $x-x\in P(S)$. By Lemma \ref{jiaohuan}, we have
$$x+1=1\cdot x=x,\,\,1+x=(1+x-x)+x=(x-x)\cdot 1+x=x-x+x=x.$$ This implies that $1$ is the identity in $(S,+,-)$.
\end{proof}
To prove that the map  associated to a weak left $\star$-brace  is   a solution  of the Yang-Baxter equation, we need more lemmas.
\begin{lem}\label{6.8}Let $(S,+,\cdot,-,\ast)$ be a weak left $\star$-brace and $x,y\in S$. Then $x(y-y)y=xy$.
\end{lem}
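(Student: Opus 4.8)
The plan is to reduce the claim to a single identity between projections and then settle that identity using the projection calculus assembled in Lemmas~\ref{lw6}, \ref{liwei} and \ref{jiaohuan}. First I would observe that $y-y\in P(S)$, so the key Lemma~\ref{jiaohuan} gives $(y-y)y=y+(y-y)$ and hence $x(y-y)y=x(y+(y-y))$. Applying the defining axiom~(\ref{lw5}) then yields $x(y-y)y=xy-x+x(y-y)=xy+(-x+x(y-y))$. Since $a+aa^{\ast}=a$ by Lemma~\ref{lw6}(5), the projection $(xy)(xy)^{\ast}$ is a right additive identity of $xy$; therefore the whole lemma follows as soon as I prove the projection identity $-x+x(y-y)=(xy)(xy)^{\ast}$, because then $x(y-y)y=xy+(xy)(xy)^{\ast}=xy$.

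To attack this identity I would record two equivalent reformulations that guide the computation. On the additive side, Lemma~\ref{lw6}(10) gives $-x+x(y-y)=x(x^{\ast}+(y-y))=\lambda_{x}(y-y)$, while Lemma~\ref{liwei} (together with $yy^{\ast}=-y+y$) gives $(xy)(xy)^{\ast}=-x+xyy^{\ast}=\lambda_{x}(-y+y)$. As $\lambda_{x}$ is additive by Proposition~\ref{+zitongtai}, the target becomes $\lambda_{x}(y)+\lambda_{x}(-y)=\lambda_{x}(-y)+\lambda_{x}(y)$, i.e. the additive commuting of $\lambda_{x}(y)$ and $\lambda_{x}(-y)$. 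On the multiplicative side, writing $y-y=(-y)(-y)^{\ast}$ (Lemma~\ref{lw6}(6)) and applying Lemma~\ref{liwei} shows $-x+x(y-y)=(x(-y))(x(-y))^{\ast}$, so the very same identity is exactly the relation $x(-y)\,\mathcal{R}\,xy$ in $(S,\cdot)$, equivalently $x(y-y)x^{\ast}=xyy^{\ast}x^{\ast}$ by Lemma~\ref{1.2}(5).

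The hard part will be establishing this last equality: it is the real content of the lemma and it is precisely where the passage from weak left braces to weak left $\star$-braces bites. Indeed, in the weak-brace case $(S,+)$ is Clifford, so $y-y=-y+y=yy^{\ast}$, the two projections coincide on the nose, and the statement is immediate from $yy^{\ast}y=y$; in our setting $y-y$ and $yy^{\ast}$ genuinely differ. I would prove $x(y-y)x^{\ast}=xyy^{\ast}x^{\ast}$ by exploiting that $(S,\cdot)$ is orthodox and locally inverse (Theorem~\ref{zicu9tiaoxingzhi}): inserting the factorizations $x=xx^{\ast}x$ and $y=yy^{\ast}y$ to create adjacent projections and then transposing them via Lemma~\ref{zicu8}, while using the commuting and idempotency facts of Lemma~\ref{1.2}(2),(4) to absorb the resulting factors. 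One must be careful here, since $x(-y)\,\mathcal{R}\,xy$ does \emph{not} follow merely from $U:=\lambda_{x}(y)$ and $V:=\lambda_{x}(-y)$ being mutually inverse in $(S,+)$ — already in a rectangular band an inverse pair can have two distinct idempotent products, both of which are projections — so the argument has to invoke the concrete descriptions $U=-x+xy$ and $V=-x+x(-y)$ together with the orthodox, locally inverse structure, rather than any formal property of inverse pairs. Once $-x+x(y-y)=(xy)(xy)^{\ast}$ is secured, the reduction of the first paragraph finishes the proof.
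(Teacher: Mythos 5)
Your reduction is correct and genuinely different from the paper's route: the steps $x(y-y)y=x(y+(y-y))=xy+(-x+x(y-y))$ via Lemma~\ref{jiaohuan} and (\ref{lw5}), and the observation that everything follows from $-x+x(y-y)=(xy)(xy)^{\ast}$ (equivalently $x(-y)\,\mathcal{R}\,xy$, equivalently $x(y-y)x^{\ast}=xyy^{\ast}x^{\ast}$), are all sound, and you correctly identify this projection identity as the real content. The problem is that you never actually prove it. The method you sketch --- inserting $x=xx^{\ast}x$ and $y=yy^{\ast}y$ and transposing adjacent projections via Lemma~\ref{zicu8} --- uses only the multiplicative structure of $(S,\cdot,\ast)$ as an orthodox, locally inverse, completely regular regular $\star$-semigroup, plus the bare fact that $y-y$ is \emph{some} projection. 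But such manipulations would equally well prove $xex^{\ast}=xyy^{\ast}x^{\ast}$ for an \emph{arbitrary} projection $e$, which is false already in a semilattice (take $x$ an identity and $e\neq yy^{\ast}$). So no purely multiplicative computation can close this gap: one must use a relation between the additive projection $y-y$ and the element $y$ that comes from the brace axiom, and your sketch never invokes one. The transpositions you describe only yield $xyy^{\ast}x^{\ast}=x(y-y)yy^{\ast}x^{\ast}$ (assuming the lemma!), leaving you to absorb the factor $yy^{\ast}$, which is exactly the missing additive input.

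For comparison, the paper works on the starred side: it first shows $y^{\ast}x^{\ast}=y^{\ast}x^{\ast}+y^{\ast}y$ using Lemma~\ref{jiaohuan} and (\ref{lw5}), then substitutes $y=y+y-y-y+y$ (the second identity of Lemma~\ref{lw12}, which encodes the complete regularity of $(S,+)$) and uses Proposition~\ref{zuozhengze}, (\ref{lw1}) and Lemma~\ref{jiaohuan} to contract $y^{\ast}x^{\ast}+y^{\ast}y$ down to $y^{\ast}(y-y)x^{\ast}$, and finally applies $\ast$. The decisive additive ingredients there are Lemma~\ref{lw12} and the homomorphism-like behaviour of $y^{\ast}(\,\cdot\,)$ coming from the brace axiom --- precisely the kind of mixed additive--multiplicative step your proposal omits. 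To repair your argument you would need to supply an analogous computation establishing $-x+x(y-y)=(xy)(xy)^{\ast}$ (or directly $x(y-y)x^{\ast}=xyy^{\ast}x^{\ast}$) that genuinely uses (\ref{lw5}) together with Lemma~\ref{lw12}; as written, the proof has a real gap at its central step.
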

\begin{proof}
By Lemma \ref{jiaohuan}, the fact $yy^\ast\in P(S)$ and (\ref{lw5}), we have
$$y^\ast x^\ast=y^\ast( yy^\ast x^\ast)=y^\ast(x^\ast + yy^\ast)=y^\ast x^\ast -y^\ast +y^\ast yy^\ast$$$$=y^\ast x^\ast -y^\ast +y^\ast
=y^\ast x^\ast+y^\ast y^{\ast\ast} =y^\ast x^\ast+y^\ast y.$$
Using the second identity in Lemma \ref{lw12},  we have $y=y-y+y=y+y-y-y+y.$
By Proposition  \ref{zuozhengze}, (\ref{lw1}), Lemma \ref{jiaohuan}, the fact $y-y, -y+y\in P(S)$ and (\ref{lw5}) in that order,
$$y^\ast x^\ast+y^\ast y=y^\ast x^\ast+y^\ast (y+y-y-y+y)=y^\ast(x^\ast+y-y-y+y)$$$$=y^\ast((-y+y)(y-y)x^\ast)=y^\ast yy^\ast (y-y)x^\ast=y^\ast (y-y)x^\ast.$$
Thus, we have $y^\ast x^\ast=y^\ast (y-y)x^\ast$. Since $y-y\in P(S)$, we have $(y-y)^\ast=y-y$ by Lemma \ref{lw6} (9).
So $xy=(y^\ast x^\ast)^\ast=(y^\ast (y-y)x^\ast)^\ast=x^{\ast\ast}(y-y)^\ast y^{\ast\ast}=x(y-y)y$.
\end{proof}
\begin{lem}Let $(S,+,\cdot,-,\ast)$ be a weak left $\star$-brace and $x,y\in S, e\in P(S)$. Then
\begin{equation}\label{6.9}
x(-y+y)z=x(y-y)z,
\end{equation}
\begin{equation}\label{6.11}
x+e+z+e=x+z+e,
\end{equation}
\begin{equation}\label{6.12}
(-x+x)e(-x+x)=-x+e+x.
\end{equation}
 \end{lem}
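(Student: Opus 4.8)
The plan is to use Lemma~\ref{jiaohuan} (the bridge $ex=x+e$ for $e\in P(S)$) as the main engine for passing between the additive and multiplicative operations, together with Theorem~\ref{zicu9tiaoxingzhi}: since $(S,+,-)$ is completely regular, orthodox and locally inverse, the additive analogues of Lemma~\ref{zicu9} and Corollary~\ref{zhang2} are available, while $(S,\cdot,\ast)$ being orthodox and locally inverse makes Lemma~\ref{zicu8} (the swap $afgb=agfb$ for projections $f,g$) available multiplicatively. I would establish (\ref{6.11}) first, then (\ref{6.9}), and finally (\ref{6.12}), since the argument for (\ref{6.12}) reuses (\ref{6.11}).

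The identity (\ref{6.11}) is purely additive. Writing $-e=e$ and $e+e=e$ for the projection $e$ (Lemma~\ref{lw6}(9)), the additive form of Lemma~\ref{zicu9}(7) reads $x+z+e=x+e+(-e)+z+e$, and cancelling $e+(-e)=e$ collapses this to $x+z+e=x+e+z+e$, which is exactly (\ref{6.11}). No further work is needed here.

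For (\ref{6.9}) the difficulty is that $-y+y=yy^{\ast}$ and $y-y$ are genuinely distinct projections, so the two sides cannot be compared term by term; the whole point is to make these two projections interact. The key is to squeeze two one-sided ``sandwich'' identities out of Lemma~\ref{6.8}. Right-multiplying $x(y-y)y=xy$ by $y^{\ast}$ and using $yy^{\ast}=-y+y$ from (\ref{lw5}) gives $x(y-y)(-y+y)=x(-y+y)$; applying Lemma~\ref{6.8} with $y$ replaced by $-y$, right-multiplying by $(-y)^{\ast}$, and using $(-y)(-y)^{\ast}=y-y$ (Lemma~\ref{lw6}(6)) gives $x(-y+y)(y-y)=x(y-y)$. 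Inserting a right factor $z$ and swapping the two adjacent projections $y-y$ and $-y+y$ via Lemma~\ref{zicu8}, I then chain $x(-y+y)z=x(y-y)(-y+y)z=x(-y+y)(y-y)z=x(y-y)z$, which is (\ref{6.9}). The main obstacle is precisely discovering these two sandwich identities; once Lemma~\ref{6.8} supplies them, the orthodox--locally-inverse swap finishes the job.

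For (\ref{6.12}) I would first simplify the left-hand side: with $p:=-x+x\in P(S)$, using $e(-x+x)=(-x+x)+e$ (Lemma~\ref{jiaohuan}), the axiom (\ref{lw5}), and $p+p=p=-p$ together with $pe=e+p$, the product $(-x+x)e(-x+x)$ collapses to $L:=p+e+p$, while the right-hand side is $R:=-x+e+x$; both are projections. The crucial step is then to prove $R+L=L$ and $L+R=R$: invoking $x+p=x$ and $p-x=-x$ (Lemma~\ref{lw6}(2)) to simplify the mixed terms, each reduces after a short computation to an application of (\ref{6.11}). Finally I would convert these sums back to products via Lemma~\ref{jiaohuan}, obtaining $LR=R+L=L$ and $RL=L+R=R$; applying the involution to $LR=L$ and using $L^{\ast}=L$, $R^{\ast}=R$ gives $RL=L$, whence $L=R$. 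The subtle point to flag is that $R+L=L$ and $L+R=R$ by themselves only place $L$ and $R$ in a common $\mathcal{R}$-class (a right-zero configuration); it is the passage to the multiplicative operation followed by the $\ast$-symmetry that upgrades this to the genuine equality $L=R$.
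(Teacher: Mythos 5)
Your proof is correct, and although it draws on the same toolkit as the paper (Lemma \ref{jiaohuan}, Lemma \ref{6.8}, Theorem \ref{zicu9tiaoxingzhi}, and the reuse of (\ref{6.11}) inside (\ref{6.12})), the route through each identity is genuinely different. For (\ref{6.11}) the paper computes $x+z+e=e(x+z)=ex-e+ez=x+e-e+ez$ via Lemma \ref{jiaohuan} and the brace axiom (\ref{lw5}), whereas you stay entirely inside $(S,+,-)$ and invoke the additive form of Lemma \ref{zicu9}(7); both are one-liners. For (\ref{6.9}) your two sandwich identities $x(y-y)(-y+y)=x(-y+y)$ and $x(-y+y)(y-y)=x(y-y)$ are precisely the content the paper extracts from Lemma \ref{6.8}, but the paper threads them through an involution computation starting from $x(-y+y)z=(xy)(z^{\ast}y)^{\ast}$ and never needs the projection swap of Lemma \ref{zicu8}; your version makes the mechanism more transparent at the cost of one extra appeal to Theorem \ref{zicu9tiaoxingzhi}. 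For (\ref{6.12}) the two arguments agree up to $L=(-x+x)+e+(-x+x)$, after which the paper finishes directly by inserting (\ref{6.11}), regrouping as $-x+e+(x+e-x)+x$, and swapping the two additive projections $e$ and $x+e-x$ via the additive Lemma \ref{zicu8}; your absorption identities $R+L=L$ and $L+R=R$, followed by the passage to products and the $\ast$-symmetry, are a valid alternative. One remark on your closing caveat: the upgrade from the ``right-zero configuration'' to equality is cheaper than you suggest. Since $L$ and $R$ are projections of $(S,+,-)$, applying the additive involution to $R+L=L$ gives $L=-L=-(R+L)=(-L)+(-R)=L+R=R$ directly; equivalently, by the additive version of Lemma \ref{1.2}(5) two distinct projections of a regular $\star$-semigroup are never $\mathcal{R}$-related, so $R+L=L$ and $L+R=R$ already force $L=R$ without leaving the additive structure.
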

\begin{proof}Firstly,
\begin{eqnarray*}
&&x(-y+y)z =xyy^\ast z=(xy)(z^{\ast}y)^{\ast}\,\,\,\,\,\,\,(\mbox{by }-y+y=yy^\ast \mbox{ in } (\ref{lw5}))\\
&=&x(y-y)y(z^{\ast}(y-y)y)^{\ast}\,\,\,\,\,\,\,\mbox{(by Lemma \ref{6.8})}\\
&=&x(y-y)yy^{\ast}(y-y)z\,\,\,\,\,\,\,(\mbox{since }(y-y)^\ast=y-y \mbox{ by Lemma \ref{lw6} (4)})\\
&=&(x(y-y))(-y+y)(-y)((-y)^{\ast}z)\\
&&  (\mbox{since }yy^\ast=-y+y,\, (y-y)(y-y)=y-y=(-y)(-y)^\ast\mbox{ by Lemma \ref{lw6} (4), (6)})\\
&=&(x(y-y))(-y)((-y)^{\ast}z)\,\,\,\,\,\,\,(\mbox{by Lemma \ref{6.8} and }-y+y=-y-(-y))\\
&=&x(y-y)(y-y)z=x(y-y)z. \,\, (\mbox{since }(y-y)(y-y)=y-y=(-y)(-y)^\ast)
\end{eqnarray*}
Thus (\ref{6.9}) holds. Since $e\in P(S)$, we have $e-e=e$ by Lemma \ref{lw6} (9). In view of  Lemma \ref{jiaohuan} and (\ref{lw5}), we obtain that
$$x+z+e=e(x+z)=ex-e+ez=x+e-e+ez=x+e+z+e,$$
which implies that (\ref{6.11}) holds. Finally, according to the fact $-x+x, e\in P(S)$, Lemma \ref{jiaohuan} and (\ref{6.11}), we have
$$(-x+x)e(-x+x)=(-x+x+e)-x+x=(-x+e+x+e)-x+x=-x+e+(x+e-x)+x.$$
The fact $e\in P(S)$  and Lemma \ref{1.2} (1) imply that $x+e-x\in P(S)$. On the other hand, Theorem \ref{zicu9tiaoxingzhi} gives that $(S, +, -)$ is completely regular, orthodox and locally inverse. By Lemma \ref{zicu8} and (\ref{6.11}) we have
$$-x+(e+(x+e-x))+x=-x+ {((x+e-x)+e)+x} \overset{(\ref{6.11})}=-x+ {(x-x+e)+x}=-x+e+x.$$
Thus $(-x+x)e(-x+x)=-x+e+x.$ That is, (\ref{6.12}) holds.
\end{proof}

\begin{lem}\label{6.13.1}Let $(S,+,\cdot,-,\ast)$ be a weak left $\star$-brace. Then $xyy^\ast(x^\ast+y)=x(x^\ast +y)$ and
$(x +y)^{\ast}z=(x +y)^\ast xx^{\ast}z$ for all $x, y, z\in S$.
\end{lem}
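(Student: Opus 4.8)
The plan is to treat the two identities separately: the first falls to a short distributive computation, while the second is reduced, through the anti-automorphism $\ast$, to a statement about how left translations act, which ultimately rests on the structure theorem for the additive reduct.

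For the first identity $xyy^\ast(x^\ast+y)=x(x^\ast+y)$, I would begin from $yy^\ast\in P(S)$ and invoke Lemma \ref{jiaohuan} to write $yy^\ast(x^\ast+y)=(x^\ast+y)+yy^\ast$. Multiplying on the left by $x$ and applying the left distributive axiom (\ref{lw1}) gives $xyy^\ast(x^\ast+y)=x\big((x^\ast+y)+yy^\ast\big)=x(x^\ast+y)+x(x^\ast+yy^\ast)$. By Lemma \ref{lw6}(10) the second summand is $x(x^\ast+yy^\ast)=-x+xyy^\ast$, which equals the projection $(xy)(xy)^\ast$ by the first identity of Lemma \ref{liwei}. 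Writing $x(x^\ast+y)=-x+xy$ (again Lemma \ref{lw6}(10)) and using associativity of $+$ together with $a+aa^\ast=a$ from Lemma \ref{lw6}(5) applied to $a=xy$, I obtain $x(x^\ast+y)+(xy)(xy)^\ast=-x+\big(xy+(xy)(xy)^\ast\big)=-x+xy=x(x^\ast+y)$, which is the claim.

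For the second identity the first move is to apply $\ast$ to both sides: since $\ast$ is an anti-automorphism and $((x+y)^\ast)^\ast=x+y$, the equation $(x+y)^\ast z=(x+y)^\ast xx^\ast z$ is equivalent (putting $w=z^\ast$) to $w(x+y)=wxx^\ast(x+y)$ for all $w,x,y\in S$. Here Lemma \ref{jiaohuan} gives $xx^\ast(x+y)=(x+y)+xx^\ast$, so by (\ref{lw5}) the right-hand side becomes $w\big((x+y)+xx^\ast\big)=w(x+y)-w+wxx^\ast$; as $-w+wxx^\ast=(wx)(wx)^\ast\in P(S)$ by Lemma \ref{liwei}, the goal reduces to $s+f=s$, where $s=w(x+y)$ and $f=(wx)(wx)^\ast$. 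I would then pass to an identity between projections: using $-s+s=ss^\ast$ and $s+ss^\ast=s$ (Lemma \ref{lw6}(5)) one checks that $s+f=s$ holds if and only if $ss^\ast+f=ss^\ast$, and Lemma \ref{jiaohuan} converts the additive sum of the two projections into a product, so that the claim is equivalent to $f\,(ss^\ast)=ss^\ast$. Now $ss^\ast=w\,p\,w^\ast$ with $p=(x+y)(x+y)^\ast$, while $f=w\,xx^\ast\,w^\ast$; expanding $p=-(x+y)+(x+y)=-y+xx^\ast+y$ (via (\ref{lw5}) and Lemma \ref{lw6}(2)) and transposing the projections $xx^\ast,\,w^\ast w,\,p$ appearing flanked by $w$ and $w^\ast$---legitimate because $(S,\cdot,\ast)$ is orthodox and locally inverse, so Lemma \ref{zicu8} applies---the problem collapses to $w(xx^\ast p)w^\ast=w\,p\,w^\ast$.

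The main obstacle is exactly this last equality. In general $xx^\ast p\neq p$, so I cannot simply cancel the extra projection; it becomes equal to $p$ only after conjugation by $w$ and $w^\ast$. I expect to clear it by writing $xx^\ast p=p+xx^\ast$ (Lemma \ref{jiaohuan}) and then exploiting Theorem \ref{zicu9tiaoxingzhi}, which says $(S,+,-)$ is completely regular, orthodox and locally inverse; the additive analogue of Corollary \ref{zhang2} should allow me to absorb the trailing $xx^\ast$ inside the conjugate $-y+xx^\ast+y$ without altering $w(\cdot)w^\ast$. Carrying out this additive collapse carefully---so as never to appeal to a right distributive law, which is not available in a weak left $\star$-brace---is the delicate point of the argument.
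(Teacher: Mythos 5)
Your proof of the first identity is correct and close in spirit to the paper's (the paper simply notes $yy^\ast(x^\ast+y)=(x^\ast+y)+yy^\ast$ by Lemma \ref{jiaohuan} and then $y+yy^\ast=y-y+y=y$, with no appeal to distributivity; your detour through $x(x^\ast+yy^\ast)=(xy)(xy)^\ast$ and Lemma \ref{lw6}(5) is longer but valid). For the second identity you take a genuinely different route: the paper first proves the special case $(x+y)^\ast y=(x+y)^\ast xx^\ast y$ by inserting $(x+y)(x+y)^\ast$ and rewriting it as $yy^\ast xx^\ast yy^\ast$ via (\ref{6.12}), then bootstraps to general $z$; you instead apply $\ast$ to reduce the claim to $w(x+y)=wxx^\ast(x+y)$, and then, via Lemmas \ref{jiaohuan}, \ref{liwei} and \ref{lw6}(5), to the single multiplicative identity $w\,xx^\ast p\,w^\ast=w\,p\,w^\ast$ with $p=(x+y)(x+y)^\ast$. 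Every reduction step up to that point checks out.

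The gap is the last equality, which you explicitly leave open, and the tool you propose for it is the wrong one: an ``additive analogue of Corollary \ref{zhang2}'' acting inside $-y+xx^\ast+y$ cannot interact with the purely multiplicative conjugation $w(\cdot)w^\ast$, since no right distributivity is available and additive rewriting alone yields $p+xx^\ast=-y+y+xx^\ast$, which is \emph{not} equal to $p$. The correct move is the one the paper uses at the analogous point: by (\ref{6.12}) and $-y+y=yy^\ast$ one has $p=-y+xx^\ast+y=yy^\ast xx^\ast yy^\ast$, a purely multiplicative expression. Then $xx^\ast p=(xx^\ast yy^\ast)^2=xx^\ast yy^\ast$ by Lemma \ref{1.2}(4), and
$$w\,p\,w^\ast=w\,yy^\ast\,xx^\ast\,yy^\ast w^\ast=w\,xx^\ast\,yy^\ast\,yy^\ast w^\ast=w\,xx^\ast yy^\ast\,w^\ast=w\,xx^\ast p\,w^\ast$$
by Lemma \ref{zicu8}, which applies because $(S,\cdot,\ast)$ is orthodox and locally inverse (Theorem \ref{zicu9tiaoxingzhi}). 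With this substitution your argument closes completely; as written, however, the decisive step is only conjectured, and conjectured with a method that would not succeed.
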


\begin{proof}Let $x,y,z\in S$.  Then by Lemma \ref{jiaohuan}, the fact $yy^\ast\in P(S)$ and (\ref{lw5}), we have $$xyy^\ast(x^\ast+y)=x(x^\ast +y+yy^\ast)=x(x^\ast +y-y+y)=x(x^\ast +y).$$  On the other hand, again by (\ref{lw5}) we obtain
$$(x +y)^{\ast}y=(x +y)^{\ast}(x +y)(x +y)^{\ast}y=(x +y)^{\ast}(-(x +y)+(x +y))y$$$$=(x +y)^{\ast}(-y-x +x
+y)y=(x +y)^{\ast}(-y+xx^{\ast}+y)y.$$ By (\ref{6.12}) and the facts $x x^\ast \in P(S)$ and $-y+y=yy^\ast$, we have
$$(x +y)^{\ast}(-y+xx^{\ast}+y)y=(x +y)^{\ast}(-y+y)xx^{\ast}(-y+y)y=(x +y)^{\ast}yy^\ast xx^{\ast}yy^\ast y.$$
Theorem \ref{zicu9tiaoxingzhi} gives that $(S, \cdot, \ast)$ is orthodox and locally inverse. Since $yy^\ast, x x^\ast\in P(S)$,  we have $(x +y)^{\ast}yy^\ast xx^{\ast}yy^\ast y = (x +y)^{\ast} xx^{\ast} yy^\ast yy^\ast y=(x +y)^{\ast} xx^{\ast} y$ by Lemma \ref{zicu8}, whence
\begin{equation}\label{6.13}
(x +y)^{\ast}y=(x +y)^\ast xx^{\ast}y.
\end{equation}This implies that
\begin{eqnarray*}
&&(x +y)^\ast xx^{\ast}z=(x +y-y+y)^\ast xx^{\ast}z=((x +y)+yy^{\ast})^{\ast}xx^{\ast}z\,\,(\mbox{since }-y+y=yy^\ast)\\
&=&(yy^{\ast}(x +y))^{\ast}xx^{\ast}z=(x +y)^{\ast}yy^{\ast}xx^{\ast}z\,\,\,\,\,\,\,\mbox{(by Lemma \ref{jiaohuan} and }yy^\ast\in P(S))\\
&=&(x +y)^{\ast}xx^{\ast}yy^{\ast}z\,\,\,\,\,\,\,\mbox{(by Lemma \ref{zicu8} and Theorem \ref{zicu9tiaoxingzhi}})\\
&=&(x +y)^{\ast}yy^{\ast}z=(yy^{\ast}(x^{\ast}+y))^{\ast}z\,\,\,\,\,\,\,\mbox{(by (\ref{6.13}))}\\
&=&((x +y)+yy^{\ast})^{\ast}z\,\,\,\,\,\,\,\mbox{(by Lemma \ref{jiaohuan} and }yy^\ast\in P(S))\\
&=&((x +y)-y+y)^{\ast}z=(x +y)^{\ast}z.\,\,\,\,\,\,\,(\mbox{since } -y+y=yy^\ast)
\end{eqnarray*}
Thus the desired result is true.
\end{proof}

Let $(S,+,\cdot,-,\ast)$ be a weak left $\star$-brace and $x,y\in S$. Recall that
$$ \lambda_{x}: S\rightarrow S,\, y\mapsto x(x^\ast +y),\,\, \rho_{y}: S\rightarrow S,\, x\mapsto (x^\ast+y)^\ast y.$$
\begin{theorem}\label{zhongyao}Let $(S,+,\cdot,-,\ast)$ be a weak $\star$-brace and   $x,y,z\in S$.
\begin{itemize}
\item[(1)]$\lambda_x(y+z)=\lambda_{x}(y)+\lambda_{x}(z)$.
\item[(2)]$\lambda_{x}\lambda_{y}(z)=\lambda_{xy}(z)$.
\item[(3)]$\rho_{z}\rho_{x}(y)=\rho_{xz}(y)$.
\end{itemize}
\end{theorem}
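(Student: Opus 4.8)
The plan is to handle the three parts in increasing order of difficulty, treating (1) as essentially already known, giving a short direct computation for (2), and reserving the real work for the anti-multiplicativity statement (3).

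For part (1), I would simply observe that a weak left $\star$-brace is in particular a left regular $\star$-semibrace: by Proposition \ref{zuozhengze} the triple $(S,+,\cdot,\ast)$ satisfies the axiom (\ref{lw1}), so Proposition \ref{+zitongtai} applies verbatim and yields $\lambda_x(y+z)=\lambda_x(y)+\lambda_x(z)$. No new computation is needed here.

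For part (2) the key is the additive normal form $\lambda_x(w)=-x+xw$ supplied by Lemma \ref{lw6}(10). Writing $\lambda_y(z)=y(y^\ast+z)$ and applying this form twice, I would compute $\lambda_x\lambda_y(z)=-x+xy(y^\ast+z)$, then expand the right-hand summand by the left distributive law in (\ref{lw5}) to get $xy(y^\ast+z)=xyy^\ast-xy+xyz$. The first identity of Lemma \ref{liwei} rewrites $-x+xyy^\ast$ as $(xy)(xy)^\ast$, which equals $-xy+xy$ by the defining relation $-a+a=aa^\ast$ of (\ref{lw5}); one application of Lemma \ref{lw6}(2), namely $-a+a-a=-a$ with $a=xy$, then collapses the whole expression to $-xy+xyz=\lambda_{xy}(z)$. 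This argument is short and self-contained.

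Part (3) is where the difficulty lies, because there is no equally clean additive normal form for $\rho$. My first step would be to record the bridging identity $\rho_b(a)=\lambda_a(b)^\ast ab$, which follows from $\rho_b(a)=(a^\ast+b)^\ast b=(a^\ast+b)^\ast a^\ast a b$, the last equality being exactly the second identity of Lemma \ref{6.13.1}. Using this together with the resulting three-factor decomposition $\rho_x(y)=\lambda_y(x)^\ast\,y\,x$ in $(S,\cdot)$, I would expand $\rho_z\rho_x(y)$ and exploit the multiplicativity of $\lambda$ just proved in part (2) (applied to the product $\lambda_y(x)^\ast\cdot y\cdot x$) to factor the awkward term $\lambda_{\rho_x(y)}(z)$. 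The main obstacle is that, unlike the $\lambda$-computation, this manipulation genuinely needs the ambient structure of the multiplicative semigroup: by Theorem \ref{zicu9tiaoxingzhi} the semigroup $(S,\cdot,\ast)$ is orthodox and locally inverse, so the idempotent-commutation afforded by Lemma \ref{zicu8} is available to rearrange the products of projections that accumulate, while Lemma \ref{jiaohuan} ($ex=x+e$ for $e\in P(S)$) is needed to translate between the additive and multiplicative encodings of those projections. I expect the bookkeeping of these projection rearrangements, rather than any single conceptual leap, to be the genuine hurdle; I would therefore organize it by reducing both $\rho_z\rho_x(y)$ and $\rho_{xz}(y)$ to a common shape $(\cdots)^\ast xz$ and then matching the two prefactors, falling back on the direct expansion of $(y^\ast+xz)^\ast xz$ via (\ref{lw5}) and (\ref{6.9}) should the composition route prove too unwieldy.
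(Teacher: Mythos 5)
Parts (1) and (2) of your proposal are correct. Part (1) is exactly the paper's argument (Proposition \ref{zuozhengze} shows the weak left $\star$-brace satisfies (\ref{lw1}), after which Proposition \ref{+zitongtai} applies), and your part (2) is a clean, valid variant of the paper's computation: the paper regroups $xx^\ast-x+xyy^\ast$ back into $x(x^\ast+yy^\ast)$ and commutes the projection with Lemma \ref{jiaohuan}, whereas you stay in the additive normal form $\lambda_a(b)=-a+ab$ of Lemma \ref{lw6}(10) and invoke the first identity of Lemma \ref{liwei}; both routes land on $(xy)(xy)^\ast-xy+xyz=-xy+xyz=\lambda_{xy}(z)$.

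The genuine gap is part (3), which you outline but do not prove, and whose primary route has a concrete obstruction. Writing $\rho_z\rho_x(y)=\lambda_{\rho_x(y)}(z)^\ast\,\rho_x(y)\,z$ and $\rho_{xz}(y)=\lambda_y(xz)^\ast\,y\,xz$ does reduce both sides to the shape $u^\ast(yxz)$, but $(S,\cdot)$ is a non-cancellative regular $\star$-semigroup, so ``matching the two prefactors'' is not a legitimate final step unless you prove the full products equal; moreover, the factorization $\lambda_{\rho_x(y)}=\lambda_{\lambda_y(x)^\ast}\lambda_y\lambda_x$ supplied by part (2) merely trades the awkward term for the equally opaque operator $\lambda_{\lambda_y(x)^\ast}$ and does not visibly advance the computation. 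Your fallback --- direct expansion of $(x^\ast(y^\ast+x)+z)^\ast z$ --- is indeed the paper's route, but that expansion is the entire content of part (3): it needs Lemma \ref{6.13.1} twice (first to insert the projection $(x^\ast(y^\ast+x))(x^\ast(y^\ast+x))^\ast$ so the outer $x$ can be pulled inside the star, then to strip $(y^\ast+x)(y^\ast+x)^\ast$ off again), Lemma \ref{jiaohuan} together with $aa^\ast-a=-a$ to cancel the resulting $xx^\ast$, and, critically, both identities of Lemma \ref{lw12} ($x-y+y+y=x+y$ and $y+y-y+x=y+x$) to absorb the surplus $-x+x+x$ that appears in the middle --- a lemma your tool list omits entirely. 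As written, part (3) remains unestablished.
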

\begin{proof} (1) By Proposition \ref{zuozhengze} and (\ref{lw1}), we have
$$\lambda_{x}(y)+\lambda_{x}(z)=x(x^\ast +y)+x(x^\ast+z)=x(x^\ast +y+z)=\lambda_x(y+z).$$

(2) By (\ref{lw5}), the fact $yy^\ast\in P(S)$ and Lemma \ref{jiaohuan},
$$\lambda_{x}\lambda_{y}(z) =x(x^{\ast}+y(y^{\ast}+z))=xx^{\ast}-x+xy(y^{\ast}+z)$$$$ =xx^{\ast}-x+xyy^{\ast}-xy+xyz
=x(x^{\ast}+yy^\ast)-xy+xyz$$$$=x (yy^\ast x^{\ast})-xy+xyz=xy(xy)^\ast -xy+xyz=xy((xy)^\ast +z)=\lambda_{xy}(z).$$

(3) In fact, we have
\begin{eqnarray*}
&&\rho_{z}\rho_{x}(y)
=(x^{\ast}(y^{\ast}+x)+z)^{\ast}z\\
&=&(x^{\ast}(y^{\ast}+x)+z)^{\ast}(x^{\ast}(y^{\ast}+x))(x^{\ast}(y^{\ast}+x))^\ast z\,\,\,\,\,\,\,\mbox{(by Lemma \ref{6.13.1})}\\
&=&(x(x^{\ast}(y^{\ast}+x)+z))^{\ast}(y^{\ast}+x)(y^{\ast}+x)^{\ast}xz\\
&=&(xx^{\ast}(y^{\ast}+x)-x+xz)^{\ast}(y^{\ast}+x)(y^{\ast}+x)^{\ast}xz\,\,\,\,\,\,\,\mbox{(by (\ref{lw5}))}\\
&=&((y^{\ast}+x)+xx^{\ast}-x+xz)^{\ast}(y^{\ast}+x)(y^{\ast}+x)^{\ast}xz\,\,\,\,\,\,\,\mbox{(by Lemma \ref{jiaohuan} and }xx^\ast\in P(S))\\
&=&((y^{\ast}+x)-x+xz)^{\ast}(y^{\ast}+x)(y^{\ast}+x)^{\ast}xz\,\, \,\, (\mbox{by } aa^\ast-a=-a \mbox{ in Lemma  \ref{lw6} (5)} )\\
&=&(\underline{y^{\ast}+x}-x+xz)^{\ast}xz\,\,\,\,\,\,\,\mbox{(by Lemma \ref{6.13.1})}\\
&=&(\underline{y^{\ast}-x+x+x}-x+xz)^{\ast}xz\,\,\,\,\,\,\,\mbox{(by the first axiom in Lemma \ref{lw12})}\\
&=&((y^{\ast}-x)+(x+x-x+xz))^{\ast}xz\\
&=&((y^{\ast}-x)+(x+xz))^{\ast}xz\,\,\,\,\,\,\,\mbox{(by the second axiom in Lemma \ref{lw12})}\\
&=&((y^{\ast}-x+x-x)+(x+xz))^{\ast}xz\,\,\,\,\,\,\, (\mbox{since }-x+x-x=-x)\\
&=&(((-x+x)y^{\ast})-x+x+xz)^{\ast}xz\,\,\,\,\,\,\,(\mbox{by Lemma \ref{jiaohuan} and }-x+x\in P(S))\\
&=&(((-x+x)y^{\ast})-(-x+x)+(-x+x)xz)^{\ast}xz\, \,\, \,\,\,\, (\mbox{by }(-a+a)a=a \mbox{ in Lemma \ref{lw6} (6)}))\\
&=&((-x+x)(y^{\ast}+xz))^{\ast}xz\,\,\,\,\,\,\, \mbox{(by (\ref{lw5}))}\\
&=&((y^{\ast}+xz)-x+x)^{\ast}xz=(y^{\ast}+(-x+x)xz)^{\ast}xz \,(\mbox{by Lemma \ref{jiaohuan} and }-x+x\in P(S))\\
&=&(y^{\ast}+xz)^{\ast}xz=\rho_{xz}(y).\, \,\, \,\,\,\, (\mbox{by  }(-a+a)a=a \mbox{ in Lemma \ref{lw6} (6)}))
\end{eqnarray*}
Thus item (3) holds.
\end{proof}

\begin{lem}\label{tebei}Let $(S,+,\cdot,-,\ast)$ be a weak left $\star$-brace, $x,y\in S$. Then $xy=\lambda_{x}(y)\rho_{y}(x)$.
\end{lem}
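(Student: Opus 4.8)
The plan is to expand the right-hand side directly and then show that a certain projection can be absorbed. By definition $\lambda_x(y)=x(x^\ast+y)$ and $\rho_y(x)=(x^\ast+y)^\ast y$, so writing $w=x^\ast+y$ we have $\lambda_x(y)\rho_y(x)=x\,(ww^\ast)\,y$. Since $ww^\ast\in P(S)$, the whole task reduces to the absorption identity $x(ww^\ast)y=xy$. The weak-brace axiom (\ref{lw5}) gives $ww^\ast=-w+w$, so the multiplicative projection $ww^\ast$ is simultaneously the additive projection $-(x^\ast+y)+(x^\ast+y)$; this double description is exactly what the argument must exploit.

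First I would convert the sandwiched projection into the additive picture. Applying Lemma \ref{jiaohuan} in the form $ey=y+e$ with $e=ww^\ast$ gives $x(ww^\ast)y=x(y+ww^\ast)=x\bigl(y-(x^\ast+y)+(x^\ast+y)\bigr)$. Then I would simplify the interior additive expression using the regular-$\star$ identities for $(S,+,-)$: Lemma \ref{lw6} (notably $-(a+b)=-b-a$ and $a+aa^\ast=a$), Lemma \ref{lw12}, and the fact from Theorem \ref{zicu9tiaoxingzhi} that $(S,+,-)$ is completely regular, orthodox and locally inverse, so that projections may be commuted when sandwiched (Lemma \ref{zicu8}) and traded via (\ref{6.9}). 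An alternative entry point is to rewrite $\rho_y(x)=(x^\ast+y)^\ast x^\ast x\,y$ by the identity $(x^\ast+y)^\ast y=(x^\ast+y)^\ast x^\ast x\,y$ coming from Lemma \ref{6.13.1}, or to rewrite $\lambda_x(y)=xyy^\ast(x^\ast+y)$ by the first identity of Lemma \ref{6.13.1}; both feed the same projection-absorption mechanism and serve as a cross-check on the final cancellation.

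The main obstacle will be bookkeeping caused by the interplay of the two operations rather than any single hard idea. A purely multiplicative attempt fails: commuting $ww^\ast$ past the projection $x^\ast x$ by Lemma \ref{zicu8} and using $xx^\ast x=x$ merely returns $x(ww^\ast)y$, so that route is circular unless one passes through the additive structure. Hence the essential move is to use the compatibility axiom $-a+a=aa^\ast$ together with Lemma \ref{jiaohuan} to relocate the projection into a $+$-expression built from $y$, where the orthodox and locally inverse structure of $(S,+,-)$ finally permits reduction to the shape $x(y-y)y$, which collapses to $xy$ by Lemma \ref{6.8}. Carrying this through cleanly, keeping careful track of which projections are equal and when the left- versus right-multiplication identities (for instance $-x+xe=-xe+x$ from Lemma \ref{liwei}) apply, is the delicate part of the proof.
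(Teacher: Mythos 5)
Your opening reduction---write $w=x^{\ast}+y$, note $\lambda_{x}(y)\rho_{y}(x)=x(ww^{\ast})y$ with $ww^{\ast}=-w+w\in P(S)$, and aim at the absorption $x(ww^{\ast})y=xy$---is exactly where the paper's proof starts, and your diagnosis that one must pass through the additive structure is right. From there the two arguments diverge. The paper stays entirely inside the axioms (\ref{lw5}): it expands $-w+w=-y-x^{\ast}+x^{\ast}+y=-y+x^{\ast}x+y$, pulls $x$ through the sum twice with $x(a+b)=xa-x+xb$, collapses $xx^{\ast}x=x$ and $-x+x-x=-x$, refolds to $x(-y+y)y=xyy^{\ast}y=xy$, and is done; it never invokes Lemma \ref{jiaohuan}, Theorem \ref{zicu9tiaoxingzhi}, Lemma \ref{zicu8}, Lemma \ref{6.8} or (\ref{6.9}). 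Your route instead relocates the projection with Lemma \ref{jiaohuan} and leans on the structural machinery. It does go through: for instance $ww^{\ast}=-y+x^{\ast}x+y=(-y+y)(x^{\ast}x)(-y+y)=yy^{\ast}x^{\ast}x\,yy^{\ast}$ by (\ref{6.12}), and then Lemma \ref{zicu8} with Theorem \ref{zicu9tiaoxingzhi} gives $x\,yy^{\ast}x^{\ast}x\,yy^{\ast}y=xx^{\ast}x\,yy^{\ast}y=xy$; none of these lemmas depends on the present statement, so there is no circularity. What this buys is a more conceptual picture (the sandwiched projection commutes into $x^{\ast}x$ and $yy^{\ast}$ and is absorbed); what it costs is economy, since each auxiliary lemma is itself a nontrivial consequence of (\ref{lw5}). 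One imprecision to fix: once you apply Lemma \ref{jiaohuan} to get $x(y+ww^{\ast})$, the trailing $y$ has been swallowed into a single sum, so "reduction to the shape $x(y-y)y$" does not follow as stated; you must either re-expand multiplicatively as above, or finish additively, e.g.\ $x(y-y+x^{\ast}x+y)=xy-x+x(-y+y)=xy+(xy)(xy)^{\ast}=xy$ by Lemma \ref{liwei} and Lemma \ref{lw6} (5). With that repaired, your plan yields a correct, if longer, proof.
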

\begin{proof}Using the two axioms in (\ref{lw5}) repeatedly, we have
$$\lambda_{x}(y)\rho_{y}(x)=x(x^{\ast}+y)(x^{\ast}+y)^{\ast}y=x(-(x^{\ast}+y)+(x^{\ast}+y))y=x(-y-x^{\ast}+x^{\ast}+y)y$$$$=x(-y+x^{\ast}x+y)y=
(x(-y+x^{\ast}x)-x+xy)y=((x(-y)-x+xx^{\ast}x)-x+xy)y$$$$=(x(-y)-x+x-x+xy)y=(x(-y)-x+xy)y=(x(-y+y))y=xyy^\ast y=xy.$$
Thus, the result follows.
\end{proof}

Let $(S,+,\cdot,-,\ast)$ be a weak left $\star$-brace. Recall that the map $r_S$ associated to $(S,+,\cdot,\, -,\ast)$ is defined as follows: For all $x,y\in S$,
$$r_S(x,y)=(\lambda_{x}(y),\rho_{y}(x))=(x(x^{\ast}+y),(x^{\ast}+y)^{\ast}y).$$
Now we can state the main result of this section.
\begin{theorem}Let $(S,+,\cdot,-,\ast)$ be a weak left $\star$-brace. Then $r_S$ is a solution.
\end{theorem}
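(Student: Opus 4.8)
The plan is to use the reduction recorded in Section 1: the map $r_S$ is a solution of the Yang--Baxter equation precisely when the three identities (\ref{jie1}), (\ref{jie2}) and (\ref{jie3}) hold for all $x,y,z\in S$. The three structural facts just established will carry almost all of the load: by Theorem \ref{zhongyao}(2) the map $\lambda$ is a homomorphism from $(S,\cdot)$ to $(\mathcal{T}(S),\cdot)$, i.e. $\lambda_{ab}=\lambda_a\lambda_b$; by Theorem \ref{zhongyao}(3) the map $\rho$ is an anti-homomorphism, i.e. $\rho_{ab}=\rho_b\rho_a$; and Lemma \ref{tebei} supplies the factorization $ab=\lambda_a(b)\rho_b(a)$. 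I would verify the three identities one at a time.

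For (\ref{jie1}), applying Theorem \ref{zhongyao}(2) twice gives $\lambda_x\lambda_y(z)=\lambda_{xy}(z)$ on the left and $\lambda_{\lambda_x(y)}\lambda_{\rho_y(x)}(z)=\lambda_{\lambda_x(y)\rho_y(x)}(z)$ on the right; since $\lambda_x(y)\rho_y(x)=xy$ by Lemma \ref{tebei}, both sides equal $\lambda_{xy}(z)$. Dually, for (\ref{jie2}) Theorem \ref{zhongyao}(3) gives $\rho_z\rho_y(x)=\rho_{yz}(x)$ and $\rho_{\rho_z(y)}\rho_{\lambda_y(z)}(x)=\rho_{\lambda_y(z)\rho_z(y)}(x)$, while Lemma \ref{tebei} (with $y,z$ in place of $x,y$) yields $\lambda_y(z)\rho_z(y)=yz$, so again both sides collapse to $\rho_{yz}(x)$. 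Thus (\ref{jie1}) and (\ref{jie2}) are one-line consequences of Theorem \ref{zhongyao} and Lemma \ref{tebei}.

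The main obstacle is the compatibility identity (\ref{jie3}),
$$\lambda_{\rho_{\lambda_{y}(z)}(x)}\rho_{z}(y)=\rho_{\lambda_{\rho_{y}(x)}(z)}\lambda_{x}(y),$$
which, unlike the first two, mixes $\lambda$ and $\rho$ and so cannot be collapsed by the (anti)homomorphism laws alone. Here the plan is to reduce each side directly to a common normal form. I would first replace every occurrence of $\lambda$ by its additive form $\lambda_a(b)=-a+ab$ (Lemma \ref{lw6}(10)) and expand the inner $\rho$-terms via the defining formula $\rho_b(a)=(a^{\ast}+b)^{\ast}b$. The simplification is then driven by the commutation rule $ea=a+e$ for projections $e\in P(S)$ (Lemma \ref{jiaohuan}), the absorption identities $ab b^{\ast}(a^{\ast}+b)=a(a^{\ast}+b)$ and $(a+b)^{\ast}c=(a+b)^{\ast}aa^{\ast}c$ of Lemma \ref{6.13.1}, the identity $a(b-b)b=ab$ of Lemma \ref{6.8}, and the rearrangement identities (\ref{6.9}), (\ref{6.11}) and (\ref{6.12}). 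Crucially, the fact that $(S,\cdot,\ast)$ is orthodox and locally inverse while $(S,+,-)$ is completely regular, orthodox and locally inverse (Theorem \ref{zicu9tiaoxingzhi}) lets products of projections be freely transposed inside any word (Lemma \ref{zicu8}), which is exactly what is needed to bring the two sides into coincidence. I expect this single identity to absorb essentially all of the effort of the proof.
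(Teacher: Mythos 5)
Your treatment of identities (\ref{jie1}) and (\ref{jie2}) is correct and coincides exactly with the paper's: both follow in one line from Theorem \ref{zhongyao}(2),(3) together with the factorization $xy=\lambda_x(y)\rho_y(x)$ of Lemma \ref{tebei}. The problem is (\ref{jie3}), which you correctly identify as the hard case but for which you offer only an unexecuted plan: expand $\lambda_a(b)=-a+ab$ and $\rho_b(a)=(a^\ast+b)^\ast b$ and normalize both sides using the assorted lemmas. This is a genuine gap. The two sides of (\ref{jie3}), written additively, are long words in a structure where neither operation is a group or even an inverse semigroup, and nothing in your list of tools guarantees that the rewriting terminates in a common form; you would have to actually carry out the reduction, and it is not at all clear it goes through as described.

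The idea you are missing is the pair of ``cross'' identities recorded in the paper as (\ref{lmabda-rho2}), namely
$\lambda_a(b)=ab\,(\rho_b(a))^{\ast}$ and $\rho_b(a)=(\lambda_a(b))^{\ast}\,ab$,
which are extracted from Lemma \ref{6.13.1} together with the definitions. These let one express each of $\lambda$ and $\rho$ multiplicatively in terms of the other and of the product $ab$, so that (\ref{jie3}) can be proved by a purely multiplicative chain of substitutions using only (\ref{jie1}), (\ref{jie2}), Lemma \ref{tebei}, and (\ref{lmabda-rho2}) — no additive expansion is ever needed. Concretely, the paper rewrites $\lambda_{\rho_{\lambda_y(z)}(x)}(\rho_z(y))$ as $\rho_{\lambda_y(z)}(x)\cdot\rho_z(y)\cdot[\rho_{\rho_z(y)}(\rho_{\lambda_y(z)}(x))]^{\ast}$, collapses the last factor via (\ref{jie2}), replaces $\lambda_y(z)\rho_z(y)$ by $yz$ and then $xy$ by $\lambda_x(y)\rho_y(x)$, and reassembles the result as $\rho_{\lambda_{\rho_y(x)}(z)}(\lambda_x(y))$ using (\ref{jie1}) and (\ref{lmabda-rho2}) again. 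To repair your proof you should either adopt this mechanism or actually exhibit the full normal-form computation you propose; as written, the third identity — which is the substance of the theorem — remains unproved.
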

\begin{proof}We only need to show that the following three identities hold for all $x,y,z\in S$:
$$({\rm i})\,  \lambda_{x}\lambda_{y}(z)=\lambda_{\lambda_{x}(y)}\lambda_{\rho_{y}(x)}(z), ({\rm ii})\,  \rho_{z}\rho_{y}(x)=\rho_{\rho_{z}(y)}\rho_{\lambda_{y}(z)}(x),  ({\rm iii})\,  \lambda_{\rho_{\lambda_{y}(z)}(x)}\rho_{z}(y)=\rho_{\lambda_{\rho_{y}(x)}(z)}\lambda_{x}(y).$$
In fact, Theorem \ref{zhongyao} (2) and Lemma \ref{tebei} give item (i), and Theorem \ref{zhongyao} (3) and Lemma  \ref{tebei} deduce item (ii). We shall show item (iii) in the sequel. By Lemma  \ref{6.13.1}, for all $a, b\in S$,
$$\lambda_{a}(b)=a(a^{\ast}+b)=abb^{\ast}(a^{\ast}+b),\,\,\,  \rho_{b}(a)=(a^{\ast}+b)^{\ast}b=(a^{\ast}+b)^{\ast}a^{\ast}ab$$
\begin{equation}\label{lmabda-rho2}
\lambda_{a}(b)=ab(\rho_{b}(a))^{\ast},\,\,\, \rho_{b}(a)=(\lambda_{a}(b))^{\ast}ab.
\end{equation}
Let $x,y,z\in S$. Then
\begin{eqnarray*}
&&\lambda_{\rho_{\lambda_{y}(z)}(x)}(\rho_{z}(y))=\underline{\rho_{\lambda_{y}(z)}(x)}\cdot \rho_{z}(y)\cdot [\rho_{\rho_{z}(y)}(\rho_{\lambda_{y}(z)}(x))]^{\ast} \,\,(\mbox{by the first identity in (\ref{lmabda-rho2})})\\
&=&\underline{[\lambda_{x}(\lambda_{y}(z))]^{\ast}\cdot x\cdot \lambda_{y}(z)}\cdot \rho_{z}(y)\cdot [\rho_{\rho_{z}(y)}(\rho_{\lambda_{y}(z)}(x))]^{\ast} \,\,(\mbox{by the second identity in (\ref{lmabda-rho2})})\\
&=&[\lambda_{x}(\lambda_{y}(z))]^{\ast}\cdot x\cdot \underbrace{\lambda_{y}(z)\cdot \rho_{z}(y)}\cdot [\rho_{z}(\rho_{y}(x))]^{\ast}\,\,\, \,\,\,\,(\mbox{by (ii)})\\
&=&[\lambda_{x}(\lambda_{y}(z))]^{\ast}\cdot x\cdot \underbrace{yz}\cdot [\rho_{z}(\rho_{y}(x))]^{\ast}=[\lambda_{x}(\lambda_{y}(z))]^{\ast}\cdot\underline{xy}\cdot z\cdot [\rho_{z}(\rho_{y}(x))]^{\ast} \,(\mbox{by Lemma \ref{tebei}})\\
&=&[\lambda_{x}(\lambda_{y}(z))]^{\ast}\cdot\underline{\lambda_{x}(y)\cdot  \rho_{y}(x)}\cdot z\cdot [\rho_{z}(\rho_{y}(x))]^{\ast} \,\,\,\,\,\, \,(\mbox{by Lemma \ref{tebei}})\\
&=&\overbrace{[\lambda_{x}(\lambda_{y}(z))]^{\ast}}\cdot \lambda_{x}(y)\cdot  \underbrace{\rho_{y}(x) \cdot z\cdot [\rho_{z}(\rho_{y}(x))]^{\ast}}\\
&=&\overbrace{[\lambda_{\lambda_{x}(y)}(\lambda_{\rho_{y}(x)}(z))]^{\ast}}\cdot \lambda_{x}(y)\cdot  \underbrace{\lambda_{\rho_{y}{(x)}}(z)}
\,\,\,\,\,\,(\mbox{by (i) and  the first identity in (\ref{lmabda-rho2})})\\
&=&\rho_{\lambda_{\rho_{y}(x)}(z)}(\lambda_{x}(y)).\,\,(\mbox{by the second identity in (\ref{lmabda-rho2})})
\end{eqnarray*}
This proves that (iii) holds.
\end{proof}

\begin{remark}
In \cite{Catino-Mazzotta-Stefanelli2}, Catino,  Mazzotta and Stefanelli have established the Rota-Baxter operator theory of Clifford semigroups and explored the relationship between this theory and dual weak left braces. Moreover, the construction of dual weak left braces and the solutions of Yang-Baxter equation associated to dual weak left braces are also considered in \cite{Catino-Mazzotta-Stefanelli3,Mazzotta-Rybolowicz-Stefanelli}.
Thus, the following question is natural: How to establish a theory parallel to the theory in \cite{Catino-Mazzotta-Stefanelli2,Catino-Mazzotta-Stefanelli3,Mazzotta-Rybolowicz-Stefanelli} in the class of weak left $\star$-braces? We shall continue to study this problem in a separate paper.
\end{remark}
\noindent {\bf Acknowledgment:}   The authors express  their profound gratitude to Professor Li Guo at Rutgers University for his encouragement and help, and acknowledge  the assistance of Prover9 and Mace4 developed by McCune \cite{McCune} in the course of preparing this article. The paper is supported partially  by the Nature Science Foundations of  China (12271442, 11661082).

\end{document}